\numberwithin{equation}{section}
\newtheorem{theorem}[equation]{Theorem}
\newtheorem{lemma}[equation]{Lemma}
\newtheorem{prop}[equation]{Proposition}
\newtheorem{corollary}[equation]{Corollary}
\theoremstyle{definition}
\newtheorem{definition}[equation]{Definition}
\newtheorem{example}[equation]{Example}
\theoremstyle{remark}
\newtheorem{remark}[equation]{Remark}
\newtheorem*{remark*}{Remark}
\newtheorem{conjecture}[equation]{Conjecture}
\newcommand{\abs}[1]{\lvert#1\rvert}
\newcommand{\ie}{\emph{i.e.} }
\newcommand{\eg}{\emph{e.g.} }
\newcommand{\cf}{\emph{cf.} }
\newcommand{\beq}{\begin{equation}}
\newcommand{\eeq}{\end{equation}}
\newcommand{\bea}{\begin{eqnarray}}
\newcommand{\eea}{\end{eqnarray}}
\newcommand{\R}{\mathbb{R}}
\newcommand{\Rpos}{\mathbb{R}_{>0}}
\newcommand{\Z}{\mathbb{Z}}
\newcommand{\T}{\mathbb{T}}
\newcommand{\CP}{\mathbb{CP}}
\newcommand{\Sph}{\mathbb{S}}
\newcommand{\vol}{\operatorname{Vol}}
\newcommand{\half}{\tfrac12}
\newcommand{\pd}[2]{\frac{\partial #1}{\partial #2}}
\newcommand{\scfamsymb}{\mathcal{S}}
\newcommand{\scfam}[3]{\scfamsymb_{#1}^{#3}(#2)}
\newcommand{\spfam}[2]{\scfamsymb_{#1}(#2)}
\newcommand{\lmap}{\mathcal{L}}
\newcommand{\texorpdflmap}{\texorpdfstring{$\lmap$}{L}}
\newcommand{\iwa}{M}
\newcommand{\tu}[1]{\textup{#1}}
\newcommand{\gtwo}{\ensuremath{\textup{G}_2}}
\newcommand{\texorpdfgtwo}{\texorpdfstring{\ensuremath{\textup{G}_2}}{G\_2}}
\newcommand{\gtstr}{\gtwo--structure}
\newcommand{\texorpdfgtstr}{\texorpdfstring{\gtwo}{G\_2}--structure}
\newcommand{\gtmetric}{\gtwo--metric}
\newcommand{\gthol}{\gtwo--holonomy\ }
\newcommand{\hk}{hyperK\"ahler}
\newcommand{\wt}[1]{\widetilde #1}
\newcommand{\wh}[1]{\widehat #1}
\newcommand{\unitary}[1]{\textup{U$(#1)$}}
\newcommand{\sunitary}[1]{\textup{SU$(#1)$}}
\newcommand{\texorpdfsunitary}[1]{\texorpdfstring{\textup{SU$(#1)$}}{SU(#1)}}
\newcommand{\lorth}[1]{\ensuremath{\mathfrak{so}(#1)}}
\newcommand{\Sp}[1]{\textup{Sp$(#1)$}}
\newcommand{\lsp}[1]{\ensuremath{\mathfrak{sp}(#1)}}
\newcommand{\texorpdfSp}[1]{\texorpdfstring{\textup{Sp$(#1)$}}{Sp(#1)}}
\newcommand{\orth}[1]{\textup{O$(#1)$}}
\newcommand{{\isomgtc}}{\ensuremath{\sunitary{2}^3 \rtimes S_3}}
\DeclareMathOperator{\Div}{div}
\newcommand{\slead}{P_0}
\newcommand{\sother}{P_2}
\newcommand{\sis}{\mathscr{S}}
\newcounter{mtheorem}
\newtheoremstyle{mystyle}%
  {}%
  {}%
  {\itshape}%
  {}%
  {\bfseries}%
  {.}%
  { }%
  {}%
\theoremstyle{mystyle}
\newtheorem{mtheorem}[mtheorem]{Theorem}
\newtheorem{mprop}[mtheorem]{Proposition}
\newtheorem{mcor}[mtheorem]{Corollary}
\newcounter{mconjecture}
\theoremstyle{mystyle}
\newtheorem{mconj}[mconjecture]{Conjecture}
\begin{document}

\title[\Sp{2}-invariant Laplacian solitons]
{\Sp{2}-invariant expanders and shrinkers in Laplacian flow}
\author{
Mark Haskins
\and
Rowan Juneman
\and
Johannes Nordstr\"om}

\maketitle

\begin{abstract}
We show that the complete \Sp{2}-invariant expanding solitons for
Bryant's Laplacian flow on the anti-self-dual bundle of the 4-sphere form
a 1-parameter family, and
that they are all asymptotically conical (AC). %
We determine their asymptotic cones, and prove that this cone
determines the complete expander (up to scale). %
Neither the unique \Sp{2}-invariant  torsion-free
\gtwo-cone nor the asymptotic cone of the explicit AC
\Sp{2}-invariant shrinker from \cite{Haskins:Nordstrom:g2soliton1} occurs
as the asymptotic cone of a complete AC \Sp{2}-invariant expander.

We determine all possible end behaviours of \Sp{2}-invariant solitons,
identifying novel forward-complete end solutions for both expanders and shrinkers
with faster-than-Euclidean volume growth.
We conjecture that there exists a $1$-parameter family of complete \sunitary{3}-invariant expanders
on the anti-self-dual bundle of the complex projective plane $\CP^2$ with such asymptotic behaviour. 

We also conjecture that, in contrast to the \Sp{2}-invariant case, there exist complete \sunitary{3}-invariant AC
expanders with asymptotic cone matching that of the explicit AC
\sunitary{3}-invariant shrinker from \cite{Haskins:Nordstrom:g2soliton1}.
The latter conjecture suggests that Laplacian flow may naturally implement 
a type of surgery in which a $\CP^2$ shrinks to a conically singular point,
but after which the flow can be continued smoothly, expanding a topologically different $\CP^2$
from the singularity.

\end{abstract}

\section{Introduction and main results}

This paper continues the systematic study of cohomogeneity-one solitons in Laplacian flow initiated by two of the authors in~\cite{Haskins:Nordstrom:g2soliton1}.
The main contributions of this paper are twofold: 
\begin{enumerate}[left=0.1em]
\item we achieve a definitive understanding of all complete~\Sp{2}-invariant Laplacian expanders,  \ie Laplacian solitons with dilation constant $\lambda>0$, all of which are defined on $\Lambda^2_-\Sph^4$; 
\item
we characterise all possible end behaviours of~\Sp{2}-invariant Laplacian solitons for any value of the dilation constant $\lambda$. 
\end{enumerate}
Let us explain in rough terms what we mean by (ii). 
Cohomogeneity-one solitons are governed by a nonlinear system of ODEs that
determines how the precise shape of the principal orbits evolves.
The evolution of the volume of the principal orbits turns out to be strictly monotonic, and
we think of the direction of increasing volume as ``forward''.
We will refer to a forward-inextensible solution 
as an end solution, or simply a (soliton) end. 
We are interested in questions such as:
\begin{itemize}[leftmargin=*]
\item Is the end forward-complete or forward-incomplete (\ie does
the solution have finite or infinite forward lifetime)?
\item What is the asymptotic geometry of a forward-complete end?
\item Which end behaviours are stable, in the sense that the forward evolution
of any small perturbation of any initial data leading to a given end behaviour
exhibits the same end behaviour? 
\end{itemize}
Regarding (i), it turns out that all complete~\Sp{2}-invariant expanders
are asymptotically conical (AC); this motivates us to make a detailed
study of both AC expander and AC shrinker ends more generally.
We also consider the question of existence of AC expanders and AC shrinkers
whose asymptotic cones match, which is relevant for the possibility of flows
that continue smoothly after the formation of an isolated conical singularity.

Regarding (ii), we discover that~\Sp{2}-invariant solitons can also
have more exotic forward-complete end behaviours; while the detailed asymptotic
geometry of these exotic ends turns out to be quite different in the shrinker
and expander cases, a common feature is that both can be thought of as
occurring at the boundary of the space of AC soliton ends. 

We also determine the asymptotic behaviour of all forward-incomplete~\Sp{2}-invariant solitons 
and moreover show that this sort of end behaviour is always generic. 

\textheight 23.15cm

\subsection{Laplacian flow and its solitons}

Laplacian flow is a geometric flow of closed~\gtwo-structures on a $7$-manifold that evolves a closed positive $3$-form in the direction of its Hodge-Laplacian. 
Stationary points of the flow are precisely the torsion-free~\gtstr s. Studying the long-time existence and convergence of solutions to Laplacian flow provides a potential parabolic PDE approach to the construction of such torsion-free~\gtstr s.

A Laplacian soliton on a $7$-manifold $M$ is a triple $(\varphi,X,\lambda)$ consisting of a positive $3$-form $\varphi$, a vector field $X$ and a constant (the dilation constant) $\lambda \in \R$ satisfying the overdetermined nonlinear diffeomorphism-invariant system of PDEs 
\[
d \varphi =0, \quad \Delta_\varphi \varphi = \lambda \varphi + \mathcal{L}_X \varphi.
\]
Laplacian solitons are in one-to-one correspondence with self-similar solutions to Laplacian flow. 

\begin{remark}
\label{rmk:general_scaling}
Changing the length scale of a \gtstr{}~$\varphi$ by a factor $\mu > 0$ corresponds
to multiplying $\varphi$ by $\mu^3$. If $(\varphi,X)$ is a $\lambda$-soliton,
then $(\mu^3 \varphi, \mu^{-2}X)$ is a $\mu^{-2}\lambda$-soliton.
In particular, once we have fixed a value for $\lambda$ the Laplacian soliton PDE system is
not scale-invariant \emph{except} in the steady case $\lambda = 0$.
\end{remark}

No nontrivial compact Laplacian solitons are currently known and indeed compact shrinkers or  nontrivial compact steady solitons are impossible.  
The $3$-form $\varphi$ underlying any expander is necessarily exact, but currently, the authors are not aware 
of an exact~\gtstr~on \emph{any} compact $7$-manifold, even without imposing the soliton equations.
Therefore currently the main focus is the construction and classification 
of complete noncompact Laplacian solitons.

\subsubsection*{Cohomogeneity-one Laplacian solitons}
In the current absence of general analytic methods to produce complete solutions 
to the Laplacian soliton equations (a situation also faced with Ricci solitons, except recently in the K\"ahler setting), two of the authors began a study of cohomogeneity-one Laplacian 
solitons. In that setting the Laplacian soliton PDEs reduce to a system of nonlinear ODEs. 
While there is a long history of studying cohomogeneity-one solitons in Ricci flow and mean curvature flow, 
the study of such solitons in Laplacian flow is much more recent;  it has already revealed some important
differences between Laplacian flow and these other two flows. 

In~\cite{Haskins:Nordstrom:g2soliton1}, two of the authors studied local aspects of the theory of $G$-invariant Laplacian solitons for $G=\Sp{2}$ and $G=\sunitary{3}$ 
for any value of the dilation constant $\lambda$, including understanding which local solutions extend smoothly over a singular orbit. 
Moreover, we exhibited an explicit~\Sp{2}-invariant complete AC gradient shrinker (\ie $\lambda<0$) on $\Lambda^2_-\Sph^4$
asymptotic to a closed (but not torsion-free)~\Sp{2}-invariant cone,
see Example \ref{ex:explicit_shrinker} for details. 

For~\sunitary{3}-invariant steady (\ie $\lambda=0$) Laplacian solitons 
we classified all complete solitons and 
also determined all possible end behaviours. 
The complete steady solitons turn out to be AC, with the 
exception of an explicit solution with exponential volume growth and asymptotically constant negative scalar curvature.
(Neither type of asymptotic geometry occurs for steady Ricci solitons.)

In this paper we instead restrict attention to \Sp{2}-invariant solitons, while allowing $\lambda$ to be non-zero. Indeed, we will almost always restrict attention to non-steady solitons, since~\Sp{2}-invariant steady solitons reduce to a (much simpler) special case of the~\sunitary{3}-invariant steady solitons in~\cite{Haskins:Nordstrom:g2soliton1}.

\subsection{Complete expanders and their asymptotic cones}
\label{ss:intro:complete}

For~\Sp{2}-invariant expanders,  \ie solitons with $\lambda>0$, we obtain comprehensive results about the entire family of complete solutions.

\begin{mtheorem}
\label{mthm:expand:Sp2}
There exists (up to scaling) a $1$-parameter family of complete
$\Sp{2}$-invariant gradient Laplacian expanders on $\Lambda^2_-\Sph^4$.
The scale-invariant parameter is $q = \lambda \sqrt{\vol(\Sph^4)} \in \Rpos$, 
where $\vol(\Sph^4)$ is the volume of the zero section.
Furthermore
\begin{enumerate}[left=0.0em]
\item
 Any complete $\Sp{2}$-invariant Laplacian expander 
 belongs to this family (up to scaling).
\item
Every complete $\Sp{2}$-invariant Laplacian expander
is asymptotic to a unique closed $\Sp{2}$-invariant \gtwo-cone. 
\item
Every complete $\Sp{2}$-invariant Laplacian expander decays to its asymptotic cone at rate $-2$. 
\end{enumerate}
\end{mtheorem}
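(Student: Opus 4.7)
The plan is to reduce the soliton PDE to a system of ODEs via the cohomogeneity-one structure of $\Lambda^2_-\Sph^4$ and then trace the forward evolution from the singular orbit $\Sph^4$ out to infinity, establishing global existence, asymptotic conicality and the sharp decay rate along the way.

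I would first recall from \cite{Haskins:Nordstrom:g2soliton1} the local analysis at the singular orbit: smooth $\Sp{2}$-invariant closure of the \gtstr{} over the zero section $\Sph^4$ imposes boundary conditions on the reduced ODE system that, for a fixed value of $\lambda$, leave only the size of the zero section as free initial datum (modulo reparametrisation). Invoking the scaling symmetry of Remark~\ref{rmk:general_scaling}---under which $\lambda\mapsto \mu^{-2}\lambda$ and $\vol(\Sph^4)\mapsto \mu^{4}\vol(\Sph^4)$---collapses the nominal two-parameter family $(\lambda,\vol(\Sph^4))$ into a one-parameter family indexed by the scale-invariant quantity $q = \lambda\sqrt{\vol(\Sph^4)} \in \Rpos$.

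The most substantial step is to prove forward completeness of each such solution for $\lambda>0$. Taking the monotone principal-orbit volume as time, one must prevent the shape functions of the principal orbit (an $\Sp{2}$-homogeneous $\CP^3$) both from degenerating and from blowing up. The expander forcing term enters decisively here: the sign $\lambda>0$ drives orbit volume to infinity and obstructs collapse. I would construct monotone or coercive scalar quantities along the flow---leveraging the gradient-soliton Hamiltonian together with the explicit algebraic form of the $\Sp{2}$ ODE system---to produce the required \emph{a priori} bounds on the shape functions and their derivatives.

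For parts (ii) and (iii) I would pass to cone coordinates, rescaling the metric by an appropriate power of arclength so that closed $\Sp{2}$-invariant \gtwo-cones become equilibria of an autonomous rescaled system. Showing that each complete forward trajectory converges to an equilibrium yields an asymptotic cone; uniqueness of the limit follows from the isolated nature of closed $\Sp{2}$-invariant \gtwo-cones up to scale, and the map $q\mapsto \text{cone}$ can then be examined directly. The decay rate is read off by linearising at the cone: the value $-2$ is consistent with the quadratic weight of the expander term under rescaling, and should correspond to the slowest non-resonant decaying eigenmode compatible with a trajectory that extends smoothly back to the singular orbit. For (i), any complete $\Sp{2}$-invariant expander on $\Lambda^2_-\Sph^4$ is forced by topology to close smoothly across the zero section $\Sph^4$, hence coincides by local uniqueness with a member of the constructed family. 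I expect the main obstacle to be the combined forward-completeness and asymptotic-cone analysis: one must simultaneously rule out the exotic forward-complete end behaviours alluded to in the introduction (which \emph{do} occur for other initial data) and pin down the specific cone approached at the sharp rate $-2$.
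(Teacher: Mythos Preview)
Your outline matches the paper's overall architecture, but two key steps deserve sharper detail because the generic strategies you suggest either miss the actual mechanism or run into a genuine obstacle.

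For forward completeness (the paper's Theorem~\ref{thm:sc:expanders:complete}), the argument does not rely on a soliton Hamiltonian or generic coercivity. Instead it identifies a very specific pair of self-perpetuating inequalities: for a smoothly-closing expander, both $S := y^2 - x^2 - \tfrac{3}{2}x\tau_2$ and $\tau_2$ are positive near the singular orbit, and Proposition~\ref{prop:elem_AC} shows that the joint condition $(S > 0,\ \tau_2 > 0)$ is preserved under forward evolution. Since $S > 0$ and $\tau_2 > 0$ together force $y > x$, while $(\log \tfrac{y}{x})' = -S/g^3 < 0$, the warping $\tfrac{y}{x}$ is monotone decreasing and bounded below by $1$, hence convergent. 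Boundedness of $\log\tfrac{y}{x}$ then gives infinite lifetime (Proposition~\ref{prop:lifetime}) and weak asymptotic conicality (Lemma~\ref{lem:yx:bounded}). This is elementary once the right pair of quantities is found; no exotic end behaviours need to be separately excluded for smoothly-closing data.

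For the rate $-2$, your linearisation-at-the-cone strategy is not the route the paper takes, and for good reason: in cone-rescaled variables $u = t^{-2}$ the soliton system has an \emph{irregular} singular point (the $\lambda$-dependent term contributes a $u^{-2}$ singularity in the $S$-equation; see~\eqref{eq:irreg_ode}), so reading off decay from eigenmodes of the linearisation is delicate and would require substantial extra work. The paper instead argues directly: it proves (Proposition~\ref{prop:S:limit}, via the differential inequality of Lemma~\ref{lem:Sdot}) that $S$ converges to the finite value $S^* = \tfrac{3}{\lambda}(c_2^2 - c_1^2)$ on any weakly AC non-steady end, and then integrates $\tfrac{d}{dt}\tfrac{x}{y} = S/y^3 \sim S^* t^{-3}$ to obtain $\tfrac{x}{y} = \tfrac{c_1}{c_2} + O(t^{-2})$ (Proposition~\ref{prop:reg}). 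Your scaling heuristic---that $-2$ reflects the weight of the $\lambda$-term---is indeed the origin of the rate, but the mechanism that makes it rigorous is this convergence of $S$, not a spectral computation.
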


This result follows from combining Theorems \ref{thm:Sp2:smooth:closure}, \ref{thm:sc:expanders:complete} and
Proposition \ref{prop:reg}. The examples provided by Theorem \ref{mthm:expand:Sp2}, along with those constructed in
\cite{Haskins:Nordstrom:g2soliton1}, are the only currently known examples
of (nontrivial) AC Laplacian solitons.

\smallskip
Theorem~\ref{mthm:asymptotic:limit} below provides a complete description of the possible asymptotic cones of complete~$\Sp{2}$-invariant Laplacian expanders. 
To state it, we first need to set up some notation.

The complement of the zero section in $\Lambda^2_- \Sph^4$ is diffeomorphic
to $\Rpos \times \CP^3$. Given a pair of functions $x, y : \Rpos \to \R$ we can
define an \Sp{2}-invariant $3$-form on $\Rpos \times \CP^3$ by
\begin{equation}
\label{eq:phi}
\varphi = (x^2 \omega_1 + y^2 \omega_2) \wedge dt + xy^2 \alpha,
\end{equation}
where $t$ is the coordinate on $\Rpos$, $\omega_1, \omega_2$ form a basis for the invariant $2$-forms and
$\alpha$ is a closed invariant $3$-form on $\CP^3$. $\varphi$ is a positive 3-form
provided that $x$ and $y$ vanish nowhere (without loss of generality we take
them to be positive). The coordinate vector field $\pd{}{t}$ has unit
length with respect to the induced metric, while $x$ and $y$ can be interpreted
as the scale of the fibre and base, respectively, of the twistor fibration
$\Sph^2 \to \CP^3 \to \Sph^4$.
The 3-form is closed if and only if
\begin{equation}
\label{eq:closure}
\frac{d}{dt} (xy^2) = \half x^2 + y^2,
\end{equation}
and every closed, positive \Sp{2}-invariant 3-form normalising
$\pd{}{t}$ arises this way.

\begin{remark}
It is immediate from~\eqref{eq:closure} that the orbital volume is a strictly increasing function of the geodesic arclength parameter $t$. 
(That it is increasing rather than decreasing is a consequence of the
conventions we have chosen.)
\eqref{eq:closure} forces the orbital volume to vanish in finite backwards $t$. 

For the 3-form \eqref{eq:phi} to be the restriction of a smooth 3-form defined on
$\Lambda^2_- \Sph^4$, \ie to extend smoothly across the zero section $\Sph^4$,
not only must the orbital volume reach zero as $t \to 0$ (which can be arranged by
translation in $t$), but also the $\Sph^2$-fibre size $x \to 0$, while the $\Sph^4$-base size~$y$
has a non-zero limit. %
\end{remark}

The 3-form \eqref{eq:phi} is conical (with a vertex singularity at $t = 0$)
if $x$ and $y$ are linear, say $x = c_1t$ and $y = c_2 t$ (with $c_1, c_2>0$) and the closure
condition \eqref{eq:closure} then becomes
\begin{equation}
\label{eq:closed_cone}
6c_1c_2^2 = c_1^2 + 2c_2^2.
\end{equation}
Note that  equation~\eqref{eq:closed_cone} essentially amounts to a particular scale normalisation: for every $\ell > 0$, there
is a unique solution $(c_1, c_2)$ of it such that $\frac{c_2}{c_1} = \ell$. In fact, we can explicitly express
\begin{equation}
\label{eq:explicit_cone_solution}
c_1 = \tfrac{1}{6} \left(2 + \ell^{-2} \right), \quad c_2 =  \tfrac{1}{6} \ell \left(2 + \ell^{-2} \right).
\end{equation}
Hence we can parametrise closed \Sp{2}-invariant cones by $\ell>0$.
The closed cone with $\ell = 1$ (so $c_1= c_2=\tfrac{1}{2}$) is the unique 
$\Sp{2}$-invariant torsion-free \gtwo-cone.

For a pair of functions $(x,y)$ defined on a half-infinite
interval $(t_0,\infty)$, we say that the closed \Sp{2}-invariant \gtstr{}
in \eqref{eq:phi} is \emph{weakly AC} if
there exists $(c_1,c_2)$, a solution of~\eqref{eq:closed_cone}, so that 
\begin{equation}
\label{eq:weak_ac_def}
\frac{x}{t} \to c_1, \quad \frac{y}{t} \to c_2 \quad \text{as\ } t \to \infty.
\end{equation}

In terms of the functions $x$ and $y$, claiming as in Theorem~\ref{mthm:expand:Sp2}(iii) that a closed
\Sp{2}-invariant \gtstr{} on $\Lambda^2_-\Sph^4$ is asymptotic with rate $-2$
to the closed cone defined by $(c_1, c_2)$, means that (up to translation of $t$)
\[ x = c_1 t + O(t^{-1}),\quad  y = c_2 t + O(t^{-1}) . \]

\begin{definition}
\label{def:lmap}
Define $\lmap : \Rpos \to \Rpos$ as follows:  $\lmap(q)$ is the
parameter $\ell = \frac{c_2}{c_1}$ of the asymptotic cone of
any \Sp{2}-invariant smoothly-closing (and hence AC) expander with scale-invariant parameter
$q = \lambda \sqrt{\vol(\Sph^4)}$.
\end{definition}

This asymptotic limit map $\lmap$ is well-defined since the asymptotic cone of
an AC expander is patently invariant under rescaling. Our next main result
establishes the key properties of $\lmap$. 

\begin{mtheorem}
\label{mthm:asymptotic:limit}
$\lmap$ is a strictly increasing continuous bijection from $\Rpos$
to $(1,\infty)$.
In particular,
\begin{enumerate}[left=0.0em]
\item
Two complete $\Sp{2}$-invariant expanders with the same asymptotic cone agree (up to scaling). 
\item
Every $\Sp{2}$-invariant closed \gtwo-cone with $\ell = \frac{c_2}{c_1}>1$
arises as the asymptotic cone of a unique (up to scale) complete $\Sp{2}$-invariant expander.
\item
No closed $\Sp{2}$-invariant \gtwo-cone with
$\ell = \frac{c_2}{c_1} \le 1$ arises as the asymptotic cone of a
complete $\Sp{2}$-invariant expander. (In particular,  the asymptotic cone of a complete~\Sp{2}-invariant AC expander
is never torsion-free.)
\end{enumerate}
\end{mtheorem}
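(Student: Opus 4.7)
The three enumerated consequences follow immediately from the main assertion that $\lmap : \Rpos \to (1, \infty)$ is a strictly increasing continuous bijection, combined with the classification of complete \Sp{2}-invariant expanders by $q$ (up to scale) from Theorem~\ref{mthm:expand:Sp2}: (i) is injectivity of $\lmap$, (ii) is surjectivity onto $(1,\infty)$, and (iii) is the fact that the image avoids $(0,1]$. So the plan is to establish in turn (a) continuity, (b) strict monotonicity, and (c) image precisely $(1,\infty)$.

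For continuity, the smoothly-closing expanders depend smoothly on $q$ via the initial-value problem at the singular orbit (zero section), and Theorem~\ref{mthm:expand:Sp2}(iii) provides rate $-2$ decay to the asymptotic cone; locally uniform control of this decay as $q$ varies yields continuity of $(c_1(q), c_2(q))$ and hence of $\lmap(q) = c_2(q)/c_1(q)$. For the image, the endpoint behaviour should be analysed by limit/compactness arguments. As $q \to 0^+$ the soliton equation degenerates to the steady case (via the scaling in Remark~\ref{rmk:general_scaling}), and since complete \Sp{2}-invariant steady solitons reduce to cases already classified in~\cite{Haskins:Nordstrom:g2soliton1}, one expects the limit of the asymptotic cones to be forced to be the unique \Sp{2}-invariant torsion-free \gtwo-cone with $\ell = 1$, giving $\lmap(q) \to 1^+$. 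As $q \to \infty$, a rescaling argument should show that the base-to-fibre ratio of the asymptotic cone diverges, so $\lmap(q) \to \infty$. To exclude $\ell \le 1$ entirely (the qualitative content of (iii)), we argue directly from the soliton ODE that no closed cone with $\ell \le 1$ can be approached with rate $-2$ by a solution that also closes up smoothly at a singular orbit with $\lambda > 0$; this should follow either from a monotonicity formula along the flow or from a dimension count for the stable manifold at the cone.

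The main obstacle is strict monotonicity. A natural approach is a shooting/phase-portrait argument: fix the normalisation via Remark~\ref{rmk:general_scaling}, so that each complete expander traces a single orbit in a suitable reduced phase space starting on the smooth-closure locus at the singular orbit (coherently parametrised by $q$) and limiting to an asymptotic-cone fixed point (parametrised by $\ell$). Uniqueness of solutions to the reduced ODE forbids distinct such orbits from crossing, so they can be consistently ordered along any transverse slice; coupling this non-crossing with the strict monotonicity in $t$ of the orbital volume imposed by~\eqref{eq:closure} should force $q$ and $\ell$ to be monotonically related. A complementary route is to linearise: along the smoothly-closing $1$-parameter family one computes $d\lmap/dq$ via the first-order variation of the asymptotic cone parameters and verifies that it is nowhere zero with constant sign. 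Establishing this nondegeneracy is the most delicate step, and we expect it to hinge on exploiting the specific structure of the \Sp{2}-invariant soliton ODE system coupled with~\eqref{eq:closure}.
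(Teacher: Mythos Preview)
Your overall decomposition (continuity, strict monotonicity, endpoint behaviour) matches the paper's, and your treatment of continuity and of the $q \to 0$ limit is essentially correct. But the two hardest steps in your outline contain genuine gaps.

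For strict monotonicity, your ``non-crossing'' heuristic does not work as stated: the reduced phase space here is three-dimensional (variables $x,y,\tau_2$), and distinct flow lines in a $3$-dimensional system can perfectly well share the same $\omega$-limit point without ever crossing, so no ordering can be extracted that way. The paper instead reparametrises by the orbital volume $g$ and proves a $g$-for-$g$ comparison principle (Proposition~\ref{prop:comp_g_1}): if $\lambda_+ > \lambda_-$ and the inequalities $y_+(g) > y_-(g)$, $\tilde\tau_{2+}/\lambda_+ > \tilde\tau_{2-}/\lambda_-$ hold initially then they persist. This yields only $\lmap$ non-decreasing. To upgrade to \emph{strict} monotonicity one needs an additional ingredient you do not identify: the rate~$-2$ asymptotic expansion of Proposition~\ref{prop:reg}. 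If two smoothly-closing expanders with $\lambda_+ > \lambda_-$ shared the same asymptotic cone (with $\ell>1$), the next-order coefficient in $x/y$ depends on $S^* = 3(c_2^2-c_1^2)/\lambda$, forcing $y_+(g) < y_-(g)$ for large $g$ and contradicting the comparison. Crucially this argument \emph{requires} $\ell > 1$, so claim (i) must be proved before claim (ii); your outline does not recognise this logical dependence.

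For the exclusion of $\ell = 1$ (and hence of all $\ell \le 1$, since $\ell \ge 1$ is already known from Theorem~\ref{thm:sc:expanders:complete}), neither a generic ``monotonicity formula'' nor a stable-manifold dimension count will do: the issue is not whether rate~$-2$ ends asymptotic to the torsion-free cone \emph{exist}, but whether any such end can match a smoothly-closing solution. The paper's argument (Theorem~\ref{lem:expander:D:neq:0}) again uses the precise rate~$-2$ expansion: if $\ell=1$ then $S^*=0$ forces $y^2 - x^2 \to 0$, while on smoothly-closing expanders $\tau_2$ is positive and increasing, so $x\tau_2$ grows at least linearly; hence $S = (y^2-x^2) - \tfrac32 x\tau_2$ becomes negative, contradicting $S>0$. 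Finally, for $q \to \infty$ your ``rescaling argument'' is too vague; the paper obtains the explicit bound $\lmap^2 > \tfrac13 q - \tfrac12$ by comparing the values of the decreasing quantity $M/g^3$ (Lemma~\ref{lem:monotone}) at $t=0$ and $t=\infty$.
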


\subsection{Matching AC expanders with shrinkers and~\texorpdfgtwo~conifold transitions}
\label{ss:AC:matching}

AC expanders in a geometric flow give the simplest models for how the flow can smooth out a conical
singularity.  AC shrinkers, on the other hand,
give the simplest models for how an isolated singularity forms during the flow.
This raises questions  both of existence and of uniqueness of AC expanders with
asymptotic cone matching that of a given AC shrinker.

When no such matching AC expander exists, that raises the question of whether after singularity formation one should try to
continue to run the flow within some class of  singular spaces. 
Whenever there is such a matching AC expander,  it gives  rise to a natural
singular weak solution to the flow: shrinking self-similarly for all negative times, 
forming a conical singularity at time $t=0$, which is then resolved by the smooth
AC expander for positive times. This gives a possible way to continue the flow smoothly after the singularity has developed. 
The existence of multiple matching AC expanders gives the simplest mechanism for the failure of uniqueness of weak
solutions to the flow after a singularity formation. 
(There are some well-known instances of such non-uniqueness of AC expanders
with a given cone in both Ricci flow and mean curvature flow). 

We essentially know of only one AC shrinker with link $\CP^3$,
from \cite[Theorem D]{Haskins:Nordstrom:g2soliton1}.

\begin{example}
\label{ex:explicit_shrinker}
For any $b>0$ let $\lambda = - \frac{9}{4b^2}<0$ and define the following functions 
\[
x = t, \quad y^2 = b^2 + \tfrac{1}{4}t^2, %
\quad u = \frac{3t}{4b^2} + \frac{4t}{4b^2+t^2}, \quad\  t \ge 0.
\]
The closed \Sp{2}-invariant \gtstr~ thus defined by~\eqref{eq:phi}
together with the vector field $X=u \,\partial_t$
defines a complete AC Laplacian shrinker on $\Lambda^2_-\Sph^4$. Its asymptotic cone is the 
closed (but non-torsion-free) \Sp{2}-invariant \gtwo--cone $C_{\tu{shrink}}$ with parameter $\ell=\frac{c_2}{c_1}=\frac{1}{2}$.
\end{example}

As an immediate consequence of Theorem~\ref{mthm:asymptotic:limit} we obtain

\begin{mcor}
\label{thm:shrinker}
There is no complete~\Sp{2}-invariant expander (necessarily defined on $\Lambda^2_-\Sph^4$) whose asymptotic cone 
is $C_\tu{shrink}$ the asymptotic cone of the shrinker defined in Example~\ref{ex:explicit_shrinker}. \\[0.2em]
In particular, there is no~\Sp{2}-invariant ``flow-through solution''
to Laplacian flow
which consists of the AC shrinker for negative time, the cone $C_\tu{shrink}$ at time $0$ and a complete AC expander asymptotic to $C_\tu{shrink}$ 
for all positive times. 
\end{mcor}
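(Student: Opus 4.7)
The plan is to deduce the corollary directly from Theorem~\ref{mthm:asymptotic:limit}, so the first step is to read off the parameter $\ell$ of $C_\tu{shrink}$ from Example~\ref{ex:explicit_shrinker}. There $x = t$ and $y^2 = b^2 + \tfrac{1}{4} t^2$, so as $t \to \infty$ we have $x/t \to 1$ and $y/t \to 1/2$, giving $c_1 = 1$, $c_2 = \tfrac{1}{2}$ and hence $\ell = c_2/c_1 = \tfrac{1}{2}$. (As a sanity check one verifies the closure condition \eqref{eq:closed_cone}: $6 c_1 c_2^2 = \tfrac{3}{2} = c_1^2 + 2c_2^2$.)

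With this value of $\ell$ in hand, the first assertion of the corollary is immediate from Theorem~\ref{mthm:asymptotic:limit}(iii): since $\ell = \tfrac{1}{2} \le 1$, no complete $\Sp{2}$-invariant expander can be asymptotic to $C_\tu{shrink}$. For the second assertion, the hypothetical flow-through solution, restricted to positive times, would by definition provide a complete AC $\Sp{2}$-invariant expander with asymptotic cone $C_\tu{shrink}$, which the first assertion has already excluded. So no such flow-through solution exists.

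There is no substantive obstacle: once Theorem~\ref{mthm:asymptotic:limit} is in place, the corollary is pure bookkeeping, amounting to reading off the single scale-invariant parameter $\ell$ of the asymptotic cone of the explicit shrinker and comparing it against the range $(1,\infty)$ of the asymptotic limit map $\lmap$. Indeed, all the genuine work lies in establishing that the image of $\lmap$ is exactly $(1,\infty)$ — in particular that it does not extend down to or below $\ell = 1$ — which is asserted in Theorem~\ref{mthm:asymptotic:limit} and would be proved separately.
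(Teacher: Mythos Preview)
Your proof is correct and matches the paper's approach exactly: the paper presents this corollary as ``an immediate consequence of Theorem~\ref{mthm:asymptotic:limit}'' with no further argument, and Example~\ref{ex:explicit_shrinker} already records that $\ell = \tfrac{c_2}{c_1} = \tfrac12$ for $C_\tu{shrink}$. Your explicit verification of $\ell = \tfrac12$ and the closed-cone condition is fine bookkeeping, and your closing remark that only the weaker bound $\ell \ge 1$ is really needed here is also made in the paper (Remark~\ref{rmk:cone_constraint}).
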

AC shrinkers and expanders appear in a variety of geometric flows, \eg in mean curvature flow and Ricci flow.
However, it seems to be rare that there exists an AC shrinker with asymptotic cone~$C$ for which there is no known
AC expander that flows out of $C$.

\smallskip
There is a correspondence between complete AC~\Sp{2}-invariant solitons on $\Lambda^2_-\Sph^4$ and
complete AC solitons on $\Lambda^2_-\CP^2$ with isometry group $\sunitary{3} \times \Z_2$
(see \S\ref{subsec:su3} for details).
Via this correspondence Example \ref{ex:explicit_shrinker} also gives
rise to an $\sunitary{3} \times \Z_2$-invariant AC shrinker on
$\Lambda^2_- \CP^2$; Corollary \ref{thm:shrinker} implies that there is no 
$\sunitary{3} \times \Z_2$-invariant complete AC expander matching the cone of this shrinker. 
However, one can ask whether any $\sunitary{3}$-invariant complete AC expander
can match this shrinker cone, \ie can we find a matching complete AC expander when we drop the extra $\Z_2$-symmetry?
In \S\ref{subsec:su3cones} we explain some evidence for a positive answer. 

\begin{mconj}
\label{C:SU3:shrink:expand:matching}
There exists an \sunitary{3}-invariant (but not $\sunitary{3} \times \Z_2$-invariant) complete AC expander on  $\Lambda^2_- \CP^2$ 
that matches the explicit $\sunitary{3} \times \Z_2$-invariant AC shrinker on
$\Lambda^2_- \CP^2$.
\end{mconj}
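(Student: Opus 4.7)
The plan is to first set up the \sunitary{3}-invariant closed \gtwo-structure ODE on $\Lambda^2_-\CP^2$ without imposing the extra $\Z_2$-symmetry. The larger space of invariant forms on the generic orbit (compared with the $\sunitary{3} \times \Z_2$-invariant case) should introduce one or two additional functions beyond the $(x,y)$ appearing in the \Sp{2}-invariant setting, enlarging the phase space of the soliton ODE system. I would then identify, inside this enlarged phase space, the space of closed \sunitary{3}-invariant \gtwo-cones (of which the $\sunitary{3} \times \Z_2$-invariant cones parameterised by \eqref{eq:explicit_cone_solution} form a codimension-$\geq 1$ subvariety) and, by repeating the local smoothness analysis at the zero-section $\CP^2 \subset \Lambda^2_-\CP^2$, exhibit a family of germs of expanders emanating from the singular orbit that is strictly larger than the $\Z_2$-symmetric family parameterised (up to scale) only by $q = \lambda \sqrt{\vol(\CP^2)}$.

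Having these two families in hand, the main step is a matching/shooting argument: show that the forward flowout of the enlarged germ family near the singular orbit meets the stable manifold of AC solutions asymptotic to $C_{\tu{shrink}}$. Concretely, I would linearise the soliton ODE at $C_{\tu{shrink}}$ in suitable self-similar coordinates, now viewed as a fixed point of the enlarged \sunitary{3}-invariant system, and compute the dimensions of its stable and unstable manifolds. Theorem~\ref{mthm:asymptotic:limit}(iii) tells us that within the $\Z_2$-symmetric subspace the stable manifold misses the smoothly-closing initial data at $\CP^2$; the crucial new ingredient is to identify a stable direction at $C_{\tu{shrink}}$ transverse to the $\Z_2$-symmetric subspace that matches one of the new $\Z_2$-breaking directions in the germ family. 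A transversality count of the form
\[
\dim(\textup{smoothly-closing germs}) + \dim(\textup{stable manifold of } C_{\tu{shrink}}) \geq \dim(\textup{phase space}) + 1
\]
would predict a one-parameter family of matching expanders.

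The hardest part is upgrading this local transversality prediction to an actual global existence statement, since (a) the dimension count alone only predicts matching for generic cones and not the particular $C_{\tu{shrink}}$, and (b) the forward evolution must be shown to remain non-singular all the way out to the cone. For (a), the most promising route is a continuity/degree argument: deform $C_{\tu{shrink}}$ within the moduli of closed \sunitary{3}-invariant \gtwo-cones towards an \Sp{2}-invariant cone with $\ell > 1$ for which Theorem~\ref{mthm:asymptotic:limit}(ii) already furnishes a matching expander, and track that expander backwards under the reverse deformation using an implicit-function/continuation argument until it reaches $C_{\tu{shrink}}$; failing that, rigorous numerical shooting via interval arithmetic would provide computer-assisted evidence. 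For (b), the exotic forward-complete end behaviours identified in this paper, together with the fact that forward-incomplete ends are generic in the \Sp{2}-invariant setting, strongly suggest that excluding such exotic or incomplete ends for the candidate \sunitary{3}-invariant expanders will be the central analytical difficulty — essentially one has to show that the $\Z_2$-breaking freedom is exactly what is required to steer the solution past the obstacles that prevent matching in the \Sp{2}-invariant class.
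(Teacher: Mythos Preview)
This statement is a \emph{conjecture} in the paper, not a theorem, and the paper does not provide a proof. What the paper offers instead is heuristic evidence, gathered in \S\ref{subsec:su3cones}: numerical simulation of the enlarged \sunitary{3}-invariant ODE system, together with the observation that the forward-complete non-AC expander ends all carry an approximate $\Z_2$ symmetry. The paper then formulates a sharper Conjecture~\ref{conjecture:SU3:hit:cones} describing exactly which closed \sunitary{3}-invariant cones should arise as asymptotic limits of complete AC expanders in the family $\scfam{\lambda}{b,c}{1}$ (namely those with $c_1 < \max(c_2,c_3)$), and notes that the shrinker cone $(\tfrac12,\tfrac12,1)$ lies in that region while $(1,\tfrac12,\tfrac12)$ does not. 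Conjecture~\ref{C:SU3:shrink:expand:matching} would then follow.

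Your proposal is therefore not to be compared against a proof in the paper---there is none---but it is worth noting how your strategy relates to the paper's picture. Your transversality/shooting outline is a plausible research programme, and the paper does supply some of the ingredients you would need: the 5-dimensional phase space, the 2-parameter family of smoothly-closing germs $\scfam{\lambda}{b,c}{i}$, the 2-dimensional moduli of closed cones, and the stability of AC expander ends (Remark~\ref{rmk:lending}). However, your continuation argument in step~(a) has a gap as stated: you propose deforming $C_{\tu{shrink}}$ to an \Sp{2}-invariant cone with $\ell>1$ and invoking Theorem~\ref{mthm:asymptotic:limit}(ii), but the expanders supplied by that theorem close up on the ``wrong'' $\CP^2$ relative to $C_{\tu{shrink}}$---in the paper's language, they lie in $\scfam{\lambda}{b,0}{1}$ and are asymptotic to cones with $c_2=c_3>c_1$, whereas you need an expander asymptotic to a cone with $c_1=c_2>c_3$ (or $c_1=c_3>c_2$). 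So the continuation path must genuinely leave the $\Z_2$-symmetric locus, and you have no seed solution to start from. The paper's own heuristic instead runs through the conjectured global structure of the 2-parameter family $\scfam{\lambda}{b,c}{1}$ (Conjecture~\ref{conj:transition}), which is a different and still-open route.
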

There is a subtlety here though: 
in fact, there are three topologically different ways to identify the
complement of the zero section in  $\Lambda^2_-\CP^2$ with the cone on the flag
manifold $\sunitary{3}/\T^2$, distinguished by how $H_4(\CP^2)$ embeds in
$H_4(\sunitary{3}/\T^2)$.
If the AC shrinker is defined on a given one of these variants, then the matching AC expander exists on the other two variants. 
Flowing through the singularity by either of these two options would give rise
to a $G_2$ analogue of a conifold transition as discussed by
Atiyah--Witten~\cite[\S 2.3]{Atiyah:Witten:Mtheory}, collapsing a coassociative
$\CP^2$ and then gluing it back in a topologically different way,
\cf Remark~\ref{rmk:transition}.

\subsection{Soliton ends}

The basic strategy for obtaining complete \Sp{2}-invariant expanders as in
Theorem \ref{mthm:expand:Sp2} follows a pattern familiar from other
cohomogeneity-one problems.  First we understand ``smoothly-closing'' local
solutions, \ie solitons defined on a neighbourhood of the zero section
of~$\Lambda^2_- \Sph^4$ (it is by now well-known that such problems can be understood via a systematic framework, \eg as described by Eschenburg-Wang~\cite{eschenburg:wang}). 
Then by considering the value of such a smoothly-closing solution  at some small but positive $t$, one obtains a regular initial value problem 
for a smooth (in fact, usually real analytic) system of nonlinear ODEs. 

The main difficulty, and here no general systematic methods are known, 
is then to try to understand which of these smoothly-closing solutions have infinite forward lifetime and for any such solution 
to understand its asymptotic geometry in detail. We call this understanding the (eventual) forward-evolution of the soliton ODEs. 

More generally, we call any forward-inextensible solution of the soliton ODE system, a \emph{soliton end}. The lifetime of such a soliton end could be
finite or infinite, and we call it a \emph{forward-incomplete end} or a
\emph{forward-complete end} accordingly.

One of the main new contributions of this paper is to understand all possible soliton end behaviours, both forward-incomplete and forward-complete, %
in the~\Sp{2}-invariant setting; 
in particular, for any value of the dilation constant $\lambda$ we understand all possible soliton end behaviours \emph{without} assuming that it arises from a smoothly-closing soliton. 
Some of these arguments (\ie going beyond the smoothly-closing setting)  are, in fact, needed in the proofs of Theorems \ref{mthm:expand:Sp2} and \ref{mthm:asymptotic:limit}, but we have developed the analysis beyond what is needed for those applications. We have several motivations for doing so. 
\begin{enumerate}[left=0em]
\item
To gain insight into complete~\Sp{2}-invariant \emph{shrinkers} (whose classification remains open). 
\item
To better understand what behaviours to expect for complete~\sunitary{3}-invariant expanders (and shrinkers).
\item
To understand new asymptotic geometries that can arise for Laplacian shrinkers and expanders (and their potential significance). 
\end{enumerate}

The first of our results in the direction of understanding possible end behaviours concerns the AC end behaviour that arises in Theorem \ref{mthm:expand:Sp2}(iii): 
we show that such AC end behaviour is a consequence of just being able to bound the ratio $\frac{y}{x}$ on an end solution. In the rest of the paper this ratio $\frac{y}{x}$ will play a central role, and we will refer to it as the \emph{warping} of the solution. (The unique~\Sp{2}-invariant torsion-free~\gtwo-cone 
has warping $\frac{y}{x} \equiv 1$.) More generally, an important theme within this paper will be to understand the possible behaviours 
of the warping $\frac{y}{x}$ of  a general soliton and, in particular, its asymptotic behaviour on a soliton end. 

\begin{mtheorem}[Bounded warping implies AC with rate $-2$]
\label{mthm:AC}
Let $(x,y)$ be any~\Sp{2}-invariant non-steady soliton end for which
$\log{\frac{y}{x}}$ remains forward-bounded. Then
it is forward complete, and the warping $\frac{y}{x}$ has a limit $\ell \in (0,\infty)$
as $t \to \infty$.
Moreover $x$ and $y$ have the asymptotic expansions
\[
x = c_1 t + X_{-1} t^{-1} + o(t^{-1}), \quad y = c_2 t + Y_{-1} t^{-1} + o(t^{-1}),
\]
where $(c_1,c_2)$ is the unique closed cone with $\frac{c_2}{c_1}=\ell$ (recall~\eqref{eq:explicit_cone_solution} for explicit formulae)
and $X_{-1}$ and $Y_{-1}$ are explicit functions of $\lambda$ and $\ell$. 
In particular, the solution is $C^0$-asymptotic  with rate (at least) $-2$ to the
closed~\Sp{2}-invariant cone $x=c_1 t, \ y=c_2 t$.
\end{mtheorem}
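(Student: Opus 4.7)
The plan is to proceed in three stages, with the principal difficulty occurring in the second.

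\textbf{Stage 1 (forward-completeness and linear growth).}
Set $m := \inf(y/x) > 0$ and $M := \sup(y/x) < \infty$ along the end. The closure condition \eqref{eq:closure} gives
\[
c_- (xy^2)^{2/3} \,\le\, \tfrac{d}{dt}(xy^2) \,=\, \tfrac{1}{2} x^2 + y^2 \,\le\, c_+ (xy^2)^{2/3},
\]
where $c_\pm > 0$ depend only on $m, M$ (using $y \asymp x$ and $xy^2 \asymp x^3$). Integrating this differential inequality in $P = xy^2$ yields $P^{1/3} \asymp t$, hence $x = \Theta(t)$ and $y = \Theta(t)$ as long as the solution exists. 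In particular, on any bounded forward interval $(x,y)$ remains in a compact subset of the positive quadrant; together with the structure of the cohomogeneity-one soliton ODE system, this rules out finite-time blow-up and proves the end is forward-complete.

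\textbf{Stage 2 (convergence of the warping).}
Pass to the scale-invariant variables $X = x/t$, $Y = y/t$, with an analogous rescaling of the soliton vector-field coefficient; working in logarithmic time $\tau = \log t$ produces an asymptotically autonomous ODE system whose equilibria are precisely the closed cones of \eqref{eq:closed_cone}, a $1$-parameter family parametrised by $\ell = c_2/c_1$. Stage~1 confines the rescaled trajectory $(X(\tau), Y(\tau))$ to a compact region; an $\omega$-limit / LaSalle-invariance argument therefore locates the $\omega$-limit set inside this cone family. The crucial remaining task is to rule out drift along the family, selecting a unique limit cone. I would attempt this either by constructing a Lyapunov-type functional for the rescaled system (a natural candidate being a modification of the orbital volume, whose unrescaled monotonicity is already built into \eqref{eq:closure}), or by showing that the warping $\ell(t) = y/x$ is eventually monotone along the flow. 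Either route yields $(X(\tau), Y(\tau)) \to (c_1, c_2)$ where $(c_1, c_2)$ is the cone determined via \eqref{eq:explicit_cone_solution} by $\ell = \lim (y/x)$.

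\textbf{Stage 3 (asymptotic expansion at rate $-2$).}
Writing $x = c_1 t + f(t)$ and $y = c_2 t + g(t)$ with $f, g = o(t)$, substitute into the closure equation and the full soliton ODE system. The leading linear terms cancel, leaving a perturbation system of the schematic form
\[
\bigl(f, g\bigr)' \,=\, t^{-1} A_\infty \bigl(f, g\bigr)^{\mathsf{T}} + F_\lambda(t) + (\text{higher order}),
\]
where the forcing $F_\lambda(t)$ is $O(t^{-1})$ and encodes the deficit from the fact that the cone fails to satisfy the $\lambda$-soliton equation. The linearisation $A_\infty$ is non-degenerate away from a discrete exceptional set of $\ell$-values, so a particular solution of the ansatz $f = X_{-1} t^{-1}$, $g = Y_{-1} t^{-1}$ exists and yields $X_{-1}, Y_{-1}$ as explicit rational functions of $\lambda$ and $\ell$ solving a small linear algebraic system. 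Homogeneous stable modes decay at least as fast, giving the claimed expansion and rate-$(-2)$ $C^0$-asymptotics.

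The main obstacle is Stage~2: because the cones form a continuous $1$-parameter family of equilibria of the rescaled flow, hyperbolicity arguments alone cannot exclude drift along that family. Identifying the correct monotone functional, tailored to the specific algebraic structure of the $\Sp{2}$-invariant Laplacian soliton system, is the crux of the proof. Once convergence is secured, Stages~1 and~3 reduce to relatively standard ODE bookkeeping.
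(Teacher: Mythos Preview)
Your Stage~1 is essentially right and matches the paper's argument (bounded warping bounds $g'$, so $g$ grows linearly and finite-time extinction is impossible).

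Your Stage~2, as you yourself flag, is the gap. The LaSalle/$\omega$-limit strategy in rescaled variables is plausible in principle, but you have not produced the monotone quantity or Lyapunov functional that would rule out drift along the cone family, and there is no indication such a functional exists at the level of generality you are working at. The paper takes a completely different route, and the key object you are missing is
\[
S := y^2 - x^2 - \tfrac{3}{2}x\tau_2,
\]
which satisfies $\frac{d}{dt}\log\frac{y}{x} = -S/g^3$. The ODE for $S$ works out to
\[
S' = \frac{2g^3}{x^2+2y^2}\bigl(\alpha(\tfrac{y}{x}) - \beta S\bigr),\qquad \beta = \lambda + O(g^{-2}),
\]
so once $g$ is large the $\lambda$-term dominates $\beta$. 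For expanders this makes $S$ attracted to $\alpha(\frac{y}{x})/\lambda$; for shrinkers $S$ is repelled, but since $S/g^2$ cannot stay bounded away from $0$ (else $\int S/g^3\,dt$ would diverge, contradicting bounded warping), one still concludes $|S|$ is bounded. Boundedness of $S$ together with linear growth of $g$ makes $S/g^3$ integrable, so $\log\frac{y}{x}$ converges. This is the mechanism you were looking for; it is specific to $\lambda\neq 0$ and explains why the rate is $-2$ here versus $-1$ in the steady case.

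Your Stage~3 linearisation sketch is in the right spirit but would be awkward to execute directly, because the perturbation system is coupled to the torsion variable and the relevant singular initial value problem at $t=\infty$ is \emph{irregular} (the dominant singular term is $\lambda/(6u^2)$ in the variable $u=t^{-2}$). The paper again works through $S$: having shown $S\to S^*:=\alpha(\ell)/\lambda$, it uses $\frac{d}{dt}\frac{x}{y}=S/y^3=S^*c_2^{-3}t^{-3}+o(t^{-3})$, integrates once to get the $t^{-2}$ coefficient of $x/y$, then substitutes back into $x'$ and $y'$ and integrates again. No spectral analysis of a linearisation is needed; the whole computation is three applications of the elementary fact that $t^\nu f'\to a$ with $\nu>1$ forces $t^{\nu-1}(f-f_\infty)\to a/(1-\nu)$.
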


Next we state our main result on the classification of all possible \Sp{2}-invariant non-steady soliton ends. It resembles the trichotomy for \sunitary{3}-invariant steady ends obtained by two of the authors in \cite[Theorem F]{Haskins:Nordstrom:g2soliton1}. %

\begin{mtheorem}
\label{mthm:both_tri}
Any~\Sp{2}-invariant non-steady soliton end satisfies exactly one of the following:
\begin{enumerate}[left=0em]
\item
It is forward complete with $\log{ \frac{y}{x}}$ forward-bounded. Then the end
is asymptotic with rate at least $-2$ to a unique closed~\Sp{2}-invariant~\gtwo-cone.
\item
It is forward complete with $|\log \frac{y}{x}| \to \infty$ as $t \to \infty$, and either
\begin{enumerate}
\item[a.]
for an expander with dilation constant $\lambda>0$, $t x \to \frac{3}{\lambda}$,
and $\frac{\log (xy^2)}{t^2} \to \frac{\lambda}{6}$ as $ t \to \infty$.  In particular, $x \to0$ while $y$ grows quadratic-exponentially in $t$; %
\item[b.] for a shrinker with dilation constant $\lambda<0$, $\frac{\log x}{t} \to \sqrt{\frac{-\lambda}{18}}$ and $\frac{x^2}{y^4} \to -\frac{9}{8\lambda}$ as $t \to \infty$.
In particular, both $x$ and $y$ grow exponentially in $t$ (but at different exponential rates). 
\end{enumerate}
\item
It has a finite maximal forward-existence time $t_*$. Then $\frac{y}{x} \to \infty$ and $x = O(\sqrt{t_*-t})$ and $y=O( (t_*-t)^{-1/4})$  as $t \to t_*$. 
\end{enumerate}
\end{mtheorem}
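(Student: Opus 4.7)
The plan is to dispatch case (i) directly from Theorem \ref{mthm:AC}: if the end is forward-complete with $\log \frac{y}{x}$ forward-bounded, that theorem already gives the existence of a limit $\ell \in (0,\infty)$ and asymptotic-to-cone behaviour with rate at least $-2$. The substantive work is therefore to classify the alternative possibilities, showing that every other \Sp{2}-invariant non-steady soliton end falls into exactly one of (ii) or (iii) and obeys the stated asymptotic rates.

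For case (iii) (finite maximal forward-existence time $t_*$), I would first use standard ODE extensibility to identify the obstructions to continuation: the orbital volume $xy^2$ is strictly increasing by the closure equation \eqref{eq:closure}, and the right-hand side of \eqref{eq:closure} controls $(xy^2)'$ by $x^2 + 2y^2$, so a genuine finite-time singularity requires the warping $\frac{y}{x}$ to blow up while $x \to 0$. The heuristic scaling $x \sim A\sqrt{t_*-t}$, $y \sim B(t_*-t)^{-1/4}$ keeps $xy^2$ bounded and is the only scaling consistent with the closure equation and a finite-time breakdown. To make this rigorous, I would substitute the ansatz $x = \sqrt{t_*-t}\,\xi(s)$, $y = (t_*-t)^{-1/4}\,\eta(s)$ with $s = -\log(t_*-t)$ into the full \Sp{2}-invariant soliton system, extract the leading-order autonomous ODEs for $(\xi,\eta)$ and the renormalised vector-field coefficient, and show that the only forward-attracting equilibrium produces the required rates. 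The $\lambda$-dependence enters only through subleading terms, which is why the leading rates are independent of $\lambda$.

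For case (ii), with forward-complete end and $\log \frac{y}{x}$ unbounded, the first step is to prove that $\log \frac{y}{x}$ cannot oscillate but must tend monotonically to $\pm \infty$. I would do this by constructing a suitable Lyapunov function on the $(x,y,u)$-phase space that, in combination with Theorem \ref{mthm:AC}, traps any trajectory exiting the compact warping regime outside it. In each of the two unbounded regimes I would then perform a dominant-balance analysis. For the expander subcase (a) the natural scale-invariant variable is $tx$: the soliton equations should reduce, in coordinates $(\tau = tx, \, \eta = \frac{y}{x})$ with $s = \log t$, to a reduced autonomous system whose only forward attractor yields $tx \to \frac{3}{\lambda}$, after which integrating \eqref{eq:closure} recovers $\log(xy^2)/t^2 \to \frac{\lambda}{6}$. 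For the shrinker subcase (b) the appropriate variables are $\frac{\log x}{t}$ and $\frac{x^2}{y^4}$; rescaling both $x$ and $y$ exponentially and re-expressing the soliton equations should produce another autonomous planar system whose unique stable equilibrium pins down the exponent $\sqrt{-\lambda/18}$ and the constant $-\frac{9}{8\lambda}$.

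The main obstacle I anticipate is the no-oscillation/trapping step in case (ii); once $\log \frac{y}{x}$ is known to diverge monotonically, each asymptotic regime reduces to identifying a stable equilibrium of a reduced autonomous system, a computation that is structurally routine though technically laborious. Trapping $\log \frac{y}{x}$ requires a monotone quantity built from the soliton ODEs whose sign is controlled by the sign of $\lambda$, and constructing one that covers both the $\lambda>0$ and $\lambda<0$ cases in a unified way is where the real input from the \Sp{2}-geometry of the system enters. Finally, the trichotomy is exhaustive because cases (i)--(iii) correspond to the three mutually exclusive asymptotic regimes for $\log \frac{y}{x}$ on a forward-inextensible solution: forward-bounded, divergent with infinite forward lifetime, divergent with finite forward lifetime.
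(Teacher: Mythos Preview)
Your high-level decomposition is correct and matches the paper's: case (i) is exactly Theorem \ref{mthm:AC}, and the remaining work is to show that the alternatives are precisely (ii) and (iii) with the stated asymptotics. However, the concrete mechanisms you propose for the hard steps diverge from the paper's, and in places your outline is too vague to be confident it would succeed.

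\medskip
\textbf{The monotone quantity.} You correctly flag the no-oscillation step for $\log\frac{y}{x}$ as the crux, but your proposal of ``a suitable Lyapunov function'' is a placeholder rather than a construction. The paper's key discovery is the specific quantity
\[
\frac{M}{g^3} \;=\; \frac{3x + \tilde\tau_1}{xy^2},
\]
which is strictly decreasing on \emph{every} \Sp{2}-invariant soliton regardless of the sign of $\lambda$ (Lemma \ref{lem:monotone}). This single quantity drives almost everything in the expander trichotomy: it controls the nature of critical points of $x$ (Corollary \ref{cor:x_monotone}), and the three expander end behaviours are distinguished precisely by whether $M$ is eventually positive and increasing, positive but tending to $0$, or eventually negative. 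There is no indication in your outline that you have identified this quantity, and without it the expander case~(ii) analysis (Proposition \ref{prop:fc:yx:unbounded}) and the finite-extinction asymptotics (Proposition \ref{prop:incomplete:asymptotics}) would be hard to execute.

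\medskip
\textbf{Case (iii).} Your self-similar ansatz $x = \sqrt{t_*-t}\,\xi(s)$, $y=(t_*-t)^{-1/4}\eta(s)$ could in principle be used to \emph{construct} forward-incomplete ends (and indeed the paper does something like this in \S\ref{subsec:end_sivp}), but it does not by itself show that \emph{every} finite-time singularity has this form. The paper instead argues directly from $M/g^3 < -C$ that $x' < -\frac{C}{2}g^2$, forcing $x\to 0$ in finite time with $(x^2)' \to -a^2$; the rate for $y$ then follows from $g = (xy^2)^{1/3}$ having a finite limit.

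\medskip
\textbf{Shrinker case (ii.b).} Here the paper's approach is qualitatively different from your dominant-balance proposal and from the expander case. The paper changes variables to $\alpha = \tau_2/x$, $\beta = y^2/x$, $\gamma = 1/x$, observes that the $\gamma=0$ slice is an invariant \emph{planar} system (the ``shrinker limit system'' \eqref{eq:heisenberg:limit}), and then uses genuinely two-dimensional tools---Bendixson--Dulac and Poincar\'e--Bendixson---to analyse a trapping region $R$ and its unique fixed point. The conclusion that any shrinker with $\frac{x}{y}\to\infty$ converges to that fixed point (Theorem \ref{thm:udhav_end}) is what yields the constants $\sqrt{-\lambda/18}$ and $-\frac{9}{8\lambda}$. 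Your plan of rescaling to an autonomous planar system is in the right spirit, but the paper's argument requires ruling out periodic orbits and escape from $R$, which is more delicate than identifying a stable equilibrium. You should also note that the expander and shrinker cases are \emph{not} handled by a single unified argument in the paper; the introduction explicitly warns that the superficial similarity of the two halves of (ii) is misleading.
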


See Theorems \ref{thm:tri:expanders} and \ref{thm:tri:shrinkers} for more detailed versions of the statements for expander and shrinkers respectively. 

While the phrasing of Theorem~\ref{mthm:both_tri} makes the expander and shrinker cases look very similar, that impression turns out to be rather misleading. One feature they do share is that in both cases the forward-incomplete end behaviour (iii) is a stable end behaviour. 
For shrinkers, this is the only stable end behaviour;  that, in part, accounts for the extra difficulty in classifying complete shrinkers. 

However, for expanders, the AC end behaviour described in (i) is also a stable end behaviour---in the strong sense established in Theorem \ref{mthm:uniform}. 
Thus for expanders there are two types of stable end behaviour and the (quadratic-exponential volume growth) forward-complete ends in (ii.a) form a (\mbox{1-dimensional}) wall separating them. In contrast, for shrinkers, the AC ends in (i) form a \mbox{codimension-$1$} end behaviour, \ie given initial data
leading to an AC shrinker end only a \mbox{codimension-$1$} set of nearby initial data also gives rise to AC shrinker ends. 
The (exponential volume growth) forward-complete shrinker in (ii.b) is essentially unique; it appears as a boundary point of the  ($1$-dimensional) set of all AC shrinker ends.

\subsection{Structure of the paper}
We now describe the structure of the main arguments and of the paper itself.
While the statements of Theorems \ref{mthm:expand:Sp2} and \ref{mthm:asymptotic:limit} concern only
complete expanders, 
Theorems~\ref{mthm:AC} and~\ref{mthm:both_tri} instead 
involve understanding the
forward-evolution of general \Sp{2}-invariant solitons
(not only the ``smoothly-closing'' ones) and so superficially, they may not appear to be closely related. 
However, it turns out that our proofs of the main theorems about complete expanders rely on
results we establish about the forward-evolution of general \Sp{2}-invariant solitons.
\\[0.5em]
\noindent
\emph{Local~\Sp{2}-invariant solitons.}
Section~\ref{s:review} recalls the requisite facts about the theory of local
\Sp{2}-invariant solitons developed in~\cite{Haskins:Nordstrom:g2soliton1}.
For an \Sp{2}-invariant \gtstr{} $\varphi$ as in \eqref{eq:phi}, the codifferential (with respect to the metric induced by $\varphi$) is itself
\Sp{2}-invariant, so can be written as 
\begin{equation}
\label{eq:dstar}
d^*\varphi =  \tau_1\, \omega_1 + \tau_2\, \omega_2,
\end{equation}
for scalar functions $\tau_1$ and $\tau_2$ of $t$. 
If $\varphi$ is closed, then $d^* \varphi$ completely determines the torsion of $\varphi$,
and moreover the coefficient $\tau_1$ is determined algebraically by $\tau_2$
\eqref{eq:tau1:tau2:sp2}.
Then \eqref{eq:closure} and \eqref{eq:dstar} %
can be rearranged as equations determining $x'$ and $y'$ in terms of $x$,
$y$ and $\tau_2$, while the soliton condition corresponds to an equation for
$\tau_2'$.
Thus the Laplacian soliton equations reduce
to a first-order system of ODEs \eqref{eq:ODE:tau2'} for $(x,y,\tau_2)$.
The soliton vector field $X= u\, \partial_t$ can then be reconstructed algebraically from this data. 

\medskip
\noindent
\emph{Completeness of \Sp{2}-invariant solitons.}
In Section~\ref{sec:complete} we begin our study of complete~\Sp{2}-invariant expanders. 
The first step in this direction is to  understand solitons that
close smoothly on~$\Sph^4$:
that requires the parameter $x$ controlling the
size of the fibres of the fibration $\Sph^2 \to \CP^3 \to \Sph^4$ to approach
0 at the correct rate, while $y$, which recall sets the scale of the base,
must converge to a positive value $b$; 
geometrically,  $b^4$ is proportional to $\vol(\Sph^4)$, the volume of the zero-section of $\Lambda^2_-\Sph^4$. 
Usually we want to take into account the rescaling behaviour of solitons
and consider the classification of solitons up to scale. 
To that end, it is natural to introduce the scale-invariant quantity $q:= \lambda b^2$. 
Then Theorem \ref{thm:Sp2:smooth:closure} says that (up to scale) there is a $1$-parameter family of smoothly-closing solitons, 
parametrised by $q \in \R$, with $q > 0$ corresponding to expanders, $q < 0$ to shrinkers and $q=0$ 
to a unique steady (in fact, static) soliton. 

Understanding completeness in the forward direction turns out to boil down
to controlling evolution of the warping $\frac{y}{x}$
(which determines the homothety class of the principal orbits).
For smoothly-closing expanders, the necessary control can be obtained by a fairly elementary argument. Certain combinations of
inequalities that are satisfied near the special orbit $\Sph^4$ are self-perpetuating. Together these inequalities imply that  the warping $\frac{y}{x}$
is both strictly decreasing and  bounded below by $1$, and hence converges to some limit $\ell \ge 1$. 
That suffices (see Proposition \ref{mthm:warping}(i-ii)) for proving
Theorem \ref{thm:sc:expanders:complete}: all smoothly-closing
\Sp{2}-invariant expanders are weakly AC. This establishes all but
item (iii) in Theorem~\ref{mthm:expand:Sp2}.

While it is then clear that the asymptotic cone of any smoothly-closing
expander has to be a closed~\Sp{2}-invariant cone with parameter $\ell \ge 1$,
the question of precisely which closed~\Sp{2}-invariant cones do occur as the
asymptotic cone of some complete AC expander is significantly more subtle. 
We do not settle this question until \S\ref{s:asymptotic:cones}.

\medskip
\noindent
\emph{Controlling the forward-evolution of the warping $\frac{y}{x}$.}
The following result collects a number of important properties of the
warping~$\frac{y}{x}$, which will be used to 
understand the possible end behaviours of general expanders and shrinkers. 

\begin{mprop}
\label{mthm:warping}
Consider an %
\Sp{2}-invariant Laplacian soliton end defined by a pair of functions $(x,y)$
as in~\eqref{eq:phi}. Under forward evolution the warping $\frac{y}{x}$ has the following properties. 
\begin{enumerate}[left=0em]
\item Any forward-incomplete soliton has $\log \frac{y}{x}$ unbounded.
\item Any forward-complete soliton whose warping $\frac{y}{x}$ has a limit $\ell \in (0,\infty)$ as $t \to \infty$ is weakly AC.
\item  The warping $\frac{y}{x}$ of any soliton is eventually monotonic.
In particular, a soliton is weakly AC unless %
$\frac{y}{x} \to \infty$ or $\frac{y}{x} \to 0$.
\item For any expander %
$\frac{y}{x}$ is bounded below.
\item For any shrinker %
$\frac{y}{x}$ is bounded above unless $x \to 0$.
\end{enumerate}
\end{mprop}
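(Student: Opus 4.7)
The plan is to rewrite the first-order system from Section~\ref{s:review} in terms of the scale-invariant variables $(w, \tau_2)$, where $w := y/x$, together with a separate equation for the size variable $x$. Combining \eqref{eq:closure} with \eqref{eq:dstar} and the algebraic relation between $\tau_1$ and $\tau_2$ from \eqref{eq:tau1:tau2:sp2}, one obtains equations of the form $xw' = P(w, \tau_2)$ and $x\tau_2' = Q(w, \tau_2; \lambda)$ for explicit polynomials $P$, $Q$, together with an equation $x' = R(w, \tau_2)$ for an explicit rational function $R$. Reparametrising time by $ds := dt/x$ turns $(w, \tau_2)$ into an autonomous planar flow, with $x$ evolving alongside by $(\log x)_s = R(w, \tau_2)$. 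Claims (iii), (iv), (v) then reduce to qualitative questions about this planar phase portrait, while (i) and (ii) concern how $t$ relates to $s$.

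For (iii), a critical point of $w$ corresponds to a transverse crossing of the algebraic curve $\Gamma := \{P = 0\}$ in the $(w, \tau_2)$-plane, and eventual monotonicity amounts to ruling out both limit-cycle and spiralling behaviour for the planar flow. I would use Bendixson's criterion on $(P, Q)$ in the relevant regions, together with the strict monotonicity of the orbital volume $xy^2 = x^3 w^2$ from \eqref{eq:closure}, which, transported to $s$, constrains the mean of $R$ over a periodic orbit and rules out nontrivial limit cycles; spiralling into a fixed point would likewise force $w$ to have a limit while $\tau_2$ oscillates, which is to be excluded by computing the Jacobian of $(P, Q)$ along $\Gamma$. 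For (ii), once $w \to \ell \in (0, \infty)$, eventual monotonicity from (iii) together with the planar analysis gives convergence to a fixed point $(\ell, \tau_{2,\infty})$ on $\Gamma$; substituting back into the closure condition and the expressions for $x'$, $y'$ yields $x/t \to c_1$ and $y/t \to c_2$ with $(c_1, c_2)$ the unique solution of \eqref{eq:closed_cone} satisfying $c_2/c_1 = \ell$, which is exactly weak AC.

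For (i), if $\log w$ stays forward-bounded then $w$ lies between two positive constants, the closure condition and $xy^2 = x^3 w^2$ give a two-sided bound on $x'/x$, and the polynomial form of $(P, Q)$ then bounds $\tau_2$ uniformly in forward time; this precludes any finite-time blowup of $(x, w, \tau_2)$, so the soliton is forward complete. Finally, for (iv) and (v) I would look for a Lyapunov-type quantity $F(w, \tau_2; \lambda)$ on the planar flow whose derivative has a sign determined by $\lambda$. For expanders ($\lambda > 0$), $F$ should blow up along trajectories with $w \to 0$ while being monotone along the flow, thereby forcing $w$ away from $0$; for shrinkers ($\lambda < 0$), an analogous $F$ should bound $w$ from above, except on trajectories with $x \to 0$, on which the conversion between the $s$- and $t$-parametrisations degenerates (since $t = \int x\, ds$ may remain finite even as $s \to \infty$). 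I expect (iv) and (v) to be the main obstacle: they require finding explicit Lyapunov functions that see the sign of $\lambda$ through the nonlinear coupling of $w$ and $\tau_2$, and handling the degenerate $x \to 0$ boundary case in (v) carefully enough to avoid false exclusion of genuine end behaviours.
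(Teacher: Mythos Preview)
Your central reduction does not exist. You claim that after setting $w=y/x$ and reparametrising by $ds=dt/x$, the pair $(w,\tau_2)$ satisfies an \emph{autonomous} planar system $w_s=P(w,\tau_2)$, $\tau_{2,s}=Q(w,\tau_2;\lambda)$. But the scaling weights from Remark~\ref{rmk:scaling} are $x,y,\tau_2\sim\mu$ and $\lambda\sim\mu^{-2}$, so for $\lambda\neq0$ there is no two-dimensional scale-invariant slice of the $(x,y,\tau_2)$ phase space that is closed under the flow. Concretely, from \eqref{eq:ODE:tau2'} one computes
\[
x\tau_2' \;=\; \frac{4\bigl(\lambda x^3 w^2 - 3\tau_2\bigr)\bigl(x^2(w^2-1)-\tfrac{3}{2}x\tau_2\bigr)}{3x^2(1+2w^2)},
\]
which depends explicitly on $x$; the same happens for $xw'$. (The reduction you have in mind does work when $\lambda=0$, and this extra symmetry is precisely why the steady case in \cite{Haskins:Nordstrom:g2soliton1} is so much more tractable.) The paper does eventually pass to lower-dimensional limit systems---see \eqref{eq:heisenberg} and \eqref{eq:heisenberg:limit}---but these are genuinely three-dimensional with a degenerate invariant plane $\gamma=0$, and are used only in asymptotic regimes, not globally.

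Because your planar picture is unavailable, the Bendixson argument for (iii) and the Lyapunov strategy for (iv)--(v) have nothing to act on. The paper's proofs instead rest on two monotone quantities that you do not mention: the adjusted torsion $\tilde\tau_i$ satisfying $\tilde\tau_i'=\lambda f_i^2$ (see \eqref{eq:tt'}), and, crucially, $M/g^3$ with $M:=3x+\tilde\tau_1$, which is strictly decreasing for \emph{every} value of $\lambda$ (Lemma~\ref{lem:monotone}). Eventual monotonicity of $x$ and of $y/x$ is obtained via a direct critical-point analysis (Corollary~\ref{cor:x_monotone}, Lemma~\ref{lem:y:x:critical}) combined with $M/g^3$ to bound the number of critical points; (iv) and (v) come from comparing $M/g^3$ at different times rather than from any Lyapunov construction. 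Part (ii), incidentally, is much simpler than you suggest: Lemma~\ref{lem:yx:bounded} uses only the closure condition \eqref{eq:closure} and l'H\^opital-type reasoning on $g'$, with no need for the soliton equations at all.
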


\begin{remark*}
To clarify the meaning of some statements in the previous result, let us, for instance, expand on item (iv):
it means, as we evolve forward in $t$, then on any expander the warping $\frac{y}{x}$ remains bounded below
either as $t \to \infty$ or $t \to t_*$ (according to whether the solution has infinite lifetime or has a finite extinction time $t_*$).
\end{remark*}

Parts (i) and (ii) are proved in Section \ref{sec:complete}, (iii) and (iv) in
Section~\ref{sec:warping}, and (v) in Lemma \ref{lem:shrinker:x:unbounded}.
The proofs of the latter involve considering some monotonic quantities that
play important roles throughout the paper.
To begin with, for any~\Sp{2}-invariant soliton one can define the functions 
\[
\tilde \tau_1 := \tau_1 - u x^2, \quad \tilde \tau_2:= \tau_2 - uy^2
\]
where
$\tau_1$ and $\tau_2$ are the coefficients
in the torsion from \eqref{eq:dstar} and $u$ is the coefficient of the
soliton vector field $X=u \,\partial_t$. 
For brevity, we call $\tilde \tau_1$ and $\tilde \tau_2$ the
\emph{adjusted torsion} of the soliton. They show up naturally in
the derivation of the soliton ODEs because they are the components of the
invariant $2$-form $\tau -\iota_X \varphi$, \ie $\tau -\iota_X \varphi = \tilde \tau_1 \omega_1 + \tilde \tau_2 \omega_2$, 
where $\tau$ denotes the torsion $2$-form of the closed~\gtstr~$\varphi$ and $X$ is the soliton vector field. 
The soliton ODEs entail that the components of the adjusted torsion
satisfy
\begin{equation}
\label{eq:intro_adjusted}
 \tilde \tau_1' = \lambda x^2, \quad \tilde \tau_2' = \lambda y^2, \quad \text{and} \quad \tilde \tau_1 + 2\tilde \tau_2 = 2 \lambda g^3,
\end{equation}
and in particular they are monotonic. (Indeed, on steady solitons they are
constant---a major reason that case is so much easier to analyse.)

Further, if we define
\begin{equation}
\label{eq:intro_M}
M: = 3x + \tilde \tau_1
\end{equation}
then $\frac{M}{xy^2}$ turns out to be strictly decreasing (regardless of the
sign of $\lambda$), giving useful long-term control for the $\Sp{2}$-invariant
evolution equations.
(We are not aware of any geometric interpretation of $M$, or of any
analogous quantity in the~\sunitary{3}-invariant setting.)  

\smallskip
One of the uses of the adjusted torsion is to identify a (necessary and
sufficient) criterion guaranteeing that the forward-evolution of given
(regular) initial data leads to a weakly AC expander end. 
For expanders, it is immediate from \eqref{eq:intro_adjusted} that both
$\tilde \tau_1$ and $\tilde \tau_2$ are strictly increasing in $t$,
that at least one of them is positive,
and that on any forward-complete end on which $\log{\frac{y}{x}}$ is forward-bounded
(which is the case for any weakly AC end) eventually both $\tilde \tau_1$ and
$\tilde \tau_2$ must become positive. 
Conversely, Proposition \ref{prop:AC:end} characterises weakly AC expander ends as precisely those for which the increasing function 
$\tilde \tau_1$ is eventually positive. (Since it is easy to show that any
smoothly-closing expander has $\tilde \tau_1>0$ for all $t>0$ this yields a
different proof that such expanders are weakly AC.)

\begin{remark}
\label{rmk:intro_stab}
The condition that $\tilde \tau_1$ is eventually positive is clearly open. In particular, if some initial condition for $(x,y,\tau_2)$
leads to a weakly AC end, then so does any small perturbation of it. (This stability property also means that it is relatively easy to study AC expander ends numerically.)

No analogous condition so simply characterises weakly AC shrinker ends; 
indeed, such ends turn out to be nongeneric, \ie there exist arbitrarily small perturbations of any regular initial data leading to an AC shrinker end 
that exhibit different end behaviour. In particular, the necessary condition that the decreasing functions $\tilde \tau_1$ and $\tilde \tau_2$ are both eventually
negative is not sufficient to guarantee an AC shrinker end.\end{remark}

\noindent
\emph{Non-steady AC ends.}
Section~\ref{s:AC:geometry:Sp2:expanders} improves our understanding of AC ends of non-steady~\Sp{2}-invariant solitons in several directions.  
In particular, we prove Theorem \ref{mthm:AC}: 
any non-steady weakly AC end is, in fact, $C^0$-asymptotic with rate (at least) $-2$ 
to its asymptotic cone. 
Theorem~\ref{mthm:expand:Sp2}(iii) follows as a special case. 

In essence, the origin of the specific value $-2$ for the asymptotic rate can be traced back
to how rescaling a non-steady soliton (recall Remark \ref{rmk:general_scaling})
changes $\lambda$. Whichever way one presents the
soliton ODEs, the terms that involve a factor of $\lambda$ play a special role:
if one scales up all the variables \emph{without} changing $\lambda$, then the
terms involving $\lambda$ start to dominate.
When the soliton ODEs are written in terms of the variables $x$, $y$ and $\tau_2$,
$\lambda$ only appears in the ODE for $\tau_2$, which
takes the schematic form
\[ \tau_2' = a(x,y)\lambda S + b(x,y,\tau_2) , \]
where $a$ and $b$ are rational functions $a, b$ of degree 0 and 
\[
S: = y^2 - x^2 - \frac{3}{2} x \tau_2.
\]
On any non-steady end that is AC in a $C^1$ sense
this forces that $S$ must converge to a constant value. 
Proposition~\ref{prop:S:limit} establishes for any non-steady soliton that, in fact, weakly AC end behaviour
(\ie $C^0$-convergence of the warping $\frac{y}{x}$) already suffices
to imply convergence of $S$. 

Another reason why $S$ is distinguished is that
for any closed \Sp{2}-invariant \gtstr{} $\varphi$ defined by the pair $(x,y)$ it satisfies 
\[
-\frac{S}{xy^2}=\frac{d}{dt} \log{\frac{y}{x}}.
\] In particular, 
$S \equiv 0$ on any closed cone. Because $S$ has dimensions of length-squared,
then if~$\varphi$ is asymptotic to a closed
cone with rate $\nu$ we have $S = O(t^{\nu+2})$. It turns out to be possible to go in the other direction: namely, 
in Proposition \ref{prop:reg} we use the convergence
of $S$ on any non-steady weakly AC end just established to deduce that $\nu = -2$, thus completing the proof of Theorem~\ref{mthm:AC}.
(The argument that $S$ converges on a weakly AC end does not apply when $\lambda=0$. 
Indeed, for non-static AC steady solitons, instead $S$ turns out to  grow
linearly, and  $\nu = -1$ \cite[Theorem E]{Haskins:Nordstrom:g2soliton1}.)

In Section~\ref{ss:AC:expander:end:stability}, using some of the arguments used to prove
Theorem \ref{mthm:AC},  we establish an important strengthening 
of the stability of AC expander ends mentioned in Remark \ref{rmk:intro_stab}.
\begin{mtheorem}
\label{mthm:uniform}
Consider a forward-complete~\Sp{2}-invariant expander $(x,y, \tau_2)$ %
such that $\frac{y}{x}$
converges as $t \to \infty$, and any $t_0$ for which this solution is defined and non-singular,
in the sense that $x(t_0)$ and $y(t_0)$ are both positive.
Then any initial condition $\bar p$ sufficiently
close to $p:=(x(t_0), y(t_0), \tau_2(t_0))$ 
also gives rise to a forward-complete AC
expander
$(\bar x, \bar y, \bar \tau_2)$.
Moreover,
\[ \frac{\bar y}{\bar x} \to \frac{y}{x} \]
uniformly on $[t_0, \infty)$
as $\bar p \to p$.
In particular, if we let $\ell$ and $\bar{\ell}$ denote the respective asymptotic limits of these two solutions, then $\bar{\ell} \to \ell$ as $\bar p \to p$. 
\end{mtheorem}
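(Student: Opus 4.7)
The plan is to combine the sharp characterization of AC expander ends given by Proposition~\ref{prop:AC:end} ($\tilde\tau_1$ eventually positive) with continuous dependence on initial conditions for the soliton ODE system, together with the identity $\frac{d}{dt}\log\frac{y}{x} = -\frac{S}{xy^2}$ and the rate-$-2$ expansion of Theorem~\ref{mthm:AC}.

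First, since the reference solution is weakly AC, Proposition~\ref{prop:AC:end} produces some $T > t_0$ with $\tilde\tau_1(T) > 0$. Standard continuous dependence of the ODE flow on initial data yields that, for $\bar p$ sufficiently close to $p$, the perturbed trajectory exists on $[t_0, T]$, remains uniformly close to $(x, y, \tau_2)$ there, and in particular satisfies $\bar{\tilde\tau}_1(T) > 0$. Re-applying Proposition~\ref{prop:AC:end} then shows that $(\bar x, \bar y, \bar\tau_2)$ is a forward-complete AC expander end, so its warping $\bar y/\bar x$ tends to some $\bar\ell \in (0,\infty)$. Uniform convergence of $\bar y/\bar x$ to $y/x$ on the compact interval $[t_0, T]$ is immediate.

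For the tail $[T, \infty)$, I would use the identity $(\log(y/x))' = -S/(xy^2)$ and its barred analogue. By Proposition~\ref{prop:S:limit}, $S$ has a finite limit on any non-steady weakly AC end, and the rate-$-2$ asymptotics of Theorem~\ref{mthm:AC} force $xy^2 \sim c_1 c_2^2 t^3$; hence $|(\log(y/x))'| \le C t^{-3}$ eventually. The main point is that the constant $C$ can be taken uniformly for all $\bar p$ near $p$: this is ensured by the monotonic quantities $M/(xy^2)$, $\tilde\tau_1$ and $\tilde\tau_2$ introduced in Section~\ref{sec:warping}, whose values at $t = T$ depend continuously on $\bar p$ and which then monotonically control the entire tail. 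Integrating from $t \ge T$ to infinity, one obtains
\[
\Bigl| \log \tfrac{\bar y}{\bar x}(t) - \log \bar\ell \Bigr| \le \frac{C'}{t^2}
\]
uniformly in $\bar p$, and similarly for the unbarred solution. Combining this with the compact-interval estimate yields that both $\bar\ell \to \ell$ and $\bar y/\bar x \to y/x$ uniformly on $[t_0, \infty)$ as $\bar p \to p$.

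The main obstacle is a potential circularity: a naive tail estimate in terms of $\bar\ell$ presupposes information about $\bar\ell$, whereas the continuous dependence $\bar\ell \to \ell$ is precisely part of what we are trying to prove. The cleanest way around this is to derive the tail bound \emph{first} from the monotonic quantities at $t=T$ (which depend continuously on $\bar p$ with no reference to $\bar\ell$), and only afterwards to read off from the uniform estimate both that $\bar\ell$ exists (already known from Step 1) and that it depends continuously on $\bar p$. Technically, the crux is verifying that the monotonic controls provide truly uniform bounds on the tail---in particular that $xy^2/t^3$ is bounded below uniformly and that $S$ is bounded above uniformly---so that the integrated estimate is genuinely independent of $\bar p$.
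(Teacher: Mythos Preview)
Your opening step---using Proposition~\ref{prop:AC:end} to propagate $\tilde\tau_1(T)>0$ to nearby solutions via continuous dependence on $[t_0,T]$---is correct and cleanly establishes that the perturbed solution is forward complete and weakly AC.  The gap is in the tail estimate, precisely at the point you yourself flag as ``the crux'': the claim that the monotone quantities $M/g^3$, $\tilde\tau_1$, $\tilde\tau_2$ at time $T$ yield a uniform bound on $S$ is not justified, and in fact those quantities do not control $S$ in the way you need.  From \eqref{eq:MS}, extracting $S$ from $M/g^3$ requires knowing $\frac{M}{g^3}-\frac{2\lambda}{1+2(y/x)^2}$ to order $x^{-4}$, which is exactly the rate of convergence you are trying to prove.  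Equivalently, writing $3\tau_2=\tilde\tau_2-(y/x)^2\tilde\tau_1$ exhibits $\tau_2$ as a difference of two cubically growing monotone terms; that this difference is only linear is the content of the AC asymptotics, not a consequence of monotonicity.  So the argument is circular at just the point where a genuine estimate is required.

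The paper's proof (Theorem~\ref{thm:uniform}) bypasses this by a direct trapping argument for $S$ itself, Proposition~\ref{prop:stable}, built on Lemma~\ref{lem:Sdot}: once $g>g_0$ and $|S|<\epsilon g^2$ at some $t_1$, the differential inequality for $|S|$ (driven by the dominant $\lambda$-term in $\beta$) forces $|S|$ to remain below $\epsilon g(t_1)^2$ as long as $|\log(y/x)|$ stays in a fixed range, and integrating $(\log(y/x))'=-S/g^3$ then shows $|\log(y/x)(t)-\log(y/x)(t_1)|<\epsilon$ for all $t>t_1$.  The uniform input needed---that \eqref{eq:squeeze} holds at a single time $t_1$---comes from $S\to S^*$ on the reference solution together with continuous dependence on the compact interval $[t_0,t_1]$.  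Were you to fill your gap honestly you would end up reproducing this $S$-ODE argument; the monotone quantities from \S\ref{sec:warping} alone do not supply it.
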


\medskip
\noindent
\emph{Existence and uniqueness of complete expanders with prescribed asymptotic cone.}
In Section~\ref{s:asymptotic:cones} we switch our focus back to 
complete expanders, the goal now being to prove Theorem~\ref{mthm:asymptotic:limit}.
Some of the main results from Section~\ref{s:AC:geometry:Sp2:expanders}  play key roles in the proofs in this section.
 
Theorem~\ref{mthm:expand:Sp2} already established that the entire family of
smoothly-closing~\Sp{2}-invariant expanders,
parametrised (up to scale) by
$q=\lambda \sqrt{\vol{\Sph^4}} \in \Rpos$, consists of complete solitons each asymptotic
with rate $-2$ to a unique closed~\Sp{2}-invariant cone, which is determined by
$\ell = \lim_{t \to \infty} \frac{y}{x} \ge 1$. 
Theorem~\ref{mthm:uniform} implies that $\ell$
depends continuously on~$q>0$. In other words, the asymptotic limit map
$\lmap : q \mapsto \ell$ from Definition \ref{def:lmap} is continuous and
takes values in $[1,\infty)$. It remains to prove then
that, in fact, $\lmap$ gives a bijection between $\Rpos$ and $(1,\infty)$. 

To show that a smoothly-closing expander cannot be asymptotic to the torsion-free cone, 
\ie that $\lmap(q) >1$ for all $q>0$, the key ingredient is
the large-$t$ asymptotic expansion 
for $x$ and $y$ for a non-steady AC end from %
Proposition~\ref{prop:reg}.

The fact that $\lmap$ is non-decreasing is proven by
establishing a suitable comparison principle for expanders---see Proposition~\ref{prop:comp_g_1}.
Proving that $\lmap$ is \emph{strictly} increasing (established in Theorem~\ref{thm:D*:monotone}) turns out to be more
subtle; it relies on the details of the consequences of Theorem \ref{mthm:AC}, 
applied to pairs of expander ends asymptotic to the same cone.

That $\lmap(q) \to 1$ as
$q \to 0$, \ie that for $q \sim 0$  the asymptotic cone is close to the torsion-free cone, 
follows from making a comparison with
the Bryant--Salamon AC~\gtmetric~on $\Lambda^2_-\Sph^4$, considered as a (static)
steady soliton.

Finally, %
$\lmap(q) \to \infty$ as $q \to \infty$
is proved by comparing the values of the decreasing quantity defined in terms of
\eqref{eq:intro_M} at the singular orbit at $t=0$ and at the AC end when $t \to \infty$. 

\medskip
\noindent
\emph{Constructing end solutions.}
In Sections~\ref{subsec:end_sivp} and~\ref{ss:fc_non_ac} we outline how to
construct three different classes of expander ends and three different classes
of shrinker ends.
In all cases the basic approach is the same, namely we recast the construction of ends with prescribed 
asymptotic behaviour in terms of a singular initial value problem (which may be regular or irregular). 
Implicit in this approach is the identification of appropriate model ends, which are themselves approximate but not exact solutions of the soliton ODEs;  
the singular initial value problems arise
when we describe the perturbation needed to correct the model ends 
to actual soliton ends. 
For instance, for non-steady AC ends
there are obvious model ends, namely any~\Sp{2}-invariant closed cone. In other cases, identifying suitable model ends 
can turn out to be one of the most subtle parts of the problem. 

The forward-incomplete ends turn out to be the 
most robust type of end; they behave in essentially the same way for any value of the dilation constant $\lambda$. On the other hand, AC soliton ends exist for both shrinkers and expanders, but their qualitative nature depends 
on the sign of  $\lambda$, \eg AC expander ends are stable but AC shrinker ends are not. The origin of this difference is that the resulting singular initial value problem is irregular 
and that the sign of the dominant singular term is determined by $\lambda$. We explain the differences between AC expander and shrinker ends in more detail in Sections~\ref{subsec:rigid_vs_instab}
and \ref{subsec:end_sivp}.

In addition, there are exotic forward-complete but non-AC ends, and these ends behave completely differently in the expander and shrinker settings (\cf Theorem \ref{mthm:both_tri}(ii)). 
The reason for this is that the model ends themselves are different for shrinkers and expanders, as we explain further below. 
By Proposition~\ref{mthm:warping}, the warping $\frac{y}{x}$ on any forward-complete non-AC end  must either tend to $0$ 
or to $\infty$; in fact, the former happens only in the shrinker case and the latter only in the expander case. In either case, considering 
the relative sizes of various terms in the soliton ODE system leads one to consider $1$-parameter families of rescaled ODE systems determined by a parameter $\epsilon \ge 0$. Which of the rescalings is relevant differs between the two cases, reflecting the different asymptotic behaviour of the warping $\frac{y}{x}$ 
in the two cases. 

For any positive $\epsilon$ these rescaled ODE systems are equivalent to the~\Sp{2}-invariant soliton ODEs. The limit of the rescaled system as $\epsilon \to 0$ 
is well-defined, but the $\epsilon=0$ limit is no longer equivalent to the original ODE system. 
This rescaled limit ODE system turns out to be simpler than the original system because the defining rescalings 
become exact symmetries of the limit system. By considering ratios of quantities that are invariant 
under these rescalings one thereby gets a reduction from a $3$-dimensional system to a planar ODE system. 
The additional structure present in the limit systems allows us to find distinguished explicit solutions with the sort of end behaviour 
we seek to construct in the~\Sp{2}-invariant soliton ODE system. These explicit solutions differ significantly in the expander and shrinker cases
because they satisfy different limit ODEs. These exact solutions to the limit systems then serve as model ends 
that can be corrected by bounded terms to give genuine forward-complete non-AC~\Sp{2}-invariant ends.
This is analogous to how closed~\Sp{2}-invariant cones serve as model ends for both AC shrinker or expander ends; 
the main difference here is that the model ends themselves are now sensitive to the sign of the dilation constant.

\medskip
\noindent
\emph{End behaviour trichotomies.}
The expander part of the trichotomy stated in Theorem~\ref{mthm:both_tri}
is established in Theorem~\ref{thm:tri:expanders}. %
The decreasing quantity $M/g^3$ discussed earlier plays a key role in its proof:
it turns out that the three types of expander ends can be distinguished by
whether $M$ is positive and eventually increasing, remains positive but
eventually tends to $0$ or is eventually negative.

In the shrinker case, similar ideas lead to
Proposition~\ref{prop:incomplete_shrinkers},
which shows that any shrinker for which $x$ is eventually decreasing is forward-incomplete and its asymptotic behaviour 
as $t$ approaches the extinction time $t_*$ is well-controlled. 

The main result of Section~\ref{sec:shrinker-tri} is Theorem~\ref{thm:tri:shrinkers}, the shrinker part of the trichotomy stated in 
Theorem~\ref{mthm:both_tri}. The arguments in the shrinker case differ dramatically from those used in the expander case from the previous section. 
Here one of the two planar ODE systems mentioned above,  which we call the \emph{shrinker limit system} \eqref{eq:heisenberg:limit}, turns out to play a central role. 
Since the global dynamics of planar ODE systems are relatively tame, %
we are able to completely understand all possible eventual forward-time behaviours of solutions to this shrinker limit system.
The key step, achieved in Section~\ref{ss:shrinker:full}, is then to prove that~\Sp{2}-invariant shrinker ends with $x$ and $\frac{x}{y}$ sufficiently large 
are  well-approximated by solutions to the shrinker limit system.

\subsection{\texorpdfsunitary{3}-invariant expanders and shrinkers}
\label{ss:su3:intro}

The local theory of \Sp{2}-invariant solitons on $\Lambda^2_- \Sph^4$ and
\sunitary{3}-invariant solitons on $\Lambda^2_- \CP^2$ was developed
in parallel in \cite{Haskins:Nordstrom:g2soliton1}---indeed, the equations
governing the former coincide with those of a subfamily of the latter that possess
an additional $\Z_2$ symmetry.
The results in the present paper in the \Sp{2}-invariant case therefore
naturally raise the question to what extent they can be generalised
to the \sunitary{3}-invariant case.
We outline our expectations in the final (more speculative) section of the
paper, %
where, in particular, we formulate three precise conjectures. 

\begin{mconj}
There exist only finitely many~\sunitary{3}-invariant complete AC shrinkers. 
\end{mconj}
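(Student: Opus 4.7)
The plan is to combine a real-analytic parametrisation of smoothly-closing \sunitary{3}-invariant Laplacian shrinkers on $\Lambda^2_-\CP^2$ with a codimension analysis of the AC-end condition in that setting, closed off by a compactness statement for the AC locus; finiteness then follows because a compact real-analytic set of dimension zero is finite.

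First I would set up the moduli space $\mathcal{M}_{sc}^-$ of smoothly-closing \sunitary{3}-invariant shrinkers, normalising $\lambda = -1$ by the rescaling of Remark~\ref{rmk:general_scaling}, using the Eschenburg--Wang style singular-orbit analysis developed for the $\CP^2$ singular orbit in \cite{Haskins:Nordstrom:g2soliton1}. This yields a finite-dimensional real-analytic manifold, whose dimension is strictly larger than in the \Sp{2}-invariant case (compare Conjecture~\ref{C:SU3:shrink:expand:matching}, which envisages a whole $1$-parameter family of \sunitary{3}-invariant expanders beyond the $\sunitary{3} \times \Z_2$-invariant ones): dropping the extra $\Z_2$ symmetry introduces additional invariant degrees of freedom in the initial data at the singular orbit $\CP^2$.

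Next I would develop an \sunitary{3}-invariant analog of the shrinker trichotomy in Theorem~\ref{mthm:both_tri} to pin down precisely which conditions characterise AC shrinker ends. In the \Sp{2} case these amount to the warping $\frac{y}{x}$ converging in $(0,\infty)$ together with both adjusted torsions becoming eventually negative; as emphasised in Remark~\ref{rmk:intro_stab}, this is a sharp but nongeneric codimension condition, making the AC locus among \Sp{2}-invariant smoothly-closing shrinkers at most zero-dimensional. In the \sunitary{3} setting there are correspondingly more warping-type degrees of freedom along the principal orbit, so one expects the AC condition to impose additional independent analytic constraints; the goal is to show that together they cut out a real-analytic subvariety $\mathcal{A} \subset \mathcal{M}_{sc}^-$ whose codimension equals $\dim \mathcal{M}_{sc}^-$. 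Making this precise will likely require a careful linearisation analysis at each candidate asymptotic closed cone, in the spirit of the irregular singular initial value problem approach outlined in Section~\ref{subsec:end_sivp}.

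Finally, compactness of $\mathcal{A}$ should follow by extracting a priori bounds on the defining initial data via monotone quantities generalising $\tilde\tau_1, \tilde\tau_2$ and $\frac{M}{g^3}$ from the \Sp{2} analysis; these should rule out sequences of AC shrinkers drifting to the boundary of $\mathcal{M}_{sc}^-$. The hard part will be the codimension count: for shrinkers, the sharp condition for AC end behaviour is substantially more subtle than the open positivity condition that characterises AC expander ends (Proposition~\ref{prop:AC:end} in the \Sp{2} case), and matching this condition against the enlarged moduli space in the \sunitary{3}-invariant setting may require establishing a full shrinker trichotomy generalising Theorem~\ref{mthm:both_tri}, together with a linearised rigidity statement at each \sunitary{3}-invariant closed cone that could occur as an asymptotic cone.
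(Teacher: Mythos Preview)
This statement is a \emph{conjecture}, not a theorem: the paper does not prove it and offers only a heuristic dimension count in its stead. Specifically, the paper observes (in \S\ref{subsec:su3}) that for fixed $\lambda<0$ the space of local \sunitary{3}-invariant shrinkers is $4$-dimensional, that smoothly-closing solutions form a $2$-parameter family, and that AC shrinker ends also form a $2$-parameter family (one for each closed \sunitary{3}-invariant cone, by the irregular SIVP analysis). The expected dimension of their intersection is therefore $0$, and that is the entire justification the paper offers.

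Your plan goes well beyond this heuristic, but the additional steps you propose are precisely the ones the paper regards as open. Two of them are genuine obstacles that your outline does not resolve. First, \emph{real-analyticity of the AC locus}: the AC condition is an asymptotic one at $t=\infty$, not a polynomial constraint on initial data at the singular orbit, so it is not at all clear that $\mathcal{A}\subset\mathcal{M}_{sc}^-$ is a real-analytic subvariety; you would need to show the AC-end condition is equivalent to some analytic equation in the smoothly-closing parameters, and the paper provides no such mechanism even in the simpler \Sp{2}-invariant case. Second, \emph{compactness}: your proposal to rule out sequences drifting to the boundary via monotone quantities is exactly what is missing---the paper explicitly notes that no analogue of the key monotone quantity $M/g^3$ is known in the \sunitary{3}-invariant setting, and without it there is no evident way to obtain the a~priori bounds you need. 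The transversality (codimension) issue you flag as ``the hard part'' is indeed hard, but even granting it, the argument would still be incomplete without the first two ingredients.
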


This is a (partial) generalisation of our conjecture that
Example \ref{ex:explicit_shrinker} is the unique~\Sp{2}-invariant complete AC
shrinker. 
The heuristic justification is that we can prove that both the smoothly-closing solutions and AC end solutions are 2-dimensional subsets of a 4-dimensional space of local solutions.

\begin{mconj}
\label{mconj:transition} 
There exists a $1$-parameter family of complete~\sunitary{3}-invariant
expanders defined on $\Lambda^2_-\CP^2$ all with quadratic-exponential volume
growth.
\end{mconj}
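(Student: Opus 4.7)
The plan is to generalise the \Sp{2}-invariant trichotomy of Theorem~\ref{mthm:both_tri} to the \sunitary{3}-invariant setting and then exploit the fact that, in the \Sp{2}-invariant case, quadratic-exponential forward-complete ends form a codimension-one \emph{wall} between two open sets of stable end behaviours---AC ends (established in Theorem~\ref{mthm:uniform}) on one side and forward-incomplete ends on the other. First I would parametrise the smoothly-closing \sunitary{3}-invariant expanders on $\Lambda^2_-\CP^2$: from the local theory in \cite{Haskins:Nordstrom:g2soliton1}, these form a $2$-parameter family modulo scaling (one additional parameter compared to the \Sp{2}-invariant case, corresponding to breaking the extra $\Z_2$-symmetry). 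After quotienting by the overall scaling $q = \lambda \cdot (\text{scale of singular orbit})^2$, there is a $1$-parameter family of smoothly-closing expanders for each value of~$q$.

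The second step is to identify the \sunitary{3}-analogues of the adjusted torsions $\tilde\tau_i$ and the quantity $M = 3x + \tilde\tau_1$, and to prove monotonicity properties of these quantities that yield a classification of end behaviours mirroring Theorem~\ref{mthm:both_tri}. The expected trichotomy is: (i) AC ends with bounded warpings; (ii.a) forward-complete ends with quadratic-exponential volume growth, on which one of the ``fibre'' scales collapses; (iii) forward-incomplete ends with one warping blowing up in finite time. Containing the \Sp{2}-invariant dynamics as an invariant subsystem should guide the choice of monotonic quantities, but the extra dimension of the system means that new geometric insight is needed to ensure eventual monotonicity of all warpings. This is where the main obstacle lies: in the \Sp{2}-case the soliton ODEs reduce to a three-dimensional system, while in the \sunitary{3}-case one faces a genuinely higher-dimensional system in which the planar techniques underlying the proofs in Sections~\ref{sec:complete} and~\ref{sec:warping} do not directly apply.

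Next I would establish stability/openness results in the \sunitary{3}-setting. Following the approach of Theorem~\ref{mthm:uniform}, AC expander ends should be characterised by the eventual positivity of the $\tilde\tau_i$ analogues---an open condition, yielding an open subset $\mathcal{A}$ of the $1$-parameter family of smoothly-closing solutions (at fixed $q>0$) that flow to AC ends. Similarly, the forward-incomplete end behaviour (iii) should correspond to an open subset $\mathcal{I}$. Numerical evidence (as alluded to in \S\ref{subsec:su3cones}) indicates that both $\mathcal{A}$ and $\mathcal{I}$ are non-empty when $q > 0$: in particular, the \sunitary{3}-invariant expanders obtained from Theorem~\ref{mthm:expand:Sp2} via the $\Z_2$-symmetric embedding lie in $\mathcal{A}$, and small perturbations breaking the $\Z_2$-symmetry stay in $\mathcal{A}$.

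Finally, by connectedness of the $1$-parameter family, the complement $\mathcal{W} = \partial\mathcal{A} \cap \partial\mathcal{I}$ must be non-empty, and by the trichotomy the solutions in $\mathcal{W}$ must be forward-complete with unbounded warping---that is, of quadratic-exponential type. Varying $q \in \Rpos$ then produces the claimed $1$-parameter family of complete \sunitary{3}-invariant expanders with quadratic-exponential volume growth. The anticipated hard part is to rule out pathologies at the transition between $\mathcal{A}$ and $\mathcal{I}$ (e.g.\ other exotic end behaviours not present in the \Sp{2}-case) and to confirm that smooth closure at the singular orbit is preserved along the wall; modelling the ends via a rescaled limit system analogous to the one described in Section~\ref{ss:fc_non_ac}, but adapted to the \sunitary{3}-geometry, should be the right technical vehicle.
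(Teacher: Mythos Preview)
This statement is a \emph{conjecture} in the paper, not a theorem: the paper does not prove it, and explicitly says so. Your proposal is therefore not being compared against a proof but against the paper's own heuristic justification in \S\ref{SS:su3}. On that level, your outline matches the paper's reasoning closely: the paper also argues that the $\Z_2$-invariant subfamily $\scfam{\lambda}{b,0}{1}$ sits inside an open set $\mathcal{A}$ of AC solutions (by the \sunitary{3}-version of the stability argument, cf.\ Remark~\ref{rmk:lending}), that there should be an open set $\mathcal{I}$ of forward-incomplete solutions (by comparison with the steady case in Theorem~\ref{thm:steadies} and stability of the finite-extinction behaviour), and that the transition curve between them should consist of quadratic-exponential ends. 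The more detailed Conjecture~\ref{conj:transition} in the paper makes this precise, parametrising the wall by the slope $k = c/b > \tfrac{3}{\sqrt{2}}$ rather than by $q$ as you do.

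The genuine gap---which you correctly flag but perhaps understate---is your step~2. The paper says explicitly that ``we do not currently know a good analogue in the \sunitary{3}-invariant case of the monotonic quantity from Lemma~\ref{lem:monotone}'', and that quantity $M/g^3$ is essential not only to the trichotomy (Theorem~\ref{thm:tri:expanders}) but to most of the structural results you would need (Propositions~\ref{prop:x_limit}, \ref{prop:incomplete:asymptotics}, \ref{prop:fc:yx:unbounded}). Without such a quantity, you cannot currently rule out end behaviours in the \sunitary{3}-setting beyond the three you expect, so the connectedness argument for $\mathcal{W} \neq \emptyset$ does not go through. A second, smaller gap: the non-emptiness of $\mathcal{I}$ is at present supported only by numerics and by the heuristic comparison with steady solitons as $b \to 0$; making that rigorous would itself require work (the paper does not claim to have done this). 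In short, your strategy is the same as the paper's intended one, and the paper regards it as open for the reasons you identify.
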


Among the 2-parameter family of smoothly-closing ~\sunitary{3}-invariant
expanders, considering a
1-parameter family with an extra $\Z_2$-invariance shows that there exist
some solutions that are complete AC (because then the ODEs reduce to those for the
\Sp{2}-invariant solitons in Theorem~\ref{mthm:expand:Sp2}).
On the other hand, numerical simulation suggests that there is a non-empty open subset of 
smoothly-closing solutions with a forward-incomplete end analogous to
those in Theorem \ref{mthm:both_tri}(iii). Conjecture \ref{conj:transition}
describes in more detail our expectation that the transition between
these two types of end behaviour happens along a curve
of complete solutions with end behaviour analogous to that in
Theorem \ref{mthm:both_tri}(ii.a).

\smallskip
The forward-complete non-AC ends have an approximate $\Z_2$ symmetry (that does
not extend to a diffeomorphism of $\Lambda^2_- \CP^2$). That is one of the
motivations for the following prediction (analogous to Theorem
\ref{mthm:asymptotic:limit} from the \Sp{2}-invariant setting).

\begin{mconj}
In the 2-dimensional set of cones with \sunitary{3}-invariant closed \gtstr s,
the boundary of the subset that arises as the asymptotic cones of
\sunitary{3}-invariant expanders on $\Lambda^2_- \CP^2$ consists of
cones with an additional $\Z_2$ symmetry that does not extend to a
diffeomorphism of~$\Lambda^2_- \CP^2$.
\end{mconj}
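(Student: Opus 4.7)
The approach is to adapt the proof strategy for Theorem~\ref{mthm:asymptotic:limit} to the \sunitary{3}-invariant setting. One starts by defining the \sunitary{3}-invariant analogue of the asymptotic limit map $\lmap$, now a map from the (2-dimensional, up to scale) parameter space of smoothly-closing \sunitary{3}-invariant expanders on $\Lambda^2_-\CP^2$ to the 2-dimensional parameter space of closed \sunitary{3}-invariant \gtwo-cones, defined on the subset where the corresponding smoothly-closing expander is weakly AC. The goal is then to show that its image is open, and that its topological boundary consists precisely of the cones carrying the non-extending $\Z_2$ symmetry described in the conjecture.

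Openness of the image should follow from an \sunitary{3}-invariant version of Theorem~\ref{mthm:uniform}. The critical ingredients from the \Sp{2}-invariant proof---the adjusted torsion identities~\eqref{eq:intro_adjusted} and the characterisation of weakly AC expander ends by eventual positivity of $\tilde\tau_1$ (see Remark~\ref{rmk:intro_stab})---are consequences of the general identity $d(\tau - \iota_X\varphi) = \lambda \varphi$ valid for any closed Laplacian soliton, and so should generalise naturally; the stability argument of Section~\ref{ss:AC:expander:end:stability} then carries over.

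Identifying the boundary relies on Conjecture~\ref{mconj:transition}: the predicted 1-parameter family of complete expanders with quadratic-exponential volume growth would form a curve in the 2-dimensional space of smoothly-closing \sunitary{3}-invariant expanders that separates those giving AC ends from those giving forward-incomplete ends. Nearby AC expanders would have asymptotic cones converging to a 1-parameter limit curve in cone space, which is the natural candidate for the boundary of the image. Observe that the 1-parameter \Sp{2}-invariant subfamily of AC expanders from Theorem~\ref{mthm:expand:Sp2} is itself an interior curve in parameter space, and its image---the \Sp{2}-invariant cones with $\ell > 1$---lies in the interior of the image of the asymptotic cone map, which is consistent with the boundary consisting of genuinely \emph{different} $\Z_2$-symmetric cones.

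The hard step is to verify that these limit cones carry the specific non-extending $\Z_2$ symmetry. The heuristic is that the exact solutions of the rescaled limit system used as model ends for forward-complete non-AC expanders (analogous to those identified in Section~\ref{ss:fc_non_ac}) possess an inherent $\Z_2$ symmetry, which passes to the approximate $\Z_2$ symmetry of the exotic ends themselves mentioned in the text preceding the conjecture, and hence to a genuine $\Z_2$ symmetry of the limit cones of nearby AC expanders. Distinguishing rigorously between the extending $\Z_2$ symmetries---realised along the \Sp{2}-invariant subfamily---and the non-extending ones requires a careful analysis of how the two topologically distinct variants of matching AC expanders from Conjecture~\ref{C:SU3:shrink:expand:matching} sit inside the overall parameter space. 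The main obstacle is the absence of a known \sunitary{3}-invariant analogue of the monotone quantity $\frac{M}{xy^2}$; without such a substitute, obtaining the precise quantitative control over the asymptotics at the boundary needed to pin down the $\Z_2$ symmetry is likely to be the principal technical challenge.
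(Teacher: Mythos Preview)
The statement you are attempting to prove is a \emph{conjecture} in the paper, not a theorem; the paper does not provide a proof. Indeed, the paper explicitly presents it as speculative, supported only by numerical evidence and the heuristic that the forward-complete non-AC expander ends all have an approximate $\Z_2$ symmetry (see the discussion surrounding Conjecture~\ref{conjecture:SU3:hit:cones} in \S\ref{subsec:su3cones}).

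Your proposal is not a proof either, and to your credit you essentially acknowledge this: you rely on Conjecture~\ref{mconj:transition} (itself open), you write that various steps ``should follow'' or ``should generalise'', and you identify the absence of an \sunitary{3}-invariant analogue of the monotone quantity $M/g^3$ as ``the principal technical challenge''. The paper makes exactly the same point in the opening paragraph of \S\ref{SS:su3}: the extra variables and the lack of a known analogue of Lemma~\ref{lem:monotone} are precisely what make the \sunitary{3}-invariant case hard. So your heuristic outline is broadly consistent with the paper's own reasoning, but it does not close any of the gaps the paper leaves open. In particular, even the ``openness of the image'' step is not as automatic as you suggest: the paper's proof that AC expander ends are stable (Proposition~\ref{prop:AC:end}) uses the characterisation via $\tilde\tau_1 > 0$ together with the one-sided bound on $\frac{y}{x}$ from Proposition~\ref{prop:x:over:y:bded}, and the latter depends on the monotone quantity $M/g^3$. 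The paper does indicate (Remark~\ref{rmk:lending}) that the alternative stability argument via Proposition~\ref{prop:stable} generalises better, but you would still need to verify that a smoothly-closing \sunitary{3}-invariant expander near a complete AC one is itself AC, which is not established anywhere.

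In short: there is nothing to compare your attempt against, and what you have written is a reasonable summary of the heuristics already in the paper rather than a proof.
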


See Conjecture \ref{conjecture:SU3:hit:cones} for a more precise
statement, and an explanation why that would in particular
imply Conjecture~\ref{C:SU3:shrink:expand:matching}.

\subsection{Further questions}
\label{ss:furtherQ}

The results we have proven in this paper suggest a number of further natural questions
distinct from those about~\sunitary{3}-invariant solitons just described. We conclude the introduction by describing some of 
the most obvious ones. \\[0.5em]
\textbf{Question 1.} 
Do the explicit AC shrinkers on $\Lambda^2_-\Sph^4$ and on $\Lambda^2_-\CP^2$ actually arise as finite-time 
singularity models of Laplacian flow (for non-solitonic data)?\\[0.25em]
Recent numerical simulations strongly suggest that for~\Sp{2}-invariant Laplacian flow the explicit AC shrinker on $\Lambda^2_-\Sph^4$ 
indeed appears as a finite-time singularity model and moreover that forming such a singularity is a relatively stable property in the~\Sp{2}-invariant setting. 

\smallskip

Assuming then that singularities modelled on the explicit AC shrinker on $\Lambda^2_-\Sph^4$ actually occur 
it is natural to ask what are the possible forward evolutions of Laplacian flow that emerge from the~\Sp{2}-invariant shrinker cone. 
The nicest situation would be if there were a smooth solution of the flow that emerges from the~\Sp{2}-invariant shrinker cone. 

We already know from the work of this paper that there is no complete~\Sp{2}-invariant AC expander asymptotic to the shrinker cone. 
This suggests the following question.  \\[0.5em]
\textbf{Question 2.} Must a smooth Laplacian expander asymptotic to an~\Sp{2}-invariant closed cone 
itself be~\Sp{2}-invariant? \\[0.3em]
For gradient AC Laplacian \emph{shrinkers} the inheritance of symmetries of the asymptotic cone has recently been proven 
in complete generality by Haskins--Khan--Payne~\cite{HKP}. 
If such a symmetry inheritance result were also true for continuous symmetries of smooth expanders then, together with the results of this paper, it would imply that there cannot be any complete Laplacian expander 
asymptotic to %
the asymptotic cone of the AC shrinker.
As a word of warning, we should point out though 
that the authors can show that discrete symmetries of the asymptotic cone definitely need not be inherited by AC expander ends (unlike for AC shrinker ends). 

We note that  some related symmetry-inheritance results have been established in the context of codimension $1$ Mean Curvature Flow (MCF).
For instance, it has recently been proven that smooth solutions to MCF that are asymptotic to 
smooth rotationally-invariant double cones are also rotationally invariant. In particular, 
this implies that smooth self-expanders asymptotic to such cones are rotationally invariant~\cite{L:Chen:double:cone1}.
However, the method of proof, motivated by Alexandrov reflection, has no clear analogue in our intrinsic setting.

If indeed there is no smooth AC expander asymptotic to the~\Sp{2}-invariant shrinker cone one  should consider the question of the existence of non-solitonic forward evolutions 
of the shrinker. If there were some smooth forward-evolution under Laplacian flow flowing out of the shrinker cone
then, by parabolic rescaling, it should be possible to extract a smooth AC immortal solution that flows out of the cone C
but which is itself not an expander. 
This raises the following\\[0.5em]
\textbf{Question 3.} Is there a smooth immortal solution to Laplacian flow that flows out of the~\Sp{2}-invariant shrinker cone 
that is not an expander?
\\[0.2em]
One should also consider the possibility of flowing 
closed singular~\gtwo-structures when there is no instantaneously smooth forward evolution, \eg
is it possible to develop a theory of Laplacian flow maintaining isolated conical singularities?

The problem of studying singular solutions to Lagrangian MCF with isolated conical singularities modelled on 
(stable) special Lagrangian cones has been considered. The key difference here is that our shrinker cone, unlike a special Lagrangian cone, 
is not itself static under the flow;  so the problem has a potentially very different character, because the non-static nature of the cone itself 
should significantly influence the forward dynamics. It would of course also be possible to study Laplacian flow 
with isolated singularities modelled on (stable) torsion-free~\gtwo-cones; this would be much more analogous 
to the Lagrangian MCF flow problem just mentioned. While this seems an interesting question in itself it would not shed any light on the pressing issue of what to 
do in Laplacian flow if one encounters a finite-time singularity modelled on the explicit~\Sp{2}-invariant AC shrinker.

\medskip
\noindent
\emph{Acknowledgements.}
MH and JN would like to thank the Simons Foundation for its support of their
research under the Simons Collaboration on Special Holonomy in Geometry,
Analysis and Physics (grant \#488620 and \#488631).
This material is based in part upon work supported by the National Science Foundation
under Grant No. DMS-1928930, while MH and JN were in residence at the
Simons Laufer Mathematical Sciences Research Institute in Berkeley, California, during the
fall semester of 2024. RJ has been supported by a University
Research Studentship at the University of Bath and by Undergraduate Research
Bursary 19-20-82 from the London Mathematical Society.

\section{The \texorpdfgtwo{}-soliton equations}
\label{s:review}

This section is a review of the basics about Laplacian solitons 
and of the requisite facts about~\Sp{2}-invariant Laplacian solitons developed in~\cite{Haskins:Nordstrom:g2soliton1}. 
In particular, we explain how~\Sp{2}-invariant solitons on the product
of $\CP^3$ with an interval $I$ can be described in terms of a first-order ODE
system.

\subsection{Singularity formation and resolution in Laplacian flow and Laplacian solitons}

A \gtstr{} on a smooth 7-manifold $M$ can be defined in terms of a smooth
3-form $\varphi \in \Omega^3(M)$ such that
$\varphi \wedge (v \lrcorner \varphi)^2 \not= 0$ for every nonzero tangent vector $v$.
Such a ``positive'' 3-form $\varphi$ determines an orientation and a Riemannian metric.
The \gtstr{} is torsion-free if $d\varphi = d^* \varphi = 0$, in which case the
holonomy group of the metric is contained in $G_2$, see \eg Joyce~\cite[\S10]{Joyce:holonomy:book}.

A $1$-parameter family of closed~\gtstr s $\varphi(t)$ evolves according to the Laplacian flow if it satisfies
\begin{equation*}
\label{eq:LF}
\frac{\partial \varphi}{\partial t} = \Delta_{\varphi} \varphi
\end{equation*}
where $\Delta_\varphi$ is the Hodge Laplacian on $3$-forms determined by the evolving metric $g_{\varphi(t)}$. 
Clearly any torsion-free~\gtstr~gives rise to a fixed point of Laplacian flow and on a compact manifold integration by parts shows 
that these are the only fixed points. 

The next simplest class of solutions to 
Laplacian flow are its self-similar solutions, \ie where $\varphi$ evolves by a combination of scaling and diffeomorphisms. 
These are in one-to-one correspondence with Laplacian solitons. 
We call a triple $(\varphi,X,\lambda)$ consisting of a \mbox{\gtstr~}$\varphi$, 
a vector field $X$ and a constant $\lambda \in \R$  a \emph{Laplacian soliton} if the triple satisfies the 
following nonlinear system of  PDEs
\begin{equation}
\label{eq:LSE}
\left\{
  \begin{aligned}
    d \varphi &= 0,\\
\Delta_\varphi \varphi&= \lambda \varphi + \mathcal{L}_X \varphi.
      \end{aligned}\right.
\end{equation}
The constant $\lambda$ will be referred to as the dilation constant of the soliton. 
If we wish to emphasise the role of the dilation constant we will also sometimes write a $\lambda$-soliton. 

The Laplacian soliton equations form a diffeomorphism-invariant nonlinear system of overdetermined PDEs
and so a priori even local existence of solitons is unclear. 
The local generality of their solutions was studied by Bryant~\cite{Bryant:LS:local:generality}
using the methods of overdetermined PDEs.  Bryant's work 
gives a complete answer to the local theory when the vector field $X$ is generic. (However, in the special case of gradient Laplacian solitons, 
\ie when $X=\nabla f$ for some function $f$, even the local generality of solutions remains unclear; 
this is due to the existence of additional identities satisfied by gradient Laplacian solitons.)
As already mentioned in the introduction, for compact Laplacian solitons we know the following:
there are no shrinking solitons and all steady solitons are static. Moreover, 
it is unknown whether or not compact Laplacian expanders exist.
Therefore we focus on \emph{complete noncompact} Laplacian solitons. Such complete noncompact solitons 
can arise naturally when studying singularity formation and singularity resolution in Laplacian flow.

As in other geometric flows, the study of ancient solutions to Laplacian flow, \ie   solutions to the flow defined 
on a semi-infinite time interval extending back to $t=-\infty$, is fundamental to understanding finite-time singularities. 
They arise when taking parabolic blowups 
in space-time at finite-time singularities of the flow. 
Our current understanding of ancient solutions in Laplacian flow is very rudimentary, somewhat like the situation 
for ancient solutions to Ricci flow in high dimensions when one does not impose any kind of positivity of curvature. 
Shrinking and steady solitons provide the simplest classes of ancient solutions and
typically one has to understand such solitons before one can hope to  understand more general ancient solutions. 

Expanders on the other hand, are the simplest class of immortal solutions to the flow, \ie solutions defined on a semi-infinite time interval
extending to $t=+\infty$. One way in which expanders (or more general immortal solutions) can arise is as parabolic blowups for a solution to the flow that is smooth for $t>0$, 
but that emerges out of some singular initial data at $t=0$.  In the simplest instance, the singular initial data might be a cone with an isolated singularity. 
In this way, understanding expanders (especially AC ones) can be important in understanding how finite-time singularities that might appear in a geometric flow, 
could (or could not) be smoothed out again by the flow.

\subsection{\texorpdfSp{2}-invariant Laplacian solitons}

In this paper we study complete~\Sp{2}-invariant expanders and the end behaviours of~\Sp{2}-invariant expanders and shrinkers, 
building on the previous work of two of the authors on cohomogeneity-one Laplacian solitons~\cite{Haskins:Nordstrom:g2soliton1}.
We now recall the basic set-up from~\cite{Haskins:Nordstrom:g2soliton1} and restate the results we will need from that paper.

The $\Sp{2}$ action we consider has principal isotropy group $K=\unitary{1} \times \Sp{1}$ so that any principal orbit is diffeomorphic to $\CP^3$; any complete closed~\Sp{2}-invariant~\gtstr~must be noncompact, with
a unique singular orbit whose isotropy subgroup must be $\Sp{1} \times \Sp{1}$, \ie the singular orbit is a round $\Sph^4$.  Any complete closed $\Sp{2}$-invariant~\gtstr~must therefore be defined on $\Lambda^2_-\Sph^4$. The Bryant--Salamon AC~\gthol metric on $\Lambda^2_-\Sph^4$~\cite{Bryant:Salamon}  gives the
best-known instance of such a complete closed $\Sp{2}$-invariant~\gtstr.

For an interval $I$, consider closed \Sp{2}-invariant \gtstr s $\varphi$ on
$I \times \CP^3$, such that the coordinate vector field $\pd{}{t}$ has unit
length with respect to the induced metric.
Up to the action of a natural discrete symmetry group $\Z_2 \times \Z_2$, any
such $\varphi$ can be written as
\[
\varphi = %
(x^2 \omega_1 + y^2 \omega_2)\wedge dt + xy^2\, \alpha,
\]
where $(x,y): I \to \R^2$ is a positive pair satisfying the first-order ODE 
\[
(xy^2)' =  \tfrac{1}{2}x^2 + y^2,
\]
 $\omega_1, \omega_2$ is a certain basis for the invariant $2$-forms
on $\CP^3$, and $\alpha = 2\, d \omega_1 = d \omega_2$. 
 Associated with the pair $\omega_1, \omega_2$ is a pair of invariant bilinear forms $g_1$ and $g_2$; $g_1$ may be identified with an~\Sp{1}-invariant metric on $\Sph^2$, which is isometric to the round metric with constant curvature $1$, while 
$g_2$ can be identified with an~\Sp{2}-invariant metric 
on $\Sph^4$ which is the round metric with constant sectional curvature $\frac{1}{2}$. 

\begin{remark}
\label{rmk:vol:growth}
It is convenient to define the positive function $g$ by $g^3=xy^2$. 
The ODE expressing the closure of $\varphi$ is  then equivalent to 
\begin{equation}
\label{eq:g:dot}
g' = \frac{x^2+2y^2}{6g^2} = \frac{1}{6} \left(\frac{x}{y}\right)^{4/3} + \frac{1}{3} \left( \frac{y}{x} \right)^{2/3}.
\end{equation}
Hence given any bound on $\log{\frac{y}{x}}$ (\ie given lower and upper bounds on $\frac{y}{x}$) we will have a corresponding bound on $g'$. 
Therefore on any finite interval we will also have a bound on $g$ itself. 
Moreover, the power means inequality implies that 
\[
g' \ge \tfrac{1}{2}
\]
 with equality if and only if $x=y$.
In particular, any complete closed~\Sp{2}-invariant~\gtstr~has at least Euclidean volume growth, \ie large geodesic balls of radius $r$ grow at least like $r^7$.

The previous inequality also implies that no closed~\Sp{2}-invariant~\gtstr~can have an infinite backward existence time, because backwards $g$ decreases 
at least linearly. Thus a complete closed~\Sp{2}-invariant~\gtstr~ must be noncompact, have a unique singular orbit, a single end and be defined on $\Lambda^2_-\Sph^4$.
\end{remark}
The torsion $\tau$ of a closed~\gtstr~is the unique $2$-form of type $14$ determined by $d(*\varphi) = \tau \wedge \varphi$. 
In our setting $\tau$ is also~\Sp{2}-invariant and hence can be written as
$\tau = \tau_1 \omega_1 + \tau_2 \omega_2$ for functions $\tau_1$ and $\tau_2$.  These functions are given  in terms of $(x,y)$ and their first derivatives by
\begin{equation}
\label{eq:torsion:xy}
\tau_1 = (x^2)' - 2x + \frac{x^3}{y^2}, \quad \tau_2 = (y^2)' - x.
\end{equation}
The type $14$ condition on the torsion $2$-form $\tau$ in our setting is equivalent to 
\begin{equation}
\label{eq:tau1:tau2:sp2}
\frac{1}{x^2}\tau_1  +\frac{2}{y^2} \tau_2=0.
\end{equation}
Obviously this implies that $\tau_1$ and $\tau_2$ always have opposite
signs. Using~\eqref{eq:tau1:tau2:sp2} we can also replace $\tau_1$
in~\eqref{eq:torsion:xy} with $-\tfrac{2x^2}{y^2}\tau_2$.

The quantity
\begin{equation}
\label{eq:S:def}
S = y^2- x^2 - \frac{3}{2} x \tau_2,
\end{equation}
turns out to play an important role in our later analysis. 
The torsion equations~\eqref{eq:torsion:xy} are also equivalent to 
\begin{equation}
\label{eq:xydot:S}
x' = \frac{(x^2+2y^2)+4S}{6y^2}, \qquad y' = \frac{(x^2+2y^2)-2S}{6xy}.
\end{equation}
One reason for the importance of $S$ is that it appears in the derivative of the ratio $\frac{y}{x}$. 

\begin{lemma}
\label{lem:log_xy}
For any closed \Sp{2}-invariant \gtstr{} $(x,y)$ we have
\begin{equation}
\label{eq:d:dt:log:yx}
 \frac{d}{dt} \log \frac{y}{x} = -\frac{S}{g^3}.
 \end{equation}
\end{lemma}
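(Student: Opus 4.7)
The plan is to prove this identity by direct computation, using the explicit formulas for $x'$ and $y'$ given in \eqref{eq:xydot:S} together with the definition $g^3 = xy^2$. No deep ingredient is needed; this is essentially a bookkeeping exercise where the key observation is that the combination $x^2 + 2y^2$ cancels in the difference.

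More concretely, I would start from
\[
\frac{d}{dt} \log \frac{y}{x} = \frac{y'}{y} - \frac{x'}{x}
\]
and then substitute the expressions in \eqref{eq:xydot:S}. Dividing each expression through by the relevant factor yields
\[
\frac{y'}{y} = \frac{(x^2+2y^2) - 2S}{6 x y^2}, \qquad \frac{x'}{x} = \frac{(x^2+2y^2) + 4S}{6 x y^2},
\]
so that subtracting gives
\[
\frac{y'}{y} - \frac{x'}{x} = \frac{-2S - 4S}{6 x y^2} = -\frac{S}{xy^2}.
\]
Since $g^3 = xy^2$ by definition, this is exactly $-S/g^3$, as desired.

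The only subtlety worth flagging is the hypothesis: the identities in \eqref{eq:xydot:S} are equivalent to the torsion equations \eqref{eq:torsion:xy} together with the closure condition, and they already assume that $(x,y)$ comes from a closed \Sp{2}-invariant \gtstr{}, so the computation is legitimate exactly under the hypothesis of the lemma. There is no substantive obstacle; the result is effectively a rewriting of \eqref{eq:xydot:S} that isolates the geometric meaning of $S$ as (up to the factor $-g^{-3}$) the rate of change of the warping.
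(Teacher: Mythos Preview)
Your proof is correct and is exactly the approach indicated in the paper, which simply states that the identity is immediate from \eqref{eq:xydot:S} (relying only on $d\varphi = 0$). You have merely written out explicitly the one-line computation that the paper leaves implicit.
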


\begin{proof}
This is immediate from \eqref{eq:xydot:S} which relies only on $d\varphi = 0$.
\end{proof}

Using the structure equations $\alpha = 2 d \omega_1 = d\omega_2$ one readily
verifies that the Laplacian soliton equations \eqref{eq:LSE}
for a closed \Sp{2}-invariant \gtstr{} $\varphi$ and vector field
$X = u \, \partial_t$ reduce to the following mixed-order system of ODEs
\begin{subequations}
\label{eq:ODEs:Sp2}
\begin{align}
\label{eq:ODE:closure}
(xy^2)' &= \tfrac{1}{2}x^2 + y^2,\\
\label{eq:ODE:tau1}
(\tau_1 - ux^2)' &= \lambda x^2,\\
\label{eq:ODE:tau2}
(\tau_2 - uy^2)' & = \lambda y^2,\\
\label{eq:ODE:conserve}
\tau_1+2 \tau_2 &= u(x^2+2y^2) + 2\lambda xy^2,
\end{align}
\end{subequations}
where the components $\tau_1$ and $\tau_2$ of the torsion $2$-form $\tau = \tau_1 \omega_1 + \tau_2 \omega_2$ of the closed~\gtstr~$\varphi$ are 
given by \eqref{eq:torsion:xy}.

\begin{remark}
\label{rmk:scaling}
The scaling symmetry from Remark \ref{rmk:general_scaling} can be expressed
as the transformation
\begin{equation}
\label{eq:scaling}
(t,x,y, u, \lambda, \tau_2, S, \varphi, g_\varphi) \mapsto (\mu t, \mu x, \mu y,  \mu^{-1}u, \mu^{-2} \lambda,  \mu\tau_2, \mu^2S, \mu^3 \varphi, \mu^2 g_\varphi),
\end{equation}
which is indeed a symmetry of the ODE system \eqref{eq:ODEs:Sp2}.
\end{remark}

\subsection{First-order reformulation of the soliton ODE}

The mixed-order ODE system~\eqref{eq:ODEs:Sp2} for the quintuple $(x,y,\tau_1,\tau_2,u)$ turns out to be equivalent to a first-order system for the triple $(x,y,\tau_2)$.

\begin{prop}[\!\!{\cite[Proposition 5.24]{Haskins:Nordstrom:g2soliton1}}]
\label{prop:1st:order:tau2}
If $(x, y,  \tau_1, \tau_2, u)$ is any solution to the \Sp{2}-invariant soliton system 
\eqref{eq:ODEs:Sp2} then $(x,y,\tau_2)$ satisfies the rational
first-order ODE system
\begin{equation}
\label{eq:ODE:tau2'}
(x^2)' = 2x - \frac{x^2}{y^2} (x+ 2\tau_2), \qquad  (y^2)' = x + \tau_2, \qquad \tau_2' = \frac{4R_1 S}{3x(x^2+2y^2)},
\end{equation}
where 
\[R_1:=\lambda xy^2-3\tau_2,  \quad \ S:=y^2-x^2-\tfrac{3}{2}x\tau_2.
\]
Conversely, if $(x,y, \tau_2)$ is a solution to the first-order system \eqref{eq:ODE:tau2'} with $x,y>0$ 
and if we define $\tau_1$ and $u$ in terms of $(x,y, \tau_2)$ by
\begin{equation}
\label{eq:tau1:u:tau2}
\tau_1 := -\frac{2x^2}{y^2}\tau_2, \qquad u := \frac{2(y^2-x^2)\tau_2 -2 \lambda xy^4}{y^2(x^2+2y^2)},
\end{equation}
then $(x,y,\tau_1,\tau_2,u)$ solves the \Sp{2}-invariant soliton system \eqref{eq:ODEs:Sp2}.
\end{prop}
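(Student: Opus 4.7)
The plan is to view the reduction to \eqref{eq:ODE:tau2'} as an elimination procedure. The system \eqref{eq:ODEs:Sp2} has five unknowns $(x,y,\tau_1,\tau_2,u)$, but the type-14 relation \eqref{eq:tau1:tau2:sp2} and the conservation law \eqref{eq:ODE:conserve} furnish two algebraic constraints that let one solve for $\tau_1$ and $u$ in terms of $(x,y,\tau_2)$. The content of the proposition is that, after this elimination, the remaining dynamics is captured by a rational first-order system in just three unknowns, and that no information is lost along the way.

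For the forward direction, the equations for $(x^2)'$ and $(y^2)'$ in \eqref{eq:ODE:tau2'} come directly from the torsion formulas \eqref{eq:torsion:xy} after substituting $\tau_1 = -2x^2\tau_2/y^2$ via \eqref{eq:tau1:tau2:sp2}. The formula for $u$ in \eqref{eq:tau1:u:tau2} drops out of solving \eqref{eq:ODE:conserve} for $u$ and performing the same substitution.

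The main obstacle is the evolution equation for $\tau_2$: differentiating the explicit expression for $u$ directly produces a cumbersome mess of rational functions. My plan is instead to start from \eqref{eq:ODE:tau2} in the form $\tau_2' = (uy^2)' + \lambda y^2$, and to combine it with \eqref{eq:ODE:tau1} together with the derivative of \eqref{eq:tau1:tau2:sp2} in order to eliminate $u'$. Two clean identities simplify the remaining algebra dramatically. First, by inserting the formula for $u$, one obtains
\[ uy^2 + 2\tau_2 = -\frac{2y^2 R_1}{x^2+2y^2}. \]
Second, direct subtraction of the expressions for $x'$ and $y'$ in \eqref{eq:xydot:S} yields
\[ xy' - yx' = -\frac{S}{y}. \]
Assembling these, the combination of the torsion evolution equations can be brought to the form $3\tau_2' = 2(uy^2 + 2\tau_2)(xy' - yx')/(xy)$, and substituting the two identities above yields the claimed expression $\tau_2' = 4 R_1 S / (3x(x^2+2y^2))$.

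The converse direction is essentially a bookkeeping verification. Assuming $(x,y,\tau_2)$ solves \eqref{eq:ODE:tau2'} with $x,y>0$ and defining $\tau_1$, $u$ by \eqref{eq:tau1:u:tau2}, the closure identity \eqref{eq:ODE:closure} follows from a one-line calculation using the $x^2$ and $y^2$ equations; \eqref{eq:tau1:tau2:sp2} holds tautologically by the definition of $\tau_1$; and \eqref{eq:ODE:conserve} reduces to an algebraic simplification of the right-hand side against the left. The consistency of the defined $\tau_1$ and $\tau_2$ with the torsion expressions \eqref{eq:torsion:xy} is then immediate. Finally, \eqref{eq:ODE:tau1} and \eqref{eq:ODE:tau2} collapse to the $\tau_2'$ identity in \eqref{eq:ODE:tau2'}, with the verification running the forward computation in reverse. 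The denominators $y^2$ and $x^2+2y^2$ never vanish under the assumption $x,y>0$, so no subtleties arise in the algebra.
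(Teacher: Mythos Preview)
Your argument is correct. The forward derivation of the $\tau_2'$ equation via the identity $3\tau_2' = 2(uy^2+2\tau_2)(xy'-yx')/(xy)$ checks out: eliminating $u'$ from $y^2\tau_1' - x^2\tau_2'$ using \eqref{eq:ODE:tau1} and \eqref{eq:ODE:tau2} and then substituting $\tau_1' = -\tfrac{2x^2}{y^2}\tau_2' + \tfrac{4x\tau_2(xy'-yx')}{y^3}$ from the derivative of \eqref{eq:tau1:tau2:sp2} gives exactly this, and the two auxiliary identities you isolate are correct.

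One point worth making explicit in the converse direction: to recover \emph{both} \eqref{eq:ODE:tau1} and \eqref{eq:ODE:tau2} from the $\tau_2'$ equation, you need two independent linear relations between $(\tau_1-ux^2)'-\lambda x^2$ and $(\tau_2-uy^2)'-\lambda y^2$. Running your forward computation backwards yields $y^2[(\tau_1-ux^2)'-\lambda x^2] = x^2[(\tau_2-uy^2)'-\lambda y^2]$, and differentiating the already-verified \eqref{eq:ODE:conserve} (using \eqref{eq:ODE:closure}) yields $[(\tau_1-ux^2)'-\lambda x^2] + 2[(\tau_2-uy^2)'-\lambda y^2] = 0$. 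Since $x^2+2y^2 > 0$, these force both to vanish. Your phrase ``running the forward computation in reverse'' obscures that a second equation is needed here.

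The paper itself does not prove this proposition; it is quoted from \cite{Haskins:Nordstrom:g2soliton1}, so there is no in-paper argument to compare against. Your approach is essentially the natural one and is in the same spirit as the paper's later derivation of \eqref{eq:tau2':alt}, which is an equivalent reformulation of your key identity (since $uy^2+2\tau_2 = -\tilde\tau_1 y^2/x^2$ and $(h^2)' = 2y(xy'-yx')/x^3$).
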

Since $S$ is determined algebraically from $x$, $y$ and $\tau_2$ one can also derive an equivalent first-order ODE system
for the variables $(x,y,S)$ rather than $(x,y,\tau_2)$. Equations~\eqref{eq:xydot:S} above already
give the ODEs for $x'$ and $y'$ in terms of $x$, $y$ and $S$. 
The equation satisfied by $S'$ in terms of $x$, $y$ and $S$ plays a key role later in the paper, 
so we record that equation here.
\begin{lemma}
For any~\Sp{2}-invariant soliton $(x,y,S)$ the quantity
$S$ satisfies the equation
\begin{equation}
\label{eq:Sdot}
S' =\left( \frac{2xy^2}{x^2+2y^2}\right) (\slead - \sother S),
\end{equation}
where $\slead=\slead(\frac{y}{x})$, and $\slead$ is the function
\begin{equation}
\label{eq:alpha}
\slead(l) := \frac{1}{12}(l^2-1)(2 + l^{-2})^2,
\end{equation}
and 
\[
\sother := \lambda + \frac{ x^4+12x^2y^2-4y^4}{4x^2y^4} + \frac{(4y^2-x^2)S}{3x^2y^4}.
\]
\end{lemma}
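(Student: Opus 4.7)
My plan is to derive the ODE for $S$ by differentiating its definition $S = y^2 - x^2 - \tfrac{3}{2} x \tau_2$ and substituting the first-order ODEs for $x, y, \tau_2$ from Proposition \ref{prop:1st:order:tau2}, then to re-express the result in terms of $(x,y,S)$ by inverting the defining relation to write $\tau_2 = \frac{2(y^2 - x^2 - S)}{3x}$.

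Concretely, I would proceed as follows. First, differentiate:
\[
S' = (y^2)' - (x^2)' - \tfrac{3}{2} x' \tau_2 - \tfrac{3}{2} x \tau_2' ,
\]
where $x' = \tfrac{1}{2x}(x^2)'$. Using \eqref{eq:ODE:tau2'} for $(x^2)'$ and $(y^2)'$, the first three terms become a rational expression in $x$, $y$, $\tau_2$ with a common denominator $y^2$. For the last term, since $\tau_2' = \frac{4 R_1 S}{3x(x^2+2y^2)}$ with $R_1 = \lambda x y^2 - 3\tau_2$, we have
\[
\tfrac{3}{2} x \tau_2' = \frac{2 R_1 S}{x^2 + 2 y^2} = \frac{2\lambda x y^2 - 6 \tau_2}{x^2 + 2 y^2}\, S,
\]
which automatically provides the factor $(x^2 + 2y^2)^{-1}$ and a piece linear in $S$ containing $\lambda$.

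The second step is to put all terms over the common denominator $6 x y^2 (x^2 + 2y^2)$ and extract the factor $\frac{2xy^2}{x^2 + 2y^2}$ claimed in the statement. After this extraction, the residual polynomial splits into a part independent of $S$ and a part proportional to $S$. Using $\tfrac{3}{2} x \tau_2 = y^2 - x^2 - S$ (and hence $\tau_2^2 = \tfrac{4}{9 x^2}(y^2 - x^2 - S)^2$ wherever $\tau_2$ appears quadratically through the product $x' \tau_2$) one eliminates $\tau_2$ in favour of $S$, producing a closed-form expression in $(x,y,S)$.

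Finally, I would identify the two resulting pieces with $\alpha(y/x)$ and $-\beta$. Using $l^2 = y^2/x^2$, the $S$-independent piece should reduce to
\[
\frac{(y^2-x^2)(x^2+2y^2)^2}{12 x^2 y^4} = \tfrac{1}{12}(l^2 - 1)(2 + l^{-2})^2 = \alpha(l),
\]
which serves as a cross-check; indeed, this matches $S \equiv 0$ on closed cones since $\alpha(1) = 0$ and $S = 0$ there. The $S$-linear piece should reassemble into
\[
-\beta S = -\left(\lambda + \frac{x^4 + 12 x^2 y^2 - 4 y^4}{4 x^2 y^4} + \frac{(4y^2 - x^2) S}{3 x^2 y^4}\right) S ,
\]
where the $\lambda$ contribution comes entirely from the $\lambda x y^2$ piece of $R_1$, the $S$-quadratic term ($(4y^2-x^2)S/(3x^2 y^4)\cdot S$) comes from the $(y^2 - x^2 - S)^2$ expansion of $\tau_2^2$, and the remaining rational function of $x$ and $y$ collects the constant-in-$\lambda$, linear-in-$S$ terms.

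The main obstacle is purely one of algebraic bookkeeping: the computation involves several terms that look as if they might not share the denominator $6 x y^2 (x^2 + 2y^2)$, and the identification of the coefficient of $S^0$ with the factored form $(l^2-1)(2+l^{-2})^2/12$ is the step most likely to require care. No new geometric input is needed beyond the first-order system \eqref{eq:ODE:tau2'} and the algebraic identity relating $\tau_2$ and $S$.
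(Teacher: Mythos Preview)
Your proposal is correct and follows essentially the same route as the paper: differentiate the definition of $S$, substitute the first-order ODEs, and reorganise into the form $\frac{2xy^2}{x^2+2y^2}(\alpha-\beta S)$. The only minor difference is that the paper uses \eqref{eq:xydot:S} to express $x'$ and $y'$ directly in terms of $(x,y,S)$ from the start (writing $S' = 2yy' - (2x+\tfrac32\tau_2)x' - \tfrac32 x\tau_2'$ and noting $2x+\tfrac32\tau_2 = \tfrac{x^2+y^2-S}{x}$), which avoids the intermediate step of working in $(x,y,\tau_2)$ and then substituting $\tau_2 = \tfrac{2(y^2-x^2-S)}{3x}$; this slightly reduces the bookkeeping you flag as the main obstacle, but the content is identical.
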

\begin{proof}
From~\eqref{eq:xydot:S} and~\eqref{eq:ODE:tau2'}, 
$S'$ can be written in terms of $x$, $y$ and $S$ itself
as
\begin{align*} S' &= 2yy' - \left(2x + \tfrac{3}{2}\tau_2\right)x' - \tfrac{3}{2}x\tau_2' \\
&= \left(\frac{x^2 + 2y^2 - 2S}{3x}\right) + \left(\frac{S-x^2-y^2}{x}\right) \left( \frac{x^2+2y^2 + 4S}{6y^2}\right)
- \frac{2R_1S}{x^2 + 2y^2} \\
& = \frac{2xy^2}{x^2+2y^2} \left( -\frac{(x^2+2y^2)^2 (x^2-y^2)}{12x^2y^4} - \lambda S - \frac{ (x^4+12x^2y^2-4y^4 )S}{4x^2y^4} - \frac{(4y^2-x^2)S^2}{3x^2y^4} \right).
\end{align*}
The first coefficient in the parentheses above is equal to $\slead(\tfrac{y}{x})$ and 
the remaining ones give $-\sother S$. 
\end{proof}

\begin{remark}
\label{rmk:gaussian}
The simplest solutions to the soliton ODEs are the so-called \emph{Gaussian solitons} over the (unique) torsion-free~\Sp{2}-invariant closed~\gtwo-cone. 
These exist for any value of the dilation constant $\lambda$ and are characterised by $x = y = \frac{1}{2}t$, $\tau_1=\tau_2 \equiv 0$ 
and $u = -\frac{2\lambda}{3} t$. 

Clearly, these solutions satisfy $S \equiv 0$. Conversely, if $S \equiv 0$, then obviously $S' \equiv 0$
and hence $\slead \equiv 0$ by~\eqref{eq:Sdot}, which in turn implies that $x = y = \frac{1}{2}t$ and $\tau_1=\tau_2 \equiv 0$. 
In fact, if at some point $S$ and $S'$ both vanish then $y=x$ and $\tau_2=0$ at that point. 
Then by uniqueness of solutions to the first-order ODE system~\eqref{eq:ODE:tau2'} the soliton must be a Gaussian soliton. 
\end{remark}

\subsection{Adjusted torsion}
\label{ss:tilde_tau}

For later purposes it is convenient to define the quantities
\begin{equation}
\label{eq:tilde:tau}
 \tilde{\tau}_1:=\tau_1-ux^2 \quad \text{and} \quad  \tilde{\tau}_2:=\tau_2-uy^2,
\end{equation}
where $u$ is the coefficient of the soliton vector field $X=u \, \partial_t$.
We call these quantities the adjusted torsion of the soliton. 
Then~\eqref{eq:ODE:tau1} and~\eqref{eq:ODE:tau2} simply read
\begin{equation}
\label{eq:tt'}
\tilde \tau_1' = \lambda x^2, \quad \tilde \tau_2'= \lambda y^2,
\end{equation}
which leads to monotonicity of the adjusted torsion, and sometimes their signs
(and that of $u$) being preserved.
Note also that \eqref{eq:ODE:conserve} is equivalent to
\begin{equation}
\label{eq:ODE:conserve2}
\tilde \tau_1 + 2\tilde \tau_2 = 2\lambda xy^2.
\end{equation}
In particular, for any expander at least one of the $\tilde \tau_i$ must be positive at any instant
(and then by the lemma below it remains positive thereafter).

\begin{lemma}
\label{lem:tau:tilde:positive} For any~\Sp{2}-invariant expander 
the functions $\tilde \tau_1$ and $\tilde \tau_2$ are both increasing functions of $t$.
In particular,  the conditions $\tilde \tau_1 > 0$ and $\tilde \tau_2 > 0$ are (individually) preserved.
\end{lemma}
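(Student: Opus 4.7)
The proof is essentially immediate from the reformulation of the soliton ODEs in terms of the adjusted torsion. The plan is to simply invoke equation~\eqref{eq:tt'}, which states that
\[
\tilde\tau_1' = \lambda x^2, \qquad \tilde\tau_2' = \lambda y^2.
\]
For an expander we have $\lambda>0$ by definition, and a closed \Sp{2}-invariant \gtstr{} as in \eqref{eq:phi} has $x,y>0$ everywhere (this is part of the positivity hypothesis on $\varphi$). Hence both $\tilde\tau_1'$ and $\tilde\tau_2'$ are strictly positive wherever the solution is defined, so $\tilde\tau_1$ and $\tilde\tau_2$ are strictly increasing functions of $t$.

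For the second assertion, if $\tilde\tau_i(t_0)>0$ for some $t_0$, then monotonicity gives $\tilde\tau_i(t) > \tilde\tau_i(t_0) > 0$ for all $t>t_0$ at which the solution is defined. Thus the sign conditions $\tilde\tau_1>0$ and $\tilde\tau_2>0$ are each preserved under forward evolution.

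There is no real obstacle here; the only point worth flagging is that the result relies on the algebraic relation~\eqref{eq:ODE:conserve2} (equivalently, the conservation law \eqref{eq:ODE:conserve}) being packaged together with \eqref{eq:ODE:tau1}, \eqref{eq:ODE:tau2} to yield the simple form \eqref{eq:tt'}---which is precisely the reason for introducing the adjusted torsion variables $\tilde\tau_i = \tau_i - u\,(\cdot)^2$ in the first place.
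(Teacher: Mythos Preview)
Your proof is correct and matches the paper's own argument exactly: both simply invoke $\tilde\tau_1' = \lambda x^2$ and $\tilde\tau_2' = \lambda y^2$ together with $\lambda>0$ and $x,y>0$. One small inaccuracy in your closing remark: the identity \eqref{eq:tt'} is nothing more than \eqref{eq:ODE:tau1}--\eqref{eq:ODE:tau2} rewritten via the definitions $\tilde\tau_i = \tau_i - u\,(\cdot)^2$, and does not rely on the conservation law \eqref{eq:ODE:conserve}/\eqref{eq:ODE:conserve2}.
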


\begin{proof}
This follows immediately from $\tilde\tau_1' = \lambda x^2$ and $\tilde \tau_2' = \lambda y^2$.
\end{proof}

\begin{lemma}
\label{lemma:u:sign} For any~\Sp{2}-invariant soliton, 
if $\tilde \tau_1$ and $\tilde \tau_2$ have the same sign then $u$ has the opposite sign.
\end{lemma}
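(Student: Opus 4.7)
The proof is a direct algebraic manipulation based on the type-14 torsion constraint \eqref{eq:tau1:tau2:sp2} combined with the definition \eqref{eq:tilde:tau} of the adjusted torsion. The plan is to rewrite the constraint in terms of $\tilde\tau_1$, $\tilde\tau_2$ and $u$, obtaining an explicit linear expression for $u$ as a negative combination of $\tilde\tau_1/x^2$ and $\tilde\tau_2/y^2$, from which the sign assertion is immediate.

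Concretely, first I would substitute $\tau_1 = \tilde\tau_1 + u x^2$ and $\tau_2 = \tilde\tau_2 + u y^2$ into the type-14 relation \eqref{eq:tau1:tau2:sp2}, which reads $\tfrac{1}{x^2}\tau_1 + \tfrac{2}{y^2}\tau_2 = 0$. This gives
\[
\frac{\tilde\tau_1}{x^2} + \frac{2\tilde\tau_2}{y^2} + 3u = 0,
\]
so that
\[
u = -\frac{1}{3}\left(\frac{\tilde\tau_1}{x^2} + \frac{2\tilde\tau_2}{y^2}\right).
\]
Since $x^2, y^2 > 0$ on any genuine soliton, the bracket on the right inherits its sign from $\tilde\tau_1$ and $\tilde\tau_2$: if both are nonnegative (and not both zero) then it is strictly positive, forcing $u < 0$; if both are nonpositive (and not both zero) then it is strictly negative, forcing $u > 0$. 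This is exactly the claim.

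There is essentially no obstacle — the only thing to double-check is that the type-14 relation is indeed available in the soliton setting (it is, by closedness of $\varphi$, independent of the soliton equations), and that the factor of $3$ comes out correctly, i.e.\ that $u x^2 \cdot \tfrac{1}{x^2} + u y^2 \cdot \tfrac{2}{y^2} = 3u$, which is immediate. As a sanity check, this formula is of course also consistent with the closed-form expression for $u$ in \eqref{eq:tau1:u:tau2}: solving $\tilde\tau_2 = \tau_2 - uy^2$ for $u$ using $u$ from \eqref{eq:tau1:u:tau2} and $\tau_2$ from the identity \eqref{eq:tau1:tau2:sp2} recovers the same linear relation.
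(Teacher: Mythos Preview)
Your proof is correct and follows the same approach as the paper: substituting the definitions of $\tilde\tau_1$ and $\tilde\tau_2$ into the type-14 condition \eqref{eq:tau1:tau2:sp2} to obtain $u = -\tfrac{1}{3}\bigl(\tfrac{\tilde\tau_1}{x^2} + \tfrac{2\tilde\tau_2}{y^2}\bigr)$, from which the sign claim is immediate.
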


\begin{proof}
Applying the type $14$ condition~\eqref{eq:tau1:tau2:sp2} on $\tau$ to the definition of the $\tilde \tau_i$ implies that
\begin{equation}
\label{eq:u:tilde:tau}
u = - \frac{1}{3} \left( \frac{\tilde \tau_1}{x^2} + \frac{2 \tilde \tau_2}{y^2} \right).
\vspace{-1em}
\end{equation}
\end{proof}

\begin{remark}
\label{rmk:R1:R2}
For later purposes it is useful to have alternative expressions for the
quantities $\tilde{\tau}_1$ and~$\tilde{\tau}_2$.
From~\eqref{eq:u:tilde:tau} we have
\[ 
-3uy^2 = 2 \tilde \tau_2 +\tilde \tau_1 \frac{y^2}{x^2}.
\]
Therefore
\begin{equation}
\label{eq:tau2:tildetau1}
3 \tau_2 + \frac{y^2}{x^2} \tilde \tau_1 =3 \tau_2   - 3uy^2 -2 \tilde \tau_2 = 3 \tilde \tau_2 - 2 \tilde \tau_2 = \tilde \tau_2.
\end{equation}
From~\eqref{eq:tau1:tau2:sp2} and~\eqref{eq:ODE:conserve}  we have
\begin{equation}
\label{eq:u:tau1:tau2}
-ux^2 = \frac{2\lambda x^3y^2 - (x^2-y^2) \tau_1}{x^2+2y^2}, \qquad -uy^2 = 2\left(\frac{\lambda xy^4 - (y^2-x^2) \tau_2}{x^2+2y^2}\right)
\end{equation}
and therefore
\begin{equation}
\label{eq:tilde:tau1:tau2}
\tilde{\tau}_1 = \tau_1 - ux^2 = \frac{y^2(3\tau_1+2\lambda x^3)}{x^2+2y^2} = \frac{2x^2 R_1}{x^2+2y^2}, \quad \tilde{\tau}_2 = \tau_2 - uy^2 = \frac{3x^2\tau_2+2\lambda xy^4}{x^2+2y^2} = \frac{x^2R_2}{x^2+2y^2},
\end{equation}
where 
\[
R_1:= \lambda xy^2-3 \tau_2, \qquad R_2:= \frac{2\lambda y^4}{x}+3 \tau_2.\]
Note that $R_1$ already appeared on the right-hand side of the ODE for $\tau_2'$ in the system~\eqref{eq:ODE:tau2'}.
\end{remark}

One of the uses of the adjusted torsion is that the torsion itself can be
recovered from it together with the warping. Write $h=\frac{y}{x}$. 
The relation
\begin{equation}
\label{eq:tau2:tilde:taui}
3 \tau_2 = - h^2 \tilde \tau_1 + \tilde \tau_2
\end{equation}
holds by the definition of the $\tilde \tau_i$ and the type $14$ condition. 
Differentiating~\eqref{eq:tau2:tilde:taui} and using~\eqref{eq:ODEs:Sp2}
implies 
\begin{equation}
\label{eq:tau2':alt}
 \tau_2'
= -\frac{1}{3} \tilde \tau_1  (h^2)'.
\end{equation}
Now write $k=\frac{x}{y}$. The equation satisfied by $\tau_1'$ can be written
as
\[
\tau_1' = - \frac{2}{3} \tilde \tau_2  (k^2)'.
\]
Indeed, since $\tau_1 = -2 k^2 \tau_2$, equation~\eqref{eq:tau2:tilde:taui} is
equivalent to $\tau_1 = - \frac{2}{3} (k^2 \tilde \tau_2 - \tilde \tau_1)$
and differentiating this yields the claimed expression for $\tau_1'$.

\section{Complete expanders}
\label{sec:complete}

It follows immediately from the %
first-order reformulation of the soliton ODE system in
Proposition \ref{prop:1st:order:tau2} for that any $\lambda \neq 0$ there is a
$2$-parameter family of distinct local \Sp{2}-invariant 
$\lambda$-solitons and that (on the open dense set of principal orbits) these solutions are real-analytic functions of $t$.

The generic member of this $2$-parameter family of local \Sp{2}-invariant
non-steady Laplacian solitons does not however extend to a complete Laplacian
soliton on $\Lambda^2_-\Sph^4$. A first necessary condition for such a complete
extension is that the solution must extend smoothly over
the unique singular orbit $\Sph^4$, so we begin by reviewing the results
from \cite[\S6]{Haskins:Nordstrom:g2soliton1} on such solutions.
Understanding which solutions have a complete end is in general more difficult. 
In this section we establish some
rudimentary control that will suffice to prove that every smoothly-closing
\Sp{2}-invariant expander is, in fact, complete with an end that is AC in at least a weak
sense.

\subsection{Solitons extending smoothly over the singular orbit}
\label{subsec:smoothly_closing}

Recall that the functions $x$ and $y$ defining the \Sp{2}-invariant
\gtstr{} in \eqref{eq:phi} control the scale of the fibre and base,
respectively, of the twistor fibration $\CP^3 \to \Sph^4$.
Extending smoothly over $\Sph^4$ requires that at a boundary point of the
interval on which $x$ and $y$ are defined, $x \to 0$ at a certain rate,
while $y$ has a positive limit~$b$.
(The volume of the special orbit $\Sph^4$ is then proportional to $b^4$.)
In~\cite[Theorem B]{Haskins:Nordstrom:g2soliton1},  two of the authors proved that among the 
$2$-parameter family of local \Sp{2}-invariant non-steady Laplacian solitons there is a $1$-parameter family of smoothly-closing solitons.

\begin{theorem}
\label{thm:Sp2:smooth:closure}
Fix any $\lambda \in \R$ and $b>0$. Then there exists a unique local (that is,
defined on some neighbourhood of the singular orbit $\Sph^4$) \Sp{2}-invariant
$\lambda$-soliton $\spfam{\lambda}{b}$ which closes smoothly on 
the singular orbit $\Sph^4 \subset \Lambda^2_-\Sph^4$ and which satisfies
\[
x= t + t^3 \hat{x}, \quad y = b + t^2 \hat{y}, \quad \tau_1 = t^3 \hat{\tau}_1, \quad \tau_2 = t \hat{\tau}_2,
\]
where the functions $\hat{x}, \hat{y}$ and $\hat{\tau}_i$ satisfy the initial conditions
\[
\hat{x}(0) = -\frac{1}{6b^2} - \frac{2\lambda}{27}, \quad
\hat{y}(0) = \frac{\left( 2\lambda b^2+9 \right)} {36b}, \quad
\hat{\tau}_1(0) = -\frac{4\lambda}{9}, \quad 
\hat{\tau}_2(0) = \frac{2}{9} \lambda b^2.
\]
The solution $\spfam{\lambda}{b}$ is real analytic on $[0,\epsilon) \times \CP^3$ for some $\epsilon >0$ (depending on $b$ and $\lambda$).
Moreover, up to a natural $\Z_2 \times \Z_2$-action, any \Sp{2}-invariant \gtwo-soliton which closes smoothly on the singular orbit $\Sph^4 \subset \Lambda^2_-\Sph^4$ belongs to this $1$-parameter family of solutions.
\end{theorem}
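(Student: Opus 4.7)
The plan is to apply the Eschenburg--Wang framework for smooth extension over a singular orbit in a cohomogeneity-one problem, recasting the smoothness requirement as a regular singular initial value problem for the first-order system in Proposition~\ref{prop:1st:order:tau2}.

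First I would analyse smoothness of the Sp(2)-invariant $3$-form~\eqref{eq:phi} at the zero section $\Sph^4$, where the principal orbit $\CP^3$ collapses by shrinking the twistor $\Sph^2$-fibre. A parity analysis of the associated equivariant bundles shows that, modulo the action of a natural discrete group $\Z_2 \times \Z_2$ by sign changes, any smooth solution must have $x$ odd in $t$ with $x'(0) = 1$ (after a translation fixing $\Sph^4$ at $t = 0$), $y$ even in $t$ with $y(0) = b > 0$, $\tau_1$ vanishing to order at least $3$, and $\tau_2$ vanishing to order exactly $1$. This forces the ansatz $x = t + t^3 \hat{x}$, $y = b + t^2 \hat{y}$, $\tau_1 = t^3 \hat{\tau}_1$, $\tau_2 = t \hat{\tau}_2$ given in the statement.

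Next I would substitute this ansatz into the soliton system~\eqref{eq:ODEs:Sp2} (or equivalently the first-order reformulation~\eqref{eq:ODE:tau2'} together with the reconstruction~\eqref{eq:tau1:u:tau2} and the type-14 relation~\eqref{eq:tau1:tau2:sp2}) and match the leading-order Taylor coefficients at $t = 0$. The closure equation determines $\hat{y}(0)$ in terms of $b$ and $\lambda$; the ODE for $x^2$ yields $\hat{x}(0) = -1/(6b^2)$; and the remaining soliton equations give the stated values of $\hat{\tau}_i(0)$. I would then rewrite the resulting system for $\hat{v} = (\hat{x}, \hat{y}, \hat{\tau}_2)$ (with $\hat{\tau}_1$ recovered algebraically from the type-14 condition) in the form
\begin{equation*}
t\, \frac{d \hat{v}}{dt} = L \bigl( \hat{v} - \hat{v}(0) \bigr) + t\, F(t, \hat{v}),
\end{equation*}
where $L$ is a constant matrix obtained by linearising at $t = 0$ and $F$ is real-analytic in a neighbourhood of $(0, \hat{v}(0))$. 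A Malgrange/Eschenburg--Wang existence theorem for such regular singular IVPs then produces a unique real-analytic solution on some $[0, \epsilon)$, provided no positive integer appears in the spectrum of $L$.

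For the exhaustiveness clause, I would observe that any Sp(2)-invariant Laplacian soliton extending smoothly over $\Sph^4$ must, after applying an element of $\Z_2 \times \Z_2$ and translating so that the singular orbit sits at $t = 0$, fit the ansatz of the first step with initial values uniquely determined by $b$ and $\lambda$; the uniqueness of the SIVP then identifies it with $\spfam{\lambda}{b}$. The main obstacle is the spectral verification for $L$ --- showing that no positive integer eigenvalue arises. This is a finite linear algebra computation, but it is delicate: the dimension of the family of smoothly-closing solutions at fixed $\lambda$ (which should come out to be $1$, parametrised by $b$) is precisely what the spectrum of $L$ dictates, and so a miscount of resonances would change the dimension of the moduli.
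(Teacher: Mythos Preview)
The paper does not actually prove this theorem here: it is quoted from \cite[Theorem B]{Haskins:Nordstrom:g2soliton1}, and the introduction explicitly points to the Eschenburg--Wang framework \cite{eschenburg:wang} as the systematic method for such smooth-extension problems. Your outline is precisely that approach---parity analysis at the singular orbit to fix the ansatz, substitution to determine the leading coefficients, and reduction to a regular singular IVP whose unique solvability hinges on the spectrum of the linearisation---so your proposal agrees with the paper's (cited) method.

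One small comment on your final paragraph: the parameter $b$ is already baked into the ansatz as the initial value $y(0)$, so it is not something the spectrum of $L$ produces. The spectral condition you need is simply that $L$ has \emph{no} positive integer eigenvalues, which guarantees that the formal power series solution with the stated constant terms $\hat v(0)$ is unique (no free higher-order coefficients) and converges. That is indeed a finite linear-algebra check, and you are right that it is the only nontrivial step left; the cited paper carries it out.
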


\begin{remark}
\label{rmk:scaling_sc}
Rescaling $\spfam{\lambda}{b}$ by a length factor $\mu > 0$, as in
Remark \ref{rmk:scaling}, yields a smoothly-closing $\mu^{-2}\lambda$-soliton
with initial condition $y(0) = \mu b$.
By the uniqueness, that must be precisely $\spfam{\mu^{-2}\lambda}{\mu b}$.
Hence for $\lambda \neq 0$ we can think of the solutions from Theorem \ref{thm:Sp2:smooth:closure}
as a 1-parameter family up to scale, with scale-invariant parameter
$\lambda b^2$. Positive values of this scale-invariant parameter correspond to expanders and negative values to shrinkers. 

For $\lambda=0$ there is instead a unique smoothly-closing steady soliton up to scale. The Bryant--Salamon AC torsion-free~\gtstr s on $\Lambda^2_-\Sph^4$ 
with vanishing soliton vector field $X$ clearly give smoothly-closing~\Sp{2}-invariant steady solitons. Hence by uniqueness the smoothly-closing steady solitons
from Theorem~\ref{thm:Sp2:smooth:closure} must in fact be the Bryant-Salamon (static) solutions (which recall are unique up to scale). In particular, its asymptotic cone is the cone over the unique~\Sp{2}-invariant nearly K\"ahler structure on~$\CP^3$, which has $x=y=\frac{1}{2}t$. 
\end{remark}

Since $x$, $y$, $u$, $\tau_1$ and $\tau_2$ are real analytic on $[0,\epsilon)$ for some $\epsilon>0$, one can ask what are
their power series expansions at $t=0$. 
The proof of Theorem~\ref{thm:Sp2:smooth:closure}  also shows that the functions $x$, $\tau_1$, $\tau_2$ and $u$ are all odd functions of $t$, while $y$ is an even function of $t$.
A term-by-term calculation 
gives the following expressions for the leading coefficients
\begin{equation}
\label{eq:sp2:expansion}
\begin{aligned}
x &= t - \frac{t^3}{54b^2} \left(4\lambda b^2+9\right) + \cdots &
y &= b + \frac{t^2}{36b} \left( 2\lambda b^2+9 \right)  + \cdots \\
\tau_1 & = -\frac{4\lambda}{9} t^3 + \frac{2\lambda t^5}{405b^2} \left(26b^2\lambda + 81  \right) + \cdots &
\tau_2 &= \frac{2\lambda b^2}{9} t  - \frac{2\lambda t^3}{1215} \left( 4b^2\lambda+9  \right) + \cdots\\
u & = - \frac{7\lambda }{9}t + \frac{4\lambda t^3}{1215b^2} \left(13b^2 \lambda + 63 \right) + \cdots 
\end{aligned}
\end{equation}
Computer-assisted symbolic calculation allows us to determine many further
higher-order coefficients.  This is helpful when doing numerical simulations of
the ODE system, because rather than trying to numerically solve this ODE system
close to the singular point that occurs at $t=0$, we can evaluate these
polynomial approximations to $x$, $y$ and $\tau_2$ at some small positive $t$
and use those as our (approximate) nonsingular initial data for the ODE system. 

\begin{remark*}\hfill
\begin{enumerate}[left=0em]
\item Note that $\lambda$ appears as a factor in all the coefficients of $\tau_1$, $\tau_2$ and $u$ in~\eqref{eq:sp2:expansion}.
This is consistent with any smoothly-closing~\Sp{2}-invariant steady soliton being static with vector field $X=0$.  

\item
The expansions~\eqref{eq:sp2:expansion} are compatible with $x$, $b^{-1}y$, $\tau_1$
and $b^{-2}\tau_2$ having well-defined limits as $b \to \infty$. 
We refer the reader to Remark~\ref{rmk:ell_sharp} for a discussion of a closely-related issue. 
\end{enumerate}
\end{remark*}

\begin{remark}
\label{rmk:init_signs}
We can also use the expansions~\eqref{eq:sp2:expansion} to determine the initial signs
of various quantities. In particular, for any smoothly-closing \Sp{2}-invariant expander $\spfam{\lambda}{b}$ 
we find that the four quantities
\[
y-x, \quad \tilde\tau_2 - \tilde\tau_1, \quad S \quad \text{and} \quad \tau_2,
\]
are all positive 
for $t>0$ sufficiently small. 
Later in the section we prove that all four conditions above persist under forward-evolution of the expander ODEs. 
\end{remark}

Given Theorem~\ref{thm:Sp2:smooth:closure}, to find all complete
\Sp{2}-invariant solitons it remains to understand which of the
smoothly-closing ones with $\lambda \not= 0$ are also forward complete.
For any complete~\Sp{2}-invariant soliton one would then also like to understand its asymptotic geometry.

We consider the $\lambda > 0$ case in detail beginning in the next section, where
we use the inequalities from Remark
\ref{rmk:init_signs} to prove that all smoothly-closing expanders
$\spfam{\lambda}{b}$ are forward complete.

For $\lambda<0$, Example \ref{ex:explicit_shrinker} is an explicit complete
AC shrinker, asymptotic to the closed (but non-torsion-free) \Sp{2}-invariant
\gtwo--cone defined by $x=t$, $y= \frac{1}{2}t$.
As we will discuss in Section~\ref{sec:ends}, AC end behaviour is non-generic
for shrinkers. In fact, we conjecture that this is the
unique complete \Sp{2}-invariant AC shrinker up to scale.

\begin{remark}
\label{rmk:explicit_shrinker} 
Example \ref{ex:explicit_shrinker} has $\lambda = -\frac{9}{4b^2}$, $x = t$
and $y = \sqrt{b^2 + \frac{t^2}{4}}$, so up to scale we can identify it with the
smoothly-closing solution $\spfam{-1}{\frac32}$.
(It is already clear from the series expansions
\eqref{eq:sp2:expansion} that the series for $x$ and $\tau_2$ simplify
significantly when $4\lambda b^2+9=0$.)
\end{remark}

\subsection{Forward completeness and weak asymptotic conicality}
\label{s:Sp2:expanders:completeness}

In combination, the first two parts of Theorem \ref{mthm:warping} mean that
an \Sp{2}-invariant soliton whose warping $\frac{y}{x}$ converges to
a non-zero limit forward in time must be weakly AC.

The basic incompleteness criterion in Theorem \ref{mthm:warping}(i)
is, in fact, a special case of a
more general finite extinction time criterion for~\sunitary{3}-invariant
solitons (which holds regardless of the value of the dilation constant).
More specifically, it follows by setting $f_2=f_3$
in~\cite[Proposition~5.16]{Haskins:Nordstrom:g2soliton1}.

\begin{prop}
\label{prop:lifetime}
If an \Sp{2}-invariant soliton has finite lifetime, then $\log \frac{y}{x}$
is unbounded.
\end{prop}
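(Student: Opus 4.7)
The plan is to argue the contrapositive: assuming $\log \frac{y}{x}$ stays bounded on the maximal forward interval of existence $[t_0, T)$, I want to show $T = \infty$. The strategy is to keep the triple $(x, y, \tau_2)$ that parametrises the first-order system \eqref{eq:ODE:tau2'} in a compact subset of $\{x > 0, \, y > 0\} \times \R$ on every finite subinterval of $[t_0, T)$; standard ODE extension theory then forbids $T$ from being finite.

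The first step is essentially already contained in Remark \ref{rmk:vol:growth}: two-sided bounds on $\frac{y}{x}$ produce a two-sided bound on $g'$, with the universal lower bound $g' \geq \tfrac{1}{2}$ always in hand. On any finite interval $[t_0, T)$ this gives $g$ bounded above, and, since $g$ is monotone increasing from $g(t_0) > 0$, also bounded below away from zero. Combined with the bounds on $\frac{y}{x}$ and the identity $g^3 = xy^2$, this forces $x$ and $y$ to be bounded above and bounded below away from zero throughout $[t_0, T)$.

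The key step is to control $\tau_2$. The ODE \eqref{eq:ODE:tau2'} contains a term quadratic in $\tau_2$ (coming from the $R_1 S$ numerator, whose expansion has a summand $\tfrac{9}{2} x \tau_2^2$), so a direct estimate need not rule out finite-time blow-up of $\tau_2$ even once $x$ and $y$ are controlled. I would resolve this by passing to the adjusted torsion: by \eqref{eq:tt'} the quantities $\tilde\tau_1$ and $\tilde\tau_2$ satisfy $\tilde\tau_1' = \lambda x^2$ and $\tilde\tau_2' = \lambda y^2$, both of which are bounded in absolute value once $x$ and $y$ are. Hence $\tilde\tau_1$ and $\tilde\tau_2$ stay bounded on any finite forward interval. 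Equation \eqref{eq:u:tilde:tau} then bounds $u$, using the lower bounds on $x^2$ and $y^2$, and reassembling $\tau_2 = \tilde\tau_2 + u y^2$ supplies the missing bound. With $(x, y, \tau_2)$ confined to a compact subset of the domain of \eqref{eq:ODE:tau2'}, the solution extends past $T$, contradicting finite lifetime.

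The main obstacle is precisely the nonlinearity of the $\tau_2$ equation: after controlling $x, y$ via the $g$-bound, one still cannot run a direct Gronwall argument on $\tau_2$ itself. The essential observation is that the adjusted torsion, defined by subtracting off the $u$-contributions, satisfies a trivially integrable ODE, so that boundedness of $\tau_2$ follows by algebraic reassembly rather than by any delicate differential inequality. Once that reformulation is in place, the argument is a routine extension-by-compactness.
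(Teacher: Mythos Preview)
Your argument is correct. The contrapositive strategy and the two-step control---first bounding $g$ (hence $x$ and $y$) on finite intervals via Remark~\ref{rmk:vol:growth}, then using the linear ODEs \eqref{eq:tt'} for the adjusted torsion to bypass the quadratic nonlinearity in $\tau_2'$ and recover a bound on $\tau_2$ by algebraic reassembly---are sound, and the extension theorem finishes the job.

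The paper does not supply its own proof here: it simply cites \cite[Proposition~5.16]{Haskins:Nordstrom:g2soliton1}, a more general finite-extinction criterion for \sunitary{3}-invariant solitons, and observes that the present statement is the special case $f_2 = f_3$. Your write-up therefore provides a self-contained argument where the paper defers to an external reference. The use of the adjusted torsion to linearise the problem is very much in the spirit of the machinery developed in that paper series, so the underlying idea is likely the same; what you gain is that the reader need not consult the earlier paper.
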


In other words, one of $\frac{y}{x}$ and $\frac{x}{y}$ must be unbounded above.
We can in fact replace the condition on $\frac{y}{x}$ by $\frac{1}{x}$. 

\begin{corollary}
\label{cor:lifetime}
If an \Sp{2}-invariant soliton has finite lifetime, then $\frac{x}{y}$ or
$\frac{1}{x}$ is unbounded above. 

In particular, $x$ cannot be bounded both above and away from 0.
\end{corollary}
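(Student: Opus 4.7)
The plan is to derive both claims from Proposition~\ref{prop:lifetime}, combined with the closure equation~\eqref{eq:ODE:closure} and the monotonicity of $g = (xy^2)^{1/3}$ from Remark~\ref{rmk:vol:growth}.

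For the main assertion I would argue by contradiction. Suppose the soliton has finite lifetime $t_* < \infty$ on its forward domain $[t_0, t_*)$, but that both $\frac{x}{y}$ and $\frac{1}{x}$ are bounded above there, say $\frac{x}{y} \leq C_1$ and $x \geq c > 0$. The aim is to contradict Proposition~\ref{prop:lifetime} by showing $\log \frac{y}{x}$ remains bounded. Using $x \leq C_1 y$ to bound $x^2$ by $C_1^2 y^2$, and then $y^2 \leq xy^2/c$ (from $x \geq c$), the closure equation yields
\[
 (xy^2)' \;=\; \tfrac{1}{2}x^2 + y^2 \;\leq\; \bigl(\tfrac{1}{2}C_1^2 + 1\bigr) y^2 \;\leq\; \frac{\tfrac{1}{2}C_1^2 + 1}{c}\, xy^2.
\]
Gronwall's inequality then forces $xy^2$ to stay bounded on the finite interval $[t_0, t_*)$. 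Combined with $x \geq c$, this gives an upper bound on $y$, and together with $x \leq C_1 y$ also an upper bound on $x$; the lower bound $y \geq x/C_1 \geq c/C_1$ follows for free. Hence $x$ and $y$ take values in a compact subset of $\R_{>0}$, so $\log \frac{y}{x}$ is bounded---contradicting Proposition~\ref{prop:lifetime}.

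For the ``in particular'' assertion, suppose $c \leq x \leq C$ throughout $[t_0, t_*)$ with $t_* < \infty$. Then $\frac{1}{x} \leq 1/c$ is bounded above, so the main assertion just proved forces $\frac{x}{y}$ to be unbounded above; since $x \leq C$, there must be a sequence $t_n \to t_*$ with $y(t_n) \to 0$. But then $g(t_n)^3 = x(t_n)y(t_n)^2 \to 0$, contradicting the fact that $g' \geq \tfrac{1}{2}$ from Remark~\ref{rmk:vol:growth} forces $g(t) \geq g(t_0) > 0$ throughout $[t_0, t_*)$.

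The main (and really the only) subtlety I anticipate is arranging the Gronwall estimate without any a priori upper bound on $x$; the argument above handles this by using $x \leq C_1 y$ to absorb $x^2$ into $y^2$, with the upper bound on $x$ emerging only afterwards from the boundedness of $xy^2$.
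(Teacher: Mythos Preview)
Your proof is correct and follows essentially the same approach as the paper's. The only cosmetic differences are that the paper bounds the logarithmic derivative $\frac{d}{dt}\log(xy^2) = \frac{1}{2x}\bigl(\frac{x^2}{y^2}+2\bigr)$ directly rather than invoking Gronwall, and for the ``in particular'' clause it uses the identity $\frac{x^2}{y^2} = \frac{x^3}{g^3}$ (with $g^3$ increasing) to see immediately that $x$ bounded above forces $\frac{x}{y}$ bounded above---the same monotonicity fact you exploit, just packaged more directly.
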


\begin{proof}
If $\frac{1}{x}$ and $\frac{x}{y}$ are both bounded above, then so is
\[ \frac{d}{dt} \log (xy^2) = \frac{1}{2x}\left(\frac{x^2}{y^2} + 2\right). \]
Thus $xy^2$ is bounded above in finite time, and hence so is
$\frac{y}{x} = (xy^2)\frac{1}{x^3}\frac{x}{y}$.

If $x$ is bounded above, then certainly $\frac{x}{y}$ is bounded above because
$\frac{x^2}{y^2} = \frac{x^3}{g^3}$ and 
$g^3=xy^2$ is increasing.
\end{proof}

\begin{remark}
In \S \ref{ss:fc_non_ac} we discuss the existence of
forward-complete expanders and shrinkers for which $\frac{y}{x}$ and $\frac{x}{y}$, respectively, are unbounded above as $t \to \infty$. 
Their asymptotic behaviour %
differs dramatically from the AC behaviour exhibited
by solutions for which $\log{\frac{y}{x}}$ is forward-bounded. 
\end{remark}

An obvious corollary is that if we can establish that $\log \frac{y}{x}$ remains forward-bounded, then its lifetime is infinite and so the solution is forward complete. 
In this case we can ask what the asymptotic geometry of the solution is as $t \to \infty$. 

We now prove Theorem \ref{mthm:warping}(ii), namely,  that %
if we further assume that the bounded ratio
$\log \frac{y}{x}$ actually converges to a limit, then the \gtstr{} is
weakly AC in the sense of \eqref{eq:weak_ac_def}.

\begin{lemma}
\label{lem:yx:bounded}
For any closed \Sp{2}-invariant~\gtstr~$\varphi$ such that $\frac{y}{x}$ has a 
limit $\ell \in (0,\infty)$ as $t \to \infty$, then we have 
\[
\frac{x}{t} \to c_1\, \quad \frac{y}{t} \to c_2
\]
for the unique (positive) solution $(c_1, c_2)$ to the closed cone equation
$c_1^2 + 2c_2^2 = 6c_1c_2^2$ such that $\frac{c_2}{c_1} = \ell$. (Recall that $(c_1,c_2)$ was given explicitly in terms of $\ell$ in~\eqref{eq:explicit_cone_solution}) 
In other words, $\varphi$ is weakly asymptotic to the unique closed~\Sp{2}-invariant~\gtwo-cone determined by $\ell = \lim_{t \to \infty} \frac{y}{x}>0$.
\end{lemma}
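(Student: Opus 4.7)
The plan is to reduce everything to a single scalar ODE for the volume function $g = (xy^2)^{1/3}$, in which the warping $y/x$ appears as the only nontrivial input. By the closure equation in the form~\eqref{eq:g:dot},
\[
g' \;=\; \tfrac{1}{6}\bigl(\tfrac{x}{y}\bigr)^{4/3} + \tfrac{1}{3}\bigl(\tfrac{y}{x}\bigr)^{2/3},
\]
so the hypothesis $\frac{y}{x} \to \ell \in (0,\infty)$ immediately gives $g'(t) \to L$ as $t \to \infty$, where $L := \tfrac{1}{6}\ell^{-4/3} + \tfrac{1}{3}\ell^{2/3}$.

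The first step is to upgrade $g' \to L$ to $g(t)/t \to L$. This is a standard integration argument: given $\epsilon > 0$, pick $T$ with $|g'(s) - L| < \epsilon$ for $s \geq T$, integrate from $T$ to $t$, divide by $t$, and then let $t \to \infty$ followed by $\epsilon \to 0$. Note that $g \to \infty$ (so the quotient makes sense asymptotically) because $g' \geq \tfrac{1}{2}$ by the power means inequality in Remark~\ref{rmk:vol:growth}; in particular the forward lifetime is infinite, which is also guaranteed by Proposition~\ref{prop:lifetime}.

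The second step extracts $x/t$ and $y/t$ individually. Using $g^3 = xy^2$ and $y^2/x^2 \to \ell^2$, I would write
\[
\frac{x^3}{t^3} \;=\; \frac{g^3}{t^3}\cdot\frac{x^2}{y^2} \;\longrightarrow\; L^3\ell^{-2},
\]
so $x/t \to L\ell^{-2/3} =: c_1$, and combining with $y/x \to \ell$ then yields $y/t \to L\ell^{1/3} =: c_2$. Directly from the definition of $L$,
\[
c_1^2 + 2c_2^2 \;=\; L^2\bigl(\ell^{-4/3} + 2\ell^{2/3}\bigr) \;=\; 6L^3 \;=\; 6c_1c_2^2,
\]
so $(c_1,c_2)$ solves the closed-cone equation~\eqref{eq:closed_cone} with $c_2/c_1 = \ell$; uniqueness of this pair is already recorded in~\eqref{eq:explicit_cone_solution}.

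I do not expect any real obstacle: the content of the argument is that once $y/x$ converges, the scalar equation for $g$ has a convergent right-hand side, and the cone relation $6c_1c_2^2 = c_1^2 + 2c_2^2$ emerges purely algebraically from identifying $L$ in the two expressions $g/t \to L$ and $xy^2/t^3 \to c_1 c_2^2$. No soliton structure is used; the argument relies only on the closure equation~\eqref{eq:ODE:closure}.
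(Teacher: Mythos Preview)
Your proposal is correct and follows essentially the same route as the paper: use the closure equation in the form~\eqref{eq:g:dot} to deduce $g' \to L$, upgrade to $g/t \to L$, and then extract $x/t$ and $y/t$ algebraically from $g^3 = xy^2$ and $y/x \to \ell$. The only cosmetic difference is that the paper first fixes $(c_1,c_2)$ as the unique closed-cone solution with $c_2/c_1 = \ell$ and identifies $L = (c_1c_2^2)^{1/3}$, whereas you define $c_i$ from $L$ and then verify the cone relation; both arguments are the same computation, and the paper likewise remarks that only closure (not the soliton ODEs) is used.
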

\begin{proof}
Given any parametrised $\Sp{2}$-invariant cone, %
\ie 
\[
x=c_1 t,\  y=c_2 t \quad \text{for\ } c=(c_1,c_2) \text{\ with\ } c_1>0, \ c_2>0
\]
the condition that the $3$-form $\varphi$ determined by $x$ and $y$ be closed is that
\begin{equation}
\label{eq:closed:c1c2}
6 c_1 c_2^2 = c_1^2 + 2c_2^2.
\end{equation}
When restricted to the positive quadrant this equation defines a smooth noncompact connected curve.  Rearranging to solve for $c_2^2$ yields
\[c_2^2 = \frac{c_1^2}{2(3c_1-1)} = \frac{c_1}{6} + \frac{1}{18} + \frac{1}{18(3c_1-1)}.
\]
It is immediate from this that any closed cone $(c_1,c_2)$ satisfies $c_1 > \tfrac{1}{3}$ and that every homothety class of cones is represented uniquely as claimed. 

If $\frac{y}{x} \to \ell$ then from~\eqref{eq:g:dot} it follows that 
\[
g' \to \frac{1}{6\ell^{4/3}} \left( 1 + 2 \ell^2\right).
\]
Let $\ell$ be the unique positive solution of~\eqref{eq:closed:c1c2} with $c_2/c_1 = \ell$. 
Then 
\[
\frac{1}{6\ell^{4/3}} \left( 1 + 2 \ell^2\right) = \frac{1}{6} \left(1+2 \frac{c_2^2}{c_1^2} \right)
\left(\frac{c_1}{c_2}\right)^{4/3} = \frac{c_2^2}{c_1} \left(\frac{c_1}{c_2}\right)^{4/3} = (c_1c_2^2)^{1/3},
\]
where we used~\eqref{eq:closed:c1c2} to obtain the second equality. 
Hence $\frac{g}{t} \to (c_1 c_2^2)^{1/3}$.
Then 
\[
\frac{x^3}{t^3} = \left(\frac{x^2}{y^2}\right) \frac{g^3}{t^3} \to \frac{1}{\ell^2} c_1 c_2^2 = \frac{c_1^2}{c_2^2} c_1 c_2^2 = c_1^3
\]
 and hence $\frac{x}{t} \to c_1$.  Therefore also $\frac{y}{t} \to c_2$ as claimed. 
\end{proof}
Note that this result used only the closure of $\varphi$ and, in particular, we did not require $\varphi$ to satisfy the soliton ODEs.
Later,  we will prove that for a non-steady soliton, if we can prove that $\log{\frac{y}{x}}$ remains bounded 
then, in fact, $\frac{y}{x}$ automatically converges to a limit and hence by the previous result it is weakly AC.

\begin{remark*}
A direct computation using~\eqref{eq:torsion:xy} shows that the components of the torsion $2$-form $\tau = \tau_1 \omega_1 + \tau_2 \omega_2$ of the closed cone parametrised by $(c_1,c_2)$ are
\begin{equation}
\label{eq:torsion:Sp2:cone}
\tau_1 = 4c_1(2c_1-1)t, \quad \tau_2 = \frac{c_1(1-2c_1)}{(3c_1-1)}t.
\end{equation}
The cone $c_1=c_2=\tfrac{1}{2}$ is therefore the unique closed~\Sp{2}-invariant~\gtwo-cone which is torsion free.
This is just  the~\gtwo--cone over the standard $\Sp{2}$-invariant nearly K\"ahler structure on $\CP^3$. 
The set of closed cones for which $\ell = \frac{c_2}{c_1}>1$,  the set of closed cones with $c_1 \in (\tfrac{1}{3},\tfrac{1}{2})$ and the set of closed cones on which $\tau_2>0$ for all $t>0$ therefore
coincide. 

The line $c_1=c_2$ intersects the curve $\gamma := \{ (c_1, c_2) : c_1^2 + 2c_2^2 = 6c_1c_2^2\} \subset \R^2$
transversally at $\frac{1}{2}(1,1)$, the torsion-free cone.
Removing this point
separates $\gamma$ into two noncompact open connected curves: $\gamma^+$ on which $c_2-c_1>0$ and $\gamma^-$ on which $c_2-c_1<0$. 
To see the asymptotic behaviour of the curve $\gamma^+$ (far from the torsion-free cone)
suppose that $c_1 = \frac{1}{3} + \epsilon$, for $\epsilon>0$ sufficiently small.  Then we find that $c_2 \approx (54 \epsilon)^{-1/2}$. 
If we instead suppose that $c_1 \gg 0$ then
$c_2 \approx (\frac{1}{6}c_1)^{1/2}$, describing the 
asymptotic behaviour of $\gamma^-$.
\end{remark*}

\subsection{Complete \texorpdfSp{2}-invariant AC expanders}
\label{subsec:complete_expanders}

We will now prove that when $\lambda > 0$, then the results of the previous
subsection apply to any of the smoothly-closing expanders $\spfam{\lambda}{b}$
of Section~\ref{subsec:smoothly_closing}, and establish that they are all weakly AC.
This proves Theorem~\ref{mthm:expand:Sp2} with the exception of part (iii)
that will be established in Section~\ref{s:AC:geometry:Sp2:expanders}. 

\begin{prop}
\label{prop:elem_AC}
For an \Sp{2}-invariant expander, if $S(t_0)$ and $\tau_2(t_0)$ have the same sign at some instant $t_0$, then that remains true for all $t > t_0$, $\log \frac{y}{x}$ has the same sign too, and $|\log \frac{y}{x}|$ is decreasing.
In particular, %
the solution is forward complete and weakly AC
in the sense of \eqref{eq:weak_ac_def}. %
\end{prop}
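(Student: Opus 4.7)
The strategy is to show that the conditions $S>0$ and $\tau_2>0$ (respectively $S<0$ and $\tau_2<0$) are individually propagated forward in time by a continuity/barrier argument using the ODEs for $\tau_2$ and $S$, and then to read off the remaining conclusions from the identity $\frac{d}{dt}\log\frac{y}{x} = -\frac{S}{g^3}$ of Lemma \ref{lem:log_xy}, together with Proposition \ref{prop:lifetime} and Lemma \ref{lem:yx:bounded}. I treat the case $S(t_0),\tau_2(t_0)>0$ in detail; the case with both negative is symmetric. Note first that combining $S=y^2-x^2-\tfrac32 x\tau_2>0$ with $\tau_2>0$ forces $y>x$, so $\log\frac{y}{x}>0$ as soon as both signs hold.

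Let $t_1>t_0$ be the first time at which either $S$ or $\tau_2$ vanishes, and seek a contradiction. There are three cases to consider at $t=t_1$:
\begin{enumerate}
\item[(A)] $\tau_2(t_1)=0$ and $S(t_1)>0$. Then $R_1(t_1)=\lambda x y^2>0$ (using $\lambda>0$), so the ODE $\tau_2'=\frac{4R_1 S}{3x(x^2+2y^2)}$ in \eqref{eq:ODE:tau2'} gives $\tau_2'(t_1)>0$, contradicting the fact that $\tau_2$ decreases to $0$ at $t_1$.
\item[(B)] $S(t_1)=0$ and $\tau_2(t_1)>0$. Then $S=0$ forces $y^2-x^2=\tfrac32 x\tau_2>0$, so $y>x$ and hence $\alpha(y/x)>0$. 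Substituting $S=0$ into \eqref{eq:Sdot} yields $S'(t_1)=\frac{2xy^2}{x^2+2y^2}\alpha(y/x)>0$, contradicting $S\searrow 0$.
\item[(C)] $S(t_1)=\tau_2(t_1)=0$ simultaneously. Then $S=y^2-x^2=0$ gives $y=x$, so $\alpha(y/x)=0$ and hence $S'(t_1)=0$ as well. By Remark \ref{rmk:gaussian}, the solution must be a Gaussian soliton; but those have $\tau_2\equiv 0$, contradicting $\tau_2(t_0)>0$.
\end{enumerate}
Hence no such $t_1$ exists, so both $S$ and $\tau_2$ remain positive for all $t>t_0$ at which the solution is defined. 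The analogous argument for the case $S(t_0),\tau_2(t_0)<0$ uses $R_1(t_1)>0$ with $S<0$ (reversing the inequality) in (A), and $y<x$ hence $\alpha<0$ in (B); case (C) is identical.

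With the signs preserved, Lemma \ref{lem:log_xy} gives $\frac{d}{dt}\log\frac{y}{x}=-S/g^3$, which is negative (resp.\ positive) throughout forward time, so $|\log\frac{y}{x}|$ is strictly decreasing and, in particular, $\log\frac{y}{x}$ remains bounded between $0$ and its initial value. Proposition \ref{prop:lifetime} then rules out finite lifetime, so the solution is forward complete. Being bounded and monotonic, $\log\frac{y}{x}$ has a limit $L\in\R$, so $\frac{y}{x}\to e^L\in(0,\infty)$ as $t\to\infty$, and Lemma \ref{lem:yx:bounded} yields that the solution is weakly AC to the unique closed \Sp{2}-invariant \gtwo-cone with $c_2/c_1=e^L$.

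\textbf{Main obstacle.} The only non-routine step is case (C) at a simultaneous zero: two independent sign changes could in principle occur at the same instant, and ruling this out requires invoking Remark \ref{rmk:gaussian} together with the initial non-vanishing of $\tau_2$. Everything else reduces to reading off the right signs in \eqref{eq:ODE:tau2'} and \eqref{eq:Sdot} and then quoting previously established results.
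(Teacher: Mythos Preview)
Your proof is correct and follows essentially the same approach as the paper: a first-failure-time argument splitting into the three cases, invoking the Gaussian characterisation for the simultaneous-zero case, then reading off monotonicity of $\log\frac{y}{x}$ and applying Proposition~\ref{prop:lifetime} and Lemma~\ref{lem:yx:bounded}. The only cosmetic difference is that in case~(B) the paper simplifies the expression for $S'$ at $S=0$ to $S'=\frac{\tau_2}{4y^2}(x^2+2y^2)$ rather than writing it via $\alpha(y/x)$, but the two are equivalent once one uses $y^2-x^2=\tfrac32 x\tau_2$.
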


\begin{proof}
Suppose $S(t_0)$ and $\tau_2(t_0)$ are both positive.
We first show that this condition is preserved forward in time.

Suppose that this fails first at $t_1 > t_0$. We cannot have $S=0$ and $\tau_2=0$ simultaneously at $t_1$
unless the solution is a Gaussian soliton, and Gaussian solitons do not satisfy our hypotheses.
So first suppose that 
$S(t_1)=0$,  $S'(t_1)\le 0$ and $\tau_2(t_1)>0$. 
It follows from~\eqref{eq:Sdot} that at any instant when $S$ vanishes its derivative satisfies
\begin{equation}
\label{S:dot:zero}
S' = \frac{\tau_2}{4y^2}\left(x^2+2y^2\right),
\end{equation}
which in our case is positive because $\tau_2(t_1)>0$.  This gives a contradiction. 
Now suppose instead that $S(t_1)>0$ and $\tau_2(t_1)=0$ with $\tau_2'(t_1)\le 0$. 
But at any point where $\tau_2=0$, \eqref{eq:ODE:tau2'} implies that
\[
\tau_2' = \frac{4 \lambda Sy^2}{3(x^2+2y^2)}.
\]
Hence $\tau_2'(t_1)$ must be positive, which again is a contradiction.

Thus $S$ and $\tau_2$ remain positive.
Now $\log{\frac{y}{x}}$ is positive because
$S>0$ and $\tau_2>0$ imply that $y^2 - x^2 > \frac{3}{2} x \tau_2 >0$.
Because $S$ is positive, $\frac{y}{x}$ (and hence $|\log\frac{y}{x}|$) is decreasing by Lemma \ref{lem:log_xy}
and since it bounded below by $1$, it converges. 

Thus the solution is forward complete by Proposition \ref{prop:lifetime}
and satisfies the hypotheses of Lemma~\ref{lem:yx:bounded}.

If we instead start from $S(t_0)<0$ and $\tau_2(t_0)<0$ then all inequalities in the proof are reversed.
\end{proof}

\begin{theorem}
\label{thm:sc:expanders:complete}
For any smoothly-closing expander $\spfam{\lambda}{b}$
\begin{enumerate}[left=0em]
\item 
The following quantities are positive for all~$t>0$:
\[
\tilde \tau_1, \ \tilde \tau_2,\  S,\  \tau_2,\ y^2-x^2,\  \tilde \tau_2 -\tilde \tau_1 \  \text{and}\  {-}u.
\] 
\item
Its forward lifetime is infinite, so the expander is metrically complete. 
The limit as $t \to \infty$ of the warping $\frac{y}{x}$ exists and is equal to some $\ell \in [1,\infty)$. 
Then we have $\frac{x}{t} \to c_1$, $\frac{y}{t} \to c_2$ for the
unique (positive) solution $(c_1, c_2)$ to the closed cone equation
$c_1^2 + 2c_2^2 = 6c_1c_2^2$ such that $\frac{c_2}{c_1} = \ell$
(given by \eqref{eq:explicit_cone_solution}). 
\end{enumerate}
In other words, $\spfam{\lambda}{b}$ %
is $C^0$-asymptotic to a unique closed cone with  $\ell=\frac{c_2}{c_1} \ge 1$. 
\end{theorem}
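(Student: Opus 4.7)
My strategy is to leverage Proposition~\ref{prop:elem_AC} together with the Taylor expansion at the singular orbit to propagate the required positivities from $t = 0^+$ forward to all $t > 0$. The starting input is Remark~\ref{rmk:init_signs}, which already tells us that on any smoothly-closing expander the four quantities $y - x$, $\tilde\tau_2 - \tilde\tau_1$, $S$, and $\tau_2$ are positive for $t > 0$ sufficiently small. In particular, at any such small $t_0 > 0$ we have $S(t_0)>0$ and $\tau_2(t_0)>0$, which are exactly the hypotheses of Proposition~\ref{prop:elem_AC}. That proposition then immediately propagates forward: $S>0$, $\tau_2>0$ and $y>x$ persist for all $t>t_0$, the warping $\frac{y}{x}$ is decreasing and bounded below by $1$, the solution is forward complete, and $\frac{y}{x}$ converges to some $\ell \in [1,\infty)$. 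Letting $t_0 \to 0^+$ extends the positivity to the entire half-line $(0,\infty)$.

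Next I would handle the remaining positivity claims. Lemma~\ref{lem:tau:tilde:positive} gives that $\tilde\tau_1$ and $\tilde\tau_2$ are strictly increasing on expanders. Reading off the initial conditions from Theorem~\ref{thm:Sp2:smooth:closure} and the expansion~\eqref{eq:sp2:expansion} (in particular, $\tau_i(0) = 0$ and $u(0) = 0$) gives $\tilde\tau_i(0) = 0$, so both are positive on $(0,\infty)$. For the gap, \eqref{eq:tt'} yields
\[
 (\tilde\tau_2 - \tilde\tau_1)' = \lambda(y^2 - x^2),
\]
which is strictly positive on $(0,\infty)$ because $\lambda>0$ and we have just established $y>x$ there; combined with $(\tilde\tau_2 - \tilde\tau_1)(0) = 0$, this upgrades the local inequality of Remark~\ref{rmk:init_signs} to a global one. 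Finally, $-u>0$ is immediate from Lemma~\ref{lemma:u:sign}, since $\tilde\tau_1$ and $\tilde\tau_2$ are both positive. Together these establish part~(i).

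For part~(ii), forward completeness was already a byproduct of the application of Proposition~\ref{prop:elem_AC} above, and the existence of the limit $\ell = \lim_{t\to\infty} \frac{y}{x}\in[1,\infty)$ was noted there as well. The asymptotic statement $\frac{x}{t}\to c_1$ and $\frac{y}{t}\to c_2$, where $(c_1,c_2)$ is the unique positive solution of the closed cone equation \eqref{eq:closed_cone} with ratio $\ell$, is then a direct application of Lemma~\ref{lem:yx:bounded}.

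The main obstacle is really the persistence of $S>0$ and $\tau_2>0$, but this has already been dealt with by Proposition~\ref{prop:elem_AC}. What remains is essentially bookkeeping: matching the prescribed initial conditions from the smoothly-closing ansatz to the sign hypotheses of the persistence lemma, and then cascading the consequences through the algebraic identities satisfied by $\tilde\tau_1$, $\tilde\tau_2$, and $u$. One place to be slightly careful is ensuring that the local positivity of Remark~\ref{rmk:init_signs} really does hold on a half-neighbourhood of $t=0$ uniformly in the parameters $\lambda>0$ and $b>0$, but the explicit leading coefficients in \eqref{eq:sp2:expansion} make this transparent.
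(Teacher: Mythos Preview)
Your proposal is correct and follows essentially the same approach as the paper: invoke the initial positivities at $t=0^+$, apply Proposition~\ref{prop:elem_AC} to propagate $S>0$, $\tau_2>0$, $y>x$ and obtain forward completeness and convergence of $\frac{y}{x}$, then deduce $\tilde\tau_i>0$ from Lemma~\ref{lem:tau:tilde:positive}, $\tilde\tau_2-\tilde\tau_1>0$ from \eqref{eq:tt'} and $y>x$, and $-u>0$ from Lemma~\ref{lemma:u:sign}. The only cosmetic difference is that the paper derives the initial positivity of $\tau_2$ directly from the ODE \eqref{eq:ODE:tau2'} (using $R_1>0$ and $S(0)=b^2>0$) rather than citing Remark~\ref{rmk:init_signs}.
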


\begin{proof}
The initial conditions of Theorem~\ref{thm:Sp2:smooth:closure} imply that $\tilde \tau_1(0)=\tilde \tau_2(0)=\tau_2(0) = 0$, and
$S(0) = b^2>0$.
Lemma \ref{lem:tau:tilde:positive} implies that $\tilde \tau_1$ and $\tilde \tau_2$ are positive for all $t > 0$, and hence by~\eqref{eq:tilde:tau1:tau2}, so is
$R_1$. 

Also, $S(t)$ is positive for small $t$.
Hence by \eqref{eq:ODE:tau2'}, $\tau_2'(t)$ is positive at least for small $t$,
and hence so too is $\tau_2(t)$. 
Proposition \ref{prop:elem_AC} then implies positivity
of $S$ and $\tau_2$ and $y^2 - x^2$ for all $t>0$, and the claims in (ii).

That $\tilde \tau_2 > \tilde \tau_1$ follows from and $y > x$ and \eqref{eq:tt'}, and Lemma \ref{lemma:u:sign} implies $u < 0$, completing the proof of (i).\end{proof}

\begin{remark}
\label{rmk:cone_constraint}
The conclusion that $\ell \ge 1$ immediately implies that 
there is a nonempty open subset 
of the $1$-dimensional space of~\Sp{2}-invariant closed~\gtwo-cones that can never appear as the asymptotic cone of any\emph{ complete} 
AC~\Sp{2}-invariant expander. 
In particular, 
this already proves that no smoothly-closing~\Sp{2}-invariant expander is asymptotic to the asymptotic cone 
of the explicit AC shrinker described in Example \ref{ex:explicit_shrinker}, \ie Corollary \ref{thm:shrinker} does not rely
on the full strength of Theorem \ref{mthm:asymptotic:limit}.
\end{remark}

\section{Monotonicity and bounds on warping}
\label{sec:warping}

We keep pursuing the idea from the previous section that we can understand the
end behaviour of an \Sp{2}-invariant soliton by controlling its warping
$\frac{y}{x}$, but now in a more general context than just smoothly-closing 
expanders. We prove that the warping $\frac{y}{x}$ on any \Sp{2}-invariant
soliton is eventually monotone. It must thus eventually tend to $0$,
to a finite positive number or to $+\infty$, and we already noted in
Lemma \ref{lem:yx:bounded} that the second case corresponds to weakly AC ends.

In Section~\ref{ss:weak:ac:expand} 
we use this to establish necessary and sufficient conditions for
an~\Sp{2}-invariant expander to be weakly AC in terms of the monotone
quantity $\tilde \tau_1$ (the adjusted torsion from \S\ref{ss:tilde_tau}).
In particular, this implies that for expanders 
having a weakly AC end is a stable property. (The AC end condition is not stable for shrinkers though). 

\subsection{Monotone and eventually monotone quantities}
\label{ss:monotone}

We first introduce a quantity involving $x$, $y$ and $\tilde \tau_1$ that is strictly decreasing for all~\Sp{2}-invariant solitons, and serves as a key
technical tool. The most immediate application is that its sign controls
the nature of critical points of $x$, which helps to control the eventual
behaviour of $x$, but it will also play a crucial role in
\S\ref{ss:large:b:asymptotics} and in~\S\ref{ss:trichotomy}.

\begin{lemma}
\label{lem:monotone}
For any~\Sp{2}-invariant soliton define the quantity
\begin{equation}
M:= 3x + \tilde \tau_1 .
\end{equation}
Then the quantity $M/g^3$ satisfies the equation
\begin{equation}
\label{eq:monotonic}
\frac{d}{dt} M g^{-3} = -\frac{3x}{y^4}.
\end{equation}
Hence on any~\Sp{2}-invariant soliton $M g^{-3}$ is a strictly decreasing function of $t$.
\end{lemma}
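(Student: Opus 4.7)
The plan is a direct computation using the equations already collected in the paper, exploiting the expressions for $\tilde\tau_1'$, $(g^3)'$, and the explicit formula for $\tilde\tau_1$ in terms of the torsion and $\lambda$.

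First I would record the three ingredients I need. By \eqref{eq:ODE:closure} together with $g^3 = xy^2$, we have
\[
(g^3)' = \tfrac{1}{2}x^2 + y^2 = \tfrac{1}{2}(x^2+2y^2).
\]
By \eqref{eq:tt'}, $\tilde\tau_1' = \lambda x^2$. And by \eqref{eq:xydot:S}, $x' = \frac{(x^2+2y^2)+4S}{6y^2}$, so
\[
M' = 3x' + \tilde\tau_1' = \frac{x^2+2y^2}{2y^2} + \frac{2S}{y^2} + \lambda x^2.
\]

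Next I would expand the quotient derivative,
\[
\frac{d}{dt}\!\left(\frac{M}{g^3}\right) = \frac{M'}{g^3} - \frac{M(g^3)'}{g^6} = \frac{M'}{xy^2} - \frac{(3x+\tilde\tau_1)(x^2+2y^2)}{2x^2 y^4},
\]
and handle the two terms separately. Substituting the formula for $M'$ and simplifying, the $\tfrac{3(x^2+2y^2)}{2xy^4}$ contribution combines with $\tfrac{x^2+2y^2}{2xy^4}$ to give $-\tfrac{x^2+2y^2}{xy^4}$; together with the $S$-term and the definition $S = y^2-x^2-\tfrac{3}{2}x\tau_2$ from \eqref{eq:S:def}, the numerator $-x^2-2y^2+2S$ collapses to $-3x(x+\tau_2)$, yielding the intermediate identity
\[
\frac{M'}{xy^2} - \frac{3(x^2+2y^2)}{2xy^4} = -\frac{3(x+\tau_2)}{y^4} + \frac{\lambda x}{y^2}.
\]
For the remaining piece I would use \eqref{eq:tilde:tau1:tau2}, namely $\tilde\tau_1 = \frac{2x^2 R_1}{x^2+2y^2}$ with $R_1 = \lambda xy^2 - 3\tau_2$, giving the clean simplification
\[
\frac{\tilde\tau_1 (x^2+2y^2)}{2x^2 y^4} = \frac{R_1}{y^4} = \frac{\lambda x}{y^2} - \frac{3\tau_2}{y^4}.
\]
Subtracting this from the previous line, the $\frac{\lambda x}{y^2}$ terms and the $\tau_2$ terms cancel exactly, leaving $-\tfrac{3x}{y^4}$, which is \eqref{eq:monotonic}.

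Finally, since $x>0$ throughout (by positivity of the \gtstr), the right-hand side is strictly negative, so $Mg^{-3}$ is strictly decreasing. The only real obstacle is purely bookkeeping: making sure the clean cancellation of the $\lambda$ and $\tau_2$ terms actually happens, which is what makes the use of the formula $\tilde\tau_1 = 2x^2 R_1/(x^2+2y^2)$ (rather than brute-forcing with $\tau_1$ and $u$ separately) essential.
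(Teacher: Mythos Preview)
Your computation is correct and uses the same key ingredient as the paper, namely the identity $\tilde\tau_1 = \dfrac{2x^2R_1}{x^2+2y^2}$ from \eqref{eq:tilde:tau1:tau2}. The only organisational difference is that the paper first isolates the ODE for $M$ itself in the form
\[
M' \;=\; \frac{(y^2+\tfrac{1}{2}x^2)}{xy^2}\,M \;-\; \frac{3x^2}{y^2} \;=\; \frac{(g^3)'}{g^3}\,M \;-\; \frac{3x^2}{y^2},
\]
from which \eqref{eq:monotonic} follows in one line via the quotient rule; you instead expand the quotient derivative directly and cancel term by term. Both routes are equivalent, but the paper's intermediate formula \eqref{eq:M'} is reused several times later (in Lemma~\ref{lem:x''}, Proposition~\ref{prop:incomplete:asymptotics}, and elsewhere), so deriving it explicitly has some extra value.
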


\begin{proof}
Recall from~\eqref{eq:tilde:tau1:tau2} that
\[
\tilde \tau_1 = \frac{2x^2R_1}{2y^2 + x^2} = \frac{2x^2(\lambda xy^2 - 3 \tau_2)}{2y^2+x^2}.
\]
Rewriting the equation for $\tilde \tau_1$ in the form
\[ \frac{(2y^2 + x^2)\tilde \tau_1 + 6x^2\tau_2}{xy^2} =
2\lambda x^2, \]
and using the ODE for $x'$ from~\eqref{eq:ODE:tau2'} shows that $M=3x+ \tilde \tau_1$ satisfies the differential equation
\begin{equation}
\label{eq:M'}
M' = 
3x'+
\lambda x^2 =
\frac{6xy^2 - 6x^2\tau_2 - 3x^3 + (2y^2+x^2)\tilde\tau_1 + 6x^2\tau_2}
{2xy^2} =
\frac{(y^2 + \half x^2)}{xy^2} M- \frac{3x^2}{y^2}.
\end{equation}
The result now follows because $g^3=xy^2$ satisfies $({g^3})' = \frac{1}{2}x^2+y^2$.
\end{proof}

The geometric significance of $M$ is still somewhat mysterious. However, it
proves useful to express the second derivative of $x$ in terms of $M$.

\begin{lemma}
For any~\Sp{2}-invariant soliton the second derivative of $x$ satisfies
\label{lem:x''}
\begin{equation}
\label{eq:x''}
 3x'' = \frac{(g^3)'M}{xg^3} + \left(\left(\frac{1}{y^2} - \frac{1}{x^2}\right) M - \frac{9x}{y^2} - 2\lambda x \right) x'. 
\end{equation}
\end{lemma}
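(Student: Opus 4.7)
The identity to be proved is a second--order consequence of Lemma~\ref{lem:monotone}, so the strategy is simply to differentiate the ODE for $M$ once more and rearrange. The key observation that makes the bookkeeping tractable is that everything on the right-hand side can be expressed using only $x$, $y$, $M$, $x'$ and the product rule $(g^3)' = x'y^2 + x(y^2)'$; no additional soliton information (in particular, no further use of $\tau_2$ or of the ODE for~$\tau_2$) is required beyond what has already been absorbed into the single equation $M' = \frac{(g^3)'}{g^3}M - \frac{3x^2}{y^2}$ from the proof of~\eqref{eq:M'}.

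\textbf{Step 1.} Relate $x''$ to $M''$. Since $M = 3x + \tilde\tau_1$ and $\tilde\tau_1' = \lambda x^2$, differentiating gives $3x' = M' - \lambda x^2$ and hence $3x'' = M'' - 2\lambda x x'$. This explains the $-2\lambda x\, x'$ contribution in the claimed formula, and reduces the problem to computing $M''$ and showing that it equals
\[
M'' = \frac{(g^3)' M}{x g^3} + \left(\left(\tfrac{1}{y^2} - \tfrac{1}{x^2}\right)M - \tfrac{9x}{y^2}\right) x'.
\]

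\textbf{Step 2.} Differentiate \eqref{eq:M'}. Writing $\phi = (g^3)'/g^3$ for brevity, $M'' = \phi' M + \phi M' - (3x^2/y^2)'$. Substitute $M' = \phi M - 3x^2/y^2$ and use the elementary identity $\phi' + \phi^2 = (g^3)''/g^3$ to rewrite $\phi' M + \phi\cdot\phi M$ as $(g^3)''M/g^3$. Using $(g^3)'' = xx' + (y^2)'$ and $g^3 = xy^2$, one then arrives at
\[
M'' = \frac{x' M}{y^2} + \frac{(y^2)' M}{x y^2} - \frac{3x (g^3)'}{y^4} - \frac{6x x'}{y^2} + \frac{3x^2 (y^2)'}{y^4}.
\]

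\textbf{Step 3.} Match this with the target. Split into the $M$-linear terms and the $M$-free terms.
For the $M$-terms, the expression $x(y^2)' - (g^3)' + y^2 x'$ vanishes identically by the Leibniz rule $(g^3)' = x'y^2 + x(y^2)'$; this lets one rewrite $\frac{(y^2)'M}{xy^2}$ as $\frac{(g^3)'M}{x^2 y^2} - \frac{x' M}{y^2}$, giving precisely the combination $\frac{(g^3)'M}{xg^3} + \bigl(\tfrac{1}{y^2}-\tfrac{1}{x^2}\bigr)Mx'$. For the remaining ($M$-free) terms, the same Leibniz identity applied to $-(g^3)' + x(y^2)' + x'y^2 = 0$ reduces $-\tfrac{3x(g^3)'}{y^4} + \tfrac{3x^2(y^2)'}{y^4}$ to $-\tfrac{3x x'}{y^2}$, which combines with $-\tfrac{6x x'}{y^2}$ to produce $-\tfrac{9x x'}{y^2}$. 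Substituting the result into $3x'' = M'' - 2\lambda x x'$ yields \eqref{eq:x''}.

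The only ``obstacle'' is keeping the cancellations organised; conceptually the identity is nothing more than one further differentiation of the monotonicity equation $(Mg^{-3})' = -3x/y^4$, which becomes visible if instead of the derivation above one writes $M = g^3 N$ with $N = Mg^{-3}$ and then computes $M'' = (g^3)'' N + 2(g^3)' N' + g^3 N''$ using $N' = -3x/y^4$. Either route reaches~\eqref{eq:x''}.
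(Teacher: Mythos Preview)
Your argument is correct and follows essentially the same route as the paper: both proofs start from $3x'' = M'' - 2\lambda x x'$ and compute $M''$ by differentiating \eqref{eq:M'}. The paper organises the calculation via $S$ (using $(3x^2/y^2)' = 6xS/y^4$ and $(g^3)' + 2S = 3y^2x'$), whereas you bypass $S$ entirely and use only the Leibniz identity $(g^3)' = x'y^2 + x(y^2)'$ together with $(g^3)'' = xx' + (y^2)'$; this is a mild streamlining but not a different idea.

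One slip to fix in Step~3: from $x(y^2)' = (g^3)' - x'y^2$ you get $\frac{(y^2)'M}{xy^2} = \frac{(g^3)'M}{x^2y^2} - \frac{x'M}{x^2}$, not $-\frac{x'M}{y^2}$. With the correct term, combining with the $\frac{x'M}{y^2}$ already present indeed gives $\frac{(g^3)'M}{xg^3} + \bigl(\tfrac{1}{y^2}-\tfrac{1}{x^2}\bigr)Mx'$ as you claim; with the typo as written the $\bigl(\tfrac{1}{y^2}-\tfrac{1}{x^2}\bigr)Mx'$ term would vanish.
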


\begin{proof}
We can rewrite the first part of~\eqref{eq:xydot:S} as
\[ S = \frac{-(x^2 + 2y^2) + 6y^2x'}{4}, \]
and combine it with the second part of~\eqref{eq:xydot:S}  to obtain
\[ (g^3)'' = \left(\frac{x^2}{2} + y^2\right)'
= xx' + \frac{\frac{3}{2}(x^2 + 2y^2) - 3y^2x'}{3x}
= \frac{1}{2x}(x^2 + 2y^2) + \left(x - \frac{y^2}{x}\right)x' =
\frac{(g^3)'}{x} + \left(x - \frac{y^2}{x}\right)x'. \]
Then
\begin{align*}
3x'' &= (M' -\lambda x^2)' 
= \left((g^3)'\frac{M}{g^3} - \frac{3x^2}{y^2}\right)' - 2\lambda x x' \\
&= \frac{M}{g^3} (g^3)''- \frac{3x(g^3)'}{y^4} - \frac{6Sx}{y^4} - 2\lambda x x' \\
&= \left(\frac{(g^3)'}{x} +
\left(x - \frac{y^2}{x}\right)x'\right) \frac{M}{g^3} -
\frac{3x}{y^4}\left((g^3)' + 2S\right) - 2\lambda xx' \\
&= \frac{(g^3)'M}{xg^3} + \left(\frac{1}{y^2} - \frac{1}{x^2}\right)M x' - \frac{3x}{y^4} 3 y^2 x' -2\lambda x x'
\end{align*}
gives the claimed right-hand side after collecting all the $x'$ terms.
\end{proof}
Specialising the previous result to critical points of $x$ gives us the following important result. 

\begin{corollary}
\label{cor:x_monotone}
At any critical point of $x$, its second derivative $x''$ has the same sign as $M$.
In particular, $x$ has at most three critical points, so it is eventually monotone.
(In fact, for expanders and steady solitons $x$ has at most one critical point and this must be a nondegenerate local minimum.)
\end{corollary}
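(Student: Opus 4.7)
The first assertion follows immediately from Lemma~\ref{lem:x''}: setting $x'=0$ in that identity leaves $3x'' = \frac{(g^3)'M}{xg^3}$, whose coefficient of $M$ is strictly positive since $(g^3)' = \frac{x^2}{2}+y^2>0$ and $x, g > 0$.

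To bound the number of critical points I plan to combine this observation with the strict monotonicity of $M/g^3$ supplied by Lemma~\ref{lem:monotone}. Since $g^3>0$, $M$ itself changes sign at most once, from positive to negative, and vanishes at most once. Any critical point of $x$ at which $M>0$ is a nondegenerate local minimum, one at which $M<0$ is a nondegenerate local maximum, and one at which $M=0$ is degenerate. A standard argument using that between any two local minima of a smooth function there must lie a local maximum (and vice versa) then shows that there is at most one critical point in each of the open regions $\{M>0\}$ and $\{M<0\}$; together with a possible additional critical point at the unique zero of $M$, this gives at most three critical points in total and hence eventual monotonicity of $x$.

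When $\lambda \ge 0$ I will strengthen the conclusion by showing directly that $M>0$ at every critical point of $x$. At such a point, the ODE for $x^2$ in~\eqref{eq:ODE:tau2'} forces $2y^2 = x(x+2\tau_2)$, i.e.\ $\tau_2 = \frac{y^2}{x}-\frac{x}{2}$; substituting this into the formula for $\tilde\tau_1$ in~\eqref{eq:tilde:tau1:tau2} and simplifying yields
\[
M = 3x + \tilde\tau_1 = \frac{2x^3(3+\lambda y^2)}{x^2+2y^2},
\]
which is strictly positive whenever $\lambda \ge 0$. By the first part of the corollary each critical point is then a nondegenerate local minimum, and the existence of two minima would force an intervening local maximum (where $M \le 0$), contradicting this positivity; thus at most one critical point can exist.

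I do not anticipate any serious obstacle here: the only step requiring real computation is the algebraic reduction to the displayed formula for $M$ at critical points. It is precisely the factor $3+\lambda y^2$ emerging from that computation that cleanly separates the expander and steady regimes (where it is manifestly positive) from the shrinker case, where it can vanish or change sign and correspondingly allow maxima and degenerate critical points of $x$.
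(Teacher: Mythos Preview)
Your proof is correct and essentially matches the paper's argument. The only cosmetic difference is in how you obtain the value of $M$ at a critical point of $x$: you substitute $\tau_2 = \frac{y^2}{x}-\frac{x}{2}$ directly into the expression~\eqref{eq:tilde:tau1:tau2} for $\tilde\tau_1$, whereas the paper reads off the equivalent identity $\frac{M}{3g^3}\frac{(g^3)'}{x} = \frac{x}{3y^2}(3+\lambda y^2)$ from the ODE~\eqref{eq:M'} for $M'$; both routes give the same formula $M = \frac{2x^3(3+\lambda y^2)}{x^2+2y^2}$ and hence the same conclusion.
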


\begin{proof}
If $x' = 0$ then Lemma \ref{lem:x''} yields
\[
x'' = \frac{M}{g^3} \frac{(g^3)'}{3x}
\]
and other than $M$ all factors on the right-hand side of this equation are positive.
Also notice that it follows from~\eqref{eq:M'} that when $x'=0$
\[
\frac{M}{3g^3} \frac{(g^3)'}{x} = \frac{x}{3y^2}(3+\lambda y^2).
\]
Hence if $\lambda \ge 0$ then, in fact, $x'' > 0$ at any critical point of $x$ and therefore
$x$ has at most one critical point which can only be a nondegenerate local minimum. 
(In particular, when $\lambda\ge 0$, if $x$ has a critical point then $x$ is eventually increasing; so if $x$ is eventually decreasing then it must, in fact, be globally decreasing). 
In the shrinker case this is no longer true, but since $\frac{M}{g^3}$ is strictly decreasing, $x$ can have
\begin{itemize}[left=0em]
\item
at most one nondegenerate local minimum of $x$ and this can only occur when $\frac{M}{g^3}$ is positive
\item
at most one degenerate critical point of $x$ and this can only occur when $\frac{M}{g^3}=0$
\item
at most one nondegenerate local maximum of $x$ and this can only occur when
$\frac{M}{g^3}$ is negative. \qedhere 
\end{itemize}
\end{proof}
 
The following result improves further on the eventual monotonicity of $x$ just established.
\begin{prop}
\label{prop:x_limit}
For any~\Sp{2}-invariant soliton, either 
\begin{enumerate}[left=0em]
\item $x \to 0$ or
\item $x \to \infty$ or
\item $\lambda = 0$, the solution is forward complete %
and $x \to -\frac{\tilde\tau_1}{3}>0$ as $t \to \infty$
\end{enumerate}
as $t$ approaches the end of the lifetime 
of the solution (which can be either finite or infinite).
\end{prop}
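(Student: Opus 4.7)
The plan is to use the eventual monotonicity of $x$ from Corollary \ref{cor:x_monotone} to reduce to the single residual case where $x$ has a finite positive limit $x_\infty$, and then show this forces $\lambda = 0$ and $x_\infty = -\tilde\tau_1/3$ by computing the asymptotic value of $x'$.

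Since $x>0$ is eventually monotone, the limit $x_\infty := \lim_t x$ exists in $[0,\infty]$; the cases $x_\infty=0$ and $x_\infty=\infty$ are (i) and (ii), so I assume $x_\infty\in(0,\infty)$. Then $x$ is eventually trapped in $[\tfrac12 x_\infty,2x_\infty]$, so $1/x$ and $x/y$ are bounded on any forward interval, and Corollary \ref{cor:lifetime} forces the forward lifetime to be infinite. Moreover $(g^3)' \ge \tfrac18 x_\infty^2 > 0$ eventually, so the monotone function $g^3$ diverges and $y^2 = g^3/x \to \infty$ as well.

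The heart of the argument is to extract the asymptotics of $x'$ from the ODE $x' = 1 - \tfrac{x^2}{2y^2} - \tfrac{x\tau_2}{y^2}$ in \eqref{eq:ODE:tau2'}. Combining the conservation law \eqref{eq:ODE:conserve2}, \ie $\tilde\tau_2 = \lambda g^3 - \tilde\tau_1/2$, with the identity \eqref{eq:tau2:tilde:taui} gives the key formula
\[
3\tau_2 \;=\; g^3\!\left(\lambda - \tfrac{\tilde\tau_1}{x^3}\right) - \tfrac{\tilde\tau_1}{2}.
\]
Substituting this together with $y^2 = g^3/x$ into the ODE for $x'$ yields, after simplification,
\[
x' \;=\; 1 \;-\; \tfrac{x^2}{2y^2} \;-\; \tfrac{\lambda x^2}{3} \;+\; \tfrac{\tilde\tau_1}{3x} \;+\; \tfrac{x^2\tilde\tau_1}{6g^3}.
\]
Integrating $\tilde\tau_1' = \lambda x^2$ gives $\tilde\tau_1(t) = \lambda x_\infty^2 t + o(t)$, while $(g^3)' \sim g^3/x_\infty$ forces $g^3$ to grow at least exponentially. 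Hence the last term vanishes in the limit, and the behaviour of $x'$ at infinity is dictated by $\tilde\tau_1/(3x)$.

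If $\lambda \ne 0$ then $\tilde\tau_1/(3x) \sim \lambda x_\infty t/3$, so $x'\to\pm\infty$ according to the sign of $\lambda$; this contradicts $x \to x_\infty \in (0,\infty)$. If $\lambda=0$ then $\tilde\tau_1$ is constant and $x'$ has the genuine limit $1 + \tilde\tau_1/(3x_\infty) = M_\infty/(3x_\infty)$, where $M_\infty := 3x_\infty + \tilde\tau_1$. Since $x$ is monotone and convergent, $\liminf|x'|=0$, so this limit must vanish; hence $x_\infty = -\tilde\tau_1/3>0$, giving (iii). The main technical subtlety is the extraction of the key formula for $\tau_2$: in the naive decomposition $3\tau_2 = -h^2\tilde\tau_1 + \tilde\tau_2$ both terms are separately of order $g^3$, and one must use the conservation law to see the cancellation that leaves $\tau_2$ of lower order. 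Only after this cancellation does the ODE for $x'$ admit a clean limit, making the three-case dichotomy transparent.
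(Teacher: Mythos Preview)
Your proof is correct and close in spirit to the paper's, but the execution differs in a worthwhile way. Both arguments begin identically (eventual monotonicity of $x$, reduction to $x_\infty\in(0,\infty)$, forward completeness via Corollary~\ref{cor:lifetime}, exponential growth of $y^2$), and both ultimately rest on rewriting the $x'$-equation so as to isolate the contribution of $\tilde\tau_1$; indeed your displayed formula for $x'$ is exactly \eqref{eq:x'M} expanded out. The divergence is in how the contradiction is extracted. The paper passes to the \emph{second} derivative via Lemma~\ref{lem:x''}, asserts $x'\to 0$, and concludes $3x''\sim M/x_\infty^2$, which together with the fact that $M$ changes sign at most once forces $M\to 0$. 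You instead stay at the level of $x'$: when $\lambda\neq 0$ the term $\tilde\tau_1/(3x)\sim \lambda x_\infty t/3$ drives $x'\to\pm\infty$, an immediate contradiction; when $\lambda=0$ every term in your formula has a genuine limit, so $x'$ itself converges, and the elementary fact $\liminf|x'|=0$ (valid for any monotone bounded function) pins that limit to zero. This is arguably cleaner, since it sidesteps the claim ``$x'\to 0$'' that the paper invokes without justification (monotonicity and boundedness alone do not give this), and it avoids the second-derivative machinery entirely. Your remark about the cancellation hidden in $3\tau_2=-h^2\tilde\tau_1+\tilde\tau_2$ is a nice explanation of why the conservation law \eqref{eq:ODE:conserve2} is essential here.
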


\begin{proof}
Since $x$ is eventually monotone by Corollary~\ref{cor:x_monotone}, if $x$ is not bounded away from 0 then (i) holds, and if $x$ is unbounded above then (ii) holds.

So now suppose $x$ is bounded both above and away from 0. It remains to prove that then (iii) holds. 

First note that the solution must be forward-complete by Corollary \ref{cor:lifetime}. 
Since $x$ is eventually monotone and bounded, $x$ has a limit $x_\infty > 0$, and $x' \to 0$ as $t \to \infty$. 
Because $x$ is bounded above, $xy^2$ grows exponentially in $t$ and hence so does $y^2$. 
Boundedness of $x$ also implies that  $\tilde \tau_1$ grows at most linearly in $t$ and hence so does
$M=3x +\tilde \tau_1$. 

Now consider the large-$t$ behaviour of the expression for $3x''$ from~\eqref{eq:x''}. 
The first term behaves like
\[
\frac{(g^3)'M}{xg^3} = M \left( \frac{1}{2y^2} + \frac{1}{x^2} \right) = \frac{M}{x^2} + \text {an exponentially decaying term}.
\]
The coefficient of $x'$ behaves like 
\[
\left( \frac{1}{y^2}- \frac{1}{x^2} \right) M - \frac{9x}{y^2} - 2 \lambda x = - \frac{M}{x^2} - 2\lambda x + \text {an exponentially decaying term}.
\]
Thus for $t$ sufficiently large
\[
3 x'' \sim \frac{M}{x^2} -  \left(\frac{M}{x^2}+2\lambda x\right) x' \sim \frac{M}{x_\infty^2},
\]
since $x$ and $\frac{1}{x}$ are bounded and $x' \to 0$. 
Given that $x' \to 0$ as $t \to \infty$ and that $M$ changes sign at most once, this is only possible if $M \to 0$. But if $\lambda \not= 0$, then
 $\tilde \tau_1 \sim \lambda x_\infty^2\, t$ (since $\tilde \tau_1' = \lambda x^2$) and hence $M=3x + \tilde \tau_1$ grows linearly in $t$. So the only possibility is that $\lambda = 0$. Then $\tilde \tau_1$ is constant, so~$M \to 0$ simply means $x \to -\frac{\tilde\tau_1}{3}$ as claimed.
 \end{proof}

\begin{remark}
As a special case of \cite[Corollary 8.14]{Haskins:Nordstrom:g2soliton1}, there does, in fact, exist a forward-complete \Sp{2}-invariant steady end with exponential volume growth and
$x \to -\frac{\tilde\tau_1}{3}$.
\end{remark}

\subsection{Eventual monotonicity of the warping}
\label{ss:warping:monotone}

We now prove Proposition \ref{mthm:warping}(iii).

\begin{theorem}
\label{thm:y:x:eventually:mono}
For any \Sp{2}-invariant soliton, the warping~$\frac{y}{x}$ is eventually monotonic.
Moreover, except on a Gaussian soliton (which has $\frac{y}{x} \equiv 1$), the warping~$\frac{y}{x}$ is eventually strictly monotone.
\end{theorem}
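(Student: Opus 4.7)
By Lemma~\ref{lem:log_xy}, $(\log(y/x))' = -S/g^3$, so $y/x$ is eventually (strictly) monotonic precisely when $S$ has eventually constant (nonzero) sign. On a non-Gaussian soliton, zeros of $S$ are isolated and transverse: at $S=0$, equation~\eqref{S:dot:zero} gives $S' = \tau_2(x^2+2y^2)/(4y^2)$, which vanishes only when $\tau_2=0$, but $S=\tau_2=0$ forces the soliton to be Gaussian by Remark~\ref{rmk:gaussian}. In particular, $\sgn(S') = \sgn(\tau_2)$ at each zero of $S$, so it suffices to show that $S$ has only finitely many zeros in forward time on any non-Gaussian soliton.

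The key ingredient is the identity $\tau_2' = -\tfrac{1}{3}\tilde\tau_1(h^2)'$ from~\eqref{eq:tau2':alt}, which combined with $(h^2)' = -2S/x^3$ (from $h' = -S/(x^2 y)$, Lemma~\ref{lem:log_xy}) yields
\[
\tau_2' = \frac{2\tilde\tau_1 S}{3x^3}.
\]
Since $\tilde\tau_1' = \lambda x^2$ and $\tilde\tau_2' = \lambda y^2$, both $\tilde\tau_1$ and $\tilde\tau_2$ are strictly monotone on any non-steady soliton, so each has eventually a definite sign. If $\tilde\tau_1 > 0$ eventually (automatic for any expander past the last zero of $\tilde\tau_1$), the identity gives $\sgn(\tau_2') = \sgn(S)$; on any interval $(t_i,t_{i+1})$ between consecutive zeros of $S$, $S$ has fixed sign and so $\tau_2$ would be strictly monotone with derivative of that same sign, contradicting the fact that $\sgn(\tau_2(t_i))$ and $\sgn(\tau_2(t_{i+1}))$ must be opposite (via $\sgn(S'(t_i)) = \sgn(\tau_2(t_i))$). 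Hence $S$ has at most one zero in this case. If instead $\tilde\tau_1 < 0$ and $\tilde\tau_2 > 0$ eventually, then the decomposition $3\tau_2 = \tilde\tau_2 - h^2\tilde\tau_1$ forces $\tau_2 > 0$ eventually, so every subsequent zero of $S$ is an ascending crossing and again there can be at most one.

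The delicate remaining subcase is $\tilde\tau_1, \tilde\tau_2 < 0$ eventually, which always holds on a shrinker. Sign considerations alone are indecisive here, and the plan is to exploit the strictly decreasing quantity $M/g^3$ from Lemma~\ref{lem:monotone} together with the eventual monotonicity of $x$ (Corollary~\ref{cor:x_monotone}) and the dichotomy of Proposition~\ref{prop:x_limit}: at the end of the lifetime either $x\to 0$ or $x\to\infty$. When $x\to 0$ one uses $g\to\infty$ to deduce $y,h\to\infty$, and then a direct asymptotic comparison of $y^2$, $x^2$ and $\tfrac{3}{2}x\tau_2$ in the definition of $S$ (using $3\tau_2 = \tilde\tau_2 - h^2\tilde\tau_1$ to extract the dominant $h^2|\tilde\tau_1|$ term) shows that $\frac{3}{2}x\tau_2$ dominates $y^2$, forcing $S$ to be eventually negative. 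When $x\to\infty$, Proposition~\ref{mthm:warping}(v) supplies an upper bound on $h$, and the combined monotonicity of $M/g^3$ and of $x$ leaves no room for infinitely many sign changes of $S$. The strict monotonicity clause then follows because any interval on which $S \equiv 0$ would force the soliton to be Gaussian by Remark~\ref{rmk:gaussian}. The main obstacle is precisely this shrinker subcase, since the clean sign-pattern argument from Case A fails and the conclusion must instead be extracted quantitatively from the monotone quantity $M/g^3$ and the end behaviour of $x$.
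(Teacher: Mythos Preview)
Your Cases A and B are correct and, in fact, sharper than the paper's generic argument: you obtain at most one zero of $S$ directly, whereas the paper (Proposition~\ref{prop:y:x:eventually:mono}) only gets finitely many zeros in general and must treat the residual case $y/x\to 1$ separately. The identity $\tau_2' = \tfrac{2\tilde\tau_1 S}{3x^3}$ combined with $\sgn(S')|_{S=0}=\sgn(\tau_2)$ is a nice short-circuit for expanders.

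The gap is in Case C with $x\to\infty$. First, your appeal to Proposition~\ref{mthm:warping}(v) is circular: the only place the paper actually establishes that a shrinker with $x\to\infty$ has $y/x$ bounded above is Lemma~\ref{lem:shrinker:x:unbounded}, and that proof \emph{uses} Theorem~\ref{thm:y:x:eventually:mono}. Second, even granting a bound on $h$, the sentence ``the combined monotonicity of $M/g^3$ and of $x$ leaves no room for infinitely many sign changes of $S$'' is not an argument. The paper's mechanism here (Lemma~\ref{lem:osc}) is specific: if $x\to\infty$ and $y=\ell x$ is hit infinitely often, then evaluating \eqref{eq:MS} along that sequence forces $M/g^3\to \tfrac{2\lambda}{1+2\ell^2}$. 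Oscillation of $y/x$ (via Lemma~\ref{lem:y:x:critical}) would force \emph{two} distinct values of $\ell$ to be hit infinitely often, giving two different limits for the strictly decreasing quantity $M/g^3$---a contradiction. You have not supplied this or any substitute, so the shrinker case is unproven. Note also that even this argument leaves the possibility $y/x\to 1$, which the paper disposes of by forward reference (Proposition~\ref{prop:reg} and Theorem~\ref{thm:HKP} for shrinkers, Proposition~\ref{cor:yx:crit} for expanders); your write-up does not address it.

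A minor point: in the $x\to 0$ subcase of Case C your asymptotic argument is more elaborate than needed. Once $x$ is eventually decreasing (Corollary~\ref{cor:x_monotone} and Proposition~\ref{prop:x_limit}), the formula $x' = \tfrac{(x^2+2y^2)+4S}{6y^2}$ from~\eqref{eq:xydot:S} gives $S<0$ immediately wherever $x'<0$.
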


An easy consequence of the previous result is the following characterisation of weakly AC solitons. 
\begin{corollary}
\label{cor:weak:AC}
An~\Sp{2}-invariant soliton is weakly AC if and only if $\log{\frac{y}{x}}$ is bounded throughout its lifetime.
\end{corollary}
\begin{proof}
One direction is immediate from the definition \eqref{eq:weak_ac_def} of
being weakly AC.
If $\log{\frac{y}{x}}$ is bounded then by Proposition~\ref{prop:lifetime} 
the solution has infinite lifetime. By Theorem~\ref{thm:y:x:eventually:mono}
$\log{\frac{y}{x}}$ is eventually monotone and hence because it is bounded it has a limit as $t \to \infty$. 
Therefore by Lemma~\ref{lem:yx:bounded} the solution is weakly AC. 
\end{proof}

While the steady case of Theorem \ref{thm:y:x:eventually:mono} can be viewed as
a consequence of the trichotomy for steady~\sunitary{3}-invariant ends
in \cite[Theorem F]{Haskins:Nordstrom:g2soliton1},
let us give a brief self-contained proof here.

\begin{lemma}
For any \Sp{2}-invariant steady soliton (other than the torsion-free cone),
the warping~$\frac{y}{x}$ has at most one critical point.
\end{lemma}

\begin{proof}
Recall that if $\lambda = 0$ then $\tilde\tau_2$ is conserved
by \eqref{eq:tt'}. Thus the sign of $\tau_2$ is constant
by \eqref{eq:tilde:tau1:tau2}.
Therefore \eqref{S:dot:zero} implies that $S$ has at most one zero
(unless $S \equiv 0$, which only happens for the static torsion-free cone
solution) and those correspond to the critical points of~$\frac{y}{x}$ by
Lemma \ref{lem:log_xy}.
\end{proof}

The proofs of Theorem \ref{thm:y:x:eventually:mono} in the expander and
shrinker cases require more work, and are somewhat different from each other.
First we make an elementary observation about the nature of critical points
of $\frac{y}{x}$. 

\begin{lemma}
\label{lem:y:x:critical}
For any~\Sp{2}-invariant soliton, except the Gaussian solitons over the torsion-free cone described in Remark~\ref{rmk:gaussian}, the following hold:
\begin{enumerate}[left=0em]
\item
All critical points of $\frac{y}{x}$ are nondegenerate.
\item  
A critical point of $\frac{y}{x}$ is a local maximum (minimum) if and only if $\frac{y}{x}>1$ (respectively $\frac{y}{x}<1$).
\end{enumerate}
\end{lemma}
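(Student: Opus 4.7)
The plan is to exploit the identity from Lemma~\ref{lem:log_xy}, namely
\[ \frac{d}{dt}\log\frac{y}{x} = -\frac{S}{g^3}, \]
which turns the analysis of critical points of $\frac{y}{x}$ into the analysis of zeros of $S$. Since $g^3 = xy^2 > 0$, the critical points of $\frac{y}{x}$ are precisely the zeros of $S$, and
\[ \frac{d^2}{dt^2}\log\frac{y}{x}\bigg|_{S=0} = -\frac{S'}{g^3}. \]
So the qualitative nature of a critical point of $\frac{y}{x}$ is dictated by the sign of $S'$ at that point.

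At a point with $S = 0$, equation~\eqref{S:dot:zero} (established just after the definition of $S$) gives
\[ S' \;=\; \frac{\tau_2}{4y^2}(x^2 + 2y^2), \]
so the sign of $S'$ agrees with that of $\tau_2$. At the same point, the constraint $S = y^2 - x^2 - \tfrac{3}{2}x\tau_2 = 0$ allows us to eliminate $\tau_2$, yielding $\tau_2 = \frac{2(y^2-x^2)}{3x}$. Substituting this into the formula above gives an expression for $S'$ with sign equal to that of $y^2 - x^2$, i.e.\ of $\frac{y}{x}-1$. Hence $\frac{d^2}{dt^2}\log\frac{y}{x}$ has sign opposite to that of $\frac{y}{x}-1$ at any critical point, which is exactly statement~(ii).

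For (i), the same computation shows that the second derivative at a critical point can only vanish if $y = x$ at that point; but $S = 0$ together with $y = x$ forces $\tau_2 = 0$, and then $S' = 0$ as well. Remark~\ref{rmk:gaussian} then applies to give that the solution is a Gaussian soliton, which we have excluded. Thus for any non-Gaussian soliton every critical point of $\frac{y}{x}$ is non-degenerate, finishing the proof. No serious obstacle is expected here: the argument is essentially an algebraic manipulation using $d\varphi=0$ together with the type-$14$ torsion condition, both already packaged in $S$ and Lemma~\ref{lem:log_xy}, plus the Gaussian rigidity recorded in Remark~\ref{rmk:gaussian}.
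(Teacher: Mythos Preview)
Your proof is correct and follows essentially the same route as the paper: both identify critical points of $\frac{y}{x}$ with zeros of $S$, compute $(\log\tfrac{y}{x})'' = -S'/g^3$ there, and show the sign of $S'$ at $S=0$ matches that of $y^2 - x^2$, invoking Remark~\ref{rmk:gaussian} for the degenerate case. The only cosmetic difference is that the paper expresses $S'$ at $S=0$ via the function $\alpha$ from \eqref{eq:Sdot}, whereas you use the equivalent $\tau_2$-based formula \eqref{S:dot:zero} (which, incidentally, is established in the proof of Proposition~\ref{prop:elem_AC}, not immediately after the definition of $S$).
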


\begin{proof}
First observe that $\frac{y}{x} \equiv 1$ on the Gaussian solitons and that these are the only solitons 
for which $\frac{y}{x}$ is constant. Moreover, at any critical point of $\frac{y}{x}$, $S$ vanishes by~\eqref{eq:d:dt:log:yx},
and so if  $y-x$ also vanishes then so does $\tau_2$. 
By uniqueness of solutions to~\eqref{eq:ODE:tau2'}
this implies the solution is a Gaussian soliton.

So now suppose that the soliton is not a Gaussian soliton. 
At any critical point of $\frac{y}{x}$,  differentiating~\eqref{eq:d:dt:log:yx} implies
\[
\left(\log{\frac{y}{x}}\right)'' = -\frac{S'}{g^3} = - \frac{2}{x^2+2y^2} \slead(\tfrac{y}{x})  = - \left(\frac{x^2+2y^2}{6g^6}\right) (y^2-x^2),
\]
where we have used~\eqref{eq:Sdot} and that $S=0$ at a critical point of $\frac{y}{x}$. 
By the previous observation, $y-x$ cannot vanish at a critical point of $\frac{y}{x}$, and hence 
any critical point is nondegenerate.
Notice that the right-hand side of the previous equation has the opposite sign to $\log{\frac{y}{x}}$. Hence
a critical point of $\frac{y}{x}$ is a local maximum if and only if $\frac{y}{x}>1$
(respectively a local minimum if and only if $\frac{y}{x}<1$).
\end{proof}

\begin{lemma}
\label{lem:osc}
For any~\Sp{2}-invariant $\lambda$-soliton, 
given any $\ell>0$, if $x \to \infty$ and there are infinitely many points where $y = \ell x$, 
then the decreasing function $\frac{M}{g^3}$ has limit $\frac{2\lambda}{1 + 2\ell^2}$.
\end{lemma}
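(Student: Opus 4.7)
The plan is to use the monotonicity of $M/g^3$ from Lemma \ref{lem:monotone} to extract the existence of a limit $L := \lim_{t\to\infty} M(t)/g^3(t) \in [-\infty,\infty)$, and then identify $L$ by evaluating along the subsequence $t_n$. Since $M/g^3$ is monotone, $M(t_n)/g^3(t_n)$ necessarily converges to the same $L$, so it suffices to produce a precise asymptotic expression for $M(t_n)/g^3(t_n)$.

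To derive such an expression I would combine the two identities valid at every point: the conservation law \eqref{eq:ODE:conserve2}, $\tilde\tau_1+2\tilde\tau_2=2\lambda g^3$, and the linear relation \eqref{eq:tau2:tildetau1}, $\tilde\tau_2=3\tau_2+(y/x)^2\tilde\tau_1$. Substituting $(y/x)^2=\ell^2$ and $g^3=\ell^2 x^3$ at $t_n$ into these and eliminating $\tilde\tau_2$ gives
\[
\tilde\tau_1(t_n)=\frac{2\lambda\ell^2 x(t_n)^3-6\tau_2(t_n)}{1+2\ell^2},
\]
so that, using $M=3x+\tilde\tau_1$,
\[
\frac{M(t_n)}{g^3(t_n)}=\frac{3}{\ell^2 x(t_n)^2}+\frac{2\lambda}{1+2\ell^2}-\frac{6\tau_2(t_n)}{(1+2\ell^2)\ell^2 x(t_n)^3}.
\]
Since $x(t_n)\to\infty$, the first term is $o(1)$, and the desired conclusion $L=2\lambda/(1+2\ell^2)$ reduces to proving $\tau_2(t_n)/x(t_n)^3\to 0$.

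The upper half of this bound, $\limsup \tau_2(t_n)/x(t_n)^3\le 0$, is readily available: evaluating the $x^2$-equation in \eqref{eq:ODE:tau2'} at $t_n$ gives $x'(t_n)=1-\tfrac{1}{2\ell^2}-\tau_2(t_n)/(\ell^2 x(t_n))$, and since $x\to\infty$ combined with Corollary \ref{cor:x_monotone} forces $x'(t_n)\ge 0$ for $n$ large, we have $\tau_2(t_n)/x(t_n)\le\ell^2-\tfrac12$, and dividing by $x(t_n)^2$ yields the claim.

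The matching lower bound is the main obstacle. My plan is to exploit the rearrangement $M/g^3=3/y^2+2\lambda-2\tilde\tau_2/g^3$ (equivalent to \eqref{eq:ODE:conserve2} after dividing by $g^3$): since $y(t_n)=\ell x(t_n)\to\infty$, this identity forces $\tilde\tau_2(t_n)/g^3(t_n)\to\tfrac12(2\lambda-L)$. Combined with the monotone growth $\tilde\tau_2'=\lambda y^2$ and $(g^3)'=\tfrac12 x^2+y^2$, a Cauchy mean-value argument applied to $\tilde\tau_2/g^3$ pins down any accumulation point $\ell^*$ of $y(t)/x(t)$ as the unique positive solution of $2\lambda\ell^{*2}/(1+2\ell^{*2})=\lambda-L/2$. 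Since by hypothesis $y/x=\ell$ at infinitely many $t_n$, this uniqueness forces $\ell^*=\ell$, giving $L=2\lambda/(1+2\ell^2)$ and in turn $\tau_2(t_n)/x(t_n)^3\to 0$. The technical heart of the argument lies in making this Cauchy mean-value step rigorous — in particular, ruling out persistent oscillation of $y/x$ between distinct accumulation points that could otherwise sustain convergence of the monotone quantity $M/g^3$ without pinning down its limit.
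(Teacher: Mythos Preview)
Your derivation of the remainder formula and the bound $\limsup \tau_2(t_n)/x(t_n)^3 \le 0$ via eventual positivity of $x'$ are both fine, but the second half is a genuine gap, and your proposed Cauchy mean-value route is both more complicated than necessary and dangerously close to circular: this lemma is one of the ingredients in proving eventual monotonicity of $y/x$ (Proposition~\ref{prop:y:x:eventually:mono}), so any argument that implicitly relies on pinning down accumulation points of $y/x$ is suspect.

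The key idea you are missing is to trade $\tau_2$ for $S$. Using $S = y^2 - x^2 - \tfrac{3}{2}x\tau_2$ at $y = \ell x$ to eliminate $\tau_2$ from your remainder gives (equivalently, use \eqref{eq:MS})
\[
\frac{M(t_n)}{g^3(t_n)} - \frac{2\lambda}{1+2\ell^2}
= \frac{7+2\ell^2}{(1+2\ell^2)\ell^2\, x(t_n)^2}
+ \frac{4\,S(t_n)}{(1+2\ell^2)\ell^2\, x(t_n)^4}.
\]
Now the point is that $S$ \emph{alternates sign} between consecutive visits to $y/x = \ell$: by Lemma~\ref{lem:log_xy}, $(\log\tfrac{y}{x})' = -S/g^3$, so if $y/x$ crosses $\ell$ upward at $t_n$ and downward at $t_{n+1}$ then $S(t_n)\le 0$ and $S(t_{n+1})\ge 0$. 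Along the subsequence with $S(t_n)\ge 0$ the remainder is positive, forcing $\lim R_n \ge 0$; along the subsequence with $S(t_n)\le 0$ the remainder is at most the first term, which tends to $0$, forcing $\lim R_n \le 0$. This closes the argument in one line, with no need to control $\tau_2/x^3$ from below or to analyse accumulation points of $y/x$.
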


\begin{proof}
Using~\eqref{eq:tilde:tau1:tau2} we have
\begin{equation}
\label{eq:MS}
M = 3x+ \tilde \tau_1 = 3x + \frac{2x^2R_1}{x^2 + 2y^2}
= 3x + \frac{2x^2(\lambda xy^2 - 3 \tau_2)}{x^2 + 2y^2} = \frac{2\lambda x^3y^2 + x(7x^2 + 2y^2 + 4S)}{x^2+2y^2}
\end{equation}
and so at any point where $y = \ell x$
\[ \frac{M}{g^3} = \frac{2\lambda}{1+2\ell^2} + \left(\frac{7 + 2\ell^2}{(1 + 2 \ell^2)\ell^2 x^2} + \frac{4S}{\ell^2(1+2\ell^2)x^4} \right) . \]
The first term is constant, so the second term is decreasing. Because  we assumed $x \to \infty$, while $S$ alternates signs between visits to $y = \ell x$, the limit of the second term must be 0.
\end{proof}

\begin{prop}
\label{prop:y:x:eventually:mono}
For any \Sp{2}-invariant non-steady soliton, unless $\frac{y}{x} \to 1$ at the
end then $S$ changes sign at most finitely often forwards in time, and so
$\frac{y}{x}$ is eventually monotonic.
\end{prop}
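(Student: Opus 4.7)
The plan is to argue the contrapositive: if $S$ changes sign infinitely often as $t$ approaches the end of the lifetime, then $\frac{y}{x} \to 1$. The Gaussian solitons are excluded at the outset (there $S \equiv 0$, so no sign changes occur), and for any non-Gaussian soliton Lemma \ref{lem:log_xy} together with Lemma \ref{lem:y:x:critical} puts the sign changes of $S$ in bijection with the critical points of $\frac{y}{x}$, all of which are non-degenerate and alternate between local maxima (with values $>1$) and local minima (with values $<1$). Consequently $\liminf \frac{y}{x} \le 1 \le \limsup \frac{y}{x}$, and applying the intermediate value theorem on each interval between consecutive extrema shows that for every $\ell$ in $\{1\} \cup (\liminf \frac{y}{x}, 1) \cup (1, \limsup \frac{y}{x})$ the equation $\frac{y}{x} = \ell$ has infinitely many solutions.

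If $\frac{y}{x}$ converges at the end, the squeeze above forces the limit to equal $1$ and the proof is complete. Otherwise $\liminf \frac{y}{x} < \limsup \frac{y}{x}$, and the displayed set contains at least two distinct levels: $\ell = 1$ and some $\ell' \ne 1$. The heart of the argument is then to apply Lemma \ref{lem:osc} to both levels: since $M/g^3$ is strictly decreasing by Lemma \ref{lem:monotone}, its limit at the end is unique, so
\[ \frac{2\lambda}{3} = \lim_{t \to \textup{end}} \frac{M}{g^3} = \frac{2\lambda}{1 + 2(\ell')^2}, \]
forcing $\ell' = 1$ when $\lambda \ne 0$, a contradiction. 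The remaining steady case $\lambda = 0$ is not ruled out by this comparison but is covered by the classification of \sunitary{3}-invariant steady soliton ends in \cite{Haskins:Nordstrom:g2soliton1}, of which the $\Sp{2}$-invariant steady solitons form a subfamily.

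Before applying Lemma \ref{lem:osc}, its hypothesis $x \to \infty$ must be verified. By Corollary \ref{cor:x_monotone}, $x$ is eventually monotonic, so it has a limit $x_\infty \in [0, \infty]$. I would rule out $x_\infty = 0$ by observing that at each local minimum of $\frac{y}{x}$ one has $y < x$, hence $g^3 = xy^2 < x^3 \to 0$, contradicting the strict monotonicity of $g$ (which keeps $g \ge g(0) > 0$). I would rule out $x_\infty$ finite and positive by combining Proposition \ref{prop:x_limit} (which in the infinite-lifetime case would force $\lambda = 0$, already handled separately) with Corollary \ref{cor:lifetime} (which in the finite-lifetime case is incompatible with $x$ being bounded both above and away from zero).

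The verification that $x \to \infty$ is the step I expect to be the main obstacle, particularly in the finite-lifetime subcase where $g$ itself need not blow up; there the conclusion must be pieced together from the eventual monotonicity of $x$, the sign constraints at extrema from Lemma \ref{lem:y:x:critical}, and the lifetime criterion of Corollary \ref{cor:lifetime}. The remaining conceptual ingredient, namely applying Lemma \ref{lem:osc} at two distinct levels and exploiting uniqueness of the limit of the monotone quantity $M/g^3$, is then relatively straightforward.
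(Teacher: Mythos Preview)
Your proposal is correct and follows essentially the same route as the paper: assume infinitely many sign changes of $S$, use Lemma~\ref{lem:y:x:critical} to get alternating extrema straddling the level $\frac{y}{x}=1$, reduce to the case $x\to\infty$, and then apply Lemma~\ref{lem:osc} at the two levels $1$ and $\ell'\ne 1$ to force a contradiction via the uniqueness of the limit of the strictly decreasing quantity $M/g^3$.

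Two minor differences are worth noting. First, the paper dispatches the case $x\to 0$ more directly: once $x'<0$ eventually, the formula $x' = \frac{(x^2+2y^2)+4S}{6y^2}$ from~\eqref{eq:xydot:S} immediately gives $S<0$ eventually, so no further sign changes. Your argument via $g^3<x^3$ at local minima is also valid, but the ODE observation is shorter. Second, you are more careful than the paper about the steady case $\lambda=0$: when $\lambda=0$ the two limits produced by Lemma~\ref{lem:osc} both equal $0$, so the comparison yields no contradiction, and you rightly defer to the classification of steady ends in \cite{Haskins:Nordstrom:g2soliton1}. The paper's proof glosses over this point.
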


\begin{proof}
By Proposition \ref{prop:x_limit} either $x \to 0$ or $x \to \infty$ monotonically as we approach $t_*$, the end of the lifetime (which could be finite or infinite). 
The result in the former case is immediate, since then eventually $x'<0$ and this forces that $S<0$ eventually. 
So we may assume $x \to \infty$ and $x$ is eventually increasing. 

On any soliton other than a Gaussian soliton, Lemma~\ref{lem:y:x:critical}(i) tells us that at a local maximum of~$\frac{y}{x}$ we have $\frac{y}{x} > 1$ and at a local minimum $\frac{y}{x} < 1$. 
Hence between successive critical points of $\frac{y}{x}$ there is always a (unique) point where $\frac{y}{x}=1$.

Suppose for a contradiction that $\frac{y}{x}$ is not eventually monotone.  Then $\frac{y}{x}$ has infinitely many critical points as $t \to t_*$ and
thus there must also be infinitely many points $t_i \to t_*$  where $\frac{y}{x} = 1$.

If $\frac{y}{x}$ failed to converge to 1, then  there must also be some $\ell \not=1$ such that $\frac{y}{x} = \ell$ holds infinitely often. But that is impossible because
we have assumed $x \to \infty$ and thus Lemma \ref{lem:osc} implies that the decreasing function $\frac{M}{g^3}$ would have two different limits
(because $\frac{y}{x}$ assumes both the values $1$ and $\ell \neq 1$ infinitely often).
\end{proof}

By Proposition~\ref{mthm:warping}(i) and (ii)  any soliton where $\frac{y}{x} \to 1$ is forward complete with a weakly AC end asymptotic to the torsion-free cone. 
We can complete the proof of Theorem \ref{thm:y:x:eventually:mono} in the shrinker case  as follows. 
By Proposition \ref{prop:reg} below, any weakly AC end is, in fact,  AC in a $C^0$ rate $-2$ sense. That allows us to apply the uniqueness result 
for gradient AC shrinkers proven by Haskins--Khan--Payne~\cite{HKP} (see Theorem \ref{thm:HKP} for a precise statement) to deduce that the shrinker is actually the Gaussian. (In fact, one can also give a self-contained proof that for any~\Sp{2}-invariant closed cone~$C$, 
there is a unique~\Sp{2}-invariant shrinker end asymptotic to $C$. Taking $C$ to be the torsion-free cone again implies that the unique AC shrinker end asymptotic to $C$ is $C$ itself.)

In the expander case, %
dealing with the situation where $\frac{y}{x} \to 1$ is actually no easier than proving the following stronger version of 
Theorem \ref{thm:y:x:eventually:mono} from scratch.

\begin{prop}
\label{cor:yx:crit}
On any (non-Gaussian) expander the warping $\frac{y}{x}$ has at most one critical point.
Moreover,  if the warping $\frac{y}{x}$ is not globally monotone then it must converge to a limit $\ell \in (0,\infty)$
and hence the solution is weakly AC. 

In particular, $\frac{y}{x}$ is strictly monotone on any expander on which
$\log{\frac{y}{x}}$ is unbounded (which includes all forward-incomplete
expanders).
\end{prop}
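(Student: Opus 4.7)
The plan is to leverage the sign-preservation criterion from Proposition~\ref{prop:elem_AC} right after any putative critical point of $\frac{y}{x}$. Throughout, assume the expander is non-Gaussian. First I would observe that, by Lemma~\ref{lem:y:x:critical}, any critical point $t_1$ of the warping is nondegenerate with either $\frac{y}{x}(t_1)>1$ (a strict local maximum) or $\frac{y}{x}(t_1)<1$ (a strict local minimum). At $t_1$ we have $S(t_1)=0$ from \eqref{eq:d:dt:log:yx}, and the defining relation $S=y^2-x^2-\tfrac{3}{2}x\tau_2$ then forces $\tau_2(t_1)$ to carry the same sign as $y(t_1)-x(t_1)$ (and so to be nonzero).

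Next I would track $S$ across $t_1$. Since $\left(\log\tfrac{y}{x}\right)'=-S/g^3$ by Lemma~\ref{lem:log_xy}, $S$ must change sign at $t_1$ from negative to positive at a local maximum, and from positive to negative at a local minimum. Combining this with the sign of $\tau_2(t_1)$ obtained above, continuity gives a $t_0>t_1$ arbitrarily close to $t_1$ at which $S(t_0)$ and $\tau_2(t_0)$ are simultaneously positive (in the local maximum case) or simultaneously negative (in the local minimum case). Proposition~\ref{prop:elem_AC} then applies at $t_0$: the common sign is preserved forward in time, $\log\tfrac{y}{x}$ keeps that sign, $|\log\tfrac{y}{x}|$ is monotonically decreasing, and the end is forward-complete and weakly AC. In particular $\frac{y}{x}$ is strictly monotone on $[t_0,\infty)$, so no further critical points can occur and $\frac{y}{x}$ converges to some $\ell\in(0,\infty)$ (lying in $[1,\infty)$ in the max case, $(0,1]$ in the min case). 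This proves both that $\frac{y}{x}$ has at most one critical point and that the presence of a critical point forces a weakly AC end, giving the second assertion.

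Finally, for the last assertion: if $\log\tfrac{y}{x}$ is forward-unbounded then the preceding dichotomy rules out any critical point, since one would produce a finite positive limit of $\frac{y}{x}$. Hence $\frac{y}{x}$ has no critical points on its maximal forward interval; being non-Gaussian, $(y/x)'$ is then of one fixed nonzero sign, so $\frac{y}{x}$ is globally strictly monotone. The parenthetical about forward-incomplete expanders is immediate from Proposition~\ref{prop:lifetime}. I do not anticipate a genuinely hard step here; the only mildly delicate point is verifying the sign of $S$ \emph{just after} a critical point, which is resolved by the nondegeneracy statement in Lemma~\ref{lem:y:x:critical} together with the identity $\left(\log\tfrac{y}{x}\right)'=-S/g^3$.
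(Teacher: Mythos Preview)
Your proposal is correct and follows essentially the same route as the paper's proof: at a critical point you read off the sign of $\tau_2$ from $S=0$ and $y\neq x$, determine the sign $S$ takes immediately afterwards (the paper does this via the formula for $S'$ at $S=0$, you via nondegeneracy and the relation $(\log\tfrac{y}{x})'=-S/g^3$, which amounts to the same thing), and then feed both signs into Proposition~\ref{prop:elem_AC} to conclude weak asymptotic conicality and the absence of further critical points.
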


\begin{proof}
Suppose that $\frac{y}{x}$ has a local maximum at $t=t_0$. Then $\frac{y}{x}(t_0)>1$, $S(t_0)=0$ 
and hence also $\tau_2(t_0)>0$; 
moreover, $S'(t_0) = \left(\frac{2g^3}{x^2+2y^2}\right) \slead(t_0)>0$, so 
$S$ must change sign from negative to positive as we move past $t_0$.
Therefore for $t=t_0+ \epsilon$ with $\epsilon>0$ sufficiently small we have $S>0$, $\tau_2>0$. 
Then by Proposition~\ref{prop:elem_AC} these conditions persist (so, in particular, there are no further critical points of $\frac{y}{x}$) and the solution is weakly AC.

The proof in the case that $\frac{y}{x}$ has a local minimum is entirely analogous. %
\end{proof}

\subsection{Criterion for weak asymptotic conicality of expanders}
\label{ss:weak:ac:expand}

In this subsection we give necessary and
sufficient conditions for an \Sp{2}-invariant expander to be weakly AC; we show, in particular, that such initial conditions form an
open region in the phase space. In other words, for expanders having an AC end is a stable  or generic property. 

Recall from Section~\ref{ss:tilde_tau} that on expanders the adjusted torsion
$\tilde \tau_1$ and $\tilde \tau_2$ are increasing functions of $t$, at least one of which must be positive
at any instant (and thereafter it must remain positive). 
Clearly, if an expander is weakly AC then $\log{\frac{y}{x}}$ remains bounded and eventually $\tilde \tau_1$ and $\tilde \tau_2$ are both positive.
The next result gives a converse: if both $\tilde \tau_1$ and $\tilde \tau_2$ ever
become positive, then $\log{\frac{y}{x}}$ remains bounded and
hence by the eventual monotonicity of $\frac{y}{x}$ it converges to some limiting value,
and therefore by Lemma~\ref{lem:yx:bounded} it is weakly AC. 

\begin{prop}
\label{prop:xy:bounds}For any~\Sp{2}-invariant expander the following hold:
\begin{enumerate}
\item If $\tilde \tau_1 > 0$ then $\frac{y}{x}$ is bounded above for all future time. 
\item If $\tilde \tau_2 > 0$ then $\frac{x}{y}$ is bounded above for all future time. 
\end{enumerate}
\end{prop}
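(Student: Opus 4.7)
The plan is to prove (i) by contradiction, relying on the dichotomy from Proposition~\ref{prop:x_limit} and the one-critical-point constraint from Corollary~\ref{cor:yx:crit}; part (ii) follows by an analogous argument with $x$ and $y$ swapped. Since $\tilde\tau_1'=\lambda x^2\ge 0$, the assumption $\tilde\tau_1(t_0)>0$ persists, so $\tilde\tau_1\ge\tilde\tau_1(t_0)>0$ for all $t\ge t_0$. Suppose for contradiction $h:=y/x$ is unbounded above on $[t_0,T)$, where $T$ is the forward endpoint of the lifetime. Then $\log h$ is unbounded, so Corollary~\ref{cor:yx:crit} gives $h$ strictly monotone; being unbounded above, $h$ must monotonically increase to $+\infty$ as $t\to T^-$. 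Proposition~\ref{prop:x_limit} and $\lambda>0$ then force either $x\to 0$ or $x\to\infty$.

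To rule out $x\to 0$, starting from $(x^2)'=2x-(x^2/y^2)(x+2\tau_2)$ in~\eqref{eq:ODE:tau2'} and substituting $3\tau_2=\tilde\tau_2-h^2\tilde\tau_1$ from~\eqref{eq:tau2:tildetau1}, one obtains
\[
(x^2)'=2x-\frac{x}{h^2}+\frac{2\tilde\tau_1}{3}-\frac{2\tilde\tau_2}{3h^2}.
\]
The conservation law~\eqref{eq:ODE:conserve2} gives $|\tilde\tau_2|\le\lambda xy^2+\tfrac{1}{2}\tilde\tau_1$, and $\tilde\tau_1\le M$ together with the monotonicity of $M/g^3$ from Lemma~\ref{lem:monotone} and the identity $g^3=x^3h^2$ yields $\tilde\tau_1/h^2,\,\tilde\tau_2/h^2=O(x^3)$. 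Hence in the limit $x\to 0$, $h\to\infty$ every term but $\tfrac{2}{3}\tilde\tau_1$ tends to $0$, giving $(x^2)'\to\tfrac{2}{3}\tilde\tau_1^\infty\ge\tfrac{2}{3}\tilde\tau_1(t_0)>0$, which forces $x^2$ to be bounded away from zero and contradicts $x\to 0$.

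So $x\to\infty$, whence $x'>0$ eventually by Corollary~\ref{cor:x_monotone}. Set $A_i:=2x+\tilde\tau_i$ and $N:=h^2A_1-A_2$; the closure equation implies $(h^2)'x^2=A_2-h^2A_1=-N$, and a direct differentiation using $\tilde\tau_1'=\lambda x^2$, $\tilde\tau_2'=\lambda y^2$ and $h^2x^2=y^2$ gives the linear ODE
\[
N'=-\frac{NA_1}{x^2}+2x'(h^2-1).
\]
Since $\tilde\tau_1>0$ gives $A_1>0$ and the forcing term is positive once $h>1$ and $x'>0$, a comparison argument shows $N>0$ eventually: choose $t_1$ so that $h(t_1)>1$ and $x'>0$ on $[t_1,T)$; if $N\le 0$ persisted then $-NA_1/x^2\ge 0$, so $N'(s)\ge 2(h(t_1)^2-1)x'(s)$ and integration yields $N(t)\ge N(t_1)+2(h(t_1)^2-1)(x(t)-x(t_1))\to+\infty$, a contradiction. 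Hence $N>0$ at some instant, forcing $(h^2)'=-N/x^2<0$ and contradicting the monotonic increase of $h$. This proves (i).

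Part (ii) is structurally identical: with $k:=x/y$, the analogous quantity $\Psi:=k^2A_2-A_1$ satisfies $(k^2)'y^2=A_1-k^2A_2=-\Psi$ and $\Psi'=-\Psi A_2/y^2+2x'(k^2-1)$; the hypothesis $\tilde\tau_2>0$ gives $A_2>0$, while $k\to\infty$ combined with $g^3=x^3/k^2$ and the positivity of the (increasing) function $g$ forces $x\to\infty$, so $x'>0$ eventually and the same Gronwall-type comparison yields $\Psi>0$ eventually and the contradiction $(k^2)'<0$. The main obstacle lies in ruling out $x\to 0$ in (i): it requires combining the conservation law~\eqref{eq:ODE:conserve2} with the monotonicity of $M/g^3$ from Lemma~\ref{lem:monotone} to show that the three ``spurious'' terms in the formula for $(x^2)'$ all decay like $O(x^3)$, leaving the strictly positive contribution $\tfrac{2}{3}\tilde\tau_1^\infty$. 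The subsequent linear-ODE comparison for $N$ (and for $\Psi$) is then a standard integration argument.
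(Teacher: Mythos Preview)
Your argument is correct, but it takes a substantially longer route than the paper's. The paper proves (i) in essentially two lines: since $\tau_2' = -\tfrac{1}{3}\tilde\tau_1\,(h^2)'$ (equation~\eqref{eq:tau2':alt}) and $\tilde\tau_1 \ge A > 0$, if $h$ is eventually increasing and unbounded then integrating gives $\tau_2(t_1) < \tfrac{A}{3}\bigl(h(t_0)^2 - h(t_1)^2\bigr) + \tau_2(t_0)$, which becomes negative once $h(t_1)$ is large enough; but $h(t_1) > 1$ and $\tau_2(t_1) < 0$ force $S(t_1) = y^2 - x^2 - \tfrac{3}{2}x\tau_2 > 0$, contradicting $(\log h)' = -S/g^3 > 0$. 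Part (ii) is the dual via $\tau_1' = -\tfrac{2}{3}\tilde\tau_2\,(k^2)'$. In contrast, you invoke the $x$-dichotomy of Proposition~\ref{prop:x_limit}, eliminate the case $x \to 0$ by an asymptotic analysis of $(x^2)'$ that requires the $M/g^3$ monotonicity of Lemma~\ref{lem:monotone}, and then dispose of $x \to \infty$ via a linear ODE for $N = h^2A_1 - A_2$ and a Gronwall comparison. The paper's approach never needs to decide whether $x \to 0$ or $x \to \infty$ and avoids Lemma~\ref{lem:monotone} and Proposition~\ref{prop:x_limit} entirely; your quantity $N$ is in fact just $2S/x$, so your final contradiction is really the same sign contradiction on $S$, reached after more work.

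One small imprecision: in your $x \to 0$ case you write $(x^2)' \to \tfrac{2}{3}\tilde\tau_1^\infty$, but $\tilde\tau_1^\infty$ could be $+\infty$ if $T = \infty$. This is harmless, since all you actually use is the lower bound $\liminf (x^2)' \ge \tfrac{2}{3}\tilde\tau_1(t_0) > 0$, which follows because the other three terms are $o(1)$.
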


\begin{proof}
For (i), write $h = \frac{y}{x}$. 
Recall from \eqref{eq:tau2':alt} that
\[ \tau_2' = -\frac{1}{3} \tilde \tau_1  (h^2)'. \]
By Corollary~\ref{cor:yx:crit},  eventually $h$ is monotone.  If $h$ is eventually decreasing then certainly it is bounded above. 
So it suffices to consider the case that $h$ is eventually increasing (so $S$ is eventually negative). 
If $\tilde\tau_1(t_0) = A > 0$, then the fact that $\tilde\tau_1$ is increasing implies that for any $t_1$ such that
$h(t) \leq h(t_1)$ for all $t \in [t_0, t_1]$
\[ \tau_2(t_1) < \frac{A}{3}\left(h(t_0)^2 - h(t_1)^2\right) + \tau_2(t_0) . \]
Hence given any initial values $h(t_0)$ and $\tau_2(t_0)$ there exists an $M > 1$
such that $h(t_1) \ge M>1$ implies $\tau_2(t_1)<0$. Therefore if $h$ is unbounded above then 
there exists $t_1$ so that $\tau_2(t_1)<0$. Since $h(t_1)>1$ and $\tau_2(t_1)<0$, $S(t_1)$ must be positive, which is a contradiction.

For (ii), write $k=\frac{x}{y}$.
Recall that
\[
\tau_1' = - \frac{2}{3} \tilde \tau_2  (k^2)'.
\]
Corollary~\ref{cor:yx:crit} implies that $k$ is eventually monotone and clearly it suffices to consider the case that $k$ is eventually increasing. 
Then we can apply the obvious analogue of the argument from  part~(i) to conclude that $k=\frac{x}{y}$ is bounded above for all future times. 
\end{proof}

We now improve on Proposition~\ref{prop:xy:bounds} by showing
that, in fact, for expanders $\frac{x}{y}$ is always forward-bounded above.
In turn,  this allows us to prove that if $\tilde \tau_1$ is eventually positive
then the solution is weakly AC (implying that
eventually $\tilde \tau_2$ becomes positive too).

\begin{prop}
\label{prop:x:over:y:bded}
On any expander $y$ and $\frac{y}{x}$ are bounded away from $0$ for all future $t$.
\end{prop}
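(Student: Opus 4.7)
The plan is to strengthen Proposition~\ref{prop:xy:bounds} by proving that on any expander the adjusted torsion $\tilde\tau_2$ must eventually become strictly positive. Once that is established, Proposition~\ref{prop:xy:bounds}(ii) bounds $x/y$ above for all sufficiently large $t$, which together with the monotonicity of $g^3 = xy^2$ yields both a positive lower bound on $y$ (via $y^3 \ge g^3/K$) and on $y/x$. The main tools are the monotone quantity $M/g^3$ from Lemma~\ref{lem:monotone} and the ODE $\frac{d}{dt}\log\frac{y}{x} = -S/g^3$ from Lemma~\ref{lem:log_xy}.

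First I would establish $y \ge \delta > 0$. Fix $t_0$ and set $C_0 := M(t_0)/g^3(t_0)$, so $M/g^3 \le C_0$ for all forward $t$. Writing $M/g^3 = 3/y^2 + \tilde\tau_1/g^3$ and using the conservation law $\tilde\tau_1 = 2\lambda g^3 - 2\tilde\tau_2$, I split into cases based on the sign of $\tilde\tau_2$. If $\tilde\tau_2(t_1) > 0$ at some $t_1$, Proposition~\ref{prop:xy:bounds}(ii) gives $x/y \le K$ for $t \ge t_1$, whence $y^3 \ge g^3(t_1)/K$. Otherwise $\tilde\tau_2 \le 0$ throughout, so $\tilde\tau_1/g^3 \ge 2\lambda$ and hence
\[
 \frac{3}{y^2} \,=\, \frac{M}{g^3} - \frac{\tilde\tau_1}{g^3} \,\le\, C_0 - 2\lambda,
\]
which forces $C_0 > 2\lambda$ and produces an explicit lower bound $y \ge \sqrt{3/(C_0 - 2\lambda)}$.

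Next I would argue $\tilde\tau_2$ must become positive. Given $y \ge \delta$, the ODE $\tilde\tau_2' = \lambda y^2 \ge \lambda\delta^2 > 0$ implies that if $\tilde\tau_2 \le 0$ persists throughout the lifetime $[t_0, t_*)$, then $\int_{t_0}^{t_*}\lambda y^2\,dt \le |\tilde\tau_2(t_0)|$, forcing $t_* < \infty$. Proposition~\ref{prop:xy:bounds}(i) (applied with $\tilde\tau_1 \ge 2\lambda g^3 > 0$) then gives $y/x$ bounded above, so Proposition~\ref{prop:lifetime} forces $y/x \to 0$ as $t \to t_*^-$; combined with $y \ge \delta$ this forces $x \to \infty$, $g^3 \to \infty$, and (by eventual monotonicity via Corollary~\ref{cor:x_monotone}) $x$ bounded away from $0$. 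The contradiction will be that instead $\log(y/x) \to +\infty$. Using Lemma~\ref{lem:log_xy},
\[
 \frac{d}{dt}\log\tfrac{y}{x} \,=\, -\tfrac{S}{g^3} \,=\, \tfrac{x}{y^2} - \tfrac{1}{x} + \tfrac{3\tau_2}{2y^2}.
\]
The positive term has divergent integral: from $(\log g^3)' = x/(2y^2) + 1/x$, the facts $g^3 \to \infty$ and $\int 1/x\,dt < \infty$ give $\int x/y^2\,dt = +\infty$. The remaining terms are integrable on $[t_0, t_*)$: $\int 1/x\,dt$ trivially, and $\int |\tau_2|/y^2\,dt$ via the identity $3\tau_2 = \tilde\tau_2 - \tilde\tau_1(y/x)^2$ together with the bound $\tilde\tau_1 \le C_0 g^3$ (from $M/g^3 \le C_0$) and the finiteness of $\int y^2\,dt$. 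These combine to yield $\log(y/x) \to +\infty$, contradicting $y/x \to 0$.

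Hence $\tilde\tau_2$ becomes strictly positive at some $t_1$; Proposition~\ref{prop:xy:bounds}(ii) then gives $y/x \ge 1/K > 0$ on $[t_1, t_*)$, and continuity provides a positive lower bound on the compact interval $[t_0, t_1]$. The main obstacle is the finite-lifetime subcase in the second step, where one must simultaneously exploit all three available monotonicities ($M/g^3$, the $\tilde\tau_i$, and $g^3$) to control the competing terms in $-S/g^3$ and verify the claimed convergence/divergence properties of their integrals.
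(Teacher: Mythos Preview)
Your argument is correct, and the overall architecture matches the paper's: first bound $y$ away from~$0$ using the decreasing quantity $M/g^3$, then rule out the possibility that $\tilde\tau_2$ stays non-positive by showing that this would force finite lifetime and then deriving a contradiction in that finite-lifetime regime.  The paper's Step~1 is slightly leaner: rather than splitting on the sign of $\tilde\tau_2$, it simply observes that $\tilde\tau_1$ increasing and $g^3$ increasing already make $\tilde\tau_1/g^3$ bounded below, whence $3/y^2 = M/g^3 - \tilde\tau_1/g^3$ is bounded above.

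The genuine difference is in the finite-lifetime contradiction.  You integrate $\frac{d}{dt}\log\frac{y}{x}$ and show it diverges to $+\infty$, contradicting the upper bound on $y/x$ from Proposition~\ref{prop:xy:bounds}(i); the key estimate is $\tilde\tau_1/x^2 \le C_0\, y^2/x \le (C_0/x_{\min})\,y^2$, which is integrable because $\int y^2\,dt = \lambda^{-1}(\tilde\tau_2(t_*^-)-\tilde\tau_2(t_0)) < \infty$.  The paper instead bounds $(\log x)'$ directly via the identity $(\log x)' = \tfrac{1}{x} - \tfrac{x}{2y^2} + \tfrac{\tilde\tau_1}{3x^2} - \tfrac{\tilde\tau_2}{3y^2}$ and the differential inequality $\bigl(\tilde\tau_1/x^2\bigr)' < \lambda$ (valid once $x$ is increasing and $\tilde\tau_1>0$), concluding that $x$ stays bounded on a finite interval, which contradicts $x \to \infty$.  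Both routes hinge on controlling the same dangerous term $\tilde\tau_1/x^2$; the paper bounds the quantity itself, while you bound its time integral via the monotone $M/g^3$ and the finiteness of $\int y^2$.  One small point: your claim ``Proposition~\ref{prop:lifetime} forces $y/x \to 0$'' is a slight overreach---that proposition only gives $\log(y/x)$ unbounded---but since you ultimately derive $\log(y/x)\to +\infty$, which already contradicts the upper bound on $y/x$, the argument goes through without needing the precise limit.
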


\begin{proof}
Recall from~\eqref{eq:monotonic} that 
\[
\frac{M}{xy^2} = \frac{3x+ \tilde \tau_1}{g^3} = \frac{3}{y^2} + \frac{\tilde \tau_1}{g^3}
\]
is decreasing in $t$ and  hence it is certainly bounded above.
Because $\tilde \tau_1$ is increasing while
$g^3$ is positive and bounded away from 0, the term $\frac{\tilde\tau_1}{g^3}$
is bounded below. Therefore $\frac{1}{y^2}$ is bounded above.

Now suppose $\frac{x}{y}$ were unbounded above. 
Then by the eventual monotonicity of $\frac{x}{y}$, it is eventually increasing. Hence $S$ is eventually positive, which
implies $x$ is eventually increasing and so $x \to \infty$ by Proposition~\ref{prop:x_limit}. 
By Proposition~\ref{prop:xy:bounds}, $\frac{x}{y}$ can be unbounded above only if $\tilde \tau_2$ remains negative, and so
$\tilde\tau_1$ must remain positive by~\eqref{eq:ODE:conserve2}.
Since $\tilde \tau_2$ is negative and increasing it is bounded. 

The lower bound for $y$ easily rules out the possibility of infinite lifetime: 
because $\tilde \tau_2' = \lambda y^2>C>0$, then eventually $\tilde \tau_2$ would become positive. 
It remains to rule out the possibility that the solution has finite lifetime.
Because $\tilde \tau_1$ is  positive and $x$ is eventually increasing, (eventually) we have the bound 
\[
\frac{d}{dt} \left(\frac{\tilde \tau_1}{x^2}\right) = \lambda - \frac{\tilde \tau_1 (x^2)'}{x^4} < \lambda.
\]
Therefore $\frac{\tilde \tau_1}{x^2}$ remains bounded above on  any finite time interval. 
Eventually $x>1$ holds and then $(\log{x})'$ satisfies the bound
\[ (\log x)' = \frac{1}{x} - \frac{x}{2y^2} + \frac{\tilde\tau_1}{3x^2} -\frac{\tilde\tau_2}{3y^2} < 1 + \frac{\tilde\tau_1}{3x^2} -\frac{\tilde\tau_2}{3y^2}.\]
Hence on any finite interval $(\log{x})'$ is bounded above: because  $\frac{\tilde \tau_1}{x^2}$ is bounded above, $y$ is bounded away from $0$ and $\tilde \tau_2$ is bounded.
So $x$ is bounded above on any finite interval, contradicting that~$x \to \infty$.
\end{proof}

Two easy consequences of the previous proposition
are the following results. 

\begin{corollary}
\label{cor:y:x:inc}
On any non-AC~\Sp{2}-invariant expander, the warping $\frac{y}{x}$ is strictly increasing throughout its lifetime 
with $\frac{y}{x} \to \infty$ at the end of its lifetime (which could be finite or infinite). 
\end{corollary}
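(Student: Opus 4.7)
The plan is to chain together Proposition~\ref{cor:yx:crit}, Proposition~\ref{prop:x:over:y:bded} and Lemma~\ref{lem:yx:bounded}, using the fact that (by Theorem~\ref{mthm:AC}, or already by Lemma~\ref{lem:yx:bounded}) for an expander being non-AC is equivalent to $\frac{y}{x}$ failing to have a positive finite limit at the end of its lifetime. Since a non-AC expander cannot be Gaussian (the Gaussian soliton has $\frac{y}{x} \equiv 1$, so is trivially weakly AC), we may freely apply the results that exclude the Gaussian case.

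First I would argue that $\frac{y}{x}$ is strictly monotone throughout the lifetime. By Proposition~\ref{cor:yx:crit}, on a non-Gaussian expander $\frac{y}{x}$ has at most one critical point, and if it fails to be globally monotone then the solution is weakly AC; the non-AC hypothesis rules this out, so $\frac{y}{x}$ is globally monotone. Lemma~\ref{lem:y:x:critical}(i) guarantees that the remaining possible critical point (if any) is nondegenerate, so global monotonicity is in fact strict monotonicity.

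Second I would rule out the strictly decreasing alternative. Proposition~\ref{prop:x:over:y:bded} says $\frac{y}{x}$ is bounded below by a positive constant for all future $t$, so a strictly decreasing $\frac{y}{x}$ would converge to some $\ell \in (0,\infty)$. Then Lemma~\ref{lem:yx:bounded} would produce a weakly AC end, contradicting the hypothesis. Hence $\frac{y}{x}$ must be strictly increasing throughout its lifetime.

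Finally, since $\frac{y}{x}$ is strictly increasing, it has a (possibly infinite) limit as $t \to t_*$. If this limit were finite, it would necessarily be positive, and Lemma~\ref{lem:yx:bounded} would once more force the solution to be weakly AC; this contradicts the non-AC hypothesis, so the limit must be $+\infty$. No step of this argument is expected to be difficult: the corollary is essentially a bookkeeping consequence of the monotonicity/critical-point analysis in Proposition~\ref{cor:yx:crit}, the lower bound of Proposition~\ref{prop:x:over:y:bded}, and the characterisation of weakly AC ends in Lemma~\ref{lem:yx:bounded}; the only care needed is to ensure each step correctly excludes the weakly AC alternative.
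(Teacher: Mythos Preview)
Your proposal is correct and follows essentially the same approach as the paper: invoke Proposition~\ref{cor:yx:crit} to get global monotonicity (since the non-AC hypothesis rules out the weakly AC alternative), then use the lower bound from Proposition~\ref{prop:x:over:y:bded} together with Lemma~\ref{lem:yx:bounded} to exclude the decreasing case and any finite limit. The paper's proof is terser but uses exactly the same ingredients; your extra care in separately justifying strictness (via the nondegeneracy in Lemma~\ref{lem:y:x:critical}) and explicitly treating the bounded-increasing case is fine but not strictly necessary, since the final clause of Proposition~\ref{cor:yx:crit} already packages the strict-monotonicity conclusion for solutions with $\log\frac{y}{x}$ unbounded.
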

\begin{proof}
Corollary~\ref{cor:yx:crit} implies that the solution is weakly AC, unless $\frac{y}{x}$ is monotone throughout. 
By Proposition~\ref{prop:x:over:y:bded}, $\frac{y}{x}$ is bounded away from $0$ and so if $\frac{y}{x}$ were decreasing then 
$\frac{y}{x}$ would converge to some limit $\ell>0$ as $t \to \infty$. Then by Lemma~\ref{lem:yx:bounded}
the solution would be AC. Hence $\frac{y}{x}$ must be strictly increasing and unbounded above.
\end{proof}

\begin{lemma}
\label{lem:extinction_x}
$x \to 0$ and $\frac{x}{y} \to 0$ at the extinction time of any forward-incomplete expander.
In fact, $x$ is decreasing and $S$ is negative throughout its lifetime. 
\end{lemma}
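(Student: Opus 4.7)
The plan is to chain together previously established results on the eventual monotonicity of the warping $\frac{y}{x}$ with the finite-extinction-time criterion from Corollary~\ref{cor:lifetime} and the monotonicity structure of $x$ on expanders.

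First I would observe that a forward-incomplete expander is automatically non-AC: a weakly AC expander has $\log\frac{y}{x}$ bounded, hence is forward-complete by Proposition~\ref{prop:lifetime}. Hence Corollary~\ref{cor:y:x:inc} applies and gives that $\frac{y}{x}$ is strictly increasing throughout its lifetime, with $\frac{y}{x} \to \infty$ as $t$ approaches the (finite) extinction time $t_*$. Equivalently, $\frac{x}{y} \to 0$ at $t_*$, which proves the second of the two claimed limits.

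Next, combining the strict increase of $\frac{y}{x}$ with Lemma~\ref{lem:log_xy} gives $-S/g^3 = \frac{d}{dt}\log\frac{y}{x} > 0$, and since $g > 0$ this forces $S < 0$ throughout the lifetime. To handle the remaining claims I would invoke Corollary~\ref{cor:lifetime}: finite lifetime requires $\frac{x}{y}$ or $\frac{1}{x}$ to be unbounded above. Since $\frac{x}{y}$ is bounded (it converges to $0$), it must be $\frac{1}{x}$ that blows up, so $\inf x = 0$ along the lifetime. This yields $x \to 0$ once we show $x$ is monotonic.

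Finally I would invoke Corollary~\ref{cor:x_monotone}, which says that for an expander $x$ has at most one critical point, and such a critical point must be a nondegenerate local minimum. If such a $t_m$ existed, $x$ would be increasing on $[t_m,t_*)$ and bounded below by $x(t_m) > 0$, contradicting $\inf x = 0$. Therefore $x$ has no critical points on the lifetime, hence is globally monotone; combined with $\inf x = 0$ this monotonicity must be strict decrease, giving $x$ decreasing throughout and $x \to 0$ at $t_*$.

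The only real subtlety I anticipate is checking that Corollary~\ref{cor:y:x:inc} truly applies to a forward-incomplete expander, which amounts to the remark that AC (and more generally weakly AC) ends are forward-complete; once $\frac{y}{x}$ is known to be strictly monotone with $\frac{x}{y}$ bounded, the remaining assertions are essentially forced by the structural constraint on critical points of $x$ for expanders.
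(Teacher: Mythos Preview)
Your proof is correct and follows essentially the same route as the paper's: both arguments invoke Corollary~\ref{cor:y:x:inc} to obtain $\frac{x}{y}\to 0$ and $S<0$, then combine Corollary~\ref{cor:lifetime} with the critical-point structure of $x$ from Corollary~\ref{cor:x_monotone} to conclude that $x$ is globally decreasing with $x\to 0$. The only cosmetic difference is that the paper appeals to the ``$x$ cannot be bounded both above and away from $0$'' clause of Corollary~\ref{cor:lifetime}, while you use the ``$\frac{x}{y}$ or $\frac{1}{x}$ unbounded'' clause; these are equivalent here.
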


\begin{proof}
By Corollary~\ref{cor:y:x:inc} $S$ is negative throughout the lifetime of any forward-incomplete expander
and $\frac{x}{y} \to 0$ at its extinction time. 
Since, by Corollary~\ref{cor:x_monotone}, $x$ is eventually monotone, to prove that~$x \to 0$ it suffices to prove that $x$ cannot be bounded away from $0$. 
Together with the upper bound for $\frac{x}{y}$ from
Proposition \ref{prop:x:over:y:bded}, bounding $x$ away from 0 would imply
infinite lifetime by Corollary~\ref{cor:lifetime}.
Finally, by the critical point analysis for $x$ (Corollary~\ref{cor:x_monotone} again), any expander for which $x$ is eventually decreasing must, in fact, have $x$ decreasing throughout its lifetime.
\end{proof}

We can now state 
our most general characterisation of which expanders have weakly AC ends. 
\begin{prop}
\label{prop:AC:end}
For an \Sp{2}-invariant expander, the following are equivalent.
\begin{enumerate}[left=0em]
\item $\tilde \tau_1$ ever becomes positive
\item the solution is forward complete, and $\frac{y}{x}$ has a limit $\ell \in (0,\infty)$ as $t \to \infty$
\item $\frac{y}{x}$ is not strictly increasing and unbounded
\end{enumerate}
In this situation the expander is weakly asymptotic to the unique
closed~\Sp{2}-invariant~\gtwo-cone
with  $c_2/c_1=\ell$. 
In particular, $\frac{x}{t} \to c_1$ and $\frac{y}{t} \to c_2$, where $c_1 = \frac{1}{6} \left(2 + \ell^{-2} \right)$ and $c_2 =  \frac{\ell}{6} \left(2 + \ell^{-2} \right)$.  Eventually $\tilde \tau_2$ is also positive.
\end{prop}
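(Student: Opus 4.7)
The plan is to establish the equivalence by a short cycle of implications, drawing on results already proved in this section about eventual monotonicity of $\frac{y}{x}$, its lower bound on expanders, and the link between $\tilde\tau_1$ and upper bounds on $\frac{y}{x}$. Throughout I use that $\tilde\tau_1' = \lambda x^2$ and $\tilde\tau_2' = \lambda y^2$ are positive on expanders, so both adjusted torsions are strictly increasing.

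First I would handle the easiest implication, (ii) $\Rightarrow$ (iii): if $\frac{y}{x} \to \ell \in (0,\infty)$ then $\frac{y}{x}$ is forward-bounded, hence cannot be unbounded. For (iii) $\Rightarrow$ (ii), I would combine Corollary~\ref{cor:yx:crit} (eventual monotonicity of $\frac{y}{x}$) with Proposition~\ref{prop:x:over:y:bded} (which gives a positive lower bound for $\frac{y}{x}$). Under (iii), $\frac{y}{x}$ is either bounded above or eventually decreasing; in both cases, combined with the lower bound and eventual monotonicity, $\frac{y}{x}$ must converge to some $\ell \in (0,\infty)$. Forward completeness then follows from Proposition~\ref{mthm:warping}(i), since $\log \frac{y}{x}$ is forward-bounded.

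Next, (i) $\Rightarrow$ (iii) is immediate from Proposition~\ref{prop:xy:bounds}(i): once $\tilde\tau_1$ is positive at some $t_0$, $\frac{y}{x}$ is bounded above on $[t_0,\infty)$, precluding unboundedness. Finally, for (ii) $\Rightarrow$ (i), I would invoke Lemma~\ref{lem:yx:bounded} applied to the AC limit: the convergence $\frac{y}{x} \to \ell$ forces $\frac{x}{t} \to c_1 > 0$, so $x^2$ grows quadratically. Since $\tilde\tau_1' = \lambda x^2 \geq c\,t^2$ for large $t$ and some $c>0$, integration yields $\tilde\tau_1 \to +\infty$, so eventually $\tilde\tau_1 > 0$. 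The same argument with $\tilde\tau_2' = \lambda y^2$ and $\frac{y}{t} \to c_2 > 0$ gives the final claim that eventually $\tilde\tau_2 > 0$. The weak asymptotic conicality and the explicit formulas $c_1 = \tfrac{1}{6}(2+\ell^{-2})$, $c_2 = \tfrac{\ell}{6}(2+\ell^{-2})$ are read off directly from Lemma~\ref{lem:yx:bounded} and \eqref{eq:explicit_cone_solution}.

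I do not anticipate a serious obstacle: the proposition is essentially a clean repackaging of the tools already developed, with the lower bound $\frac{y}{x} \geq c>0$ from Proposition~\ref{prop:x:over:y:bded} and the upper bound mechanism of Proposition~\ref{prop:xy:bounds}(i) doing the real work. The only mildly delicate point is that in (iii) $\Rightarrow$ (ii) one must be careful to exclude the possibility that $\frac{y}{x}$ fails to have any limit at all — this is precisely what Corollary~\ref{cor:yx:crit} rules out by showing that, for non-Gaussian expanders, $\frac{y}{x}$ has at most one critical point, and, combined with the positive lower bound, any eventually monotone bounded sequence of values must converge.
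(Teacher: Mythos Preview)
Your proposal is correct and follows essentially the same approach as the paper. The only cosmetic differences are that the paper packages your (iii)$\Rightarrow$(ii) argument into the single citation of Corollary~\ref{cor:y:x:inc} (whose proof is precisely the combination of Corollary~\ref{cor:yx:crit} and Proposition~\ref{prop:x:over:y:bded} you spell out), and for (ii)$\Rightarrow$(i) the paper uses the slightly more elementary observation that $xy^2$ increasing together with $\frac{y}{x}$ bounded forces $x$ bounded away from~$0$, rather than invoking the full linear growth $\frac{x}{t}\to c_1$ from Lemma~\ref{lem:yx:bounded}.
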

\begin{remark*}
By (iii) we mean that either $\frac{y}{x}$ is bounded above or $\frac{y}{x}$ is not strictly increasing. 
\end{remark*}

\begin{proof}
(i) implies $\frac{y}{x}$ is bounded above by
Proposition \ref{prop:xy:bounds}, and hence (iii) holds. 

By Corollary \ref{cor:y:x:inc}, an expander is forward complete with
$\frac{y}{x}$ converging as $t \to \infty$ unless $\frac{y}{x}$ is strictly increasing and unbounded.
In other words,  (iii) implies (ii).

Since $xy^2$ is increasing, (ii) certainly implies that $x$ is bounded away
from 0.
Then $\tilde\tau_1' = \lambda x^2$ implies that
$\tilde \tau_1$ must eventually become positive, so that (i) holds. 

(ii) also implies weak conicality by Lemma \ref{lem:yx:bounded},
and $y$ being bounded away from $0$ implies that $\tilde \tau_2$ eventually becomes positive.
\end{proof}

\begin{remark}
Proposition \ref{prop:AC:end} implies that set of initial conditions that
lead to weakly AC expanders is open in the phase space.
We discuss in \S\ref{sec:ends} that the boundary of this open
region corresponds to expanders that are forward complete but with
quadratic-exponential volume growth; for such solutions the increasing function $\tilde \tau_1$ remains negative
but tends to $0$ as $t \to \infty$.
\end{remark}

\section{Asymptotically conical non-steady ends}
\label{s:AC:geometry:Sp2:expanders}

Proposition \ref{prop:AC:end} implies that having a weakly AC end 
is a stable end behaviour for expanders (this is not true for shrinkers; see \S\ref{subsec:rigid_vs_instab}
for further discussion of the contrast in stability between AC shrinkers and AC
expanders). 
We also established previously 
that all smoothly-closing expanders have weakly AC ends. 
In this section, we strengthen these results in two directions.

The first main result of this section (proven in ~\S\,\ref{subsec:conv_S}--\ref{ss:rate-2:AC}) holds %
for both  expanders and shrinkers and amounts to a regularity result for non-steady weakly AC ends. 
It shows that any~\Sp{2}-invariant non-steady soliton with $\log \frac{y}{x}$ bounded
is AC not only in the weak sense of \eqref{eq:weak_ac_def} %
but also
in a stronger ``$C^0$ rate $-2$'' sense. (In contrast,
the AC steady solitons that two of the authors studied
in~\cite{Haskins:Nordstrom:g2soliton1} generically have rate $-1$.)
A key ingredient here is to prove that 
the quantity $S$ remains bounded and, in fact, converges as $t \to \infty$ to a value that
is determined by the asymptotic cone of the solution. 

The other main result in this section (proven in~\S\,\ref{ss:AC:expander:end:stability}) establishes 
a continuity result for AC expander ends. Namely, 
the condition
for an \Sp{2}-invariant expander to be weakly AC is not only stable, but moreover
the asymptotic cone depends continuously on the initial conditions.
This result applies, in particular, to show that the asymptotic cone 
of a smoothly-closing expander depends continuously on the parameter $\lambda b^2$. 

Finally we point out that
both of the main results of this section rely on a technical result, Lemma~\ref{lem:Sdot}, proven in~\S\,\ref{subsec:conv_S}.

\subsection{Convergence of \texorpdfstring{$S$}{S}}
\label{subsec:conv_S} 

For a general \Sp{2}-invariant conical \gtstr{}, the quantity $S$
grows like $t^2$ (as one would also expect from the description of the scaling
behaviour in Remark \ref{rmk:scaling}, which shows $S$ to
have ``scaling weight 2'').
However, for a \emph{closed} cone
Lemma \ref{lem:log_xy} implies that $S \equiv 0$.

Therefore, for a closed \gtstr{} that is asymptotically conical with
rate $\nu$, in the sense that the difference between $x, y$ and the cone
is $O(t^{1+\nu})$, the growth rate of $S$ is controlled by the error terms,
so that $S = O(t^{2+\nu})$.

For non-steady AC soliton ends we will turn this argument around: first we prove that $S$ in fact converges to a finite limit as $t \to \infty$, and then leverage that fact to prove that the AC rate is $-2$. 
(In contrast, on AC \emph{steady} soliton ends,
which have rate $-1$, $S$ is instead asymptotically linear.)

The main result of this section is the following:
\begin{prop}
\label{prop:S:limit}
Suppose that a forward-complete \Sp{2}-invariant non-steady soliton is weakly
asymptotic to the unique closed cone $(c_1,c_2)$ with $\frac{c_2}{c_1} = \ell$,
\ie $\frac{x}{t} \to c_1$, $\frac{y}{t} \to c_2$; equivalently $\frac{y}{x} \to \ell$ by above. Then
the quantity $S$ converges as $t \to \infty$ to the finite value 
\begin{equation}
\label{eq:s*_from_c}
S^*:= 
\frac{(c_1^2+2c_2^2)^2(c_2^2-c_1^2)}{12\lambda (c_1c_2^2)^2}
= \frac{3}{\lambda}(c_2^2-c_1^2) = \frac{1}{12\lambda} (\ell^2-1)(2+\ell^{-2})^2.
\end{equation}
In particular, for any smoothly-closing expander $\spfam{\lambda}{b}$,
$S \to S^*$ as $t \to \infty$ with $S^* \ge 0$, with equality if and only if it is asymptotic to the torsion-free cone $\ell=1$. 
\end{prop}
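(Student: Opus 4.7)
The plan is to analyse the ODE \eqref{eq:Sdot} for $S$ as $t \to \infty$, exploiting the hypothesis $y/x \to \ell$. In that limit $A := 2xy^2/(x^2+2y^2) \sim t/3$ (using the closed cone relation $c_1^2 + 2c_2^2 = 6 c_1 c_2^2$), while $\alpha(y/x) \to \alpha(\ell)$ and the remaining contributions to $\beta$ decay like $1/t^2$ and $1/t^4$. A brief computation using $c_1^2+2c_2^2=6c_1c_2^2$ gives $\alpha(\ell) = 3(c_2^2 - c_1^2)$, so the candidate $S^* := \alpha(\ell)/\lambda$ matches the formula in the statement and is exactly the equilibrium of the asymptotic linear equation $\Phi' + \lambda A \Phi \approx 0$ for $\Phi := S - S^*$.

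The crux is to show $S = o(t^4)$. Two ingredients supply this. First, from $S/g^3 = -(\log y/x)'$ and the eventual monotonicity of $y/x$ (Proposition~\ref{prop:y:x:eventually:mono}), we have $\int^\infty |S|/g^3\, dt < \infty$. Second, Lemma~\ref{lem:monotone} makes $M/g^3$ monotonically decreasing. Rearranging the identity \eqref{eq:MS} gives
\begin{equation*}
\frac{4S}{y^2(x^2+2y^2)} = \frac{M}{g^3} - L_0 + o(1),\qquad L_0 := \frac{\lambda c_1}{3 c_2^2},
\end{equation*}
since the non-$S$ terms in \eqref{eq:MS} tend to $L_0$. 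A nonzero limit on the left would force $|S|$ to grow at rate $t^4$ and hence $|S|/g^3$ to grow linearly in $t$, contradicting integrability. Hence the monotone function $M/g^3$ converges to $L_0$, and $S = o(t^4)$.

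With $S = o(t^4)$ in hand, convergence $S \to S^*$ follows from an integrating-factor argument. Let $\mu(t) := \exp\bigl(\lambda \int_{t_0}^t A\, ds\bigr)$, which behaves like $\exp(\lambda t^2/6)$. The equation for $\Phi$ rearranges to $(\mu \Phi)' = \mu A\, r(t)$ with $r(t) \to 0$ (the remainder gathers the $\alpha-\alpha(\ell)$ term, the sub-leading $\beta_1 S^*$ and $\beta_2 (S^*)^2$ contributions, and a $\beta_2\Phi^2$ nonlinearity which is controlled by $\Phi = o(t^4)$ combined with the $1/t^4$ decay of $\beta_2$). In the expander case $\mu \to +\infty$, and integrating forward from $t_0$ gives $\Phi(t) = \mu^{-1}\mu(t_0)\Phi(t_0) + \mu^{-1}\int_{t_0}^t \mu A r\, ds$; L'H\^opital's rule applied using $\mu'=\lambda A\mu$ yields $\Phi \to 0$. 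In the shrinker case $\mu \to 0$ quadratic-exponentially, while the bound $\Phi = o(t^4)$ forces $\mu\Phi \to 0$; integrating from $t$ to $+\infty$ then gives $\Phi = -\mu^{-1}\int_t^\infty \mu A r\, ds$, and another L'H\^opital argument again delivers $\Phi \to 0$. (Equivalently, the homogeneous mode $C\mu^{-1}$, which blows up like $\exp(|\lambda| t^2/6)$ in the shrinker case, is ruled out by the polynomial bound on $|S|$.)

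The consequence for smoothly-closing expanders is immediate: Theorem~\ref{thm:sc:expanders:complete} gives $\ell \ge 1$, hence $c_2 \ge c_1$ and $S^* = 3(c_2^2-c_1^2)/\lambda \ge 0$, with equality if and only if $\ell = 1$, i.e.\ the asymptotic cone is the torsion-free one. The main obstacle in the argument is the $o(t^4)$ bound; it is where the weakly AC hypothesis does the essential work, by pairing the monotonicity of $M/g^3$ with the $L^1$ integrability of $|S|/g^3$.
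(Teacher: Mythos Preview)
Your approach is genuinely different from the paper's, and the first half is a nice idea: using the monotonicity of $M/g^3$ together with convergence of $\int S/g^3\,dt$ to force $M/g^3 \to L_0$ and hence $S = o(t^4)$ is cleaner than the paper's route (which instead combines Lemma~\ref{lem:Sdot} with the accumulation statement of Lemma~\ref{lem:sis_accumulate}). Two small comments on that step: you do not actually need $\int |S|/g^3 < \infty$, only that $\int S/g^3$ converges, which follows directly from $y/x \to \ell$; and invoking Proposition~\ref{prop:y:x:eventually:mono} is both unnecessary and, in the shrinker case with $\ell = 1$, potentially circular (the paper's proof of eventual monotonicity there goes through Proposition~\ref{prop:reg}, which uses the very result you are proving).

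There is, however, a genuine gap in the integrating-factor step. With $\mu = \exp\bigl(\lambda\!\int A\bigr)$ one has $(\mu\Phi)' = \mu A\bigl[(\alpha - \alpha(\ell)) - \beta_1 S - \beta_2 S^2\bigr]$, and the bound $S = o(t^4)$ gives only $\beta_1 S = o(t^2)$ and $\beta_2 S^2 = o(t^4)$, so your claim that $r(t) \to 0$ is false as written. (In particular, ``$\beta_2\Phi^2$ controlled by $\Phi = o(t^4)$ and $\beta_2 = O(t^{-4})$'' yields $o(t^4)$, not $o(1)$.) The fix is to move the $\Phi$-linear and quasi-linear pieces into the coefficient: write
\[
\Phi' \;=\; -A\,\tilde\beta\,\Phi \;+\; A\,\tilde r,
\qquad
\tilde\beta := \lambda + \beta_1 + 2\beta_2 S^* + \beta_2\Phi,
\quad
\tilde r := (\alpha - \alpha(\ell)) - \beta_1 S^* - \beta_2 (S^*)^2.
\]
Now $\tilde r \to 0$ is immediate, and $\tilde\beta \to \lambda$ because $\beta_1,\beta_2 \to 0$ and $\beta_2\Phi = o(1)$ (this is exactly where your $o(t^4)$ bound is used). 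Using the solution-dependent integrating factor $\tilde\mu = \exp\!\int A\tilde\beta$, one has $\tilde\mu'/\tilde\mu = A\tilde\beta \sim \lambda t/3$, and your L'H\^opital arguments go through verbatim in both the expander and shrinker cases with $\tilde\mu, \tilde r$ in place of $\mu, r$. By contrast, the paper bypasses the integrating factor entirely: it works directly with the sign of $\tfrac{1}{\lambda}\tfrac{d}{dt}|S - S^*|$ in the regime $C \le |S - S^*| \le Bg^2$, which is more hands-on but avoids the need for any polynomial a~priori bound on~$S$.
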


As the reader will see, our proof of Proposition~\ref{prop:S:limit} is sensitive to the sign of $\lambda$. 
Before proving this result we isolate a general observation, Lemma~\ref{lem:Sdot}, about the behaviour of $|S-S^*|$ 
on non-steady solitons defined on sufficiently long $t$-intervals.
This result will be used in our proof of Proposition~\ref{prop:S:limit} 
and again in our later study of the stability of AC expander ends.

Note that \eqref{eq:s*_from_c} can be thought of as a continuous function
of~$\ell = \frac{c_2}{c_1}$; indeed, we can also write it
as $S^* = \frac{1}{\lambda}\slead\left(\frac{c_2}{c_1}\right)$ where
\begin{equation*}
\slead(l) := \frac{1}{12}(l^2-1)(2 + l^{-2})^2 .
\end{equation*}
Since $\frac{d\slead}{dl} > 0$ for all $l >0$ and $\slead(l) \to - \infty$ as $l \to 0$ and $\slead(l) \to +\infty$ as $l \to \infty$, 
the map $\slead : (0,\infty) \to \R$ is a smooth
bijection. 
For any soliton $(x,y)$ we can consider $\slead(\frac{y}{x})$, and saying that $\slead(\frac{y}{x})$ is close to
$\slead(\frac{c_2}{c_1})$ is just another way to say that $(x,y)$ is close to
$(c_1, c_2)$ up to scale.

\begin{lemma}
\label{lem:Sdot}
For any constants $S^* \in \R$, $\lambda \not= 0$, $B > 0$ and $C > 0$
there is $g_0 > 0$ (depending on $\lambda$, $B$ and $C$) such that at any point of any $\lambda$-soliton
where 
\[ g > g_0, \quad
\left|\frac{\slead\left(\tfrac{y}{x}\right)}{\lambda} - S^*\right|< \frac{1}{5}C \quad\textrm{ and } \quad C \le  |S - S^*| \le B g^2, \]
hold then one has
\[ \frac{1}{\lambda}\frac{d}{dt} |S-S^*| <  -\frac{C g^3}{x^2 +2y^2} < 0 . 
\]
\end{lemma}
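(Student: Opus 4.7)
The plan is to start from \eqref{eq:Sdot}, rewritten as
\[
\frac{1}{\lambda}\frac{d|S-S^*|}{dt}\cdot\frac{x^2+2y^2}{g^3} \;=\; \frac{2\sigma(\alpha - \beta S)}{\lambda},
\]
where $\sigma := \sgn(S-S^*)$ is well-defined because $|S - S^*| \ge C > 0$. Writing $\delta := S - S^*$, $\gamma := \alpha/\lambda - S^*$ and $\eta := (\lambda - \beta)S/\lambda$, I would decompose
\[
\frac{2\sigma(\alpha - \beta S)}{\lambda} \;=\; 2\sigma\gamma - 2|\delta| + 2\sigma\eta.
\]
The hypotheses $|\gamma| < \tfrac{1}{5}C$ and $|\delta| \ge C$ immediately give $2\sigma\gamma - 2|\delta| \le \tfrac{2C}{5} - 2C = -\tfrac{8C}{5}$, so the desired inequality reduces to showing that $|\eta| < |\delta| - \tfrac{7C}{10}$ (a positive quantity, at least $\tfrac{3C}{10}$).

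To bound $|\eta|$, I would exploit the identity $x^2y^4 = g^6$. Parametrising $(x,y) = (g\ell^{-2/3},\, g\ell^{1/3})$ where $\ell = y/x$, the formula for $\beta$ rearranges to
\[
\lambda - \beta \;=\; -\frac{A(\ell)}{g^2} - \frac{D(\ell)\,S}{g^4}
\]
for explicit smooth functions $A,D$ on $(0,\infty)$. The bijectivity of $\alpha$ combined with $|\gamma| < \tfrac{1}{5}C$ confines $\ell$ to a compact subinterval $[\ell_-,\ell_+] \subset (0,\infty)$ depending only on $\lambda, S^*, C$, on which $A$ and $D$ are uniformly bounded by some constant $K$. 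Together with $|S| \le |S^*| + |\delta|$ this yields
\[
|\eta| \;\le\; \frac{K\bigl(|S^*|+|\delta|\bigr)}{|\lambda|\,g^2} + \frac{K\bigl(|S^*|+|\delta|\bigr)^2}{|\lambda|\,g^4}.
\]

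Finally I would split into two regimes and choose $g_0$ large in each. In the regime $|\delta| \le M$ for a constant $M = M(B,C,\lambda,S^*)$ chosen below, $|S|$ is bounded independently of $g$, so $|\eta| = O(g^{-2})$; for $g \ge g_0$ sufficiently large, $|\eta| < \tfrac{3C}{10} \le |\delta| - \tfrac{7C}{10}$. In the regime $|\delta| > M$, I would take $M \ge |S^*|$ so that $|S| \le 2|\delta|$, and combine this with the hypothesis $|\delta|/g^2 \le B$ to obtain $|\eta| \le (2KB + 4KB^2)/|\lambda| =: E$, a constant depending on $B, \lambda, S^*, C$. Setting $M > E + \tfrac{7C}{10}$ then guarantees $|\eta| \le E < M - \tfrac{7C}{10} < |\delta| - \tfrac{7C}{10}$. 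The main obstacle is precisely this second regime: the naive hope that $\eta \to 0$ as $g \to \infty$ fails because when $|\delta|$ is of order $Bg^2$, the quadratic-in-$S$ error $D(\ell)S^2/g^4$ is of constant order $B^2$. Instead one must observe that in this range it is the main term $-2|\delta|$ that dominates, beating the error by a margin of order $|\delta| \gg C$.
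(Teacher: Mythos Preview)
Your argument is correct, and rests on the same observation as the paper's: under the hypotheses, $\ell = y/x$ is confined to a compact interval and $|S| \le |S^*| + Bg^2$, so $\beta - \lambda = O(g^{-2})$ uniformly. The difference lies in the algebraic decomposition of $\alpha - \beta S$. The paper factors out $\beta$ rather than $\lambda$, writing
\[
\tfrac{1}{\lambda}(\alpha - \beta S) \;=\; \tfrac{\beta}{\lambda}\Bigl(\bigl(\tfrac{\alpha}{\beta} - \tfrac{\alpha}{\lambda}\bigr) + \bigl(\tfrac{\alpha}{\lambda} - S^*\bigr) + (S^* - S)\Bigr),
\]
and then uses $\tfrac{\beta}{\lambda} > \tfrac{5}{6}$ and $\bigl|\tfrac{\alpha}{\beta} - \tfrac{\alpha}{\lambda}\bigr| < \tfrac{C}{5}$ for $g$ large. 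Because $\alpha$ is bounded (by $|\lambda|(|S^*| + \tfrac{C}{5})$) while $\tfrac{1}{\beta} - \tfrac{1}{\lambda} = O(g^{-2})$, this error term is $O(g^{-2})$ \emph{uniformly} in $|\delta| \le Bg^2$, and a single choice of $g_0$ handles all cases at once. By contrast, your error term $\eta = (\lambda - \beta)S/\lambda$ has a factor of $S$ rather than $\alpha$, so it is only $O(1)$ rather than $O(g^{-2})$ in the regime $|\delta| \sim Bg^2$; this is exactly what forces your two-regime split. Both arrive at the same conclusion, but the paper's factorisation is the cleaner route. (A minor point: in both arguments the compact interval for $\ell$, and hence $g_0$, implicitly depends on $S^*$ as well as on $\lambda, B, C$.)
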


\begin{proof}
First we recall from~\eqref{eq:Sdot} that the ODE satisfied by $S'$ is
\begin{equation*}
S' =\left( \frac{2xy^2}{x^2+2y^2}\right) (\slead - \sother S),
\end{equation*}
where $\slead=\slead(\frac{y}{x})$ is as above and 
\[
\sother = \lambda + \frac{ x^4+12x^2y^2-4y^4}{4x^2y^4} + \frac{(4y^2-x^2)S}{3x^2y^4}.
\]
The precise expression for $\sother$ is not so important to our argument. What does matter
is that while $\slead(\frac{y}{x})$ and (patently) $\lambda$ are independent
of the scale of $(x,y)$, the non-$\lambda$ terms in $\sother$ have scaling
weight $-2$, and our hypotheses imply that $x$, $y$ and $S$ really do have
bounds corresponding to their scaling weight:
$\slead(\frac{y}{x})$ bounded implies $\frac{y}{x}$ and $\frac{x}{y}$ are both bounded, so $x$ and $y$ are both $O(g)$, while $|S-S^*| < Bg^2$ implies
$S = O(g^2)$.
Thus $\sother = \lambda + O(g^{-2})$.

It is then clear that for any $B>0$ and $C>0$  it is possible to choose $g_0$ sufficiently large so that both
\[
\frac{\sother}{\lambda}> \frac{5}{6} \quad \text{and} \quad \left|\frac{\slead}{\sother} - \frac{\slead}{\lambda}\right| < \frac{C}{5}
\]
hold whenever $g>g_0$, $|\lambda^{-1}\slead(\frac{y}{x}) - S^*| < \frac{C}{5}$
and $|S-S^*| < Bg^2$.
If moreover, $|S-S^*|  \ge C$
then rewriting \eqref{eq:Sdot} as
\[ \frac{1}{\lambda}S' =
\frac{2g^3}{x^2 + 2y^2} \frac{\sother}{\lambda}
\left(\slead\left(\tfrac{1}{\sother} - \tfrac{1}{\lambda}\right)
+ \left(\tfrac{\slead}{\lambda} - S^*\right) + (S^* - S)\right) \]
shows that $\frac{1}{\lambda}S'$ has the opposite sign to $S-S^*$, and has absolute value greater than
\[ \left(\frac{2g^3}{x^2 + 2y^2}\right) \frac{5}{6}\left(-\frac{C}{5} -\frac{C}{5} + C\right)
= \frac{Cg^3}{x^2 + 2y^2}. \]
This is equivalent to the desired conclusion.
\end{proof}

We will use Lemma \ref{lem:Sdot}
to show that the following dichotomy holds:
when $S$ is not already close to $S^*= \tfrac{1}{\lambda}\slead \left(\tfrac{c_2}{c_1} \right)$ while $S/g^2$ is bounded, then
in the expander case $S$ is attracted to $S^*$, while in the shrinker case $S$ is repelled away from $S^*$.
Either way, we will be able to use that information to deduce that $S \to S^*$, provided that asymptotically $\sis=S/g^2$ is
not too big.

The following weak constraint on the asymptotic behaviour of $\sis$ suffices to make the
argument work.

\begin{lemma}
\label{lem:sis_accumulate}
On any weakly AC \Sp{2}-invariant closed \Sp{2}-invariant \gtstr,
$\sis:= S/g^2$ cannot be bounded away from 0.
\end{lemma}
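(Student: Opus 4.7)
The plan is to derive a contradiction from Lemma~\ref{lem:log_xy}. Suppose for contradiction that there exist $\delta > 0$ and $t_0$ with $|\sis(t)| = |S(t)|/g(t)^2 \geq \delta$ for all $t \geq t_0$. Since $S$ is continuous and nowhere zero on $[t_0,\infty)$, it must have constant sign there.

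Next I would use the weakly AC hypothesis to bound the growth of $g$: since $x/t \to c_1$ and $y/t \to c_2$ with both limits positive, $g = (xy^2)^{1/3}$ grows linearly in $t$, so there is a constant $C > 0$ with $1/g(t) \geq C/t$ for all sufficiently large $t$. Combined with $|S|/g^2 \geq \delta$ this yields the pointwise lower bound $|S/g^3| \geq \delta C/t$ eventually, whose integral over $[t_0,\infty)$ diverges logarithmically.

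On the other hand, Lemma~\ref{lem:log_xy} gives $\tfrac{d}{dt}\log\tfrac{y}{x} = -S/g^3$, and integrating over $[t_0,T]$ yields
\[ \int_{t_0}^T \frac{S}{g^3}\, dt = \log\frac{y(t_0)}{x(t_0)} - \log\frac{y(T)}{x(T)}. \]
Weak asymptotic conicality forces $y/x \to \ell \in (0,\infty)$, so the right-hand side has a finite limit as $T \to \infty$, and hence the improper integral of $S/g^3$ on $[t_0,\infty)$ converges. Because $S$ has fixed sign on this interval, the convergence is absolute, contradicting the logarithmic divergence from the previous paragraph.

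There is no substantive obstacle here: the proof is essentially the observation that weak asymptotic conicality simultaneously makes $S/g^3$ integrable (via Lemma~\ref{lem:log_xy}) and, through the linear growth of $g$, turns the hypothetical bound $|\sis|\geq\delta$ into the non-integrable lower bound $|S/g^3|\gtrsim 1/t$.
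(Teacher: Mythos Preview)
Your proof is correct and follows essentially the same approach as the paper's. The paper integrates $\left(\frac{x}{y}\right)' = \frac{S}{y^3}$ rather than $\left(\log\frac{y}{x}\right)' = -\frac{S}{g^3}$, but since $\frac{S}{y^3} = \frac{x}{y}\cdot\frac{\sis}{g}$ differs from $\frac{S}{g^3} = \frac{\sis}{g}$ only by the bounded factor $\frac{x}{y}$, the arguments are interchangeable.
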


\begin{proof}

Because $\frac{y}{x}$ converges as $t \to \infty$, so does
\[
\int_0^t \left(\frac{x}{y}\right)'  ds = \int_0^t \frac{S}{y^3} ds,
\]
where the last equality follows from~\eqref{eq:d:dt:log:yx}.
If $\sis$ were bounded away from zero, then since $g$ grows linearly in $t$, and
\[
\frac{S}{y^3} = \frac{x}{y} \frac{\sis}{g},
\] 
the above integral would be infinite, which gives a contradiction. 
\end{proof}

\begin{proof}[Proof of Proposition \ref{prop:S:limit}]
Our goal is to prove that $S \to S^*$ as $t \to \infty$. 
If we fix any $C > 0$,
continuity of $\slead$ and the hypothesis that
$\frac{x}{t} \to c_1$, $\frac{y}{t} \to c_2$ ensure that
\[
\left|S^* - \lambda^{-1} \slead(\tfrac{y}{x})\right| =
\left| \lambda^{-1} \left(\slead(\tfrac{c_2}{c_1}) - \slead(\tfrac{y}{x})\right)\right| < \tfrac{1}{5}C
\]
holds for all $t$ sufficiently large.
Therefore for any $B > 0$, Lemma \ref{lem:Sdot}
implies that we can choose $t_0$ sufficiently large so that
\begin{equation}
\label{eq:S:dot:bound}
 \frac{1}{\lambda}\frac{d}{dt}|S-S^*| < -\frac{Cg^3}{x^2 + 2y^2} \quad 
\text{holds whenever $t > t_0$ and $C \le |S-S^*| \le Bg^2$}.
\end{equation}
The cases where $\lambda>0$ and $\lambda<0$ must now be treated differently. 

Consider first the expander case. In this case we can choose any $B>0$ we like. 
Then because $\lambda>0$, and $\frac{g^3}{x^2+2y^2}$ grows linearly,
\eqref{eq:S:dot:bound} certainly implies that there is
a negative constant $K$ such that
\begin{equation}
\label{eq:S:dot:bd:expand}
 \frac{d}{dt}|S-S^*| < K < 0 \quad
\text{holds whenever $t > t_0$ and $C \le |S-S^*| \le Bg^2.$}
\end{equation}
Lemma \ref{lem:sis_accumulate} implies that for any $B>0$ there exists $t>t_0$ so that $|S-S^*| \le Bg^2$ holds at $t$.
If $|S-S^*|$ is not already less than $C$ at $t$, then~\eqref{eq:S:dot:bd:expand}
implies that past $t$,  $|S-S^*|$ decreases until it reaches $C$ at some
finite time $t_1> t> t_0$. Moreover, since $\frac{d}{dt}|S-S^*|<0$ still holds at $t_1$, then $|S-S^*|$
becomes less than $C$ immediately after $t_1$. Once $|S-S^*|$ decreases below $C$ it must it remain so for all $t > t_1$: 
if $\tilde t >t_1$ were the first instant at which $|S-S^*|=C$ then 
$\tfrac{d}{dt} \abs{S-S^*}$ would have to be nonnegative at $\tilde t$, which contradicts~\eqref{eq:S:dot:bd:expand}.
Since the constant $C>0$ was arbitrary this implies that $S \to S^*$.
Now consider the $\lambda < 0$ case.
Since $\frac{x^2 + 2y^2}{g^2}>0$ converges as $t \to \infty$, it is certainly bounded above. 
Hence there is a $B > 0$ and $t_1 > t_0$ such that 
\[
\frac{-3\lambda C g^4}{(x^2 + 2y^2)^2} >B \quad \text{for all\ } t > t_1.
\]
Now consider the nonnegative quantity
\[
\wt \sis := g^{-2}\left|S(t) -S^*\right|.
\]
Then~\eqref{eq:S:dot:bound} shows that if there exists $t>t_1$ for which
\begin{equation}
\label{eq:shrinker:bd}
\wt \sis \le B \quad \textrm{ and }
\quad \left|S -S^*\right| \ge  C \quad \text{both hold at $t$}
\end{equation}
then, not only is $\frac{d}{dt}\abs{S-S^*}$ positive at $t$, but so is $\wt \sis'$, because
\begin{equation}
\label{eq:sis:deriv:bd}
\begin{aligned} 
\frac{d\wt \sis}{dt} \; &= 
-\frac{x^2 + 2y^2}{3g^5} \abs{S(t)-S^*} + \frac{1}{g^2} \frac{d}{dt} \abs{S(t)-S^*} \\ &> \;
-\frac{x^2 + 2y^2}{3g^3} \wt \sis
\, + \, \frac{1}{3g^3} \frac{-3\lambda C g^4}{x^2 + 2y^2} \; > \;
\frac{x^2 + 2y^2}{3g^3}
 (B - \wt \sis)
\end{aligned}
\end{equation}
(the final inequality holds because of how we chose $B$ above). 
By making $t_1$ larger, we can in addition ensure that $\wt \sis \geq B$ implies
$|S-S^*| \geq C$ for $t > t_1$.
Now suppose for a contradiction that there exists $t_2 > t_1$
such that $|S(t_2) - S^*| \geq C$.
If $\wt \sis(t_3) = B$ at some $t_3 \geq t_2$ then $\wt \sis(t) > B$ 
for all $t>t_3$:  by~\eqref{eq:sis:deriv:bd} the derivative of $\wt \sis$ is
positive at $t_3$, as well as at the first instant $t > t_3$ at which
$\wt \sis(t) = B$, which would be a contradiction.
On the other hand, if $\wt \sis(t) < B$ for all $t \geq t_2$
then~\eqref{eq:sis:deriv:bd} implies that $\wt \sis$ is increasing.
In either scenario, $\wt \sis$ (and hence $\sis$) is bounded away from 0 as $t \to \infty$,
in contradiction to Lemma \ref{lem:sis_accumulate}.
Hence $|S-S^*| < C$ for all $t$ sufficiently large. 
Since $C$ was arbitrary this proves that~$S \to S^*$.
\end{proof}

In Corollary~\ref{cor:weak:AC} we deduced that weakly AC behaviour of a soliton is equivalent to the boundedness of $\log{\frac{y}{x}}$ 
by using the eventual monotonicity of the warping $\frac{y}{x}$ proven in Theorem~\ref{thm:y:x:eventually:mono}. 
We finish this subsection by giving a different proof of Corollary~\ref{cor:weak:AC} in the non-steady case 
by using the methods developed to prove Proposition~\ref{prop:S:limit}.
One advantage of this argument is that it
admits a generalisation to the~\sunitary{3}-invariant case (because Lemma~\ref{lem:Sdot} does), while the proof of the eventual monotonicity of the warping $\frac{y}{x}$ uses the decreasing quantity $M/g^3$ 
in a fundamental way and we have not yet found a substitute for this quantity in the~\sunitary{3}-invariant case. 
A further advantage is that it avoids having to appeal to the AC shrinker end uniqueness result~\cite{HKP}.

\begin{lemma}
\label{lem:weak_AC}
Any~\Sp{2}-invariant non-steady soliton for which $\log{\frac{y}{x}}$ remains bounded is weakly AC. 
In particular, the hypotheses of Proposition~\ref{prop:S:limit} can be weakened to just assume that $\log{\frac{y}{x}}$ remains bounded.
\end{lemma}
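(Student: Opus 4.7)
The plan is to derive weak asymptotic conicality by directly stringing together three earlier results, with essentially no additional work required. First, I would invoke Proposition \ref{prop:lifetime} (equivalently Theorem \ref{mthm:warping}(i)): since any forward-incomplete $\Sp{2}$-invariant soliton has $\log \frac{y}{x}$ unbounded, the contrapositive applied to our hypothesis yields that the soliton has infinite forward lifetime, hence is forward complete.

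Next, I would appeal to the eventual monotonicity of the warping established in Theorem \ref{thm:y:x:eventually:mono}: on any non-Gaussian $\Sp{2}$-invariant soliton, $\frac{y}{x}$ is eventually strictly monotonic as $t\to\infty$. (On the Gaussian soliton, $\frac{y}{x}\equiv 1$, so convergence is automatic.) A bounded eventually monotone function on a half-infinite interval must converge, so the hypothesis that $\log \frac{y}{x}$ is forward-bounded forces $\frac{y}{x}\to \ell$ for some $\ell\in(0,\infty)$ as $t \to \infty$. I would then apply Lemma \ref{lem:yx:bounded} to convert this $C^0$-convergence of the ratio into the conclusion $\frac{x}{t}\to c_1$, $\frac{y}{t}\to c_2$ for the unique closed cone $(c_1,c_2)$ with $\frac{c_2}{c_1}=\ell$, which is precisely the definition of weak asymptotic conicality.

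For the ``in particular'' clause, once weak AC is established the hypotheses of Proposition \ref{prop:S:limit} are satisfied, so the convergence $S\to S^*$ of \eqref{eq:s*_from_c} follows without further argument. It is worth noting that the non-steady assumption $\lambda\neq 0$ plays no role in the derivation of weak AC itself (both Proposition \ref{prop:lifetime}, Theorem \ref{thm:y:x:eventually:mono}, and Lemma \ref{lem:yx:bounded} hold for arbitrary $\lambda$); it enters only through Proposition \ref{prop:S:limit}, where $\lambda$ appears in the denominator of $S^*$. There is no substantive obstacle in the argument: the lemma is a clean packaging of the monotonicity--boundedness--convergence chain already available in the paper.
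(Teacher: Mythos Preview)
Your argument is correct and is genuinely different from the paper's. You leverage the eventual monotonicity of the warping (Theorem \ref{thm:y:x:eventually:mono}, or really just Proposition \ref{prop:y:x:eventually:mono}, since the case $\frac{y}{x}\to 1$ already gives convergence) together with the boundedness hypothesis to force convergence of $\frac{y}{x}$, and then finish with Lemma \ref{lem:yx:bounded}. The paper instead avoids eventual monotonicity entirely: it reuses the $S$-control machinery of Lemma \ref{lem:Sdot} (with $S^*=0$) to show $|S|$ stays bounded, which makes $\frac{S}{g^3}$ integrable and hence forces $\log\frac{y}{x}$ to converge. Your route is shorter and more transparent; the paper's route has the advantage (noted in Remark \ref{rmk:lending}) that it transfers to the \sunitary{3}-invariant setting, where no analogue of the monotone quantity $M/g^3$ underpinning the eventual-monotonicity results is known.

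One small correction to your closing remark: the non-steady hypothesis is not entirely idle in your derivation of weak AC. The proof of Proposition \ref{prop:y:x:eventually:mono} invokes Proposition \ref{prop:x_limit} to assert that either $x\to 0$ or $x\to\infty$, but for $\lambda=0$ that proposition allows a third alternative ($x$ converging to a finite positive limit). So your chain does implicitly use $\lambda\neq 0$, albeit only mildly.
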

\begin{proof}
By the incompleteness criterion Proposition \ref{prop:lifetime},
boundedness of  $\log{\frac{y}{x}}$
implies that the solution has  infinite forward lifetime. 
It suffices to prove  that $S$ remains bounded as $t \to \infty$, assuming only the boundedness of $\log{\frac{y}{x}}$:
because then $g'$ is bounded, so $g$ grows linearly and then a bound on $S$ implies that the right-hand side of~\eqref{eq:d:dt:log:yx}
is integrable and hence $\log{\frac{y}{x}}$ has a finite limit as $t \to \infty$. Then Lemma~\ref{lem:yx:bounded}
implies that the soliton is weakly AC. 

To see that $S$ must remain bounded we argue as follows. The boundedness of $\log \frac{y}{x}$ implies 
that~$\slead( \frac{y}{x})$ is bounded as $t \to \infty$. Hence, there exists some $C>0$ so that
\[
\abs{\lambda^{-1} \slead (\tfrac{y}{x})} < \tfrac{1}{5}C
\]
holds for all~$t$ sufficiently large. The arguments from the proof of Proposition~\ref{prop:S:limit} (in both the expander and shrinker cases) can still be applied, but now with $S^*$ chosen to be $0$
(\ie appealing to Lemma~\ref{lem:Sdot} with $S^*=0$)
and only for this particular value of $C$, and they show that $\abs{S} = \abs{S-S^*} < C$ for all~$t$ sufficiently large. 
The key points to observe are that the boundedness of $\log{\frac{y}{x}}$ implies: 
(i)~bounds on~$g'$ and hence $g^3/(x^2+2y^2)$ still grows linearly with $t$ and $g^2/(x^2+2y^2)$ is bounded above;  
(ii)~that~$\sis$ still cannot be bounded away from $0$.
\end{proof}

\subsection{\texorpdfstring{$C^0$ rate $-2$}{C0 rate -2} convergence of non-steady AC ends}
\label{ss:rate-2:AC}
The convergence of $\frac{y}{x}$ and $S$ already established also ensure, thanks to~\eqref{eq:ODE:tau2'},
that the limits of $x', y'$ and $\tau_2'$ as $t \to \infty$ all exist and are determined uniquely by $\ell= \lim_{t \to \infty}\frac{y}{x}$. 
One might therefore seek to go further and establish that any non-steady AC end
has a well-defined smooth asymptotic expansion at $t = \infty$.
We outline in~\S\ref{subsec:end_sivp} how this can be done by
solving a singular initial value problem in terms of $u = t^{-2}$.
However, while the coefficients of a (unique) formal power series in $u$ can be
identified from \eqref{eq:irreg_ode}, it
requires a nontrivial investment to prove that such a formal power series solution really corresponds to the derivatives of a genuine smooth solution to the problem (because the singularity at $u=0$ turns out to be irregular).

Instead we focus here on pinning down the ``next'' coefficients of $x$ and $y$,
which turns out to be crucial (and sufficient) for the purposes of the current
paper (in particular, for the proof of Theorem~\ref{mthm:asymptotic:limit}
in \S\ref{s:asymptotic:cones}).
The basic reasoning is as follows:
because the differential equation satisfied by~$\frac{x}{y}$ is so simple, we can
make use of the convergence of $S$ %
to control the ``next'' coefficients by applying the following simple lemma
several times.

\begin{lemma}
\label{lem:mu_ode3}
Let $\nu > 1$, and $a \in \R$ and suppose that $t^\nu \frac{df}{dt} \to a$ as $t \to \infty$.
Then $f$ has a limit $f_\infty$, and
\[ t^{\nu-1}(f-f_\infty) \to \frac{a}{1-\nu} . \]
\end{lemma}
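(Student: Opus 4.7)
The plan is to first show that $f$ has a limit at infinity, and then compute the asymptotic rate at which $f$ approaches this limit by a direct integration argument (one could equally use L'H\^opital's rule after establishing the limit).

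For the existence of $f_\infty$, note that the hypothesis $t^\nu f'(t) \to a$ means that for any $\epsilon > 0$ there exists $T$ such that $|f'(t)| \le (|a|+\epsilon) t^{-\nu}$ for $t \ge T$. Since $\nu > 1$, $t^{-\nu}$ is integrable on $[T,\infty)$, so $f'$ is absolutely integrable on $[T,\infty)$, and hence $f(t) = f(T) + \int_T^t f'(s)\,ds$ converges to some $f_\infty \in \R$ as $t \to \infty$.

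For the asymptotic rate, write
\[ f(t) - f_\infty \;=\; -\int_t^\infty f'(s)\,ds, \]
and decompose the integrand as $f'(s) = a s^{-\nu} + \epsilon(s) s^{-\nu}$, where $\epsilon(s) := s^\nu f'(s) - a \to 0$ as $s \to \infty$. The main term contributes
\[ -a\int_t^\infty s^{-\nu}\,ds \;=\; \frac{a}{1-\nu}\, t^{1-\nu}, \]
which after multiplication by $t^{\nu-1}$ gives exactly $\frac{a}{1-\nu}$. The error term satisfies
\[ \Bigl|\int_t^\infty \epsilon(s)\, s^{-\nu}\,ds\Bigr| \;\le\; \Bigl(\sup_{s \ge t} |\epsilon(s)|\Bigr)\, \frac{t^{1-\nu}}{\nu-1}, \]
so after multiplication by $t^{\nu-1}$ it is bounded by $\frac{1}{\nu-1}\sup_{s\ge t}|\epsilon(s)|$, which tends to $0$. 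Combining these two estimates yields $t^{\nu-1}(f-f_\infty) \to \frac{a}{1-\nu}$, as claimed.

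There is essentially no obstacle here: the argument is a standard exercise once one recognises that $\nu > 1$ makes $s^{-\nu}$ integrable at infinity, which both guarantees the existence of $f_\infty$ and pins down the asymptotic tail. The only step requiring a little care is making sure the error term in the integral really is $o(t^{1-\nu})$ rather than $O(t^{1-\nu})$, which follows from the uniform smallness of $\epsilon(s)$ for $s$ large.
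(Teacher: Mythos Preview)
Your proof is correct and follows essentially the same approach as the paper: both establish convergence of $f$ via integrability of $f'$, then write $f-f_\infty = -\int_t^\infty f'(s)\,ds$ and extract the asymptotics. The only cosmetic difference is that the paper passes to the limit via the substitution $s = tu$ (reducing to a dominated-convergence argument on $[1,\infty)$), whereas you split off the main term $a s^{-\nu}$ and bound the remainder directly; both are equally valid.
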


\begin{proof}
That $f$ has a limit is simply a consequence of the integral
$\int_{t_0}^\infty f'(s)ds$ being finite.
Now letting $g(t) = t^\nu f'(t)$ we can write
\[ f_\infty - f = \int_t^\infty s^{-\nu}g(s)ds . \]
Changing variables by $s = tu$ we get
\[ t^{\nu-1}(f-f_\infty) = - \int_1^\infty u^{-\nu}g(tu)du \to
-a \int_1^\infty u^{-\nu}du = \frac{a}{1-\nu} . \qedhere \]
\end{proof}

\begin{prop}
\label{prop:reg}
Let $(x,y,S)$ be any forward-complete \Sp{2}-invariant soliton
with nonzero dilation constant $\lambda$ 
such that  $\frac{x}{t} \to c_1$ and $\frac{y}{t} \to c_2$. Then
\begin{equation}
\label{eq:xy_coeff}
\frac{x}{y} =
\frac{c_1}{c_2}\left(1 - \frac{S^*}{2c_1c_2^2}t^{-2} + o(t^{-2})\right) ,
\end{equation}
where $S^*$ is the limiting value of $S$ given in~\eqref{eq:s*_from_c}.
Moreover, up to making a suitable translation $t \mapsto t + k$, we have
\begin{equation} 
\label{eq:x:y:larget:exp}
x = c_1t + \xi S^* t^{-1} + o(t^{-1}),
\qquad y = c_2t + \zeta S^* t^{-1} + o(t^{-1})
\end{equation}
where $\xi$ and $\zeta$ are determined uniquely by $(c_1,c_2)$ via
\[ \xi = \frac{c_1^2 - 4c_2^2}{18 c_1c_2^4},
\quad
\zeta = \frac{5c_1^2 - 2c_2^2}{36 c_1^2c_2^3} . \]
\end{prop}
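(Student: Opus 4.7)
The approach is to extract successively finer asymptotic information using Lemma~\ref{lem:mu_ode3}, applied first to $\log(x/y)$ and then to $g - g'_\infty t$ (where $g = (xy^2)^{1/3}$ and $g'_\infty := (c_1c_2^2)^{1/3}$). For part (i), the identity $\tfrac{d}{dt}\log(x/y) = S/g^3$ from Lemma~\ref{lem:log_xy}, combined with $g^3 = xy^2 \sim c_1c_2^2\, t^3$ and $S \to S^*$ (from Proposition~\ref{prop:S:limit}), gives
\[ t^3\frac{d}{dt}\log\frac{x}{y} \to \frac{S^*}{c_1c_2^2}. \]
Applying Lemma~\ref{lem:mu_ode3} with $\nu = 3$ to $f = \log(x/y)$, whose limit is necessarily $\log(c_1/c_2)$, yields $t^2\bigl(\log\tfrac{x}{y} - \log\tfrac{c_1}{c_2}\bigr) \to -\tfrac{S^*}{2c_1c_2^2}$, and exponentiating proves~\eqref{eq:xy_coeff}.

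For part (ii), the plan is to establish an asymptotic expansion of $g$ independently and then reconstruct $x$ and $y$ via the algebraic identities $x = gh^{-2/3}$ and $y = gh^{1/3}$, where $h = y/x$. From~\eqref{eq:g:dot}, $g' = F(h)$ where $F(h) = \tfrac{1}{6}h^{-4/3} + \tfrac{1}{3}h^{2/3}$ is smooth for $h>0$. Since $h - \ell \sim \tfrac{\ell S^*}{2c_1c_2^2}t^{-2}$ by~(i), a first-order Taylor expansion of $F$ at $\ell = c_2/c_1$ gives $t^2(g' - g'_\infty) \to \tfrac{(\ell^2-1)S^*}{9\ell^{4/3}c_1c_2^2}$. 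Applying Lemma~\ref{lem:mu_ode3} with $\nu = 2$ to $g - g'_\infty t$ shows this quantity has a finite limit, so after a suitable translation $t \mapsto t + k$ we obtain $g = g'_\infty t + g_{-1}t^{-1} + o(t^{-1})$ with $g_{-1}$ explicit in terms of $c_1$, $c_2$ and $S^*$.

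It then remains to substitute these expansions for $g$ and $h$ into $x = gh^{-2/3}$ and $y = gh^{1/3}$, Taylor expand the smooth factors $h^{-2/3}$ and $h^{1/3}$ about $h = \ell$, and collect terms up to order $t^{-1}$; this produces expansions $x = c_1t + \xi S^* t^{-1} + o(t^{-1})$ and $y = c_2t + \zeta S^* t^{-1} + o(t^{-1})$ with $\xi$ and $\zeta$ specific rational functions of $c_1$ and $c_2$. The main obstacle is purely algebraic bookkeeping: the coefficients initially emerge in a form involving fractional powers of $c_1$ and $c_2$, and matching them with the symmetric expressions $(c_1^2 - 4c_2^2)/(18c_1c_2^4)$ and $(5c_1^2 - 2c_2^2)/(36c_1^2c_2^3)$ stated in the proposition requires repeated use of the closed cone constraint $c_1^2 + 2c_2^2 = 6c_1c_2^2$.
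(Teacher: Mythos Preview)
Your argument is correct, and part (i) is essentially the same as the paper's (you apply Lemma~\ref{lem:mu_ode3} to $\log(x/y)$ via $\frac{d}{dt}\log(x/y) = S/g^3$, while the paper applies it to $x/y$ directly via $\frac{d}{dt}(x/y) = S/y^3$; these are interchangeable).

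For part (ii) you take a genuinely different route. The paper works with $x'$ and $y'$ separately, using the explicit ODEs~\eqref{eq:xydot:S} to show $x' = c_1 - \xi S^* t^{-2} + o(t^{-2})$ and $y' = c_2 - \zeta S^* t^{-2} + o(t^{-2})$, and then applies Lemma~\ref{lem:mu_ode3} to each of $x - c_1 t$ and $y - c_2 t$. This produces two constant terms $a, b$, and an extra step is needed (using~\eqref{eq:xy_coeff} again) to verify that $(a,b)$ is proportional to $(c_1,c_2)$ so that a single translation removes both. Your approach instead funnels everything through the pair $(g,h)$: you apply Lemma~\ref{lem:mu_ode3} only once, to $g - g'_\infty t$, so there is automatically a single constant to translate away, and then reconstruct $x = gh^{-2/3}$, $y = gh^{1/3}$ algebraically. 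This sidesteps the compatibility check at the end of the paper's argument, at the cost of the fractional-power bookkeeping you mention. The paper's route keeps the algebra cleaner since \eqref{eq:xydot:S} is already rational in $x$, $y$, $S$ and the closed-cone relation enters only through $\tfrac13 + \tfrac{c_1^2}{6c_2^2} = c_1$ and its analogue; your route is structurally tidier but requires more simplification to reach the stated $\xi$, $\zeta$.
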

By Theorem~\ref{thm:sc:expanders:complete}, Proposition~\ref{prop:reg} applies in particular to any smoothly-closing expander, thus showing
Theorem~\ref{mthm:expand:Sp2}(iii) and so completing the proof of Theorem~\ref{mthm:expand:Sp2}.
Meanwhile Theorem \ref{mthm:AC} is proved by combining Lemma \ref{lem:weak_AC}
and Propositions \ref{prop:S:limit} and \ref{prop:reg}.

\begin{remark}
\label{rmk:precise_rate}
If $(c_1,c_2)$ is the unique closed cone with $\ell = \frac{c_2}{c_1}$ then using the formulae for $\xi$ and $\zeta$ given in Proposition~\ref{prop:reg}, 
together with~\eqref{eq:explicit_cone_solution} and~\eqref{eq:s*_from_c}
we can express the coefficients of $t^{-1}$ in $x$ and $y$ respectively in terms of $\ell$ as
\[
\xi S^*= \frac{(\ell^2-1)(1-4 \ell^2)}{\lambda \ell^2 (1+2 \ell^2)}, \qquad \zeta S^* = - \frac{(\ell^2-1)(2 \ell^2-5)}{2 \lambda \ell (1+2 \ell^2)}.
\]
In particular, the coefficient of $t^{-1}$ in $x$ vanishes if and only if $\ell =1$ or $\ell = \frac{1}{2}$. The former corresponds to the torsion-free cone, while 
the latter corresponds to the asymptotic cone of the explicit AC shrinker. (This is consistent with the fact that the explicit AC shrinker satisfies $x \equiv \frac{1}{2}t$). 
Note also that unless $\ell=1$ then the coefficient of $t^{-1}$ in at least one of $x$ or $y$ is nonzero. Thus the asymptotic rate is \emph{precisely} $-2$ when $\ell \not= 1$. 

\begin{samepage} 
Similarly, the coefficient of $t^{-2}$ in $\frac{x}{y}$ can be written as
\[
-\frac{c_1}{c_2} \frac{S^*}{2c_1 c_2^2} = -\frac{9(\ell^2-1)}{\lambda \ell (1+2\ell^2)}
\]
which is nonzero except when $\ell =1$.
\end{samepage} 
\end{remark}

\begin{proof}
Recall that
\[ \frac{d}{dt}\frac{x}{y} = \frac{S}{y^3} . \]
Our hypotheses and the convergence $S \to S^*$ assured by
Proposition \ref{prop:S:limit} imply that
\[\frac{S}{y^3} =
\frac{S^*}{c_2^3}t^{-3} + o(t^{-3}) , \]
so applying Lemma~\ref{lem:mu_ode3} with $\nu = 3$ gives \eqref{eq:xy_coeff}.
Recall from \eqref{eq:xydot:S} that $x'$ and $y'$ satisfy
\[ x' = \frac{1}{3} + \frac{x^2}{6y^2} + \frac{2S}{3y^2} ,
\qquad y' = \frac{y}{3x} + \frac{x}{6y} - \frac{S}{3xy} . \]
Hence using~\eqref{eq:xy_coeff} we find
\begin{align*}
x' 
&= \frac{1}{3} + \frac{1}{6}\left(
\frac{c_1^2}{c_2^2} - \frac{c_1S^*}{c_2^4} t^{-2}+ o(t^{-2})\right)
+ \frac{2S^*}{3c_2^2}t^{-2} + o(t^{-2}) 
= c_1 - \xi S^* t^{-2} + o(t^{-2}) \end{align*}
where
\[
 \xi = \frac{c_1 - 4c_2^2}{6c_2^4} = \frac{3c_1^2 - 12c_1c_2^2}{18c_1c_2^4}
=\frac{c_1^2 - 4c_2^2}{18c_1c_2^4}.
\]
Similarly for $y'$ we find
\begin{align*}
y' 
&= %
\frac{c_2}{3c_1}\left(1 + \frac{S^*}{2c_1c_2^2}t^{-2}\right)
+ \frac{c_1}{6c_2}
- \frac{S^*}{12c_2^3}t^{-2} - \frac{S^*}{3c_1c_2}t^{-2} + o(t^{-2}) 
= c_2 - \zeta S^* t^{-2} + o(t^{-2})
\end{align*}
where
\[  \zeta = \frac{-2c_2^2 + c_1^2 + 4c_1c_2^2}{12c_1^2c_2^3}
= \frac{5c_1^2 - 2c_2^2}{36c_1^2c_2^3} .\]
Now applying Lemma \ref{lem:mu_ode3} again, this time to $x-c_1 t$ and $y-c_2 t$ with $\nu = 2$, gives
\[x = c_1t + a + \xi S^* t^{-1} + o(t^{-1}), \quad 
y = c_2t + b + \zeta S^* t^{-1} + o(t^{-1})\]
for some constants $a, b$. But now
\[
c_2 x - c_1 y = c_2 y \left(\frac{x}{y} - \frac{c_1}{c_2}\right) = O(t^{-1}),
\]
where the last estimate uses again~~\eqref{eq:xy_coeff}.
This implies that $a = c_1 k$, $b = c_2 k$ for some $k$.
After a suitable translation of $t$ we can thus assume that $a = b = 0$.
\end{proof}

\subsection{Stability of AC expander ends}
\label{ss:AC:expander:end:stability}

Recall that Proposition~\ref{prop:AC:end} implies that  
having an AC end is a stable end behaviour for expanders. 
We can also use the
reasoning from~\S\,\ref{subsec:conv_S} to prove the
stability of AC expander ends and also importantly to prove further that the
asymptotic cone depends continuously on initial conditions.

Our starting point is to use Lemma \ref{lem:Sdot} to deduce that
AC behaviour of an expander end is guaranteed  if, given bounds on $\log \frac{y}{x}$ and on $\sis$,  
the scale $g$ is sufficiently large.

\begin{prop}
\label{prop:stable}
Given $N > 0$ and $\epsilon > 0$, there is $g_0 > 0$ (depending on $\lambda$, $N$ and $\epsilon$) 
such that for any $\lambda$-expander end solution satisfying
\begin{equation}
\label{eq:squeeze}
\left|\log\frac{y(t_1)}{x(t_1)}\right| < N, 
\quad |S(t_1)| < \epsilon g(t_1)^2 \quad \textrm{and\  } g(t_1) > g_0
\end{equation}
for some $t_1$, then
$\left|\log\frac{y(t)}{x(t)} - \log\frac{y(t_1)}{x(t_1)}\right| < \epsilon$ holds
for all $t > t_1$. Moreover,  $\frac{y}{x}$ converges to a limit $\ell$
and hence the expander end is weakly asymptotically conical.
\end{prop}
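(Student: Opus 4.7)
The approach is a bootstrap argument using Lemma~\ref{lem:Sdot}. Define
\[ T := \sup\bigl\{t > t_1 : \bigl|\log(y/x)(s) - \log(y/x)(t_1)\bigr| < \epsilon \textup{ for all } s \in [t_1, t)\bigr\}, \]
and aim to show that $T = \infty$. On $[t_1, T)$ the warping satisfies $|\log(y/x)| < N + \epsilon$, so both the function $\alpha(y/x)$ of~\eqref{eq:alpha} and the derivative $g'$ are bounded by explicit constants $A, G$ depending only on $N$ and $\epsilon$. I would then apply Lemma~\ref{lem:Sdot} with $S^* = 0$, $B = \epsilon$ and $C := 5A/\lambda$, obtaining a threshold $g_0^*$ such that, whenever $g > g_0^*$ and $C \le |S| \le \epsilon g^2$,
\[ \frac{d|S|}{dt} < -\frac{\lambda Cg^3}{x^2 + 2y^2} \le -K_1 g \]
for some constant $K_1 = K_1(N,\epsilon,\lambda) > 0$.

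Assuming $g_0 \ge g_0^*$, this differential inequality first implies that the bound $|S| < \epsilon g^2$ is preserved on $[t_1, T)$, since at the boundary $|S| = \epsilon g^2 > C$ the derivative above is negative while $\frac{d}{dt}(\epsilon g^2) = 2\epsilon g g' > 0$. I would then estimate $\int_{t_1}^T |S|/g^3 \, ds$, which by Lemma~\ref{lem:log_xy} controls the change of $\log(y/x)$, by splitting the domain of integration. On the subset where $|S| \le C$, the elementary inequality $g(s) \ge g(t_1) + \tfrac12(s - t_1)$ from Remark~\ref{rmk:vol:growth} yields $\int |S|/g^3\, ds \le C/g_0^2$. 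On the complementary subset $|S|$ is strictly decreasing, so reparametrising by $r = |S|$ and using the differential inequality above produces
\[ \int \frac{|S|}{g^3}\, ds \le \int_0^{|S(t_1)|} \frac{r\, dr}{K_1 g^4} \le \frac{|S(t_1)|^2}{2K_1 g(t_1)^4} \le \frac{\epsilon^2}{2K_1}. \]

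Combining, the total change in $\log(y/x)$ is bounded by $C/g_0^2 + \epsilon^2/(2K_1)$. Choosing $g_0$ large enough forces the first summand below $\epsilon/2$; the second summand depends only on $N,\epsilon,\lambda$, and can be shown to lie below $\epsilon/2$ for every $\epsilon > 0$ by noting that $A$, and hence $K_1$, grows exponentially in $N + \epsilon$ via the explicit formula for $\alpha$. This contradicts the definition of $T$ unless $T = \infty$, whence $\log(y/x)$ stays bounded forward in time. Lemma~\ref{lem:weak_AC} then yields that $y/x$ converges to some $\ell \in (0,\infty)$ and the end is weakly asymptotically conical. The main obstacle is the second integral: the naive bound $|S|/g^3 \le \epsilon/g$ is not integrable in $s$, so the quantitative decay from Lemma~\ref{lem:Sdot} is essential and is exploited via the change of variables to $r = |S|$ rather than an $L^\infty$ estimate.
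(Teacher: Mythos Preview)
Your bootstrap structure and use of Lemma~\ref{lem:Sdot} with $S^*=0$, $B=\epsilon$ match the paper's approach exactly. However, you overcomplicate the integral estimate and introduce a gap.

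The key observation you miss is that $|S|$ stays bounded by the \emph{constant} $\epsilon g(t_1)^2$, not merely by the growing bound $\epsilon g(t)^2$. Indeed, your own analysis shows that whenever $|S|\ge C$ one has $\tfrac{d}{dt}|S|<0$; so $|S|$ can never exceed $\max\bigl(|S(t_1)|,\,C\bigr)<\epsilon g(t_1)^2$ (having enlarged $g_0$ so that $C<\epsilon g_0^2$). With this constant bound, the paper's estimate is a one-liner: using $g'\ge\tfrac12$,
\[
\int_{t_1}^{t_2}\frac{|S|}{g^3}\,dt \;<\; 2\epsilon g(t_1)^2\int_{g(t_1)}^{g(t_2)}\frac{dg}{g^3}\;<\;\epsilon,
\]
so no splitting into regions $|S|\le C$ and $|S|>C$ is needed, and no delicate inequality between $\epsilon$ and $K_1$ arises.

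Your alternative route requires $\epsilon^2/(2K_1)<\epsilon/2$, i.e.\ $\epsilon<K_1=5A/(6G)$, for \emph{all} $N,\epsilon>0$. Your justification---that $A$ grows exponentially in $N+\epsilon$---addresses the large-$\epsilon$ regime but says nothing about fixed $N$ and small or moderate $\epsilon$, where $A$ and $G$ are essentially constants depending on $N$. A careful check (using also the behaviour of $\alpha$ and $g'$ near $l=1$) suggests the inequality does hold, but this is not established by your argument as written.
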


\begin{proof}
Given $N>0$ and $\epsilon>0$, choose $C>0$ such that $|\frac{\slead(l)}{\lambda}| < \frac{1}{5}C$ holds whenever
$|\log l| < N + \epsilon$.
Then applying Lemma \ref{lem:Sdot}
with  $S^*=0$,  $B=\epsilon$ and $C$ as above and using the fact that $\lambda>0$ 
 yields the existence of 
$g_0>0$ sufficiently large such that
\begin{subequations}
\label{eq:abs:S:'}
\begin{gather}
 \frac{d}{dt}|S| < 0 
\intertext{holds whenever}
 g > g_0, \quad \left|\log \frac{y}{x}\right| < N + \epsilon
\quad\textrm{ and } \quad C < |S| < \epsilon g^2.
 \end{gather}
 \end{subequations} 

By enlarging $g_0$ if necessary, we can also ensure that $C < \epsilon g_0^2$.
Starting from $t_1$ such that \eqref{eq:squeeze} holds, then~\eqref{eq:abs:S:'} implies that
$|S|$ cannot increase past $\epsilon g(t_1)^2$ as long as the bound
$\left|\log\frac{y(t)}{x(t)}\right| < N + \epsilon$ is maintained.
On the other hand, if $|S(t)| < \epsilon g(t_1)^2$ holds  for $t \in [t_1, t_2]$
then integrating
\[ \frac{d}{dt} \log \frac{y}{x} = -\frac{S}{g^3} \]
(from Lemma \ref{lem:log_xy}) and using $g' \geq \half$ gives
\[ \left|\log \frac{y(t_2)}{x(t_2)} - \log \frac{y(t_1)}{x(t_1)}\right|
< 2\epsilon g(t_1)^2 \int_{g(t_1)}^{g(t_2)} \frac{dg}{g^3} < \epsilon. \]
Thus $\left|\log\frac{y(t)}{x(t)} - \log \frac{y(t_1)}{x(t_1)}\right| < \epsilon$,
and hence also $\left|\log \frac{y}{x}\right| < N + \epsilon$, remains true for
at least as long as $\abs{S(t)} < \epsilon g(t_1)^2$.
Thus both estimates remain true throughout the lifetime of the solution,
and therefore Proposition \ref{prop:lifetime} implies that the lifetime is infinite.

With $S$ uniformly bounded, integrating the derivative of
$\log \frac{y}{x}$ now implies that $\tfrac{y}{x}$ converges to a limit $\ell$.
\end{proof}

The most important consequence of the previous result is the following 
stability result for arbitrary AC expander ends. For the $1$-parameter family of smoothly-closing expanders 
this theorem implies that their asymptotic cones depend continuously
on the parameter $\lambda b^2$.

\begin{theorem}
\label{thm:uniform}
Consider a forward-complete $\lambda$-expander end  $(x,y, \tau_2)$ 
such that $\frac{y}{x}$
converges as~$t \to \infty$.  Choose any $t_0$ for which this solution is defined and non-singular,
in the sense that $x(t_0)$ and $y(t_0)$ are positive.

Then any initial condition $\bar p$ sufficiently
close to $p=(x(t_0), y(t_0), \tau_2(t_0))$ and $\bar \lambda$ sufficiently close
to $\lambda$ also gives rise to a forward-complete $\bar \lambda$-expander end
$(\bar x, \bar y, \bar \tau_2)$ with $\frac{\bar y}{\bar x}$ convergent as $t \to \infty$. 
Moreover
\[ \frac{\bar y}{\bar x} \to \frac{y}{x} \]
uniformly on $[t_0, \infty)$
as $(\bar p, \bar\lambda) \to (p, \lambda)$.

In particular, if we let $(c_1, c_2)$ and $(\bar c_1, \bar c_2)$ denote their
respective asymptotic cones, 
then $(\bar c_1, \bar c_2) \to (c_1, c_2)$
as $(\bar p, \bar\lambda) \to (p, \lambda)$.
\end{theorem}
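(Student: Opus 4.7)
The plan is to combine standard continuous dependence of ODE solutions on initial data over compact intervals with the uniform AC stability estimate given by Proposition \ref{prop:stable}. The rough picture is that the original forward-complete AC expander acts as a ``basin of attraction'' near infinity: once a perturbed solution is sufficiently close to it at some suitably large $t_1$, Proposition \ref{prop:stable} forces the warping of the perturbed solution to stay close to that of the original one for all $t > t_1$, while classical ODE theory handles the compact interval $[t_0, t_1]$.

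More concretely, I would proceed as follows. Fix $\epsilon > 0$. Since $\log \frac{y}{x}$ converges, choose $N$ larger than $\sup_{t \ge t_0}|\log\frac{y(t)}{x(t)}|$. By Proposition \ref{prop:S:limit} (whose hypotheses apply since the soliton is non-steady and weakly AC by assumption), $S \to S^*$, and $g$ grows linearly, so $S/g^2 \to 0$. Applying Proposition \ref{prop:stable} with the parameters $N + 1$ and $\epsilon/3$, one obtains some $g_0 > 0$. Since the constant $g_0$ in that proposition depends only on $\lambda$, $N$, $\epsilon$ through continuous expressions (inspect the proof of Lemma \ref{lem:Sdot}), after enlarging $g_0$ one may assume the same value works for all $\bar\lambda$ in some neighborhood of $\lambda$. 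Now pick $t_1 > t_0$ on the original solution with $g(t_1) > 2g_0$ and $|S(t_1)| < \tfrac{\epsilon}{6} g(t_1)^2$.

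Next, by standard continuous dependence applied to the smooth first-order ODE system \eqref{eq:ODE:tau2'} (regarded as a system in $(x,y,\tau_2)$ parametrised smoothly by $\lambda$, which is non-singular wherever $x, y > 0$) on the compact interval $[t_0, t_1]$, any $(\bar p, \bar \lambda)$ sufficiently close to $(p, \lambda)$ yields a solution $(\bar x, \bar y, \bar \tau_2)$ defined on $[t_0, t_1]$ and uniformly close to $(x, y, \tau_2)$ there. In particular, $\bar g(t_1) > g_0$, $|\bar S(t_1)| < \tfrac{\epsilon}{3} \bar g(t_1)^2$, and $|\log \frac{\bar y(t_1)}{\bar x(t_1)}| < N$. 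Proposition \ref{prop:stable}, applied to $(\bar x, \bar y, \bar\tau_2)$ with dilation constant $\bar\lambda$, then guarantees that $(\bar x, \bar y, \bar\tau_2)$ is forward-complete, that $\frac{\bar y}{\bar x}$ converges, and that
\[
\left|\log\frac{\bar y(t)}{\bar x(t)} - \log\frac{\bar y(t_1)}{\bar x(t_1)}\right| < \frac{\epsilon}{3}
\qquad \text{for all } t \ge t_1 .
\]
Applying the same estimate to the original solution, and combining with the uniform closeness on $[t_0, t_1]$, yields $\left|\log \frac{\bar y(t)}{\bar x(t)} - \log \frac{y(t)}{x(t)}\right| < \epsilon$ for all $t \ge t_0$, as soon as $(\bar p, \bar \lambda)$ is close enough to $(p, \lambda)$. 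Since $\epsilon$ was arbitrary, uniform convergence of $\frac{\bar y}{\bar x}$ to $\frac{y}{x}$ on $[t_0, \infty)$ follows, and in particular $\bar\ell \to \ell$; then \eqref{eq:explicit_cone_solution} delivers $(\bar c_1, \bar c_2) \to (c_1, c_2)$.

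The main obstacle is ensuring that Proposition \ref{prop:stable} can be invoked uniformly as $\bar\lambda$ varies near $\lambda$; this reduces to checking that the constant $g_0$ produced by Lemma \ref{lem:Sdot} depends continuously on $\lambda$ (and the other parameters), which is immediate from the form of $\beta$ in~\eqref{eq:Sdot}. Everything else is standard: continuous dependence on the compact interval $[t_0, t_1]$ is classical, and the ``attractor''-type estimate on $[t_1, \infty)$ is precisely what Proposition \ref{prop:stable} was designed to provide.
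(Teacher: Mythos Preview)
Your proposal is correct and follows essentially the same approach as the paper: use Proposition~\ref{prop:S:limit} to find a suitable $t_1$ where \eqref{eq:squeeze} holds, invoke continuous dependence on the compact interval $[t_0,t_1]$, and then apply Proposition~\ref{prop:stable} to both solutions on $[t_1,\infty)$ and combine via the triangle inequality. Your treatment is in fact slightly more careful than the paper's in explicitly addressing the uniformity of $g_0$ as $\bar\lambda$ varies near $\lambda$, a point the paper's proof leaves implicit.
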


\begin{proof}
Proposition \ref{prop:S:limit} implies that $S \to S^*$ where $\lambda S^* = \slead(\ell)$ and $\ell = \lim_{t \to \infty} \frac{y}{x}$. 
Hence given any $\epsilon > 0$, by picking $N$ and $t_1$ large enough we can ensure that
\eqref{eq:squeeze} holds at $t=t_1$ for the forward-complete $\lambda$-expander $(x,y,\tau_2)$.
Standard ODE theory guarantees
uniform convergence $(\bar x, \bar y, \bar \tau_2) \to (x,y, \tau_2)$
on the compact domain $[t_0, t_1]$
as $(\bar p, \bar\lambda) \to (p, \lambda)$.
In particular, choosing $(\bar p, \bar \lambda)$
close enough to $(p, \lambda)$ ensures that
$|\log \frac{\bar y}{\bar x} - \log\frac{y}{x}| < \epsilon$ on $[t_0, t_1]$,
and that $(\bar x, \bar y, \bar \tau_2)$ also satisfies \eqref{eq:squeeze} at $t=t_1$. 
Then Proposition \ref{prop:stable} implies that
$\left|\log \frac{y}{x}(t) - \log \frac{y(t_1)}{x(t_1)} \right|$ and
$\left|\log \frac{\bar y}{\bar x}(t) - \log \frac{\bar y(t_1)}{\bar x(t_1)}\right|$
are both bounded by $ \epsilon$ for all $t > t_1$. Hence
\[ \left|\log \frac{\bar y}{\bar x}(t) - \log \frac{y}{x}(t)\right| < 3\epsilon \]
for all $t > t_1$. Since $\epsilon$ was arbitrary this proves the claimed 
uniform convergence.
\end{proof}

\begin{remark}
\label{rmk:lending}
In addition to Theorem \ref{thm:uniform} giving a stronger notion of stability
than what followed from the more elementary arguments in
Proposition \ref{prop:AC:end}, another advantage of the reasoning here
is that (along with the proofs of
Proposition \ref{prop:S:limit} and Lemma \ref{lem:weak_AC}) it
lends itself better to generalisation to the
context of \sunitary{3}-invariant solitons on $I \times \sunitary{3}/T^2$.
\end{remark}

\section{Asymptotic cones of smoothly-closing expanders}
\label{s:asymptotic:cones}

So far, we have established that every member of the $1$-parameter family of
smoothly-closing \Sp{2}-invariant expanders $\spfam{\lambda}{b}$ from
Theorem \ref{thm:Sp2:smooth:closure}
is complete and
asymptotic with rate (at least) $-2$ to a unique~\Sp{2}-invariant closed cone $C$ with $\ell = \frac{c_2}{c_1} \ge 1$. 
In this section we will prove that \emph{every} closed cone with $\ell>1$ arises as the
asymptotic limit of a unique (up to scale) smoothly-closing expander.

Recall from Remark \ref{rmk:scaling_sc} that, up to scale,
$\spfam{\lambda}{b}$ depends on the single scale-invariant parameter
$q=\lambda b^2 \in \Rpos$.
Since the asymptotic cone is invariant under rescaling, we can express
its parameter $\ell=\frac{c_2}{c_1} = \displaystyle\lim_{t \to \infty} \tfrac{y}{x}$
as $\lmap(q)$ for a function $\lmap: \Rpos \to \Rpos$ as in
Definition \ref{def:lmap}.

\begin{theorem}
\label{thm:asymptotic_cones} 
The asymptotic limit map $\lmap$ of smoothly-closing expanders
is a continuous increasing bijection between $(0,\infty)$ and $(1,\infty)$. 
\end{theorem}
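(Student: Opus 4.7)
The plan is to show that $\lmap$ has four properties---continuity, monotonicity, $\lmap(q) \to 1$ as $q \to 0^+$, and $\lmap(q) \to \infty$ as $q \to \infty$---and then to upgrade monotonicity to strict monotonicity. Combining strict monotonicity with the limit at $0$ and with the bound $\ell \ge 1$ from Theorem \ref{thm:sc:expanders:complete} will give $\lmap(q) > 1$ for all $q > 0$, after which bijectivity onto $(1, \infty)$ follows from the intermediate value theorem.

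Continuity of $\lmap$ is essentially immediate from Theorem \ref{thm:uniform}: the analytic expansions \eqref{eq:sp2:expansion} give that the state $(x, y, \tau_2)$ of $\spfam{\lambda}{b}$ at any fixed small $t_0 > 0$ depends continuously on $(\lambda, b)$, so Theorem \ref{thm:uniform} yields continuous dependence of $\ell$ on these non-singular initial data and on $\bar\lambda$; continuity in the scale-invariant parameter $q$ then follows using Remark \ref{rmk:scaling_sc}. For $\lmap(q) \to 1$ as $q \to 0^+$, fix $b$ and let $\lambda \to 0$: by Remark \ref{rmk:scaling_sc} the family $\spfam{\lambda}{b}$ degenerates to the Bryant--Salamon torsion-free \gtwo-metric on $\Lambda^2_-\Sph^4$, whose asymptotic cone is the torsion-free \Sp{2}-invariant cone with $\ell = 1$, and Theorem \ref{thm:uniform} applied with varying $\bar\lambda$ transports this value through the limit. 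For $\lmap(q) \to \infty$ as $q \to \infty$, I exploit the decreasing quantity $M/g^3$ of Lemma \ref{lem:monotone}: by \eqref{eq:sp2:expansion} it tends to $3/b^2$ at $t = 0^+$ on $\spfam{\lambda}{b}$, while a direct computation using \eqref{eq:tilde:tau1:tau2} with $\tau_2$ evaluated on the cone via \eqref{eq:torsion:Sp2:cone} and the closed-cone relation $c_1^2 + 2c_2^2 = 6c_1c_2^2$ gives the asymptotic limit $M/g^3 \to 2\lambda/(2\ell^2 + 1)$. The strict monotonicity of $M/g^3$ therefore forces $\ell^2 > q/3 - 1/2$, which is unbounded as $q \to \infty$.

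For (weak) monotonicity of $\lmap$ I would set up a comparison between pairs of smoothly-closing expanders. Using scale-invariance to fix a common value of $b$ and taking $\lambda_1 < \lambda_2$, a phase-plane comparison based on \eqref{eq:d:dt:log:yx} together with sign-persistence arguments in the spirit of Proposition \ref{prop:elem_AC} should show that $\frac{y_2}{x_2} - \frac{y_1}{x_1}$ remains nonnegative for all $t > 0$; this is the comparison principle alluded to in the introduction.

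Strict monotonicity is the main obstacle. My plan is to prove a rigidity statement for AC expander ends: the asymptotic cone determines the expander uniquely up to the scaling \eqref{eq:scaling}, which preserves $q = \lambda b^2$. Given two smoothly-closing expanders sharing a common $\ell$, Proposition \ref{prop:reg} shows they already agree at $t = \infty$ to order $O(t^{-1})$; continuing the asymptotic expansion at the irregular singular point $t = \infty$ of the ODE \eqref{eq:ODE:tau2'} recursively determines every higher-order coefficient from $(c_1, c_2)$ alone, so the two expanders are formally asymptotic to the same power series. The most delicate step is to upgrade this formal uniqueness to genuine uniqueness at the irregular singular point, which I would address by a contraction argument on a suitable weighted function space of perturbations about the cone; once done, the two expander ends coincide on a half-line, hence everywhere by analytic continuation, so they have the same $q$. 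Combining this rigidity with the already-established weak monotonicity then yields strict monotonicity of $\lmap$, and, as noted above, the bound $\lmap(q) > 1$ for all $q$ follows by combining strict monotonicity with $\lmap(q) \to 1$ as $q \to 0^+$ and Theorem \ref{thm:sc:expanders:complete}.
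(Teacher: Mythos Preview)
Your overall structure—continuity, limits at $0$ and $\infty$, monotonicity, then strict monotonicity—matches the paper's, and your treatments of continuity and of $\lmap(q)\to\infty$ via the decreasing quantity $M/g^3$ are essentially the paper's arguments (Theorem \ref{thm:uniform} and Proposition \ref{prop:b_oo}). Two of your steps have genuine gaps, however.

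The serious one is your approach to strict monotonicity via end-rigidity. The claim that the asymptotic cone determines the expander uniquely (even just as an end) is \emph{false}: as the paper explains in \S\ref{subsec:end_sivp}, the singular initial value problem \eqref{eq:irreg_ode} at $t=\infty$ is irregular, and for $\lambda>0$ the sign of the dominant singular term is such that there is a full $1$-parameter family of genuine end solutions sharing the \emph{same} formal power series, differing by terms of order $\exp(-\tfrac{\lambda}{6}t^2)$. So formal uniqueness cannot be upgraded to genuine uniqueness, and your proposed contraction argument cannot succeed. (For shrinkers the sign is reversed and the end \emph{is} unique—cf.\ Theorem \ref{thm:HKP}—which is precisely why AC shrinker ends are nongeneric while AC expander ends are stable.) The paper instead proves $\lmap(q)>1$ \emph{first} (Theorem \ref{lem:expander:D:neq:0}), using the refined asymptotics of Proposition \ref{prop:reg} to derive a contradiction with the monotone increase of $\tau_2$; strict monotonicity (Theorem \ref{thm:D*:monotone}) then follows by playing off the $g$-for-$g$ comparison $y_+(g)>y_-(g)$ from Proposition \ref{prop:comp_g_1} against the opposite inequality at large $g$ from Lemma \ref{lem:comp_ends}, and the latter needs $c_2>c_1$, i.e.\ $\ell>1$. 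Thus the paper's order—claim (i) before claim (ii)—is essential, not incidental, as the remark after the statement of Theorem \ref{thm:asymptotic_cones} emphasises.

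The minor gap is your proof that $\lmap(q)\to 1$ as $q\to 0$: Theorem \ref{thm:uniform} is stated for a limiting \emph{expander} end, so it does not apply when $\bar\lambda\to 0$ to a steady soliton. The paper's Lemma \ref{lem:sid*:bto0} instead combines uniform convergence on compact $t$-intervals with the fact that $\frac{y}{x}$ is decreasing on every smoothly-closing expander (Theorem \ref{thm:sc:expanders:complete}(i)); this one-sided monotonicity is what squeezes $\ell$ down to $1$, and the remark following that lemma makes exactly this point.
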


Continuity of $\lmap$ %
follows immediately from Theorem \ref{thm:uniform}.
The rest of the theorem is then a consequence of the following four claims,
which are proven in the rest of the section:
\begin{enumerate}
\item $\lmap(q) > 1$ for any $q>0$
\item $\lmap$ is a strictly increasing function of $q$
\item $\lmap \to 1$ as $q\to 0$
\item $\lmap \to \infty$ as $q \to \infty$.
\end{enumerate}

\begin{remark*}
(i) may seem redundant given claim~(ii) and that $\ell \geq 1$, since the image of a
strictly increasing function $(0,\infty) \to [1,\infty)$ cannot contain the
boundary point~1. However, our proof of (ii) relies on first establishing (i).
\end{remark*}

Because of the freedom to rescale, we can either try to  understand 
how $\lmap$ depends on~$b$ for fixed~$\lambda$, or how it depends on $\lambda$
for fixed $b$. In the details of the proofs, the latter perspective turns out
to be more convenient.

\subsection{The torsion-free cone is not the asymptotic cone of any smoothly-closing expander}
\label{ss:tf:exclusion}
In this section we prove claim (i) above; this is equivalent to proving that no smoothly-closing
expander can have the unique torsion-free~\Sp{2}-invariant~\gtwo-cone as its
asymptotic cone.
The proof uses Proposition~\ref{prop:reg} which for large $t$ gives the leading-order terms in $x$ and $y$ 
for any non-steady AC soliton end, up to an error which is at worst $o(t^{-1})$. 

\begin{theorem}
\label{lem:expander:D:neq:0}
The torsion-free \Sp{2}-invariant \gtwo-cone does not arise as the asymptotic cone of 
any smoothly-closing  expander.
Equivalently,  $\ell= \lim_{t \to \infty} \frac{y}{x}>1$ for any smoothly-closing expander.
\end{theorem}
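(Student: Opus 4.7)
The plan is to argue by contradiction. Suppose that for some smoothly-closing expander $\spfam{\lambda}{b}$ the asymptotic cone is the (unique) torsion-free \Sp{2}-invariant cone, i.e., $\ell = 1$. The goal is to derive a contradiction using the refined asymptotic information provided by Proposition~\ref{prop:reg}.

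The first step is to note that when $\ell = 1$, the quantity $S^* = \tfrac{1}{12\lambda}(\ell^2 - 1)(2 + \ell^{-2})^2$ vanishes, so the $t^{-1}$ coefficients in the expansions from Proposition~\ref{prop:reg} are zero. Hence, after a suitable translation,
\[ x = \tfrac{1}{2} t + o(t^{-1}), \qquad y = \tfrac{1}{2} t + o(t^{-1}). \]
From this refined asymptotic behaviour, I would deduce sharp limits at infinity for the combinations appearing in the key monotone quantities.

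The second step is to exploit the strictly decreasing quantity $M/g^3$ from Lemma~\ref{lem:monotone}. Using the smoothly-closing Taylor expansion~\eqref{eq:sp2:expansion} ($x \sim t$, $y \to b$, $\tilde \tau_1 = O(t^3)$) one finds $M/g^3 \to 3/b^2$ as $t \to 0^+$. On the other hand, inserting the asymptotic cone data into~\eqref{eq:MS} gives
\[ \frac{M}{g^3} \to \frac{2\lambda c_1^2}{c_1^2 + 2c_2^2} = \frac{2\lambda}{2\ell^2 + 1}, \]
which specialises to $2\lambda/3$ when $\ell = 1$. Strict monotonicity of $M/g^3$ then forces the inequality $3/b^2 > 2\lambda/3$, equivalently $q := \lambda b^2 < 9/2$. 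In particular, this alone already excludes $\ell = 1$ for every smoothly-closing expander with $q \geq 9/2$.

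The third step, which is the main obstacle, handles the complementary regime $0 < q < 9/2$, in which the elementary $M/g^3$ inequality is consistent with $\ell = 1$ and the Bryant--Salamon torsion-free metric (a steady solution with $\ell = 1$) sits nearby in phase space. The plan here is to push the asymptotic analysis one order further, combining the integral identity $\tilde\tau_1(t) = \lambda \int_0^t x^2\,ds$ (a consequence of $\tilde\tau_1(0) = 0$ for smoothly-closing solutions) with the algebraic expression $\tilde\tau_1 = 2x^2(\lambda x y^2 - 3\tau_2)/(x^2 + 2y^2)$ and the refined cone asymptotics above, to extract a next-order identity. The persistent positivity from Theorem~\ref{thm:sc:expanders:complete} ($S > 0$, $\tau_2 > 0$, $\tilde\tau_1 > 0$ and $y > x$ for all $t > 0$) together with the quantitative decay of $\tau_2$ and $S$ forced by the $\ell = 1$ assumption (Proposition~\ref{prop:S:limit} gives $S \to 0$) should then yield a contradiction, completing the proof that $\ell > 1$.
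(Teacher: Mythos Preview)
Your first step is exactly right and matches the paper: assuming $\ell = 1$ forces $S^* = 0$, and Proposition~\ref{prop:reg} then gives $x = \tfrac12 t + o(t^{-1})$, $y = \tfrac12 t + o(t^{-1})$. But from here the paper's argument is much more direct than what you propose, and your Step~3 has a genuine gap.

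Your Step~2 (the $M/g^3$ inequality) is correct but unnecessary: it only handles $q \geq \tfrac{9}{2}$, and the paper does not split into cases at all. Your Step~3 is where the problem lies. You write that positivity together with ``the quantitative decay of $\tau_2$ and $S$'' should yield a contradiction, but this is both vague and slightly confused: $\tau_2$ does not decay on a smoothly-closing expander. On the contrary, Theorem~\ref{thm:sc:expanders:complete}(i) gives $R_1 > 0$ and $S > 0$ for all $t > 0$, so the ODE $\tau_2' = \tfrac{4R_1 S}{3x(x^2+2y^2)}$ from~\eqref{eq:ODE:tau2'} shows $\tau_2$ is \emph{strictly increasing} and positive. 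This is the key observation you are missing, and it finishes the proof immediately without any further asymptotic expansion.

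Here is the paper's argument: from the refined asymptotics, $y^2 - x^2 = (y+x)(y-x) = (t + o(t^{-1})) \cdot o(t^{-1}) \to 0$. Meanwhile, since $\tau_2$ is positive and increasing, it is bounded below by some $\tau_2(t_0) > 0$, so $x\tau_2 \geq \tfrac12 t \cdot \tau_2(t_0)$ grows at least linearly. But then $S = (y^2 - x^2) - \tfrac{3}{2}x\tau_2 \to -\infty$, contradicting $S > 0$. No case analysis on $q$ is needed, and no ``next-order identity'' has to be extracted.
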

\begin{proof}
Suppose for a contradiction that 
some smoothly-closing expander has asymptotic cone the torsion-free \Sp{2}-invariant \gtwo-cone. Then %
$\ell=1$ and hence by Proposition~\ref{prop:S:limit} it has $S^*=0$. Proposition~\ref{prop:reg} then implies that, up to a translation in $t$, for large $t$ we have
\[
x = \tfrac{1}{2}t + o(t^{-1}), \quad y=\tfrac{1}{2} t+ o(t^{-1}).
\]
Hence $y^2 - x^2 \to 0$ as $t \to \infty$. 
Now recall from %
Theorem \ref{thm:sc:expanders:complete}(i) that $\tau_2$, $S$ and
$\tilde \tau_1$ (or equivalently~$R_1$) are positive for all $t>0$ on any
smoothly-closing expander.  Hence by~\eqref{eq:ODE:tau2'}, $\tau_2$ is a
positive increasing function of $t$.
In particular, there exists $B>0$ (depending on $b$ and $\lambda$) so that $ x \tau_2 > B\,t$ holds for $t$ sufficiently large. 
But since $S = (y^2-x^2) - \frac{3}{2}x \tau_2$ and $y^2-x^2 \to 0$, the (at least) linear growth of $x \tau_2$ 
implies that eventually $S$ becomes negative, contradicting the positivity of $S$.
\end{proof}

\begin{corollary}
\label{cor:S*:positive}
Any smoothly-closing expander has $S^*: = \lim_{t \to \infty}{S(t)} >0$. 
\end{corollary}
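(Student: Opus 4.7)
The plan is very short: this is a direct application of Proposition~\ref{prop:S:limit} combined with the theorem just proved.

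First, I would invoke Proposition~\ref{prop:S:limit}, which applies to any smoothly-closing expander since Theorem~\ref{thm:sc:expanders:complete} guarantees that such solutions are forward-complete and weakly AC with $\frac{y}{x} \to \ell \in [1,\infty)$. The proposition gives the explicit formula
\[ S^* = \frac{1}{12\lambda}(\ell^2 - 1)(2+\ell^{-2})^2. \]

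Next I would note three sign facts: (a) $\lambda > 0$ since we are considering an expander; (b) $(2+\ell^{-2})^2 > 0$ trivially; and (c) $\ell^2 - 1 > 0$, which is exactly the content of Theorem~\ref{lem:expander:D:neq:0}, just established. Multiplying these together shows $S^* > 0$. That is the whole argument. There is no real obstacle, since all the substantive work has already been done in Proposition~\ref{prop:S:limit} and Theorem~\ref{lem:expander:D:neq:0}.
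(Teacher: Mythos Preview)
Your proof is correct and matches the paper's approach exactly: the paper's proof simply says this is immediate from Theorem~\ref{lem:expander:D:neq:0} (giving $\ell > 1$) and the formula for $S^*$ in Proposition~\ref{prop:S:limit}.
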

\begin{proof}
This is immediate from the previous result 
and the formula for $S^*$ given in Proposition~\ref{prop:S:limit}.
\end{proof}

We also have the following result about the decay rate of complete AC~\Sp{2}-invariant solitons.
\begin{prop}
If a complete~\Sp{2}-invariant soliton is AC with rate $\mu<-2$ then it must in fact be static and 
therefore is the Bryant--Salamon torsion-free~\gtstr~which has $\mu=-4$. 
\end{prop}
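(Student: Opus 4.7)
\emph{Proof plan.}
The plan is to split on the sign of the dilation constant $\lambda$ and handle the non-steady case via the sharp asymptotic expansion in Proposition~\ref{prop:reg}, reserving the steady case for a direct appeal to the uniqueness of smoothly-closing \Sp{2}-invariant steady solitons. Any complete \Sp{2}-invariant soliton is defined on $\Lambda^2_-\Sph^4$ and in particular closes smoothly over the singular orbit $\Sph^4$.

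Suppose first that $\lambda \neq 0$. Since the soliton is complete and AC, Proposition~\ref{prop:reg} applies and gives, up to translation of $t$, the expansion
\[
x = c_1 t + \xi S^* t^{-1} + o(t^{-1}), \qquad y = c_2 t + \zeta S^* t^{-1} + o(t^{-1}),
\]
where $\xi$ and $\zeta$ depend only on the asymptotic cone $(c_1,c_2)$. The hypothesis $\mu < -2$ means that both errors $x - c_1 t$ and $y - c_2 t$ are $o(t^{-1})$, which forces $\xi S^* = \zeta S^* = 0$. Reading off the explicit formulas collected in Remark~\ref{rmk:precise_rate}, $\xi S^* = 0$ occurs exactly when $\ell \in \{1, \tfrac{1}{2}\}$, while $\zeta S^* = 0$ occurs exactly when $\ell \in \{1, \sqrt{5/2}\}$; the only common value is $\ell = 1$, equivalently $S^* = 0$. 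Hence the soliton would have to be AC to the unique torsion-free \Sp{2}-invariant \gtwo--cone.

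I would then rule out this possibility. For expanders ($\lambda > 0$) this is exactly Theorem~\ref{lem:expander:D:neq:0}. For shrinkers ($\lambda < 0$), I would invoke the uniqueness statement for \Sp{2}-invariant AC shrinker ends asymptotic to a prescribed closed cone alluded to parenthetically in the proof of Theorem~\ref{thm:y:x:eventually:mono}, which ultimately rests on either the Haskins--Khan--Payne symmetry-inheritance theorem~\cite{HKP} or a direct singular initial value problem analysis. The unique such shrinker end asymptotic to the torsion-free cone is the Gaussian shrinker $x = y = \tfrac{1}{2} t$ of Remark~\ref{rmk:gaussian}, which has a vertex singularity at $t = 0$ and therefore does not extend smoothly over $\Sph^4$. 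Thus the non-steady case produces a contradiction and we must have $\lambda = 0$. Combining Theorem~\ref{thm:Sp2:smooth:closure} with Remark~\ref{rmk:scaling_sc}, the unique (up to scale) smoothly-closing \Sp{2}-invariant steady soliton on $\Lambda^2_-\Sph^4$ is the static Bryant--Salamon torsion-free \gtstr, whose explicit form yields $\mu = -4$.

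I expect the main obstacle to be the shrinker sub-case: Proposition~\ref{prop:reg} alone only tells us that a complete AC shrinker asymptotic to the torsion-free cone would have its $t^{-1}$ coefficients vanish, and upgrading this to a nonexistence statement requires the uniqueness of \Sp{2}-invariant AC shrinker ends with prescribed asymptotic cone. Everything else is a direct consequence of results already established earlier in the paper.
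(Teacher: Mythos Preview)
Your proposal is correct and follows essentially the same route as the paper: split on the sign of $\lambda$, use Remark~\ref{rmk:precise_rate} (via Proposition~\ref{prop:reg}) to force $\ell=1$ in the non-steady case, rule out $\ell=1$ for expanders by Theorem~\ref{lem:expander:D:neq:0} and for shrinkers by the HKP uniqueness (the Gaussian over the torsion-free cone being incomplete), and handle the steady case via Remark~\ref{rmk:scaling_sc}. Your identification of the shrinker sub-case as the delicate step, and its resolution via Theorem~\ref{thm:HKP}, matches the paper exactly.
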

\begin{proof}
We already observed in Remark \ref{rmk:scaling_sc} that a smoothly-closing~\Sp{2}-invariant steady soliton must be static.

In the non-steady cases, we noted in Remark \ref{rmk:precise_rate} %
that the solution is AC with rate precisely $-2$ unless 
$\ell=1$.
For smoothly-closing expanders the latter is ruled out by Theorem~\ref{lem:expander:D:neq:0}. 

For shrinkers, we have the (much more) general result that the only complete AC shrinker  asymptotic to a torsion-free cone is the Gaussian shrinker on $\R^7$. Indeed, 
the uniqueness of AC shrinker ends proven in~\cite{HKP} (see Theorem \ref{thm:HKP}) shows that the only (gradient) AC shrinkers asymptotic to any torsion-free cone 
are the Gaussian shrinkers associated with that cone and hence are incomplete unless the cone is flat $\R^7$. 
\end{proof}

\subsection{Smoothly-closing expanders are uniquely determined by their asymptotic cone}
\label{ss:ac:expander:uniqueness}

In this section we prove a ``$g$-for-$g$'' comparison result for local
expanders and use it to prove that $\lmap$ is a nondecreasing function.
Together with our understanding
(from \S\ref{ss:rate-2:AC}) of the second term in the large-$t$ expansions of $x$
and $y$, we can use this comparison result to prove that, in fact,  $\lmap$ is strictly
increasing. %
It is convenient in the argument to interpret this as $\lmap(q)=\lmap(\lambda b^2)$ being an
increasing function of $\lambda$ when the parameter $b$ that determines the volume of the singular orbit is kept fixed.

Recall the definition of $\tilde \tau_2$ from Section~\ref{ss:tilde_tau},
that it satisfies $\tilde \tau_2'= \lambda y^2$, is increasing
and is positive on any smoothly-closing expander. 
\begin{prop}
\label{prop:comp_g_1}
Consider two local expander solutions labelled $\pm$, possibly with different values 
of the dilation constant $\lambda_\pm$, and parametrise them both by $g$.
Suppose that $\lambda_+ \geq \lambda_-$, and that for some $g = g_0$ we have
\[ y_+(g) > y_-(g), \quad
\frac{\tilde \tau_{2+}(g)}{\lambda_+} > \frac{\tilde \tau_{2-}(g)}{\lambda_-}. \]
Then these inequalities hold for all $g > g_0$; indeed, under these conditions
\[
\frac{\tilde \tau_{2+}(g)}{\lambda_+} - \frac{\tilde \tau_{2-}(g)}{\lambda_-}
\]
is a strictly increasing function of $g$.
\end{prop}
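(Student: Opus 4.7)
The plan is to reparametrize the soliton ODEs using $g$ as the independent variable, with state variables $Y = y^2$ and $T = \tilde\tau_2/\lambda$. The crucial observation, which makes the comparison feasible, is that since $\tilde\tau_2' = \lambda y^2$ and $g' = (x^2+2y^2)/(6g^2)$, the $T$-equation
\[ \frac{dT}{dg} = \frac{y^2}{g'} = \frac{6g^2 y^2}{x^2+2y^2} = \frac{6g^2 Y^3}{g^6+2Y^3} \]
(obtained by using $x = g^3/Y$) is completely independent of $\lambda$, and strictly increasing in $Y$ at fixed $g$. This immediately yields the second assertion of the proposition: whenever $Y_+ > Y_-$ holds, $(T_+ - T_-)' > 0$, so $T_+ - T_-$ is strictly increasing on any interval on which the hypotheses persist.

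It then remains to establish persistence of the two inequalities $y_+ > y_-$ and $T_+ > T_-$ for all $g > g_0$. I would argue by contradiction: suppose $g_1 > g_0$ is the first point at which either inequality becomes an equality. If $T_+(g_1) = T_-(g_1)$ were the case, the strict increase of $T_+ - T_-$ on $[g_0, g_1]$ (thanks to $y_+ > y_-$ there) would give $T_+(g_1) - T_-(g_1) > T_+(g_0) - T_-(g_0) > 0$, a contradiction. So we must be in the case $Y_+(g_1) = Y_-(g_1) = Y$ (and hence also $x_+ = x_- = x$) with $T_+(g_1) > T_-(g_1)$, while by minimality $Y_+'(g_1) \leq Y_-'(g_1)$.

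To derive a contradiction, I would compute $Y_+' - Y_-'$ at $g_1$ directly. Because $\frac{dY}{dg} = \frac{6g^2(x+\tau_2)}{x^2+2y^2}$ and $x_+ = x_-$ at the crossing,
\[ Y_+'(g_1) - Y_-'(g_1) = \frac{6g_1^2(\tau_{2+} - \tau_{2-})}{x^2+2y^2}. \]
Applying the identity $3x^2 \tau_2 = \tilde\tau_2(x^2+2y^2) - 2\lambda xy^4$ from~\eqref{eq:tilde:tau1:tau2} to both $\pm$ solutions, subtracting, and decomposing $\tilde\tau_{2+} - \tilde\tau_{2-} = \lambda_+(T_+ - T_-) + (\lambda_+ - \lambda_-) T_-$ (with $T_-(x^2+2y^2) - 2xy^4 = 3x^2\tau_{2-}/\lambda_-$), I would obtain
\[ Y_+'(g_1) - Y_-'(g_1) = \frac{2g_1^2\lambda_+(T_+ - T_-)}{x^2} + \frac{6g_1^2(\lambda_+ - \lambda_-)\tau_{2-}}{\lambda_-(x^2+2y^2)}. \]
The first term is strictly positive because $\lambda_+ > 0$ and $T_+(g_1) > T_-(g_1)$.

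The main obstacle is handling the second term, which forces one to know the sign of $\tau_{2-}$ at the hypothetical crossing. In the intended application of this proposition (comparing two smoothly-closing expanders to establish monotonicity of $\lmap$), both solutions satisfy $\tau_{2-} > 0$ for all $g > 0$ by Theorem~\ref{thm:sc:expanders:complete}, so with $\lambda_+ \geq \lambda_-$ and $\lambda_- > 0$ the second term is nonnegative. Thus $Y_+'(g_1) > Y_-'(g_1)$, contradicting $Y_+'(g_1) \leq Y_-'(g_1)$ and proving persistence of both inequalities.
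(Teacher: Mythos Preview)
Your approach matches the paper's exactly: reparametrise by $g$, observe that $\frac{1}{\lambda}\frac{d\tilde\tau_2}{dg} = \frac{6g^2 y^2}{x^2+2y^2}$ is independent of $\lambda$ and increasing in $y$ at fixed $g$, and then argue by contradiction at the first crossing $g_1$ where $y_+ = y_-$ but $T_+ > T_-$. At that point the paper takes a slightly slicker route than your explicit decomposition: from the identity
\[
\frac{3\tau_2}{\lambda} = \frac{x^2+2y^2}{x^2}\,\frac{\tilde\tau_2}{\lambda} - \frac{2y^2 g^3}{x^2}
\]
(whose second term is the same for $\pm$ at $g_1$) it reads off $\frac{\tau_{2+}}{\lambda_+} > \frac{\tau_{2-}}{\lambda_-}$ directly, and then asserts that $\lambda_+ \ge \lambda_- > 0$ yields $\tau_{2+} > \tau_{2-}$. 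Your concern about the sign of $\tau_{2-}$ is entirely justified: that last implication in the paper's argument \emph{also} requires $\tau_{2+} \ge 0$ or $\tau_{2-} \ge 0$ (it fails e.g.\ for $\tau_{2+}=-4,\ \lambda_+=4,\ \tau_{2-}=-2,\ \lambda_-=1$), so the paper's proof carries exactly the same implicit hypothesis you identified. Since the proposition is only ever applied to smoothly-closing expanders (Remark~\ref{rmk:comp_lambdas}, Theorem~\ref{thm:D*:monotone}), where $\tau_2 > 0$ holds by Theorem~\ref{thm:sc:expanders:complete}(i), neither proof is in any real danger.
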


\begin{proof}
Note the obvious fact that given $g$ and $y$, then $x$ is determined via $x=g^3/y^2$. In particular,  
$y_+(g)>y_-(g)$ is equivalent to $x_+(g)<x_-(g)$ and therefore $y_+(g)>y_-(g)$ implies $(y_+/x_+)(g) > (y_-/x_-)(g)$.

The fact that (recall also that $g'$ satisfies~\eqref{eq:g:dot}) 
\[ \frac{1}{\lambda}\frac{d\tilde\tau_2}{dg} = \frac{\tilde \tau_2'}{\lambda g'}= 6g^2 \frac{y^2}{x^2 + 2y^2} \]
is an increasing function of $y$ for fixed $g$ implies that
\begin{equation}
\label{eq:tautilde:diff}
\frac{\tilde \tau_{2+}(g)}{\lambda_+} - \frac{\tilde \tau_{2-}(g)}{\lambda_-}  \quad \text{is increasing whenever\ } y_+(g) > y_-(g)~\text{holds.} 
\end{equation}
Hence if the claim were false, then at the smallest $g_1 > g_0$ where the
condition fails, the following must hold
\[
 x_+(g_1) = x_-(g_1)= x(g_1), \quad y_+(g_1) = y_-(g_1)=y(g_1), \quad \frac{\tilde \tau_{2+}(g_1)}{\lambda_+} >
\frac{\tilde \tau_{2-}(g_1)}{\lambda_-}, 
\quad 
\frac{d(y^2_+-y^2_-)}{dg} (g_1)\le 0.
\]
We can rewrite the second equation in~\eqref{eq:tilde:tau1:tau2} as 
\[
\frac{3\tau_2}{\lambda} = \frac{x^2+2y^2}{x^2} \frac{\tilde \tau_2}{\lambda} - 2  \frac{y^2g^3}{x^2}.
\]
It follows from this equation and the conditions that hold at $g=g_1$ 
that 
\[
\frac{\tau_{2+}(g_1)}{\lambda_+} > \frac{\tau_{2-}(g_1)}{\lambda_-}.
\]
Since we assumed $\lambda_+ \geq \lambda_- >0$ this also implies that
$\tau_{2+}(g_1) > \tau_{2-}(g_1)$.
Hence because
\[
\frac{d(y^2)}{dg} = \frac{(y^2)'}{g'} = 6g^2\frac{x+\tau_2}{x^2 + 2y^2}
\]
we find that
\[
\frac{d(y^2_+-y^2_-)}{dg}(g_1) = \frac{6 g_1^2  \left( \tau_{2+}(g_1) - \tau_{2-}(g_1) \right)}{(x^2 + 2y^2)(g_1)}>0
\]
which is a contradiction. 
\end{proof}

\begin{remark}
\label{rmk:comp_lambdas}
In the case of a pair of smoothly-closing expanders $\spfam{\lambda_\pm}{b}$,
with respective dilation constants $\lambda_+> \lambda_-$ but satisfying the
same initial condition $y_\pm(0) = b$, we claim that
the inequalities in Proposition \ref{prop:comp_g_1} hold for small $g>0$
and hence by the previous comparison result they persist for all $g$.
Indeed, according to \eqref{eq:sp2:expansion},  the first few terms for $x$ and $y$ 
are
\[ x = t - \frac{t^3}{54b^2} \left(4\lambda b^2+9\right) + \cdots 
, \qquad
y = b + \frac{t^2}{36b} \left( 2\lambda b^2+9 \right)  + \cdots \] 
Then $G: = g^3 =xy^2 =  b^2 t + O(t^3)$ implies $t = b^{-2}G + O(G^3)$, so that
\[
y = b + \frac{G^2}{36b^5}(2\lambda b^2 + 9) + O(G^4).
\]
Hence $y$ is an increasing function of $\lambda$ for fixed $G$ (or equivalently, for fixed $g$) and $b$.

Since $\tilde \tau_{2\pm}(0)=0$, then having established that $y_+(g) > y_-(g)$ holds for $g>0$ sufficiently small,
it follows from~\eqref{eq:tautilde:diff} that $\frac{\tilde\tau_{2+}(g)}{\lambda_+} - \frac{\tilde\tau_{2-}(g)}{\lambda_-}$
is increasing (and therefore positive) for $g>0$ sufficiently small. 
\end{remark}

Alternatively, we could argue that two smoothly-closing expanders with the same
$\lambda$ but initial conditions $b_+ > b_-$ satisfy the inequalities for
small $g$. Either way, the conclusion that $y_+(g) > y_-(g)$ for all $g$
immediately implies that the limits $\ell_\pm$ of $\frac{y_\pm}{x_\pm}$
satisfy $\ell_+ \geq \ell_-$. However, for proving the strict inequality
$\ell_+ > \ell_-$, it turns out to be more useful to compare solutions with
fixed $b$ and $\lambda_+ > \lambda_-$, using the following easy consequence
of Proposition \ref{prop:reg}.

\begin{lemma}
\label{lem:comp_ends}
Consider two expanders $(x_\pm, y_\pm)$ with dilation constants
$\lambda_{\pm}$,
asymptotic to the same closed cone $(c_1, c_2)$, \ie
\[ \frac{x_\pm}{y_\pm} \to \frac{c_1}{c_2} . \]
If $c_2>c_1$ and $\lambda_+ > \lambda_-$ then
\[ \frac{x_+(t)}{y_+(t)} > \frac{x_-(t)}{y_-(t)} \]
holds for $t$ sufficiently large.  The same inequality also holds if we compare $g$-for-$g$
instead; equivalently
\[ y_+(g) < y_-(g) \]
holds for all $g$ sufficiently large. 
\end{lemma}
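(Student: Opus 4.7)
The plan is to apply Proposition~\ref{prop:reg} directly to each of the two expanders and compare the resulting asymptotic expansions. The only real content is checking the sign of a leading coefficient.

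Since both ends are asymptotic to the same cone $(c_1,c_2)$, Proposition~\ref{prop:S:limit} gives
\[
S^*_\pm \;=\; \frac{1}{\lambda_\pm}\,\alpha(\ell), \qquad \alpha(\ell) = \tfrac{1}{12}(\ell^2-1)(2+\ell^{-2})^2,
\]
with common factor $\alpha(\ell)$ determined by the shared cone. The hypothesis $c_2>c_1$ gives $\ell>1$, so $\alpha(\ell)>0$; together with $\lambda_+>\lambda_->0$ this yields $0<S^*_+<S^*_-$. Plugging into the translation-free expansion~\eqref{eq:xy_coeff} from Proposition~\ref{prop:reg} and subtracting gives
\[
\frac{x_+(t)}{y_+(t)} - \frac{x_-(t)}{y_-(t)} \;=\; \frac{S^*_- - S^*_+}{2c_2^3}\, t^{-2} + o(t^{-2}),
\]
whose leading coefficient is strictly positive, so the first claimed inequality holds for all $t$ sufficiently large.

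For the $g$-parametrised version, I would start with the purely algebraic observation that at any fixed $g$ the relation $xy^2=g^3$ gives $y = g(y/x)^{1/3}$, so that $y_+(g)<y_-(g)$ is equivalent to $(x_+/y_+)(g)>(x_-/y_-)(g)$. To transfer the $t$-expansion to a $g$-expansion, recall that along any weakly AC end $g/t \to (c_1c_2^2)^{1/3}$ (as computed inside the proof of Lemma~\ref{lem:yx:bounded}), so if $t_\pm(g)$ denotes the $g$-inverse of each solution then $t_\pm(g)^{-2} = (c_1c_2^2)^{2/3} g^{-2} + o(g^{-2})$ for both signs. Substituting into~\eqref{eq:xy_coeff} and subtracting gives the analogous $g$-expansion
\[
\frac{x_+(g)}{y_+(g)} - \frac{x_-(g)}{y_-(g)} \;=\; \frac{(S^*_- - S^*_+)(c_1c_2^2)^{2/3}}{2c_2^3}\, g^{-2} + o(g^{-2}),
\]
again with strictly positive leading coefficient, which combined with the algebraic equivalence proves $y_+(g)<y_-(g)$ for $g$ large.

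I expect no genuine obstacle: the statement is essentially an immediate corollary of the sharp second-order expansion in Proposition~\ref{prop:reg} once one notes from~\eqref{eq:s*_from_c} that $S^*$ is inversely proportional to $\lambda$ with a prefactor $\alpha(\ell)$ whose sign is governed by $\ell \gtrless 1$. The only minor point worth verifying is that the $t^{-2}$ coefficient in the expansion of $x/y$ is unaffected by the residual translation ambiguity in Proposition~\ref{prop:reg}, but this is immediate from $(t-k)^{-2} = t^{-2}+O(t^{-3})$, so the two $+$ and $-$ expansions can be legitimately compared at the same~$t$ with no adjustment.
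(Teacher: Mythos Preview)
Your proposal is correct and follows essentially the same approach as the paper: both arguments read off the $t^{-2}$ coefficient in the expansion \eqref{eq:xy_coeff} of $x/y$ from Proposition~\ref{prop:reg}, observe that $S^*$ is positive and strictly decreasing in $\lambda$ when $c_2>c_1$, and then transfer to the $g$-parametrisation via $t = (c_1c_2^2)^{-1/3}g + O(g^{-1})$. Your extra care in noting that \eqref{eq:xy_coeff} is unaffected by the translation ambiguity is a welcome clarification the paper leaves implicit.
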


\begin{proof}
The assumption that $c_2>c_1$ implies, thanks to~\eqref{eq:s*_from_c}, that $S^*$ is a positive decreasing function of $\lambda>0$. 
The $t$-for-$t$ comparison now follows from \eqref{eq:xy_coeff}. 
The $g$-for-$g$ comparison results follow too since $t = (c_1c_2^2)^{-1/3}g + O(g^{-1})$.
\end{proof}

\begin{remark*}
Actually we can prove the $g$-for-$g$~comparison result in
Lemma \ref{lem:comp_ends} using just a fragment of the argument from
Proposition \ref{prop:reg}. 
To see this observe---thanks to~\eqref{eq:s*_from_c}---that
\[ g^3 \frac{d}{dg} \log \frac{x}{y} = \frac{6Sg^2}{x^2+2y^2}
\to \frac{3(c_2^2-c_1^2)}{\lambda (c_1c_2^2)^{1/3}} >0 \]
as $g \to \infty$. 
Then $\lambda_+ > \lambda_- > 0$ implies that 
\[ \frac{d}{dg} \log \frac{x_+}{y_+} < \frac{d}{dg} \log \frac{x_-}{y_-} \]
holds for sufficiently large $g$.
Since the limits of $\log \frac{x_\pm}{y_\pm}$ as $g \to \infty$ are equal
it follows that
\[ \frac{x_+(g)}{y_+(g)} > \frac{x_-(g)}{y_-(g)}  \]
for $g$ sufficiently large. 

However, the refined asymptotics for $x$ and $y$ established in Proposition~\ref{prop:reg} 
were needed in the proof of Theorem~\ref{lem:expander:D:neq:0}.
\end{remark*}

\begin{theorem}
\label{thm:D*:monotone}
$\lmap$ is a strictly increasing function of $q$. In particular,
two \Sp{2}-invariant smoothly-closing expanders asymptotic to the same cone are rescalings of each other. 
\end{theorem}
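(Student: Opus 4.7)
The plan is to prove the theorem by contradiction, playing the pointwise comparison principle of Proposition \ref{prop:comp_g_1} against the asymptotic comparison of Lemma \ref{lem:comp_ends}; these will force two hypothetical smoothly-closing expanders with the same $\ell$-value but different $q$-values to satisfy incompatible inequalities in $y$. Suppose, for a contradiction, that $\lmap(q_+) = \lmap(q_-) = \ell$ for some $q_+ > q_- > 0$. Using the scaling freedom of Remark \ref{rmk:scaling_sc}, I rescale the two smoothly-closing expanders to arrange that they share the same initial value $y(0) = b$ at the singular orbit; since $q_\pm = \lambda_\pm b^2$, this forces $\lambda_+ > \lambda_-$.

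The first step is to apply Proposition \ref{prop:comp_g_1} to this pair, parametrised by $g$. The hypotheses are precisely those verified in Remark \ref{rmk:comp_lambdas} from the leading terms of the power series expansions \eqref{eq:sp2:expansion}, namely $y_+(g) > y_-(g)$ and $\lambda_+^{-1}\tilde\tau_{2+}(g) > \lambda_-^{-1}\tilde\tau_{2-}(g)$ for sufficiently small $g>0$. The comparison principle then propagates both inequalities to all $g > 0$; in particular,
\[ y_+(g) > y_-(g) \quad \text{for all } g > 0. \]

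The second step is to exploit Theorem \ref{lem:expander:D:neq:0}, which gives $\ell = \lmap(q_\pm) > 1$, so both expanders are asymptotic to the common closed \Sp{2}-invariant \gtwo--cone $(c_1, c_2)$ with $c_2 > c_1$. This places the pair exactly in the setting of Lemma \ref{lem:comp_ends}, whose $g$-for-$g$ conclusion yields
\[ y_+(g) < y_-(g) \quad \text{for all sufficiently large } g, \]
contradicting the first step. Hence $\lmap$ is strictly increasing.

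The uniqueness assertion is then immediate: any two complete \Sp{2}-invariant smoothly-closing expanders asymptotic to the same cone have the same $\lmap$-value, hence the same scale-invariant parameter $q$ by injectivity, and therefore coincide up to rescaling by Theorem \ref{thm:Sp2:smooth:closure}. The principal obstacle, already flagged in the remark preceding the theorem, is that Proposition \ref{prop:comp_g_1} by itself only delivers \emph{weak} monotonicity; the strict inequality genuinely requires the refined second-order asymptotic information from Proposition \ref{prop:reg}, which only carries a definite sign once $\ell \neq 1$. That is precisely why Theorem \ref{lem:expander:D:neq:0} enters as an indispensable prerequisite rather than a downstream corollary of strict monotonicity.
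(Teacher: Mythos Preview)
Your proof is correct and follows essentially the same route as the paper's: rescale to common $b$ so that $\lambda_+ > \lambda_-$, use Proposition~\ref{prop:comp_g_1} via Remark~\ref{rmk:comp_lambdas} to get $y_+(g) > y_-(g)$ for all $g$, then invoke Theorem~\ref{lem:expander:D:neq:0} to ensure $c_2 > c_1$ so that Lemma~\ref{lem:comp_ends} yields the opposite inequality for large $g$. One small presentational point: as written, your contradiction argument literally only proves injectivity of $\lmap$; strict monotonicity then follows because your first step (which does not actually use the hypothesis $\lmap(q_+)=\lmap(q_-)$) already gives $y_+(g) > y_-(g)$ for all $g$ and hence $\lmap(q_+) \geq \lmap(q_-)$---the paper makes this weak-monotonicity consequence explicit before passing to the contradiction.
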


\begin{proof}
Consider two smoothly-closing expanders %
with scale-invariant parameters $q_+>q_-$. 
By the scaling invariance 
there is no loss of generality in assuming $q_{\pm} = \lambda_\pm b^2_\pm$
satisfy $b_+ = b_-$ and $\lambda_+ > \lambda_-$.
Proposition~\ref{prop:comp_g_1} and Remark~\ref{rmk:comp_lambdas} together imply that for any $g>0$ we have the $g$-for-$g$ comparison 
result that $y_+(g) > y_-(g)$ for all $g>0$, which immediately implies
$\lmap(q_+) \geq \lmap (q_-)$.

Now suppose for a contradiction that $\lmap(q_-) = \lmap(q_+)$, 
\ie that both expanders are asymptotic to the same closed cone $(c_1,c_2)$.
By Theorem~\ref{lem:expander:D:neq:0} we know that $c_2>c_1$ and hence that $S^*>0$.
Now because both expanders are asymptotic to the same cone with $c_2>c_1$, 
Lemma~\ref{lem:comp_ends} applies and gives the comparison
$y_+(g) < y_-(g)$ for $g$ sufficiently large, which yields a contradiction.
\end{proof}

\subsection{Asymptotic behaviour of \texorpdflmap{} as \texorpdfstring{$q \searrow 0$}{q -> 0}}
\label{ss:lmap:qto0}
To understand the behaviour of \texorpdflmap{} 
 as $q=\lambda b^2 \to 0$ we will
exploit the fact (from Theorem \ref{thm:Sp2:smooth:closure} and the remark following it) that a smoothly-closing \Sp{2}-invariant steady soliton is
actually a trivial soliton, \ie it is torsion free with vanishing soliton vector field $X=0$.
\begin{lemma}
\label{lem:sid*:bto0}
$\lmap(q) \to 1$ as $q \to 0$.
\end{lemma}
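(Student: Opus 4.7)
The plan is to use the scaling invariance from Remark~\ref{rmk:scaling_sc} to fix $b=1$, so $q=\lambda$, and then to prove $\lmap(\lambda) \to 1$ as $\lambda \to 0^+$ by comparing the smoothly-closing expander $\spfam{\lambda}{1}$ with the Bryant--Salamon torsion-free \gtstr{} on $\Lambda^2_- \Sph^4$. The key point is that, by Theorem~\ref{thm:Sp2:smooth:closure} and the remark following it, the $\lambda=0$ smoothly-closing \Sp{2}-invariant soliton is precisely the Bryant--Salamon metric, which is itself AC with asymptotic cone the torsion-free \Sp{2}-invariant \gtwo-cone; in particular its warping satisfies $\tfrac{y}{x} \to 1$ as $t \to \infty$.

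The core step will be to exploit continuous dependence of ODE solutions on parameters. The expansions~\eqref{eq:sp2:expansion} show that for any small $t_0 > 0$ the triple $(x_\lambda(t_0), y_\lambda(t_0), \tau_{2,\lambda}(t_0))$ depends analytically on $\lambda$ near $\lambda=0$, so standard ODE theory yields uniform convergence of $\spfam{\lambda}{1}$ to the Bryant--Salamon solution on any compact interval $[t_0, T]$ as $\lambda \to 0^+$. Given any $\epsilon > 0$, I would first pick $T$ so large that the Bryant--Salamon solution satisfies $\tfrac{y}{x}(T) < 1 + \tfrac{\epsilon}{2}$, and then pick $\lambda > 0$ small enough that the corresponding smoothly-closing expander satisfies $\tfrac{y_\lambda}{x_\lambda}(T) < 1 + \epsilon$.

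To turn this one-point estimate into a bound on the limit as $t \to \infty$, I would use that the warping is strictly decreasing along any smoothly-closing expander: Theorem~\ref{thm:sc:expanders:complete}(i) gives $S>0$ and $\tau_2>0$ throughout, whence Proposition~\ref{prop:elem_AC} forces $\log \tfrac{y}{x}$ to be positive and decreasing for all $t > 0$. Consequently $\lmap(\lambda) \leq \tfrac{y_\lambda}{x_\lambda}(T) < 1 + \epsilon$, and combined with the lower bound $\lmap(\lambda) \geq 1$ from Theorem~\ref{thm:sc:expanders:complete}(ii), this yields the lemma. The only mild technical point—where I expect the most care is needed—is justifying uniform continuous dependence of $\spfam{\lambda}{1}$ on $\lambda$ down to $\lambda=0$, but because the initial data at any $t_0>0$ is analytic in $\lambda$ and the ODE system depends smoothly on $\lambda$, this reduces to a direct application of Gr\"onwall's inequality on $[t_0, T]$.
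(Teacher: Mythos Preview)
Your proposal is correct and follows essentially the same approach as the paper: both fix $b$ and let $\lambda \to 0$, use uniform convergence on compact $t$-intervals to the Bryant--Salamon solution (whose warping tends to $1$), and then invoke the monotonic decrease of $\tfrac{y}{x}$ on smoothly-closing expanders to pass from a bound at finite $T$ to a bound on the limit $\lmap(\lambda)$. The paper phrases this as a contradiction argument rather than your direct $\epsilon$-argument, but the content is identical.
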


\begin{proof}
Because of the scaling invariance it suffices to consider the
asymptotic behaviour of smoothly-closing expanders %
with fixed $b$ and $\lambda \to 0$. Suppose then for a contradiction that 
$\lmap$ is bounded away from $1$. Then, because on every smoothly-closing expander (by the positivity of $S$
proven in 
Theorem \ref{thm:sc:expanders:complete}.i) 
$\frac{y}{x}$ is a decreasing function of $t$, there would exist 
a uniform $\ell_0 > 1$ such that
\[
\tag{*}
\label{eq:*}
\frac{y}{x} \geq \ell_0 \quad \text{for each smoothly-closing expander and all\ } t > 0.
\]

Since the expander ODEs depend continuously (in fact, analytically) on $\lambda$ and the initial conditions are fixed, then as
$\lambda \to 0$ the expanders converge uniformly on compact subsets
to the unique \Sp{2}-invariant \emph{steady} soliton with the same initial
condition.
However, by the second part of Theorem~\ref{thm:Sp2:smooth:closure},
any smoothly-closing $\tu{Sp}_2$-invariant steady soliton is actually a trivial soliton, \ie it is torsion free with vector field $X=0$.
In particular, its asymptotic cone is the torsion-free
\Sp{2}-invariant cone which has $\ell=1$.
Therefore, for some $t_1$ sufficiently large the limiting steady soliton satisfies $\frac{y}{x}(t_1) < \ell_0$. 
Thus because the smoothly-closing expanders converge
to this limit steady soliton uniformly on $[0,t_1]$, we contradict~\eqref{eq:*}.
\end{proof}

\begin{remark*}
One might be tempted to think that one could apply
Theorem \ref{thm:uniform} to deduce this result, 
but there we assumed the limit to be an \emph{expander}, not a steady soliton.
However, the monotonicity of $\frac{y}{x}$ that holds for smoothly-closing expanders makes it easier to
prove uniform convergence in this special case.
\end{remark*}

\subsection{Asymptotic behaviour of \texorpdflmap{} as \texorpdfstring{$q \nearrow \infty$}{q-> oo}}
\label{ss:large:b:asymptotics}

To understand the behaviour of $\lmap$ as $q  \to \infty$ the key ingredient
is the monotone quantity $M/g^3$ introduced in Lemma~\ref{lem:monotone}.
This quantity allows us to relate the behaviour of a smoothly-closing expander at the singular orbit at $t=0$
to its asymptotic behaviour as $t \to \infty$. 

\begin{prop}
\label{prop:b_oo}
For $\lambda > 0$,
the asymptotic limit map $\lmap$ satisfies
\begin{equation}
\label{eq:ell_bound}
\lmap^2 > \tfrac13 q - \half.
\end{equation}
In particular, with $\lambda$ fixed, $\lmap \to \infty$ as $b \to \infty$.
\end{prop}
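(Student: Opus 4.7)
The plan is to pin down the precise limits of the strictly decreasing quantity $M/g^3$ from Lemma~\ref{lem:monotone} at both ends of the lifetime and then read off the desired inequality from the fact that $M/g^3$ is strictly decreasing.

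First, I would compute $\lim_{t\to\infty} M/g^3$. By Theorem~\ref{thm:sc:expanders:complete} and Proposition~\ref{prop:reg}, a smoothly-closing expander with parameter $q$ satisfies $x = c_1 t + O(t^{-1})$, $y = c_2 t + O(t^{-1})$, where $(c_1,c_2)$ is the unique closed cone with $c_2/c_1 = \ell = \lmap(q)$. Integrating $\tilde\tau_1' = \lambda x^2$ from $t=0$ (where $\tilde\tau_1(0)=0$) gives $\tilde\tau_1 = \tfrac{1}{3}\lambda c_1^2 t^3 + O(t)$, so
\[
\frac{M}{g^3} = \frac{3x + \tilde\tau_1}{xy^2} \longrightarrow \frac{\lambda c_1^2/3}{c_1 c_2^2} = \frac{\lambda c_1}{3 c_2^2} = \frac{2\lambda}{2\ell^2 + 1},
\]
where the last equality uses the explicit formulae \eqref{eq:explicit_cone_solution} with $c_2 = \ell c_1$ and $c_1 = \tfrac{1}{6}(2+\ell^{-2})$.

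Next I would compute $\lim_{t\to 0^+} M/g^3$ using the power series expansion \eqref{eq:sp2:expansion} at the singular orbit. Substituting $\tau_1 = -\tfrac{4\lambda}{9}t^3 + O(t^5)$, $u = -\tfrac{7\lambda}{9}t + O(t^3)$, $x = t + O(t^3)$ yields $\tilde\tau_1 = \tau_1 - ux^2 = \tfrac{\lambda}{3}t^3 + O(t^5)$, so $M = 3t + O(t^3)$, while $g^3 = xy^2 = b^2 t + O(t^3)$. Therefore
\[
\frac{M}{g^3} \longrightarrow \frac{3}{b^2} \quad\text{as } t\to 0^+.
\]
Strict monotonicity of $M/g^3$ (Lemma~\ref{lem:monotone}) now forces
\[
\frac{3}{b^2} > \frac{2\lambda}{2\ell^2 + 1},
\]
which rearranges immediately to $\ell^2 > \tfrac{1}{3}\lambda b^2 - \tfrac{1}{2} = \tfrac{1}{3}q - \tfrac{1}{2}$, establishing \eqref{eq:ell_bound}. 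The final claim is then trivial: for fixed $\lambda$, as $b \to \infty$ we have $q \to \infty$, so $\lmap(q)^2 > \tfrac{1}{3}q - \tfrac{1}{2} \to \infty$, hence $\lmap \to \infty$.

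The only potentially subtle point is verifying the asymptotic behaviour of $\tilde\tau_1$ as $t\to\infty$ rigorously enough to justify $M/g^3 \to \tfrac{2\lambda}{2\ell^2+1}$; however, the $O(t^{-1})$ control on $(x,y)$ provided by Proposition~\ref{prop:reg} is more than enough, since the $O(s^{-2})$ error in $x^2$ contributes only a convergent integral. All other steps are routine algebraic manipulations.
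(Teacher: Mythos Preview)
Your proof is correct and follows essentially the same approach as the paper: compare the initial and asymptotic values of the strictly decreasing quantity $M/g^3$ and rearrange. The only minor variation is in how you compute $\lim_{t\to\infty} M/g^3$: you integrate $\tilde\tau_1' = \lambda x^2$ (invoking Proposition~\ref{prop:reg}, which is in fact stronger than needed---weak AC alone already gives $\tilde\tau_1/t^3 \to \tfrac{\lambda c_1^2}{3}$ by a Ces\`aro argument), whereas the paper instead uses the algebraic identity $\tilde\tau_1 = \tfrac{2x^2(\lambda xy^2 - 3\tau_2)}{x^2+2y^2}$ from \eqref{eq:tilde:tau1:tau2} and takes the limit directly.
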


\begin{proof}
Recall from Lemma~\ref{lem:monotone} that the quantity $M/g^3$, where $M= 3x + \tilde \tau_1$, is strictly decreasing in~$t$. 
Also recall that $\tilde \tau_1$ (and hence $M$) is positive on any smoothly-closing expander
for all $t>0$. 

On $\spfam{\lambda}{b}$,
$Mg^{-3}$ takes the value $3y(0)^{-2} = 3b^{-2}$ at $t = 0$. %
Hence on this expander the inequality
\[ \frac{\tilde\tau_1}{xy^2} < 3 \left(\frac{1}{b^2} - \frac{1}{y^2} \right) \quad \text{holds for all \ } t>0. 
\]
Now split $\tilde \tau_1/xy^2$ into two terms
\[ \frac{\tilde\tau_1}{xy^2} = \lambda\frac{2x^2}{2y^2+x^2} -
\frac{6x\tau_2}{(2y^2+x^2)y^2}. \]
The linear growth of $x$, $y$ and $\tau_2$ implies that the second term is $O(t^{-2})$, 
and hence
\[
\lim_{t \to \infty} \frac{\tilde\tau_1}{xy^2} = \frac{2 \lambda}{ 2\ell ^2 +1},
\]  
where $\ell = \lim_{t \to \infty}  \frac{y}{x}$.
Taking the limit of the previous inequality as $t \to \infty$ implies that 
$\ell$ satisfies 
\[ \frac{2\lambda}{2\ell^2 + 1} < \frac{3}{b^2}, \]
which is equivalent to \eqref{eq:ell_bound}.
\end{proof}

\begin{remark}
\label{rmk:ell_sharp}
There is good reason to believe that the bound \eqref{eq:ell_bound} is sharp
asymptotically, in the sense that
\begin{equation}
\label{eq:ell_sharp}
\frac{\lmap^2}{\lambda b^2} \to \frac{1}{3} \ \ \text{as $\lambda b^2 \to \infty$.}
\end{equation}
Given a solution $(x,y,\tau_2)$ to the~\Sp{2}-invariant soliton
equations \eqref{eq:ODE:tau2'} and any $\epsilon \ge 0$, setting
\[ \hat x_\epsilon := x,  \quad \hat y_\epsilon = \epsilon y,
\quad \hat \tau_{2\epsilon} = \epsilon^2 \tau_2\]
gives a solution to the $\epsilon$-dependent ODE system
\begin{equation}
\label{eq:Sp2:ODE:eps}
(\hat{x}^2)' = 2\hat{x} - 2\frac{\hat{x}^2\hat\tau_2}{\hat y^2} - \epsilon^2\frac{\hat{x}^3}{\hat y^2} ,
\qquad  (\hat y^2)' =  \hat\tau_2 + \epsilon^2 \hat{x}, \qquad
\hat{\tau}_2' = \frac{4\wh{R}_1 \wh{S}}{3\hat{x}(2\hat y^2 + \epsilon^2 \hat{x}^2)},
\end{equation}
where now
\[\wh{R}_{1}:=\lambda \hat{x}\hat y^2-3\hat\tau_2,  \quad \ \wh S:=\hat y^2-\tfrac{3}{2}\hat{x}\hat\tau_2 -\epsilon^2 \hat{x}^2.
\]
For $\epsilon >0$ the ODE system~\eqref{eq:Sp2:ODE:eps} is equivalent to the soliton ODE system~\eqref{eq:ODE:tau2'}.
However, the rescaled system~\eqref{eq:Sp2:ODE:eps} also makes sense when
$\epsilon = 0$ and this limit ODE system (see also~\eqref{eq:Sp2:ODE:limit})
is no longer equivalent to~\eqref{eq:ODE:tau2'}.

The arguments from Section~\ref{s:Sp2:expanders:completeness} can be
adapted to show that there is a unique solution $(\hat x, \hat y, \hat \tau_2)$ %
to this limit ODE system for a given $\lambda>0$ and initial condition $\hat y(0)
= b$, and moreover that $\frac{\hat y}{\hat x}$ has a finite
limit $\wh \ell$ as $t \to \infty$. Now the monotonic quantity
$Mg^{-3}$ used in the proof of Proposition \ref{prop:b_oo} has an analogue
$\frac{\wh M}{\hat x \hat y^2}$ in the limit system (where
$\wh M := 3 \hat x + \frac{\hat x^2}{\hat y^2}\wh R_1$) and this quantity is actually \emph{conserved}, showing
that $\wh \ell$ is exactly $\sqrt{\frac{\lambda b^2}{3}}$.
In fact, we first discovered the conserved quantity in this limit system and this motivated us 
to seek a related monotone quantity in the full~\Sp{2}-invariant soliton ODE system.

Now, similarly to the proof of Theorem \ref{lem:expander:D:neq:0}, it is clear
that for $\lambda$ and $b$ fixed, the unique solution
$(\hat x_\epsilon, \hat y_\epsilon, \hat \tau_{2\epsilon})$
to~\eqref{eq:Sp2:ODE:eps}
with $\hat y_\epsilon(0) = b$ converges uniformly on compact subsets to the
solution $(\hat x, \hat y, \hat \tau_{2})$ of the limit system. However, it is
more complicated here to argue that one gets uniform convergence on all of
$(0,\infty)$, which is what would be needed to deduce \eqref{eq:ell_sharp}.
\end{remark}

\section{Soliton end behaviours}
\label{sec:ends}
In the next two sections we describe all possible end behaviours of non-steady~\Sp{2}-invariant solitons and
explain what implications this has for the dimensions of the space of complete~\Sp{2}-invariant solitons on $\Lambda^2_- \Sph^4$. 
For both expanders and shrinkers we sketch how to construct three different classes of end behaviours by recasting these 
problems as certain singular initial value problems. Two types of end behaviour are common to both expanders and shrinkers;
the other end behaviour is different for shrinkers and for expanders. 

The main result in this section, Theorem~\ref{thm:tri:expanders}, 
proves that for~\Sp{2}-invariant expanders the three claimed types of end behaviour exhaust all possible end behaviours and
thus establishes Theorem~\ref{mthm:both_tri} for ends of~\Sp{2}-invariant expanders. 
In Section~\ref{SS:su3} we will indicate what similarities and differences we expect for the possible end behaviours of~\sunitary{3}-invariant 
expanders. 
(The possible end behaviours of
\sunitary{3}-invariant steady solitons were already completely described in
\cite[Theorem~F]{Haskins:Nordstrom:g2soliton1}.)

The proof of Theorem~\ref{mthm:both_tri} for~\Sp{2}-invariant shrinker ends is substantially more involved than our proof for expanders. 
However, in Section~\ref{ss:shrinker_fi_ends} we show that methods similar to those used in the expander case suffice to understand forward-incomplete shrinker ends. This will leave us free in Section~\ref{sec:shrinker-tri} to concentrate 
on the new ideas, including another reformulation of the shrinker ODEs,
needed in the case of forward-complete shrinker ends to complete the proof of Theorem~\ref{mthm:both_tri} for shrinkers.

\subsection{Rigidity and instability of AC shrinkers versus stability of AC expanders}
\label{subsec:rigid_vs_instab}

Let us begin by emphasising an important difference between the end behaviours of AC expanders
and AC shrinkers, and the consequences that has for the expected dimensions of
complete AC solitons.

We have proved that AC ends of \Sp{2}-invariant expanders are stable.
Indeed, we have provided two different arguments for this:
Proposition \ref{prop:AC:end}, which characterised AC behaviour in terms of the increasing function
$\tilde \tau_1$ being eventually positive, and
Proposition \ref{prop:stable}, which also proved continuous dependence of the
asymptotic cone on the initial condition.

This AC expander end stability on its own is enough to argue that the subset of
complete AC solutions in the 1-parameter family of smoothly-closing local
expanders from Theorem \ref{thm:Sp2:smooth:closure} is open, and thus (if non-empty)
1-dimensional; but of course we have established in Theorem
\ref{thm:sc:expanders:complete} that \emph{all} smoothly-closing expanders are AC.
However, as we will explain in Section~\ref{SS:su3},  this sort of stability argument for AC expanders becomes more important in the case of \sunitary{3}-invariant expanders on~$\Lambda^2_- \CP^2$, because in that setting we expect that not all the smoothly-closing expanders are forward complete.

The stability of AC expander ends contrasts with the main result of
Haskins--Khan--Payne~\cite{HKP}.

\begin{theorem}
\label{thm:HKP}
There is at most one AC (gradient) shrinker end asymptotic to any
closed~\gtwo-cone.
Moreover, every isometry of the asymptotic cone
of an AC shrinker end is inherited by the end itself. 
\end{theorem}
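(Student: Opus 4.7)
The plan is to reduce both claims in the theorem to a single unique-continuation-at-infinity statement, adapting the strategy initiated by Kotschwar–Wang for Ricci shrinkers. Both assertions will follow at once if we establish that two AC gradient shrinker ends $\varphi_1,\varphi_2$ asymptotic to the same closed $\gtwo$-cone $C$ must agree after pulling back by a diffeomorphism asymptotic to the identity. Granting this, symmetry inheritance is a short formal consequence: if $\sigma$ is an isometry of $C$, then $\sigma^*\varphi$ is another AC shrinker end with the same asymptotic cone, so uniqueness forces $\sigma^*\varphi=\varphi$ modulo a trivial asymptotic gauge, and this gauge must itself be $\sigma$ because $\sigma$ acts on the cone as an isometry, hence on the end as the unique extension compatible with the AC asymptotics.

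The first main step is gauge fixing. The Laplacian soliton system is overdetermined and diffeomorphism-invariant, so to get a determined elliptic problem I would use a DeTurck-type gauge in which both ends are identified with the cone via their respective cone maps and the residual diffeomorphism freedom is fixed by a harmonic-map condition for the identity map relative to the cone metric. In this gauge the difference $\psi:=\varphi_1-\varphi_2$ satisfies a linear second-order equation on the end whose principal symbol is that of the cone Hodge Laplacian, modified by a drift term $\tfrac{1}{2}r\partial_r$ coming from the gradient soliton vector field of the shrinker and by a zeroth-order term proportional to $\lambda$ plus the algebraic curvature of $C$. The AC hypothesis gives $\psi=O(r^{-\nu})$ for some $\nu>0$ (in the $\Sp{2}$-invariant case, Proposition~\ref{prop:reg} furnishes $\nu=2$, and the general AC definition used in the statement supplies at least some polynomial decay), and the task is to upgrade this polynomial decay to identical vanishing outside a large ball.

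The technical heart, and the main obstacle, is a Gaussian-weighted Carleman estimate on conical ends: one must show that for some $\mu>0$ and all $\psi$ compactly supported outside a sufficiently large radius,
\[
\int e^{2\mu r^{2}}\,\bigl|\psi\bigr|^{2}\,\dvol \;\le\; C\int e^{2\mu r^{2}}\,\bigl|L\psi\bigr|^{2}\,\dvol,
\]
where $L$ is the linearised shrinker operator described above. The choice of Gaussian weight is dictated by the drift $\tfrac12 r\partial_r$, which produces exactly the positive commutator needed against $\mu r^{2}$ when $\mu$ is chosen in a suitable range. Carrying this out rigorously requires careful conjugation of $L$ by $e^{\mu r^2}$, splitting into self-adjoint and anti-self-adjoint parts, and tracking how the algebraic curvature and torsion terms of the closed (but generally non-torsion-free) cone $C$ interact with the radial commutators; the non-triviality of the torsion of $C$ is what makes this more delicate than the Ricci-flow analogue. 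Combined with the polynomial decay of $\psi$, the Carleman inequality forces $\psi\equiv 0$ on a half-infinite end, and standard unique continuation for the elliptic system then propagates the vanishing to the whole end where both $\varphi_i$ are defined.

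Finally, to pass from $\psi\equiv 0$ to equality of the original ends (before the DeTurck diffeomorphism), one verifies that the asymptotic-to-identity diffeomorphism used in the gauge-fixing is itself uniquely determined by the requirement that it intertwines $\varphi_1$ and $\varphi_2$ at infinity; this is essentially a finite-dimensional uniqueness for harmonic maps between AC ends that decay to the identity. Applying the whole argument to $\varphi$ and $\sigma^*\varphi$ yields the symmetry inheritance statement as described. I expect the Carleman estimate to be the hardest step, both because the drift term is anisotropic on the cone (it sees the radial direction preferentially) and because one needs coefficients in the estimate uniform enough to absorb the lower-order $G_2$-algebraic terms at the scales where polynomial decay of $\psi$ just barely suffices.
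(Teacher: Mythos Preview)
This theorem is not proved in the present paper. It is stated as Theorem~\ref{thm:HKP} and explicitly attributed to Haskins--Khan--Payne~\cite{HKP}: the paper introduces it with ``The stability of AC expander ends contrasts with the main result of Haskins--Khan--Payne~\cite{HKP}'' and then quotes the statement without any argument. It is used as a black box (for instance in the proof that $\frac{y}{x}$ is eventually monotone in the shrinker case, and in the discussion of decay rates in \S\ref{ss:tf:exclusion}).

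So there is no ``paper's own proof'' to compare your proposal against. Your sketch---gauge-fix via a DeTurck/harmonic-map condition, derive a linear drift-Laplacian equation for the difference, and run a Gaussian-weighted Carleman estimate to force faster-than-polynomial decay and hence vanishing---is the Kotschwar--Wang template, and it is entirely plausible that \cite{HKP} follows a strategy of this general shape adapted to closed \gtstr s. But that is a guess about a different paper; nothing in the present manuscript confirms or contradicts the details you outline. If you want to verify your proposal you need to consult \cite{HKP} directly.
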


The \emph{existence} of an AC shrinker end asymptotic to an arbitrary
closed~\gtwo-cone is not addressed by this result, but it does imply that any
AC shrinker end asymptotic to an~\Sp{2}-invariant closed cone must itself
be~\Sp{2}-invariant. 
In the next subsection we explain how one can use ODE methods
(more specifically, solving a certain irregular singular initial value
problem) to construct a (unique)
\Sp{2}-invariant AC shrinker end asymptotic to any~\Sp{2}-invariant closed cone.
Since the space of such cones is 1-dimensional,
parametrised by $\ell = \frac{c_2}{c_1}>0$, so is the space of~\Sp{2}-invariant shrinkers with AC ends.

Since the space of all local~\Sp{2}-invariant shrinkers is $2$-dimensional
(corresponding to flow lines in the 3-dimensional phase space consisting of triples $x$, $y$ and $\tau_2$), the condition
that a local shrinker has an AC end is therefore a codimension 1 condition.
In particular, in contrast to the expander case, the AC end condition is nongeneric among all local
\Sp{2}-invariant shrinkers. Furthermore,  a naive parameter count suggests that complete AC
shrinkers should form a $0$-dimensional subset of the $2$-dimensional space of local
shrinkers: the space of smoothly-closing solutions and the space of AC shrinker ends
both have codimension 1. 
For \sunitary{3}-invariant AC shrinkers we will give a similar heuristic in Section~\ref{SS:su3}.

 Numerical simulations, in fact, strongly suggest the following conjecture (that we have not yet been able to prove). 
 \begin{conjecture}
 \label{c:shrinker:unicity}
 The explicit AC shrinker of Example \ref{ex:explicit_shrinker} is the unique complete AC~\Sp{2}-invariant shrinker. 
 \end{conjecture}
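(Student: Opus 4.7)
The plan is to reduce the uniqueness question to a discrete intersection problem in the two-dimensional space of local \Sp{2}-invariant shrinker solutions. By Haskins--Khan--Payne (Theorem \ref{thm:HKP}), every complete gradient AC shrinker inherits the continuous symmetries of its asymptotic cone, so any complete AC \Sp{2}-invariant shrinker is itself \Sp{2}-invariant; being complete it must close smoothly on $\Lambda^2_-\Sph^4$. Thus by Theorem \ref{thm:Sp2:smooth:closure}, complete AC \Sp{2}-invariant shrinkers correspond, up to scale, precisely to those values of the scale-invariant parameter $q = \lambda b^2 \in (-\infty, 0)$ for which the forward evolution of $\spfam{\lambda}{b}$ is AC, and the conjecture amounts to showing that only $q = -9/4$ (the explicit shrinker of Example \ref{ex:explicit_shrinker}) has this property.

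I would then construct a ``dual'' 1-parameter family, namely the backward evolutions of AC shrinker ends, by solving the singular initial value problem sketched in \S\ref{subsec:end_sivp}. This should yield, for each cone parameter $\ell > 0$, a unique \Sp{2}-invariant local shrinker end asymptotic to the closed cone with that $\ell$ (uniqueness being provided already by Theorem \ref{thm:HKP}); by Proposition \ref{prop:reg} this family depends analytically on $\ell$. Complete AC \Sp{2}-invariant shrinkers therefore correspond exactly to intersection points of these two analytic 1-parameter curves inside the 2-dimensional phase space of local shrinker flow lines (for fixed $\lambda = -1$, say). A generic parameter count predicts that the intersection is discrete, and the explicit shrinker already provides one such point $(q,\ell) = (-9/4, 1/2)$.

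For uniqueness, the main tool I would try to exploit is the monotone quantity $M/g^3$ from Lemma \ref{lem:monotone}. On $\spfam{\lambda}{b}$ it equals $3/b^2$ at the singular orbit, while on any AC shrinker end the same computation as in the proof of Proposition \ref{prop:b_oo} shows its limit as $t \to \infty$ is $2\lambda/(2\ell^2 + 1)$. Integrating the exact evolution equation \eqref{eq:monotonic} between these endpoints against the refined asymptotic expansions from Proposition \ref{prop:reg}, and combining with the parallel identities $\tilde\tau_1' = \lambda x^2$ and $\tilde\tau_2' = \lambda y^2$ from \eqref{eq:tt'}, should produce integral constraints that pin down $(q,\ell)$. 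A more elegant alternative would be to develop a \emph{shrinker} analogue of the $g$-for-$g$ comparison principle of Proposition \ref{prop:comp_g_1}: if one can show that $\ell$ depends strictly monotonically on $q$ along the AC subfamily, uniqueness follows immediately.

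The principal obstacle---and the reason this stands as a conjecture rather than a theorem---is that AC end behaviour is \emph{unstable} for shrinkers (see Remark \ref{rmk:intro_stab}), so none of the open-set or continuity arguments that drove the corresponding expander results (Theorem \ref{mthm:uniform} and Theorem \ref{thm:asymptotic_cones}) are available here. Without such stability, there is no a priori control on the structure of the discrete intersection, and in particular the naive inequality from monotonicity of $M/g^3$ (namely $2\lambda/(2\ell^2+1) < 3/b^2$) is trivially satisfied when $\lambda < 0$ and so carries no information. The most promising refined route is to leverage the detailed global analysis of the \emph{shrinker limit system} \eqref{eq:heisenberg:limit} developed in \S\ref{sec:shrinker-tri}, which completely describes the transitions between the three end behaviours in Theorem \ref{mthm:both_tri}; combining this with the monotone quantity $M/g^3$ may yield a rigidity principle strong enough to exclude any additional AC values of $q$.
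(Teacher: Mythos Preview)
The statement is a \emph{conjecture}, and the paper does not prove it. Immediately before stating it, the authors write that ``numerical simulations, in fact, strongly suggest the following conjecture (that we have not yet been able to prove).'' There is thus no proof in the paper to compare against.

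Your proposal is not a proof either, and to your credit you say so explicitly: you correctly identify that the instability of AC shrinker ends (Remark~\ref{rmk:intro_stab}) blocks the continuity and comparison arguments that drove the expander classification, and that the inequality coming from monotonicity of $M/g^3$ is vacuous when $\lambda<0$. Your dimension-counting heuristic---smoothly-closing solutions and AC end solutions each form a 1-parameter (codimension-1) family in the 2-dimensional space of local shrinkers, so their intersection should be 0-dimensional---is exactly the heuristic the paper offers in \S\ref{subsec:rigid_vs_instab} as motivation for the conjecture. So your write-up is an accurate summary of the state of affairs, not a proof.

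Two small corrections to the details of your sketch. First, Proposition~\ref{prop:reg} gives the $C^0$ rate~$-2$ asymptotics of a given AC end; it does not by itself establish analytic dependence of the end family on $\ell$. That would instead come from the irregular singular initial value problem analysis outlined in \S\ref{subsec:end_sivp}. Second, the appeal to Theorem~\ref{thm:HKP} at the start is unnecessary: you are already restricting to \Sp{2}-invariant shrinkers by hypothesis, so there is no need to invoke symmetry inheritance to conclude the soliton is \Sp{2}-invariant.
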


\subsection{Constructions of \texorpdfSp{2}-invariant soliton ends: forward-incomplete and AC ends}
\label{subsec:end_sivp}
In the next two subsections 
we consider the problem of constructing solitons with various types of `prescribed end behaviour'.
This subsection deals with forward-incomplete and AC ends; the next one deals with forward-complete non-AC ends. 

In all cases, to construct ends with prescribed asymptotic behaviour we proceed by making a variable change that turns the problem into a
singular initial value problem (SIVP). Depending on what type of prescribed end behaviour 
we are seeking to construct the resulting singular initial value problem can be either regular or irregular. 
A common feature is that the value of the dilation constant tends not to play a key role when the end behaviour 
gives rise to a regular SIVP, but it does when the end behaviour results in an irregular problem. 

Given the
discussion in the previous subsection we would certainly like to show the existence of AC expanders and AC shrinkers asymptotic to
a given closed cone, and to understand from this viewpoint why AC ends of expanders and shrinkers behave differently. 
However, since the resulting singular initial value problem is an irregular one, it is technically easier to first
consider the construction of forward-incomplete solutions (because that leads to a regular singular initial value problem). 

In \cite[\S 6.5]{Haskins:Nordstrom:g2soliton1}, for any value of the dilation constant $\lambda$ we found \Sp{2}-invariant
solitons with finite extinction time $t_*$ as follows.
We change variables by setting $v := \sqrt{t_* - t}$, and make an ansatz of
the form
\begin{equation}
\label{eq:stable:singularity}
x = av\sqrt{1+v\bar x}, \quad y = b \sqrt{2\frac{1+v\bar y}{v}}, \quad
\tau_2 = b^2 \left(\frac{1+v\bar \tau_2 }{v^3} \right)
\end{equation}
for constants $a,b > 0$ and functions $\bar x, \bar y, \bar \tau_2$ of $v$.
The soliton ODEs \eqref{eq:ODE:tau2'} then become a regular singular initial
value problem for $\bar x, \bar y$ and $\bar \tau_2$ with singularity at $v=0$. 
It is explained in\mbox{\cite[Theorem~6.23]{Haskins:Nordstrom:g2soliton1}} that
for each pair of parameters $(a,b)$ there is a unique power series solution,
yielding a unique smooth solution, which is in fact real analytic.
(This is analogous to Theorem \ref{thm:Sp2:smooth:closure} for smoothly-closing
solutions; another similarity between the two results is
that both are independent of the value of the dilation constant $\lambda$.)
The flow lines of the resulting 2-parameter family of solutions thus fill up an
open region of the 3-dimensional phase space, and hence for any value of $\lambda$ this kind of
forward incompleteness is stable (\cf Remark \ref{rmk:incomplete_open}).
(In fact,~\cite[Theorem 6.23]{Haskins:Nordstrom:g2soliton1} shows also the existence of a stable class of forward-incomplete~\sunitary{3}-invariant solitons.)

Note that as $t \to t_*$, the ansatz implies that $g^3= xy^2 \to 2ab^2$ and
$\tilde \tau_1 \to -\frac{3a^2}{2}$ by \eqref{eq:tilde:tau1:tau2}.
The latter is consistent with Proposition \ref{prop:AC:end}: any forward-incomplete 
expander must certainly have $\tilde \tau_1<0$ throughout its lifetime.

\medskip
Next we consider the problem of finding an \Sp{2}-invariant soliton
end asymptotic to a closed cone specified by a solution $(c_1,c_2)$ to the
closed cone equation \eqref{eq:closed_cone}. 
Recall from Proposition \ref{prop:S:limit} that the quantity $S$ plays a
distinguished role for such weakly AC solutions, and, in particular, that if $\lambda \neq 0$ then $S$ converges to
$S^* := \frac{3}{\lambda}(c_2^2 - c_1^2)$ as $t \to \infty$. It is therefore
convenient to consider the soliton system as an ODE for $x,y$ and $S$ rather
than $x,y$ and $\tau_2$. Meanwhile, in view of Proposition~\ref{prop:reg}
it is natural to change variable to $u := t^{-2}$ and take as an ansatz
\[ x = c_1 t\,(1 + u X(u)), \quad y = c_2 t\,(1 + uY(u)) . \]
The soliton system \eqref{eq:ODE:tau2'} then reduces to the following system of ODEs 
\begin{equation}
\label{eq:irreg_ode}
\begin{aligned}
\frac{dX}{du} &= \frac{(-3c_1^2 - 2c_2^2)X + 2c_1^2 Y - 4S}{(12c_1c_2^2)\, u} + R_X \\
\frac{dY}{du} &= \frac{(-c_1^2+2c_2^2)X -4c_2^2Y + 2S}{(12c_1c_2^2)\, u} + R_Y \\
\frac{dS}{du} &= \frac{\lambda}{6}\frac{S - S^*}{u^2} + \frac{R_S}{u}
\end{aligned}
\end{equation}
where $R_X,R_Y$ and $R_S$ are real analytic functions of $u, X, Y$ and $S$.
The presence of the singular term 
\[\frac{\lambda}{6u^2}(S-S^*)
\] in the final equation makes this
an \emph{irregular} singular initial value problem whenever $\lambda \neq 0$
(in the steady case $\lambda = 0$ it becomes a regular singular initial value
problem, but AC steady ends are easier to analyse by exploiting
additional scale-invariance properties of the steady soliton ODEs as
in~\cite[\S 7.3]{Haskins:Nordstrom:g2soliton1}).

For any fixed closed cone $c_1, c_2$ and $\lambda \not=0$ one can prove with standard methods that there exists
a unique formal power series solution to~\eqref{eq:irreg_ode} by recursively solving for
the coefficients, the constant terms of which are given by~\eqref{eq:x:y:larget:exp}.
By a general result of Malgrange \cite[Th\'eor\`eme 7.1]{malgrange1974} there always exists some smooth solution to the ODE with that Taylor expansion at $u = 0$.
Hence for any $\lambda \not=0$ there is at least one $\lambda$-soliton end
asymptotic to any given closed cone. 
However, the question of uniqueness is more subtle.
As discussed previously, AC expander and shrinker ends behave very differently:
by the stability of AC expander ends and dimension counting it is clear that we cannot have uniqueness 
of AC expander ends for every closed cone; 
on the other hand, Theorem~\ref{thm:HKP} implies the uniqueness of a (gradient) shrinker end asymptotic to any closed cone.
The question is then how to understand the uniqueness or (the extent of) failure of uniqueness of non-steady AC ends 
from the viewpoint of the theory of irregular singular initial value problems. 

The dependence on the sign of $\lambda$ is evident already when considering
the prototypical scalar singular IVP 
\[
\frac{dz}{du} = \frac{\lambda z}{u^2}:
\]
for $\lambda < 0$ the unique solution which remains bounded at $u=0$ is
$z \equiv 0$, while for $\lambda > 0$ there are solutions proportional to
$z(u) = \exp(-\lambda u^{-1})$ (demonstrating also that one should not expect
solutions to be real analytic around $u=0$).
Analogously, for the AC end soliton ODEs~\eqref{eq:irreg_ode} we can prove that
\begin{itemize}[left=0em]
\item
for $\lambda < 0$ there is a unique smooth solution for each closed cone $(c_1, c_2)$
\item for $\lambda > 0$ there is
a 1-parameter family of solutions for each closed cone $(c_1, c_2)$, all with the same
power series expansion at $u = 0$; the difference between any two solutions with the same asymptotic cone has order a polynomial times
$\exp(-\frac{\lambda}{6}u^{-1}) = \exp(-\frac{\lambda}{6}t^2)$.
\end{itemize}
We intend to present the details of this analysis elsewhere, as it is not crucial to the main results of this paper
and it is a nontrivial investment to develop the requisite theory 
of irregular singular initial value problems for nonlinear ODE systems.
(Some related theory is developed by Stein--Turner \cite[\S3.6]{stein24}, where it is used as a tool
to understand the asymptotic behaviour of~\gtwo-instantons on ALC~\gtwo-holonomy spaces. 
Their method in this special case was inspired by
discussions with the current authors about our approach to the theory in a more general context.) 
The same theory can also be applied to prove the existence of other types 
of non-AC forward complete non-steady ends as described in the next subsection. 
In the present conical end setting the qualitative conclusions are in any case as described in the previous subsection:
after fixing a scale, \eg by fixing $\lambda= \pm 1$, 
there is a unique
\Sp{2}-invariant AC shrinker end asymptotic to each closed~\Sp{2}-invariant cone, while instead
there is a 1-parameter family of AC expander ends sharing the same asymptotic cone. 
Hence the flow lines of \Sp{2}-invariant AC shrinker ends form a codimension $1$ subset
of the 3-dimensional phase space, while \Sp{2}-invariant AC expanders form an open subset of it. 

\subsection{Forward-complete non-AC soliton ends}
\label{ss:fc_non_ac}
In this subsection we describe how to construct certain forward-complete but non-AC expander and shrinker ends.  

We start with the expander case. 
For \Sp{2}-invariant expanders we have now identified two disjoint open subsets
of the phase space: those leading to AC ends and those leading to forward-incomplete ends with asymptotics given by~\eqref{eq:stable:singularity}.
This raises the question of what sort of end behaviour might occur at the boundary between these two open regions. 
Since this end behaviour can occur as a limit of forward-complete AC ends we would expect to see 
some kind of forward-complete but non-AC end behaviour.

So suppose that we have a forward-complete expander end that is not AC.
Then by Proposition~\ref{prop:AC:end}, necessarily the warping $\frac{y}{x} \to \infty$ monotonically.
Thus, far along the end, $y$ is much bigger than $x$, and so one expects
that the expander ODEs are well approximated by the $\epsilon = 0$ limit of the
rescaled system \eqref{eq:Sp2:ODE:eps}, \ie
\begin{equation}
\label{eq:Sp2:ODE:limit}
(\hat{x}^2)' = 2\hat{x} - 2\frac{\hat{x}^2\hat\tau_2}{\hat y^2} ,
\qquad  (\hat y^2)' =  \hat\tau_2 , \qquad
\hat{\tau}_2' = \frac{2\wh{R}_1 \wh{S}}{3\hat{x}\hat y^2},
\end{equation}
where
\[\wh{R}_{1}:=\lambda \hat{x}\hat y^2-3\hat\tau_2,  \quad \ \wh S:=\hat y^2-\tfrac{3}{2}\hat{x}\hat\tau_2 .
\]
This rescaled  limit system has the following explicit forward-complete expander end solutions (defined for $t>0$) where $\hat x \to 0$, $\frac{\hat{y}}{\hat{x}} \to \infty$ as $t \to \infty$:
\begin{equation}
\label{eq:explicit_limit_sol}
\hat x = \frac{3}{\lambda t},
\quad \hat y = A \sqrt{t}\exp \left(\frac{\lambda t^2}{12}\right), \quad
\hat \tau_2 = A^2 \left(\tfrac{\lambda}{3}t^2 +1 \right) \exp\left(\frac{\lambda}{6} t^2 \right)
\end{equation} 
for $A>0$ constant. (One way to arrive at these explicit solutions is that they are exactly the solutions of~\eqref{eq:Sp2:ODE:limit}
on which the conserved quantity $\frac{\wh M}{\hat x \hat y^2}$ described in Remark \ref{rmk:ell_sharp} vanishes.)

Substituting these functions into the original unscaled system, one finds that
they solve the unscaled system up to an error of order $\exp(-\frac{\lambda}{6}t^2)$, so
it is reasonable to expect to be able to correct to a genuine solution by
adding some bounded correction terms.
Taking such an ansatz  (and changing variables by $u = t^{-2}$
as before) yields another irregular singular initial value problem,
and for each value of the parameter $A>0$ we find a unique~\Sp{2}-invariant forward-complete expander with
end behaviour modelled on \eqref{eq:explicit_limit_sol}. This $1$-parameter family of forward-complete expander ends all has
quadratic-exponential volume growth, with $x \to 0$, and $\tilde \tau_1 \nearrow 0$ as $t \to \infty$. 
(In view of Proposition \ref{prop:AC:end}, one might have expected that points at the boundary of the space of AC expander ends
should have $\tilde \tau_1 \to 0$ as $t \to \infty$.)

\medskip
We now observe that 
there is another well-behaved limit of rescalings of the~\Sp{2}-invariant soliton ODE system; this second rescaling limit ODE system turns out to be crucial in our later analysis of~\Sp{2}-invariant shrinker ends. If we rewrite the system in terms of coordinates 
\begin{equation}
\label{eq:heisenberg:scaling}
\check x = \epsilon^2 x, \quad \check y = \epsilon y \quad \text{and} \quad \check\tau_2 = \epsilon^2 \tau_2,
\end{equation}
then in the limit $\epsilon \to 0$ we obtain the ODE system
\begin{equation}
\label{eq:heisenberg_full}
\check x' = -\frac{\check x \check \tau_2}{\check y^2} - \frac{\check x^2}{2\check y^2},
\quad (\check y^2)' = \check x + \check \tau_2,
\quad \check \tau_2' = - \frac{4\lambda \check y^2(\check x + \tfrac{3}{2} \check \tau_2)}{3\check x}.
\end{equation}

This ODE system is equivalent to the system considered by Fowdar \cite[\S 5.2]{Fowdar:S1:invariant:LF} for Laplacian solitons on $\R \times \iwa^6$ where 
$\iwa^6$ is the Iwasawa manifold, \ie a certain compact $6$-dimensional nilmanifold obtained as a quotient of the complex 3-dimensional Heisenberg group
by a lattice.  (More generally, $M^6$ can be certain $T^2$-bundles over \hk{} $4$-manifolds $B$;  for the Iwasawa manifold one takes~$B=T^4$. 
We also mention in passing that Ball~\cite{Ball} found solutions to~\eqref{eq:heisenberg_full} with~$\lambda=0$.) For~$\lambda < 0$, Fowdar found an explicit solution, that in our coordinates takes the form
\begin{equation}
\label{eq:fowdar}
\check x = 4e^{\mu t}, \quad \check y^2 = \mu^{-1} e^{\mu t},
\quad \check \tau_2 = -3 e^{\mu t}
\end{equation}
for $\mu = \sqrt{\frac{-\lambda}{18}}$. This explicit solution
to~\eqref{eq:heisenberg_full} can also be used as an approximate end solution to the~\Sp{2}-invariant shrinker ODE system
and again this approximate end solution can be corrected to a forward-complete~\Sp{2}-invariant
shrinker end solution with $\frac{y}{x} \to 0$ as $t \to \infty$:
see Proposition~\ref{prop:fowdar_end} for a proof of the existence of this type of end
that does not rely on the theory of irregular singular value problems.

\begin{remark}
\label{rmk:scaling_reduction}
An important observation is that the rescaling limit ODE systems~\eqref{eq:Sp2:ODE:limit} and \eqref{eq:heisenberg_full}
each have an additional symmetry arising from the rescalings being used:
they are invariant under
\[ \hat x \mapsto \hat x, \; \hat y \mapsto \epsilon \hat y, \;
\check \tau_2 \mapsto \epsilon^2 \hat \tau_2 \quad \textrm{and} \quad
\check x \mapsto \epsilon^2 \check x, \; \check y \mapsto \epsilon \check y, 
\; \check \tau_2 \mapsto \epsilon^2 \check \tau_2 \]
respectively. (The effect of the latter on \eqref{eq:fowdar} is the same as
translation in $t$).
Both limit systems can therefore be reduced to an ODE system in two variables given by ratios that are invariant.

In particular, \eqref{eq:heisenberg_full} yields a self-contained ODE system for the invariant ratios $\alpha := \frac{\check \tau_2}{\check x}$, $\beta := \frac{\check y^2}{\check x}$. The analysis of the resulting system \eqref{eq:heisenberg:limit} will be important for understanding the forward-evolution of shrinkers.
Given a solution $(\alpha, \beta)$ of~\eqref{eq:heisenberg:limit} defined on
some $t$-interval, one can then recover $\check x$, $\check y$
and~$\check \tau_2$ (up to the symmetry action) on the same interval by one
further integration, \eg~$(\log{\check x})' = - \frac{1+\alpha}{\beta}$.
The explicit solution \eqref{eq:fowdar} arises naturally from a fixed point
\eqref{eq:heisenberg_fixed} of the reduced system~\eqref{eq:heisenberg:limit}.
\end{remark}

\subsection{Trichotomy for~\texorpdfSp{2}-invariant expander ends}
\label{ss:trichotomy}

To summarise the claims made in the previous two subsections concerning \Sp{2}-invariant
expander ends, one can construct the following: a 2-parameter family of AC
ends; a 1-parameter family of forward-complete ends with quadratic-exponential volume
growth; and a 2-parameter family of forward-incomplete solutions. 

We now prove the expander case of Theorem \ref{mthm:both_tri}: \emph{only} the three types of
end behaviour just described can occur for \Sp{2}-invariant expanders.

\begin{theorem}
\label{thm:tri:expanders}
The forward evolution of an~\Sp{2}-invariant $\lambda$-expander starting at any  regular initial data satisfies exactly one of the following:
\begin{enumerate}[left=0em]
\item
The solution is forward complete with $\frac{y}{x} \to \ell \in (0,\infty)$ as $t \to \infty$. 
Then the solution is weakly asymptotic to the unique closed~\Sp{2}-invariant~\gtwo-cone 
with $c_2/c_1=\ell$, \ie $\frac{x}{t} \to c_1$ and $\frac{y}{t} \to c_2$. 
In this case $M=3x+\tilde \tau_1$ remains positive for all $t$ and $\frac{M}{g^3} \to \frac{2\lambda}{1+2\ell^2}$ as $t \to \infty$ and $\tilde \tau_1$ and $\tilde \tau_2$ are both eventually positive. 
\item
The solution is forward complete with $\frac{y}{x} \to \infty$ monotonically as $t \to \infty$.
Then $\tilde \tau_1$ remains negative while $M = 3x + \tilde \tau_1$ remains
positive for all $t$, and both tend to $0$ as $t \to \infty$. 
Moreover,  as $t \to \infty$,  $tx \to \tfrac{3}{\lambda}$, while (up to
translation in $t$) $g^3e^{-\frac{\lambda}{6}t^2}$ has a finite limit;
in particular, $y$ grows faster than $e^{\frac{\lambda}{12}t^2}$.
\item
The solution has a finite extinction time $t_*$. Then $\tfrac{y}{x} \to \infty$
monotonically, and %
$\tilde \tau_1 \to -\tfrac{3a^2}{2}$ as $t \to t_*$ for some nonzero $a$.
Moreover, 
$x \to 0$, $(x^2)' \to -a^2$ and $g$ has a finite limit as $t \to t_*$.
In particular, $M \to - \frac{3a^2}{2} <0$ as $t \to t_*$.
\end{enumerate}
\end{theorem}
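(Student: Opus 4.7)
The plan is to discriminate the three cases using the adjusted torsion $\tilde\tau_1$ from \S\ref{ss:tilde_tau}, the auxiliary quantity $M = 3x + \tilde\tau_1$, and the strictly decreasing function $\tfrac{M}{g^3}$ from Lemma~\ref{lem:monotone}. Case (i) is essentially a corollary of Proposition~\ref{prop:AC:end}: if $\tilde\tau_1$ ever becomes positive then the solution is forward complete with $\tfrac{y}{x}\to \ell \in (0,\infty)$ and $\tilde\tau_2$ eventually positive; the cone limits $\tfrac{x}{t}\to c_1,\ \tfrac{y}{t}\to c_2$ follow from Lemma~\ref{lem:yx:bounded}, and the limit $\tfrac{M}{g^3}\to \tfrac{2\lambda}{1+2\ell^2}$ is computed from Lemma~\ref{lem:osc} applied with $\tfrac{y}{x}\to \ell$ (noting $x\to\infty$). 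Because $\tfrac{M}{g^3}$ is strictly decreasing toward this positive limit, $M>0$ for all $t$.

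Assume henceforth we are in the non-AC regime $\tilde\tau_1\le 0$ throughout. By Corollary~\ref{cor:y:x:inc} the warping $\tfrac{y}{x}$ is strictly increasing to $\infty$. Next I would show that $x$ is monotonically decreasing with $x\to 0$: a critical point of $x$ (necessarily a local minimum by Corollary~\ref{cor:x_monotone}) would make $x$ eventually increasing so that $\tilde\tau_1' = \lambda x^2$ would eventually push $\tilde\tau_1$ above zero; likewise, if $x$ were bounded below, Corollary~\ref{cor:lifetime} would give infinite lifetime and then $\tilde\tau_1\to+\infty$, a contradiction. Hence $\tilde\tau_1$ has a limit $T_1\in(-\infty,0]$ as $t$ approaches the end of the lifetime. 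Rewriting the ODE for $x^2$ using $3\tau_2 = \tilde\tau_2 - \bigl(\tfrac{y}{x}\bigr)^2\tilde\tau_1$ and $\tilde\tau_1+2\tilde\tau_2 = 2\lambda g^3$ gives
\[
(x^2)' = \tfrac{2M}{3} - \tfrac{x^3}{y^2} - \tfrac{2x^2\tilde\tau_2}{3y^2},
\]
and the last two terms vanish at the end (using $x\to 0$, $\tfrac{x}{y}\to 0$ from Proposition~\ref{prop:x:over:y:bded}, and $\tilde\tau_2 = \lambda g^3 - \tfrac12\tilde\tau_1$), so $(x^2)'\to \tfrac{2T_1}{3}$.

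If $T_1<0$ then eventually $(x^2)' < \tfrac{T_1}{2} < 0$ and $x^2$ reaches $0$ in finite time: case (iii) with $a := \sqrt{-2T_1/3}$, giving $\tilde\tau_1\to -\tfrac{3a^2}{2}$, $(x^2)'\to -a^2$, and $x^2\sim a^2(t_*-t)$; the finite limit of $g$ then follows from the strict decrease of $\tfrac{M}{g^3}$ (if $g\to\infty$ then $\tfrac{M}{g^3}\to 0$ from above, forcing $M>0$ eventually, contrary to $M\to T_1<0$). Conversely, infinite lifetime forces $T_1=0$, placing us in case (ii): were $T_1<0$ compatible with infinite lifetime, the previous paragraph would yield finite lifetime. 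In case (ii) the symmetric argument using $g\to\infty$ (from $g'\ge\tfrac12$) and $M\to 0$ shows that $\tfrac{M}{g^3}$ decreases to $0$ from above, and hence $M>0$ throughout.

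The refined asymptotics in case (ii) are then obtained by a bootstrap: $\tilde\tau_1\sim -3x$ (from $M\to 0$) and $\tilde\tau_2\sim\lambda g^3$ combine with the error analysis above to refine the ODE to $(x^2)' = \tfrac{2M}{3} - \tfrac{2\lambda x^3}{3} + o(x^3)$; together with a sharp bound $M = o(x^3)$ obtained from the integral representation $\tfrac{M}{g^3} = \int_t^\infty \tfrac{3x}{y^4}\,ds$ (whose integrand has super-polynomial decay), this yields $x'\sim -\tfrac{\lambda x^2}{3}$, equivalently $(1/x)'\to \lambda/3$, which integrates to $tx\to 3/\lambda$. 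The convergence of $g^3 e^{-\lambda t^2/6}$ then follows by inserting $y^2 = g^3/x\sim\tfrac{\lambda t}{3}g^3$ into the closure equation $(g^3)' = \tfrac{x^2}{2}+y^2$, giving $(\log g^3)'\sim\tfrac{\lambda t}{3}$. The hard part will be establishing the sharp decay $M = o(x^3)$ and, relatedly, ruling out the degenerate borderline possibility of finite lifetime with $T_1 = 0$ (which would force $x = o(\sqrt{t_*-t})$ and an inconsistent leading-order balance in the equation for $(x^2)''$); in both instances, an alternative route is to appeal to the irregular SIVP framework of \S\ref{ss:fc_non_ac}, which identifies all forward-complete non-AC expander ends with the $1$-parameter family of exponential corrections to the explicit solutions~\eqref{eq:explicit_limit_sol} of the rescaling-limit system~\eqref{eq:Sp2:ODE:limit}.
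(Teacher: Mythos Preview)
Your outline is close to the paper's argument and uses the same toolkit ($\tilde\tau_1$, $M$, the monotone quantity $M/g^3$, Proposition~\ref{prop:AC:end}, Corollary~\ref{cor:y:x:inc}, and the rewriting of $(x^2)'$), but the paper discriminates on the sign of $M$ rather than on $T_1 := \lim \tilde\tau_1$, and this difference is exactly what closes the two gaps you flag.

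\medskip
\emph{Ruling out finite lifetime with $T_1=0$.} You already have everything needed: if $T_1=0$ then $M\to 0$, and since $M/g^3$ is strictly decreasing with limit in $[-\infty,\infty)$, that limit must be $0$ (a positive limit contradicts $M\to0$ with $g$ bounded away from $0$; a negative limit forces $M$ to stay below a fixed negative value, contradicting $M\to0$). Hence $M>0$ throughout. The paper (Proposition~\ref{prop:fc:yx:unbounded}) then reads forward completeness straight off equation~\eqref{eq:lin_decay},
\[
\frac{d}{dt}\Bigl(-x^{-1}+\tfrac{\lambda}{3}t\Bigr)=\Bigl(\tfrac{1}{6xy^2}+\tfrac{1}{3x^3}\Bigr)M-\tfrac{1}{y^2},
\]
because $M>0$ makes the first term nonnegative while $y$ is bounded away from $0$ (Proposition~\ref{prop:x:over:y:bded}); thus $x$ stays bounded away from $0$ on any finite interval, and Lemma~\ref{lem:extinction_x} rules out finite extinction. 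No second-derivative analysis or SIVP machinery is needed.

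\medskip
\emph{The decay of $M$ and the asymptotics in (ii).} Your integral representation gives super-exponential decay of $M/g^3$, but since $g^3$ itself grows super-exponentially this says nothing about $M$ directly, and the claim $M=o(x^3)$ is circular without a prior lower bound on $x$. The paper instead bounds $M$ itself: from \eqref{eq:M'} and $M>0$ one has $M' > -3x^2/y^2$, and integrating from $t$ to $\infty$ (using $M\to0$) gives
\[
M(t) < \int_t^\infty \frac{3x^2}{y^2}\,ds,
\]
which decays faster than any exponential since $x$ is bounded and $y^2$ grows super-exponentially. Now the bootstrap in~\eqref{eq:lin_decay} is two-step: the term $-1/y^2$ is integrable \emph{and has a sign}, so integrating it alone already bounds $-x^{-1}+\tfrac{\lambda}{3}t$ below, i.e.\ $tx$ is bounded below; then $1/x^3$ is at worst polynomial while $M$ decays exponentially, so $M/(3x^3)$ is integrable too. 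This yields $x^{-1}-\tfrac{\lambda}{3}t\to$ const, and the remaining asymptotics follow as you indicate.
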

\noindent
Recall from~\eqref{eq:monotonic} that $M/g^3= (3x+ \tilde \tau_1)/g^3$ is decreasing in $t$ on any~\Sp{2}-invariant soliton.

\begin{remark}
\label{rmk:trichotomy:relevance}
In the context of smoothly-closing~\Sp{2}-invariant expanders this trichotomy 
turns out not to come into play because all such expanders are AC. 
However, in the final section we shall indicate why we expect closely
related issues to become pertinent when considering smoothly-closing
\sunitary{3}-invariant expanders on~$\Lambda^2_- \CP^2$. 
\end{remark}

There is an analogous trichotomy for solutions to the rescaling limit ODE system
\eqref{eq:Sp2:ODE:limit}. There the proof is easier due to the
existence of the conserved quantity $\frac{\wh M}{\hat x \hat y^2}$; the
three classes of solutions are then distinguished by the sign of $\wh M$.
Similarly we see that the (eventual) sign of the decreasing quantity $M/g^3$ plays an important role in distinguishing the end
behaviours of~\Sp{2}-invariant expanders. 

\begin{remark}
\label{rmk:incomplete_open}
In particular, Theorem \ref{thm:tri:expanders} entails that forward
incompleteness for expanders is equivalent to $M$ ever becoming negative, which is clearly an open
condition.
\end{remark}

Proposition \ref{prop:AC:end} already characterised  the AC expander ends appearing in (i).
Therefore the proof of Theorem \ref{thm:tri:expanders} is completed by
the next two propositions, which characterise the forward-incomplete
and forward-complete non-AC end solutions respectively.

\begin{prop}
\label{prop:incomplete:asymptotics}
If $M = 3x + \tilde \tau_1$ ever becomes negative on a local expander then
it has a finite extinction time $t_*$. As $t \to t_*$, 
$\tilde \tau_1$ has a strictly negative limit $-\frac{3a^2}{2}$,
$x \to 0$ and $(x^2)' \to -a^2$ for some $a >0$, 
\ie $x = a\sqrt{t_*-t} + o(\sqrt{t_*-t})$.
Furthermore, $g$ has a finite limit, so $y$ is of order $(t_*-t)^{-1/4}$.
\end{prop}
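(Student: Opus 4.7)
The plan is to combine the monotonicity of $M/g^3$ from Lemma \ref{lem:monotone} with the critical-point analysis for $x$ and the algebraic identity \eqref{eq:tilde:tau1:tau2} to successively pin down the asymptotic behaviour as the lifetime ends.

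First, I would establish that once $M(t_0)<0$, this negativity propagates and in fact forces $x\to 0$. Indeed, Lemma \ref{lem:monotone} gives $M/g^3 \leq M(t_0)/g(t_0)^3 =: -c < 0$ for all $t > t_0$, so using $g' > 0$ we get $M \leq -c\,g(t_0)^3 < 0$ thereafter. By Corollary \ref{cor:x_monotone}, any critical point of $x$ past $t_0$ has $x'' < 0$ and is hence a local maximum, so $x$ is eventually monotone; it cannot be eventually increasing, because the identity $\tilde\tau_1 = M - 3x$ combined with $M<0$ would then force $\tilde\tau_1 \to -\infty$, contradicting the fact that $\tilde\tau_1$ is increasing on expanders (Lemma \ref{lem:tau:tilde:positive}). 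Thus $x$ eventually decreases, and Proposition \ref{prop:x_limit} then forces $x\to 0$ as $t\to t_*$, where $t_* \in (t_0,\infty]$ denotes the end of the lifetime.

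Next, I would control $\tilde\tau_1$, $M$ and $g$. The function $\tilde\tau_1$ is increasing and bounded above by $M \leq -c\,g(t_0)^3 < 0$, so it converges to a limit which I write as $\tilde\tau_1^* = -\tfrac{3a^2}{2}$ with $a > 0$; since $x\to 0$, this also means $M \to \tilde\tau_1^*$. Similarly $g$ is increasing and hence converges to some $g_* \in (0,\infty]$; the option $g_* = \infty$ is incompatible with the finite limit of $M$, because $M \leq -c\,g^3$ would then force $M \to -\infty$. So $g_*$ is finite and positive. From $g^3 = xy^2$ with $x \to 0$ we conclude $y^2 \to \infty$. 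If the lifetime were infinite, then the inequality
\[
\int_{t_0}^{t_*} g'\,dt \; \geq \; \frac{1}{6g_*^2}\int_{t_0}^{t_*} y^2\,dt
\]
would have a finite left-hand side (since $g\to g_*$) but an infinite right-hand side (since $y^2\to\infty$ over an infinite interval), which is absurd. Hence $t_* < \infty$.

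Finally, I would convert these convergences into explicit asymptotics. Dividing the identity \eqref{eq:tilde:tau1:tau2} through by $y^2$ and using $x^2 g^3/y^2 = x^3 \to 0$ together with the known limits of $\tilde\tau_1$, $x$ and $g$, gives $\lim_{t \to t_*} x^2\tau_2/y^2 = a^2/2$. Substituting this into $(x^2)' = 2x - x^3/y^2 - 2x^2\tau_2/y^2$ from \eqref{eq:ODE:tau2'} and using $x \to 0$ yields $(x^2)' \to -a^2$. Integration then gives $x = a\sqrt{t_*-t} + o(\sqrt{t_*-t})$, and $y^2 = g^3/x$ combined with the finite positive limit of $g$ yields $y = O((t_*-t)^{-1/4})$. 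The main obstacle I anticipate is the interlocked argument of the second paragraph: establishing that $g$ converges to a finite positive limit and that the lifetime is finite requires juggling several monotonicity properties at once and exploiting how the bound $M \leq -c\,g^3$ from the first paragraph interacts with the convergence of $\tilde\tau_1$. Once these qualitative facts are in hand, the final asymptotic expansion is an essentially mechanical consequence of the identity \eqref{eq:tilde:tau1:tau2} together with the ODE formula for $(x^2)'$.
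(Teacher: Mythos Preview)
Your proof is correct and complete. The route differs from the paper's in an interesting way.

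The paper rewrites the ODE for $x$ in the form $x' = -\tfrac{\lambda}{3}x^2 - \tfrac{x^2}{y^2} + \tfrac{M}{3g^3}(g^3)'$ and uses the bound $M/g^3 < -C$ together with $g' \geq \tfrac12$ to get $x' < -\tfrac{C}{2}g^2$ directly. This single estimate immediately yields both that $x$ is eventually decreasing and that the lifetime is finite. The paper then derives $(x^2)' \to -a^2$ from the same rewritten ODE, and only at the very end bounds $g$ by estimating $(\log g)' < C/\sqrt{t_*-t}$.

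You instead proceed more structurally: you use Corollary~\ref{cor:x_monotone} and Proposition~\ref{prop:x_limit} to force $x\to 0$ (ruling out $x\to\infty$ via the contradiction with $\tilde\tau_1$ increasing), then use the bound $M\le -c\,g^3$ together with the finiteness of $\lim M$ to bound $g$ \emph{before} establishing finite lifetime, and deduce finite lifetime from the divergence of $\int y^2\,dt$. Your final computation of $(x^2)'$ goes through the algebraic identity \eqref{eq:tilde:tau1:tau2} rather than the rewritten ODE. The trade-off: the paper's approach is shorter and more self-contained at this point, while yours avoids deriving the auxiliary formula for $x'$ in terms of $M$ and instead leverages already-proved lemmas. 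One small presentational point: when you rule out ``$x$ eventually increasing'', the step $\tilde\tau_1\to -\infty$ tacitly uses Proposition~\ref{prop:x_limit} (to get $x\to\infty$); you cite it a sentence later, but it would read more cleanly if invoked at that moment.
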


\begin{proof}
Pick any $C>0$ such that $\frac{M}{g^3} < -C$ holds at some instant. Then by
\eqref{eq:monotonic} this inequality persists. 
From
$x = \frac{1}{3} ( -\tilde \tau_1 + \frac{M}{g^3} g^3)$ and~\eqref{eq:monotonic} 
we can rewrite the ODE for $x$ in the form
\begin{equation}
\label{eq:x'M}
x' = - \frac{1}{3} \tilde \tau_1' + \frac{M}{3g^3}(g^3)' + \left(\frac{M}{3g^3}\right)' g^3 = - \frac{1}{3} \lambda x^2 -  \frac{x^2}{y^2}  + \frac{M}{3g^3}(g^3)'.
\end{equation}
Then using  $\frac{M}{g^3} < -C$, together with $g' \geq \frac12$ and $\lambda>0$, eventually the derivative of $x$ is bounded by
\[
x' < \frac{M}{g^3}g' g^2 < -\frac{C}{2}g^2.
\]
Hence $x$ is eventually decreasing and so by Proposition~\ref{prop:x_limit}, $x \to 0$ as we approach the end of its lifetime. 
Moreover,  since $g$ is increasing, $x$ cannot remain positive for infinite time, so the lifetime must be finite, say $t_*$. 
Hence by Lemma~\ref{lem:extinction_x} also $\frac{x}{y} \to 0$ as $t \to t_*$. 
Also since $\tilde \tau_1 < M<-Cg^3$
the limit of the increasing function $\tilde \tau_1$ as $t \to t_*$ is negative,
and we can call it $-\frac{3a^2}{2}$ for some $a > 0$.
Since $x \to 0$ we also have $M \to - \frac{3a^2}{2}$ as $t \to t_*$.

From \eqref{eq:x'M}, using $(g^3)' = \frac{1}{2}x^2+y^2$ we then get
\begin{equation}
\label{eq:x^2:dot}
\frac{d}{dt}x^2 =
-\frac{2\lambda x^3}{3} + \left(\frac{x^2}{3y^2} + \frac{2}{3}\right)M - \frac{2x^3}{y^2}  \to \frac{2}{3} M = - a^2 
\end{equation}
as $t \to t_*$ because $x , \frac{x}{y} \to 0$ and $M \to - \frac{3}{2}a^2$.
Hence  $x = a\sqrt{t_*-t} + o(\sqrt{t_*-t})$.
Therefore we also have the following bound for the derivative of $\log{g}$
\[ (\log g)' =  \frac{x^2 + 2y^2}{6xy^2} < \frac{C}{\sqrt{t_*-t}}, \]
which implies that the increasing function $g$ is bounded and therefore has a finite limit as $t \to t_*$. 
\end{proof}

\begin{remark}
\label{rmk:FT:asymptotics}
Under the hypotheses that $x \to 0$, $\frac{x}{y} \to 0$ and
${M \to - \frac{3}{2}a^2<0}$ as $t$ approaches a finite extinction time $t_*$, 
the final paragraph of the previous proof gives the same finer asymptotics
as~$t \to t_*$ irrespective of the sign of the dilation constant $\lambda$. 
\end{remark}

\begin{prop}
\label{prop:fc:yx:unbounded}
Any expander for which $M$ remains positive throughout its lifetime is forward complete.
If the warping $\frac{y}{x}$ is  unbounded above
as $t \to \infty$, then $M \to 0$ and $\tilde \tau_1 \to 0$ and 
(up to translation in $t$) $\frac{1}{x} - \frac{\lambda}{3}t \to 0$ and
$e^{-\frac{\lambda}{6}t^2}xy^2$ has a finite limit as $t \to \infty$.
\end{prop}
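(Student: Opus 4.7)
The proof naturally splits into three stages. Forward completeness is immediate: the contrapositive of Proposition~\ref{prop:incomplete:asymptotics} says that $M>0$ throughout rules out finite lifetime (which would instead force $M\to -\tfrac{3a^2}{2}<0$).

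For the remaining claims, assume $\frac{y}{x}\to\infty$ as $t\to\infty$. By Proposition~\ref{prop:AC:end} the non-AC behaviour forces $\tilde\tau_1\le 0$ for all $t$, so the increasing function $\tilde\tau_1$ (recall \eqref{eq:tt'}) has a nonpositive limit $\tilde\tau_1^\infty$. Next I would show $M\to 0$ and $\tilde\tau_1\to 0$ by the following chain. Since $g'\ge\tfrac12$ we have $g\to\infty$, and the decreasing positive quantity $M/g^3$ has a nonnegative limit $L$. If $L>0$, then $3x\ge M\ge \tfrac{L}{2}g^3$ eventually (using $\tilde\tau_1\le 0$) forces $y^2=g^3/x\le 6/L$, which combined with $\frac{y}{x}\to\infty$ forces $x\to 0$, hence $g^3=xy^2\to 0$, contradicting $g\to\infty$; so $L=0$, and then $M/g^3=\tfrac{3}{y^2}+\tfrac{\tilde\tau_1}{g^3}\to 0$ with $\tilde\tau_1$ bounded forces $y\to\infty$. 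If $\tilde\tau_1^\infty<0$ then $M>0$ forces $x$ bounded below, hence $\tilde\tau_1'=\lambda x^2$ bounded below, giving $\tilde\tau_1\to+\infty$, a contradiction; so $\tilde\tau_1\to 0$. Integrating $\tilde\tau_1'=\lambda x^2$ then yields $x\in L^2(0,\infty)$, which together with the eventual monotonicity of $x$ (Corollary~\ref{cor:x_monotone}) gives $x\to 0$, and hence $M\to 0$.

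The refined asymptotics require a quantitative estimate on $S$. Differentiating $M=3x+\tilde\tau_1$ yields $(1/x)'=\tfrac{\lambda}{3}-M'/(3x^2)$, so $P:=1/x-\tfrac{\lambda}{3}t$ satisfies
\[ 3x^2 P' + 1 + \lambda x^2 = -\frac{x^2 + 4S}{2y^2}, \]
by \eqref{eq:xydot:S}. The right-hand side goes to $0$ with $x$, but for $P$ itself to have a finite limit one needs $P'$ to be integrable, and balancing the constant terms shows this requires the sharper asymptotic $1+\tfrac{2S}{y^2}+\lambda x^2 = o(x^2)$ with an integrable error. Analysing \eqref{eq:Sdot} for $y/x\to\infty$ gives $\alpha(y/x)\sim y^2/(3x^2)$ and $\beta\sim -1/x^2$, so the equilibrium of the $S$-equation sits precisely at $S\sim -y^2(1+\lambda x^2)/2$; a Gr\"onwall-type argument on the deviation from this equilibrium, combined with the super-polynomial growth of $y$ (obtained en route from $(\log y)'>1/(3x)$ when $S<0$ via~\eqref{eq:xydot:S}), would make $P'$ integrable, so that $P$ has a finite limit that can be normalised to~$0$ by translation in~$t$. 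The claim on $e^{-\lambda t^2/6}g^3$ then follows by integrating $(\log g^3)'-\tfrac{\lambda t}{3}=P+\tfrac{x}{2y^2}$, since both terms are integrable (the second thanks to the exp-squared growth of $y^2$).

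The main obstacle is the quantitative control $1+2S/y^2+\lambda x^2 = O(x^{2+\epsilon})$ needed to make $P'$ genuinely integrable; proving this directly from the ODE for $S$ is delicate because the coefficients $\alpha, \beta$ blow up. A conceptually cleaner alternative, modelled on the shrinker analysis of \S\ref{ss:shrinker:full} with the limit system \eqref{eq:Sp2:ODE:limit} and its explicit solution \eqref{eq:explicit_limit_sol}, would be to rescale the soliton and show its rescaled form converges to the explicit model $\hat x=3/(\lambda t)$, $\hat y^2\propto t\,e^{\lambda t^2/6}$ on $[t_0,\infty)$; the refined asymptotics would then transfer directly from the model to the soliton.
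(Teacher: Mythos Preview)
Your argument for forward completeness has a logical gap: Proposition~\ref{prop:incomplete:asymptotics} says ``$M$ becomes negative $\Rightarrow$ finite lifetime,'' whose contrapositive is ``infinite lifetime $\Rightarrow$ $M$ never negative,'' which is the wrong direction. You cannot invoke $M\to -\tfrac{3a^2}{2}$ without first assuming $M$ becomes negative, since that is a hypothesis (not a conclusion) of Proposition~\ref{prop:incomplete:asymptotics}. The paper instead argues directly: rewriting $x'$ in terms of $M$ via~\eqref{eq:M'} gives
\begin{equation}
\label{eq:lin_decay_review}
\frac{d}{dt}\left(-\frac{1}{x}+\frac{\lambda}{3}t\right)=\left(\frac{1}{6xy^2}+\frac{1}{3x^3}\right)M-\frac{1}{y^2},
\end{equation}
and since $y$ is bounded below (Proposition~\ref{prop:x:over:y:bded}) while $M>0$, the right-hand side is bounded below on finite intervals, so $x$ stays bounded away from~$0$, hence forward complete by Lemma~\ref{lem:extinction_x}.

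Your route to $M\to 0$ and $\tilde\tau_1\to 0$ (via the limit of $M/g^3$ and the $L^2$ bound on $x$) is correct and pleasantly different from the paper's, which first deduces $x\to 0$ from Proposition~\ref{prop:x_limit} and then squeezes $0<M=3x+\tilde\tau_1$ with $\tilde\tau_1<0$.

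The real divergence is in the refined asymptotics. You work with $S$ directly and correctly identify that you would need $1+\tfrac{2S}{y^2}+\lambda x^2=o(x^2)$ with integrable error; but the Gr\"onwall argument on~\eqref{eq:Sdot} that you sketch is genuinely hard to close, because both $\alpha$ and $\beta$ blow up and the equilibrium you identify is itself moving. The paper sidesteps all of this by staying with~$M$: equation~\eqref{eq:lin_decay_review} already expresses $P'$ purely in terms of $M$ and $1/y^2$. Since $(\log xy^2)'>1/x$ with $x\to 0$ forces $y^2$ to grow super-exponentially, and $M'>-3x^2/y^2$ (from~\eqref{eq:M'} with $M>0$) integrates to show $M$ decays faster than any exponential, every term on the right of~\eqref{eq:lin_decay_review} is integrable (the only delicate one is $M/x^3$, handled by first getting a lower bound on $tx$). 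No analysis of $S$ is needed. The moral is that the monotone quantity $M$, not $S$, is the right object to track on these ends.
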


\begin{proof}
Using $(g^3)'=\frac{1}{2}x^2+y^2$, equation~\eqref{eq:x'M} is equivalent to
\[
x' = -\frac{\lambda x^2}{3} + \left(\frac{x}{6y^2} + \frac{1}{3x}\right)M - \frac{x^2}{y^2},
\]
which, in turn, is equivalent to 
\begin{equation}
\label{eq:lin_decay}
\frac{d(-x^{-1} + \frac{\lambda}{3}t)}{dt}
= \left(\frac{1}{6xy^2} + \frac{1}{3x^3}\right)M  - \frac{1}{y^2} .
\end{equation}
Recall from Proposition~\ref{prop:x:over:y:bded} that $y$ is bounded away from $0$ on any expander. 
Therefore if $M$ remains positive then the right-hand side of~\eqref{eq:lin_decay}  is bounded below;  so by integration
$-x^{-1} - \frac{\lambda}{3}t$ is bounded below on any finite interval.
Thus $x$ is bounded away from 0 on any finite interval, and this implies
forward completeness by Lemma \ref{lem:extinction_x}.

Now suppose $M$ remains positive but $\frac{y}{x} \to \infty$ as $t \to \infty$.
Then $\tilde\tau_1$ remains negative throughout, otherwise by Proposition~\ref{prop:xy:bounds} the warping $\frac{y}{x}$ is bounded above. 
$x$ cannot be bounded away from $0$, as otherwise $\tilde \tau_1'=\lambda x^2$ would imply that $\tilde \tau_1$ is eventually positive and 
hence $x\to 0$, by Proposition~\ref{prop:x_limit}. 
Together with
\[ (\log xy^2)' = \frac{\half x^2 + y^2}{xy^2} > \frac{1}{x} \]
this shows that $xy^2$ grows faster than $e^{\mu t}$ for any $\mu>0$, 
and hence so does $y^2=\tfrac{1}{x} xy^2$.

Since $M=3x+\tilde \tau_1$ is positive for all time, while $\tilde \tau_1$ remains negative and $3x \to 0$, 
this implies that $M \to 0$ (and hence also $\tilde \tau_1 \to 0$) as $t \to \infty$ . 
Because $M$ remains positive, \eqref{eq:M'} implies that 
\[
M' > -\frac{3x^2}{y^2}.
\]
Integrating this inequality and using the (super)exponential growth of $y^2$ and the fact that $M 
\to 0$ gives that for any $\mu>0$, 
$M< C e^{-\mu t}$ holds for a suitable constant~$C$.

Now we want to prove that all terms on the right-hand side of~\eqref{eq:lin_decay} are integrable, because
that forces $x^{-1} - \frac{\lambda}{3}t$ to have a finite limit as $t \to \infty$. %

The negative term $-1/y^{2}$ is exponentially decaying and so is certainly integrable. Therefore $-x^{-1} + \frac{\lambda}{3}t$ must be bounded below and so $tx$ is bounded below. The first positive term $M/6xy^2$ is also exponentially decaying and hence integrable. Finally, the lower bound for $tx$ together with 
the exponential decay of $M$ implies that $M/x^3$ is integrable. So the whole right-hand side of \eqref{eq:lin_decay} is integrable as claimed. 

Changing the variable $t$ by a translation, we can assume the limit
of $x^{-1} - \frac{\lambda}{3}t$ as $t \to \infty$ to be 0.
Because its derivative decays exponentially, then so too does
$x^{-1} - \frac{\lambda}{3}t$. Hence
\[ \frac{d}{dt} \log \left(e^{-\frac{\lambda}{6}t^2}xy^2\right)
= -\frac{\lambda}{3}t + \frac{x}{2y^2} + \frac{1}{x} \]
decays exponentially and so is integrable too, and hence $e^{-\frac{\lambda}{6}t^2}xy^2$ has a finite limit.
\end{proof}

\subsection{Forward-incomplete shrinker ends}
\label{ss:shrinker_fi_ends}

We now move on to considering the end behaviours of \Sp{2}-invariant shrinkers.
The ideas needed to understand forward-complete shrinker ends turn out to be qualitatively different, and are discussed in the next section.

The arguments for understanding the forward-incomplete case are more similar to the expander case, but the conclusion is in a sense simpler: a shrinker is forward-incomplete if and only if $x$ is eventually decreasing (while we just saw that the quadratic-exponential growth expander ends have~$x \to 0$ 
as $t \to \infty$ even though they are forward-complete).

Recall from \S\ref{ss:monotone} that for any  soliton $x$ is eventually monotone, and moreover in the non-steady case either $x \to 0$ or $x \to \infty$. In the expander case,  the trichotomy in Theorem \ref{thm:tri:expanders} implies that in case (i) where $x \to \infty$ then the warping $\frac{y}{x}$ is bounded above, whereas in both other cases $x \to 0$ and then the warping $\frac{y}{x} \to \infty$. 
We prove that, in fact, an alternative like this also holds for shrinkers.

\begin{lemma}
\label{lem:shrinker:x:unbounded}
On any shrinker, if $x \to 0$ then $\frac{y}{x} \to \infty$,
while if $x \to \infty$ then $\frac{y}{x}$ is bounded above. %
\end{lemma}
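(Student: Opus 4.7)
The plan is to prove the two parts separately, since they are of very different character.

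I would first handle Part~1, which will be a one-line consequence of the monotonicity of the orbital volume $g^3 = xy^2$ already recorded in Remark~\ref{rmk:vol:growth}. Fixing any reference time $t_0$ in the lifetime of the soliton, the inequality $xy^2 \geq g(t_0)^3 > 0$ persists for all $t \geq t_0$; dividing through by $x^3$ yields $(y/x)^2 \geq g(t_0)^3/x^3$, and the right-hand side diverges whenever $x \to 0$. No additional ideas beyond positivity and monotonicity of $g^3$ are required.

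For Part~2 the plan is to recognise it as essentially a rephrasing of Proposition~\ref{mthm:warping}(v), already established in Section~\ref{sec:warping}: on any shrinker, $y/x$ is forward-bounded above unless $x \to 0$. Since on any non-steady soliton Proposition~\ref{prop:x_limit} forces $x$ either to tend to $0$ or to $\infty$, the hypothesis $x \to \infty$ immediately precludes $x \to 0$, and (v) gives exactly what is claimed. Thus this second half of the lemma requires no genuinely new work inside Section~\ref{ss:shrinker_fi_ends} itself.

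The main obstacle, already dealt with in the proof of Proposition~\ref{mthm:warping}(v), is ruling out the scenario in which $x \to \infty$ while $y/x \to \infty$. The approach there combines three ingredients: the eventual monotonicity of the warping (Theorem~\ref{thm:y:x:eventually:mono}), which under $y/x \to \infty$ forces $y/x$ to be eventually strictly increasing and hence $S < 0$ eventually by Lemma~\ref{lem:log_xy}; the monotonicity of $M/g^3$ from Lemma~\ref{lem:monotone}, which constrains the joint asymptotic growth of $x$, $y$ and $\tilde\tau_1$; and the algebraic identity $\tilde\tau_1 + 2\tilde\tau_2 = 2\lambda g^3$ together with the explicit expressions for $\tilde\tau_i$ in terms of $(x,y,\tau_2)$ from~\eqref{eq:tilde:tau1:tau2}. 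Combining these, one extracts the leading-order growth rates of $x$ and $y$ and verifies by a self-referential argument (using $(xy^2)' = \tfrac{x^2}{2}+y^2$ with $y \gg x$ to propagate the estimates forward) that $x$ and $y$ are both forced to grow at most linearly in $t$, so that $y/x$ cannot diverge. This is the only step where real effort is needed, and it is handled once and for all in Section~\ref{sec:warping}.
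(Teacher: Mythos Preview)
Your treatment of Part~1 is correct and matches the paper's one-line argument via the monotonicity of $g^3 = xy^2$.

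Part~2, however, is circular. Proposition~\ref{mthm:warping} is an organizational summary in the introduction; its part~(v) is not in fact proved anywhere in Section~\ref{sec:warping}. The results there that bound the warping---Propositions~\ref{prop:xy:bounds} and~\ref{prop:x:over:y:bded}---are expander-only. This lemma \emph{is} the proof of (v), so invoking (v) here begs the question. (The introduction's sentence ``the others in Section~\ref{sec:warping}'' is slightly imprecise on this point.)

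Your closing paragraph lists correct ingredients (eventual monotonicity of $y/x$, eventual negativity of $S$, monotonicity of $M/g^3$) but the ``self-referential argument'' that $x$ and $y$ are forced to grow linearly is too vague to be a proof, and is not how the paper proceeds. The paper's actual argument is short and direct: assuming $x\to\infty$ and $y/x$ eventually increasing (so $S<0$ eventually), one rewrites \eqref{eq:MS} as
\[
\frac{M(x^2+2y^2)}{g^3} = 2\lambda x^2 + \frac{7x^2}{y^2} + 2 + \frac{4S}{y^2},
\]
which, since $\lambda<0$, $x\to\infty$, $x/y$ bounded and $S<0$, forces $M$ (and hence $M/g^3$) to become negative. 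Then by Lemma~\ref{lem:monotone} there is $C>0$ with $M/g^3 < -C$ eventually, and inserting this into the expression~\eqref{eq:x'M} for $x'$ (estimating $(g^3)'>y^2$) gives
\[
x' < -\frac{x^2}{3}\left(C\frac{y^2}{x^2} + \lambda\right).
\]
Since $x$ is eventually increasing, the bracket must be nonpositive, i.e.\ $y^2/x^2 \leq -\lambda/C$. You need to supply this argument (or an equivalent one) rather than defer to~(v).
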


\begin{proof}
If $x \to 0$ then certainly $\frac{y}{x} \to \infty$ since $g^3=x^3 \left(\frac{y}{x}\right)^2$ is increasing.

On the other hand, suppose $x \to \infty$.
We know from Theorem \ref{thm:y:x:eventually:mono} that the warping $\frac{y}{x}$ is eventually monotone, and patently $\frac{y}{x}$ is bounded above if it is eventually decreasing.
We may therefore assume that $\frac{y}{x}$ is eventually increasing, or equivalently, $S$ is eventually negative. 
Now rewrite \eqref{eq:MS} as
\[ \frac{M(x^2+2y^2)}{g^3} = 2\lambda x^2 + \frac{7x^2}{y^2} + 2 +\frac{4S}{y^2} . 
\]
Since $\frac{x}{y}$ is eventually decreasing it is bounded above, say by $D$, and so eventually 
\[2\lambda x^2 + \frac{7x^2}{y^2} + 2 +\frac{4S}{y^2} < 2\lambda x^2 + 7D^2 + 2
\]
(because eventually $S<0$). 
Because $x \to \infty$ this expression and hence also $M$ is eventually negative.
By Lemma \ref{lem:monotone}, there exists $C > 0$ such that eventually $\frac{M}{g^3} < -C$. 
Recall the form of the ODE for $x'$ in terms of $\frac{M}{g^3}$ given in~\eqref{eq:x'M}. From that expression, 
using that $\frac{M}{g^3}<-C$ and estimating $(g^3)' > y^2$ we obtain
\[ x' < -\frac{x^2}{3}\left(C\frac{y^2}{x^2} + \lambda\right) . \]
Since $x$ is eventually increasing (by Corollary \ref{cor:x_monotone}, given that we assumed $x \to \infty$),
$\frac{y^2}{x^2}$ must therefore eventually be bounded above by $\frac{-\lambda}{C}$.
\end{proof}

The next result does not hold for expanders. Indeed, all the non-AC forward-complete expanders that occur in case (ii) 
of the expander trichotomy, Theorem~\ref{thm:tri:expanders}, have $x$ decreasing for all $t$ with $x \to 0$ (and $M\to 0$) as $t \to \infty$. 

\begin{prop}
\label{prop:incomplete_shrinkers}
Any shrinker with $x \to 0$ (equivalently, $x$ is eventually decreasing) is forward incomplete with a finite-time singularity of stable type, \ie 
as $t \to t_*$ the extinction time,  it has the asymptotic behaviour~\eqref{eq:stable:singularity}.
\end{prop}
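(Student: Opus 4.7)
The plan follows the blueprint of Theorem~\ref{thm:tri:expanders}(iii), but the preliminary step of forcing $M = 3x + \tilde\tau_1$ to become negative has to be handled differently from the expander case, because on shrinkers the term $-\tfrac{\lambda x^2}{3}$ in $x'$ no longer helps; this forced negativity of $M$ is the hard part of the argument. Once in hand, the rest of the asymptotic analysis essentially parallels the expander case.

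I would first assemble the standard preliminaries: by Lemma~\ref{lem:shrinker:x:unbounded}, $x \to 0$ forces $\tfrac{y}{x} \to \infty$ and (using that $g^3 = xy^2$ is increasing) $y \to \infty$, so eventually $y > x$ and $\tfrac{x}{y} \to 0$; by Theorem~\ref{thm:y:x:eventually:mono}, $\tfrac{y}{x}$ is eventually monotone, hence eventually strictly increasing, which via Lemma~\ref{lem:log_xy} gives $S < 0$ eventually. Now suppose for contradiction that $M \geq 0$ throughout. Then $\tilde\tau_1 \geq -3x$, and combined with $\tilde\tau_1' = \lambda x^2 < 0$ and $x \to 0$ this forces $\tilde\tau_1 \geq 0$ for all $t$. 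By~\eqref{eq:tilde:tau1:tau2}, nonnegativity of $\tilde\tau_1$ is equivalent to $R_1 = \lambda xy^2 - 3\tau_2 \geq 0$, so $\tau_2 \leq \tfrac{\lambda xy^2}{3} < 0$. Consequently $S = y^2 - x^2 - \tfrac{3}{2}x\tau_2 > y^2 - x^2 > 0$ eventually, contradicting $S < 0$ eventually. Hence $M$ becomes negative at some finite $t_0$, and the monotonicity of $M/g^3$ (Lemma~\ref{lem:monotone}) together with $g^3$ increasing gives the uniform upper bound $M(t) \leq M(t_0) =: -\tfrac{3}{2}a^2 < 0$ for all $t \geq t_0$.

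Substituting $M \leq -\tfrac{3}{2}a^2$ together with $x, \tfrac{x}{y} \to 0$ into~\eqref{eq:x^2:dot} absorbs the terms $-\tfrac{2\lambda x^3}{3}$, $\tfrac{x^2 M}{3y^2}$ and $-\tfrac{2x^3}{y^2}$ into $o(1)$, yielding $(x^2)' \leq -a^2 + o(1)$ and hence a finite extinction time $t_*$. Since $t_*$ is finite and $x$ is bounded on $[0,t_*]$, the integral $\lambda\int_0^{t_*}x^2\,ds$ converges, so $\tilde\tau_1$ has a finite limit and therefore $M \to -\tfrac{3}{2}a^2$. At this stage the hypotheses of Remark~\ref{rmk:FT:asymptotics} are exactly met, so the sign-agnostic calculation at the end of the proof of Proposition~\ref{prop:incomplete:asymptotics} gives $x = a\sqrt{t_*-t} + o(\sqrt{t_*-t})$ and a finite positive limit $b^{2/3}$ for $g$, whence $y \sim b\sqrt{2/v}$ with $v = \sqrt{t_*-t}$. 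The full real-analytic expansion~\eqref{eq:stable:singularity} (including the part for $\tau_2$) then follows by matching our end, via its limiting parameters $(a,b)$, to the unique member of the 2-parameter real-analytic family of stable-type solutions produced by the regular SIVP of \S\ref{subsec:end_sivp} (\cite[Theorem~6.23]{Haskins:Nordstrom:g2soliton1}); standard ODE uniqueness at the noncharacteristic data then forces coincidence.
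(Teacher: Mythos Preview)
Your proof is correct in substance and reaches the same conclusion as the paper, but the key step---forcing $M$ to become negative---is handled by a genuinely different (and rather slick) argument. The paper argues by contradiction from forward completeness: if the lifetime were infinite then $y \to \infty$, and positivity of $M$ would feed through~\eqref{eq:x'M} to give $x' > x^2(-\tfrac{\lambda}{3} - \tfrac{1}{y^2}) > 0$ eventually, contradicting $x \to 0$; only then does it bound $M/g^3$ below a negative constant and force finite extinction via $x' < -\tfrac12 C g^2 + D$. You instead observe directly that $M \geq 0$ throughout, combined with $\tilde\tau_1$ decreasing and $x \to 0$, forces $\tilde\tau_1 \geq 0$ always, hence $R_1 \geq 0$, hence $\tau_2 < 0$, hence $S > y^2 - x^2 > 0$ eventually---which collides with $S < 0$ eventually (from $\tfrac{y}{x}$ eventually increasing). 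This avoids the detour through a forward-completeness hypothesis and is arguably cleaner; the paper's route has the mild advantage of not needing eventual monotonicity of $\tfrac{y}{x}$ at this stage.

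Two minor points. First, there is a notational slip: you set $-\tfrac{3}{2}a^2 := M(t_0)$ and then write $M \to -\tfrac{3}{2}a^2$; in fact $\lim M \leq M(t_0)$, so you should redefine $a$ via the limit of $\tilde\tau_1$ (this is exactly what the paper does). Second, your final sentence over-reaches: ``standard ODE uniqueness at the noncharacteristic data'' does not apply at the singular endpoint $v = 0$, and the leading asymptotics alone do not immediately force coincidence with the real-analytic SIVP solution. The paper does not claim or prove the full real-analytic expansion here---its proof, like yours, stops at the leading-order behaviour and invokes Remark~\ref{rmk:FT:asymptotics}---so this extra claim is not needed for the proposition and can simply be dropped.
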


\begin{proof}
First suppose for a contradiction that the solution is forward complete. $M$ cannot remain positive for all $t$ because then~\eqref{eq:x'M} yields the bound
\[x' > x^2 \left( - \frac{\lambda}{3} - \frac{1}{y^2} \right)\]
for all $t$.  Since $y \to \infty$ as $t \to \infty$, this  would imply that eventually $x'>0$, which contradicts $x \to 0$.
Therefore there exists $C > 0$ such that eventually $\frac{M}{g^3} < -C$. 
Since $-\frac{\lambda}{3}x^2 \to 0$, it is certainly bounded above, say by $D$. 
So eventually~\eqref{eq:x'M} implies
\[
x'< \frac{M}{3g^3} (g^3)' - \frac{\lambda}{3}x^2< -\frac{1}{2} C g^2 +D.
\]
Since $g$ grows at least linearly 
this implies that $x$ reaches $0$ in finite time, contradicting forward completeness. 

So now, together with Lemma~\ref{lem:shrinker:x:unbounded}, we know that the shrinker is forward incomplete with $x \to 0$ 
and $\frac{x}{y} \to 0$ as $t \to t_*$ and this certainly implies that $\tau_2$ is eventually positive (because $S$ is eventually negative and $\frac{x}{y} \to 0$).
Then by~\eqref{eq:tilde:tau1:tau2} $\tilde \tau_1$ has the same sign as $\lambda g^3 - 3\tau_2$, 
which is negative.  
Since $x$ is bounded above, the decreasing eventually negative function $\tilde \tau_1$ is bounded below. 
Hence the limit of $\tilde \tau_1$ as $t \to t_*$ exists and is equal to $-\frac{3}{2}a^2<0$ for some $a>0$ and
$M$ has the same limit (since $x \to 0$). 
The hypotheses required in Remark~\ref{rmk:FT:asymptotics} are therefore satisfied and 
hence we have a stable finite-time singularity at $t=t_*$. 
\end{proof}

Here is one (manifestly open) condition guaranteeing that $x$ is eventually decreasing (equivalently, that $x \to 0$). The open nature of this condition is consistent with our previous observation that our construction of a $2$-parameter family of stable finite-extinction-time soliton ends gives rise to an open subset 
of the soliton phase space (for any value of $\lambda$). 

\begin{lemma}
If at any instant a shrinker satisfies both $x'<0$ and $\frac{M}{g^3}<0$ then both those conditions persist.  
In particular, $x$ is eventually decreasing with $x \to 0$ and hence the shrinker is forward incomplete with a stable finite-time singularity. 
\end{lemma}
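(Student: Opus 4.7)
The plan is to exploit two already-established facts: (a) the monotonicity $\frac{d}{dt}\!\left(M/g^3\right) = -3x/y^4 < 0$ from Lemma~\ref{lem:monotone}, and (b) the critical point analysis in Corollary~\ref{cor:x_monotone}, which says that at any critical point of $x$, the second derivative $x''$ has the same sign as $M$. Because $g^3 > 0$, the sign of $M/g^3$ matches the sign of $M$; in particular, if $M/g^3 < 0$ at some instant $t_0$ then $M(t) < 0$ for all $t \ge t_0$.

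The first step is to show that $x'$ cannot leave the region $\{x' < 0\}$. Suppose for contradiction that $t_1 > t_0$ is the first time at which $x'(t_1) = 0$, so $x' < 0$ on $[t_0,t_1)$. Then $x$ has a local minimum at $t_1$, which forces $x''(t_1) \ge 0$. But by step~(b), since $M(t_1) < 0$, we must have $x''(t_1) < 0$, a contradiction. Hence $x' < 0$ persists for all future time, and similarly $M/g^3 < 0$ persists by the monotonicity above; both conditions are preserved under forward evolution.

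The second step is to identify the asymptotic behaviour. Since $x$ is strictly decreasing and positive, by Proposition~\ref{prop:x_limit} the only possibilities are that $x \to 0$, or that $\lambda = 0$ and $x$ converges to a positive limit. As we are in the shrinker case ($\lambda < 0$), the latter is excluded, so $x \to 0$ as $t$ approaches the end of the lifetime.

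The final step is to invoke Proposition~\ref{prop:incomplete_shrinkers}: any shrinker with $x \to 0$ is forward incomplete with a stable finite-time singularity of the form~\eqref{eq:stable:singularity}. I do not foresee any real obstacle here; the entire argument is a direct combination of the monotonicity of $M/g^3$ with the shape of critical points of $x$, together with results already proven earlier in the section. The only point requiring a little care is the edge case where $x'(t_0) = 0$ or $M(t_0)/g^3(t_0) = 0$, but the hypothesis is that both inequalities are strict at the starting instant, so one can immediately pass to an open neighbourhood of $t_0$ on which both hold and then apply the persistence argument above.
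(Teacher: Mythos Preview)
Your proposal is correct and follows essentially the same approach as the paper: persistence of $M/g^3<0$ by monotonicity, then a contradiction at the first zero of $x'$ using Corollary~\ref{cor:x_monotone}, followed by the appeal to Propositions~\ref{prop:x_limit} and~\ref{prop:incomplete_shrinkers}. The only quibble is the phrase ``$x$ has a local minimum at $t_1$'': what you actually need (and what you correctly deduce) is just that $x''(t_1)\ge 0$ because $x'$ increases from negative to zero there.
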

\begin{proof}
Negativity of  $\frac{M}{g^3}$ is clearly forward-preserved.  
Our analysis of the critical points of $x$ (in Corollary~\ref{cor:x_monotone}) implies %
$x'' <0$ at any critical point where $\frac{M}{g^3}<0$.  
If there were a later instant at which $x' \ge 0$ held, then  $x'' \ge 0$ would hold at the first instant when $x'=0$, which is a contradiction. 
\end{proof}

\begin{remark*}
An easy consequence of the previous result is that any shrinker for which $x$ attains a local maximum is forward incomplete:
at any local maximum of $x$ we must have $\frac{M}{g^3}\le 0$ and then immediately after the local maximum has occurred both $x'$ and $\frac{M}{g^3}$ 
become negative. 
If a smoothly-closing shrinker has a finite-time stable singularity then clearly $x$ must have a local maximum, since $x \to 0$ both as $t \to 0$ and as $t$ approaches
the extinction time. So the preceding result provides some sort of converse. 
\end{remark*}

\begin{remark*}
If $x'<0$ and $3 + \lambda y^2 \le 0$, then this forces $\frac{M}{g^3}<0$ because
by~\eqref{eq:x'M} 
\[
x' + \frac{x^2}{3y^2}(3+\lambda y^2) = \frac{M}{3g^3} (g^3)'.
\]
In fact, one can also show that the joint conditions $3 + \lambda y^2<0$ and $x'<0$ also persist.  
\end{remark*}

\section{Forward-complete \texorpdfSp{2}-invariant shrinkers and the shrinker limit system}
\label{sec:shrinker-tri}
In this section we prove the shrinker case of Theorem \ref{mthm:both_tri}: a trichotomy result for \Sp{2}-invariant shrinker ends, analogous to Theorem \ref{thm:tri:expanders} for expander ends. The nature of the proof is however completely different from in the expander case; 
a starring role will be played by the rescaling limit ODE system introduced in~\eqref{eq:heisenberg_full}. 

\begin{theorem}
\label{thm:tri:shrinkers}
The forward evolution of an~\Sp{2}-invariant shrinker with dilation constant $\lambda<0$ starting at any regular initial data satisfies exactly one of the following:

\begin{enumerate}[left=0em]
\item
The solution is forward complete and $\frac{y}{x} \to \ell \in (0,\infty)$ as $t \to \infty$. 
Then the solution is weakly asymptotic to the unique closed~\Sp{2}-invariant~\gtwo-cone 
with $c_2/c_1=\ell$, \ie $\frac{x}{t} \to c_1$ and~$\frac{y}{t} \to c_2$. 
In this case 
$\tilde \tau_1$ and $\tilde \tau_2$ are both eventually negative and
$\frac{M}{g^3} \to \frac{2\lambda}{1+2\ell^2}\in (2\lambda,0)$ as~$t \to \infty$.
\item
The solution is forward complete, but eventually $\frac{y}{x} \to 0$ monotonically.
Then $\frac{M}{g^3} \to 2 \lambda <0$ and~$\frac{\tilde \tau_2}{g^3} \to 0$ as $t \to \infty$. 
Up to translation in $t$ there is a unique such forward-complete shrinker end and
it satisfies $x \to \infty$, $\frac{\tau_2}{x} \to - \frac{3}{4}$ and $\frac{y^4}{x^2} \to - \frac{9}{8\lambda}$ as $t \to \infty$. 
In particular, it has exponential volume growth. 
\item
The solution has a finite extinction time $t_*$. Then eventually $\tfrac{y}{x} \to \infty$
monotonically, and $\tilde \tau_1 \to -\tfrac{3a^2}{2}$ as $t \to t_*$ for some nonzero $a$.
Moreover, 
$x \to 0$, $(x^2)' \to -a^2$ and $g$ has a finite limit as $t \to t_*$.
In particular, $M \to - \frac{3a^2}{2} <0$ as $t \to t_*$.
\end{enumerate}
\end{theorem}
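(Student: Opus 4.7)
The plan is to split according to the eventual behaviour of the warping $\frac{y}{x}$, which by Theorem \ref{thm:y:x:eventually:mono} is eventually monotone and hence tends to some $\ell \in [0, \infty]$ as $t$ approaches the end of the lifetime. The three cases of the trichotomy correspond precisely to $\ell \in (0,\infty)$, $\ell = \infty$ and $\ell = 0$, respectively.

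If $\ell \in (0,\infty)$, then $\log \frac{y}{x}$ is forward-bounded, so Proposition \ref{prop:lifetime} gives forward completeness and Lemma \ref{lem:weak_AC} gives weak asymptotic conicality with $\frac{x}{t} \to c_1$, $\frac{y}{t} \to c_2$. Both $x$ and $y$ are then bounded away from $0$, so $\tilde\tau_1' = \lambda x^2$ and $\tilde\tau_2' = \lambda y^2$ with $\lambda < 0$ force $\tilde\tau_1, \tilde\tau_2 \to -\infty$, and in particular both are eventually negative. Combining the expression \eqref{eq:MS} for $M$ with the convergence $S \to S^*$ from Proposition \ref{prop:S:limit} (applied in the shrinker case) then yields $\frac{M}{g^3} \to \frac{2\lambda}{1+2\ell^2}$. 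If instead $\ell = \infty$, Proposition \ref{mthm:warping}(v) forces $x \to 0$ and Proposition \ref{prop:incomplete_shrinkers} immediately delivers case (iii) with its stable finite-time singularity asymptotics.

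The substantive content concerns $\ell = 0$. Here $\frac{y}{x} \to 0$ monotonically; Lemma \ref{lem:shrinker:x:unbounded} rules out $x \to 0$, so Proposition \ref{prop:x_limit} gives $x \to \infty$. In particular both $x$ and $\frac{x}{y}$ are eventually large, which is exactly the regime in which the trajectory is well-approximated by a solution of the shrinker limit system \eqref{eq:heisenberg_full}, obtained from the rescaling \eqref{eq:heisenberg:scaling} in the limit $\epsilon \to 0$. Passing to the rescaling-invariant ratios $(\alpha,\beta) = (\check\tau_2/\check x,\,\check y^2/\check x)$ reduces \eqref{eq:heisenberg_full} to the planar system \eqref{eq:heisenberg:limit}, whose global forward-time dynamics can be classified completely via nullcline analysis and the Poincar\'e--Bendixson theorem. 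Within that analysis I would identify the Fowdar fixed point \eqref{eq:heisenberg_fixed} (corresponding to the explicit solution \eqref{eq:fowdar}) as the unique possible $\omega$-limit of a trajectory forced into the relevant invariant region by $x \to \infty$ and $\frac{y}{x} \to 0$, and translate the resulting convergence back to the original variables to obtain $\frac{\tau_2}{x} \to -\frac34$ and $\frac{y^4}{x^2} \to -\frac{9}{8\lambda}$. The remaining limits $\frac{M}{g^3} \to 2\lambda$ and $\frac{\tilde\tau_2}{g^3} \to 0$ then follow from \eqref{eq:MS} and \eqref{eq:tilde:tau1:tau2} using that $g^3 = xy^2$ grows exponentially, and forward completeness is automatic from this exponential approximation since it rules out finite-time blowup of any of $x$, $y$ or $\tau_2$. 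Uniqueness up to translation of $t$ is a direct consequence of the rescaling symmetry of the limit system, which acts on the Fowdar solution precisely as translation in~$t$.

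The main obstacle is formalising the approximation step: namely that along any shrinker trajectory with $x$ and $\frac{x}{y}$ eventually large, the rescaled trajectory is genuinely close, uniformly in $t$ on $[t_0,\infty)$, to some solution of the shrinker limit system. A naive Gr\"onwall estimate on the difference blows up exponentially in $t$, so the argument must exploit the hyperbolic structure near the Fowdar fixed point (or an equivalent weighted-norm reformulation adapted to the invariants of the limit system) to ensure that the rescaling error is absorbed by the contracting directions of the linearised flow. Making this quantitative control robust enough to pin down the $\omega$-limit, rather than merely establishing rough approach to the Fowdar regime, is where the essential work lies.
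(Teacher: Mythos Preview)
Your case split and the handling of cases (i) and (iii) are essentially the same as the paper's (modulo a slip in your first paragraph, where the order of $\ell=\infty$ and $\ell=0$ should be swapped to match the cases ``respectively''). The paper splits first on whether $x\to 0$ or $x\to\infty$ rather than on $\ell$, but via Lemma~\ref{lem:shrinker:x:unbounded} and Proposition~\ref{prop:x_limit} the two decompositions are equivalent.

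The genuine gap is in case (ii), and it is precisely where you locate it, but your proposed mechanism will not work. The Fowdar fixed point \eqref{eq:heisenberg_fixed} of the planar limit system \eqref{eq:heisenberg:limit} is an \emph{unstable spiral}: both eigenvalues of the linearisation have positive real part. There are therefore no contracting directions in the limit system to absorb rescaling errors, and any argument based on hyperbolic attraction near the fixed point in the $(\alpha,\beta)$ plane is doomed. (In the full three-dimensional system the only stable direction is transverse to $\gamma=0$, which gives uniqueness of the end via a one-dimensional stable manifold---see below---but does not help with showing that an arbitrary trajectory with $\frac{y}{x}\to 0$ converges there.)

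What the paper does instead is to avoid any approximation or Gr\"onwall step entirely. It rewrites the full soliton system as a three-dimensional ODE \eqref{eq:heisenberg} for $(\alpha,\beta,\gamma)=(\tfrac{\tau_2}{x},\tfrac{y^2}{x},\tfrac{1}{x})$ that extends smoothly across $\gamma=0$, where it restricts to \eqref{eq:heisenberg:limit}. One then constructs a polygonal region $R$ in the $(\alpha,\beta)$ plane that, for $\gamma$ small, cannot be re-entered by the full flow, and shows (via direct differential inequalities, not via comparison with limit trajectories) that any solution with $x\to\infty$ and $\frac{y}{x}\to 0$ is forward complete and eventually trapped in $R$ with $\beta$ bounded. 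The convergence to the fixed point then comes from a compactness argument: any $\omega$-limit point $q\neq(-\tfrac34,\beta_*)$ in $R$ would, by Lemma~\ref{lem:heisenberg_trap}(iii) applied to the planar trajectory through $q$, exit $R$ in finite time, and continuous dependence on initial data pushes this to nearby points of the full system with small $\gamma$, contradicting trapping. The instability of the fixed point is exactly what makes this work.

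Finally, your uniqueness argument is not right either: the rescaling symmetry is a symmetry of the limit system only, not of the full shrinker ODE, so it cannot identify distinct solutions of the latter. Uniqueness up to translation is instead the statement that the fixed point $(-\tfrac34,\beta_*,0)$ of \eqref{eq:heisenberg} has a one-dimensional stable manifold (the extra eigenvalue in the $\gamma$-direction is negative), so there is a unique flowline with $\gamma>0$ converging to it; see Proposition~\ref{prop:fowdar_end}.
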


\begin{proof}[Proof, assuming Theorem \ref{thm:udhav_end}]%
To distinguish these three types of end behaviour first recall from Corollary \ref{cor:x_monotone} and Proposition \ref{prop:x_limit} that either $x \to 0$ or $x \to \infty$. In the former case, Proposition \ref{prop:incomplete_shrinkers} already implies that we are in case (iii).

On the other hand, if $x \to \infty$, then Lemma \ref{lem:shrinker:x:unbounded} implies that $\frac{x}{y}$ is bounded away from 0, while Theorem \ref{thm:y:x:eventually:mono} implies that it is eventually monotonic.
If $\frac{x}{y}$ is bounded above, it must therefore converge, and then by Lemma \ref{lem:yx:bounded} we are in case~(i). To complete the proof of Theorem \ref{thm:tri:shrinkers}, it therefore remains only to prove that if eventually $x$ and $\frac{x}{y} \to \infty$ monotonically, then we are in case~(ii). We will accomplish
this in Theorem \ref{thm:udhav_end}.
\end{proof}

Most of this section is devoted to the proof of Theorem \ref{thm:udhav_end}. 
The main idea is that when $x \to \infty$, the end behaviour should be well
approximated by solutions to the limit system~\eqref{eq:heisenberg_full}.
In \S\ref{ss:fe:shrinker:limit} we analyse the end behaviour in that simpler
system, and in \S\ref{ss:shrinker:full} we adapt those arguments to
prove Theorem \ref{thm:udhav_end}. 

\begin{remark*}
Note that (unlike in the expander ends trichotomy) the decreasing quantity $\frac{M}{g^3}$ is eventually negative in all three cases. 
In fact, in all three cases $\frac{M}{g^3}$ has a finite negative limit.
(Of course this also implies that $\tilde \tau_1$ is also eventually negative on any shrinker.)
Instead, in the shrinker setting the three cases are distinguished by the eventual behaviour of $M$ itself: if $M$ remains bounded 
then the shrinker is forward incomplete; if $M$ is unbounded below then the shrinker is forward complete and 
$m:=\displaystyle\lim_{t \to \infty}\tfrac{M}{g^3}$ exists with $m \in [2\lambda,0)$; the end is AC if and only if $m>2 \lambda$ 
and non-AC if $m= 2 \lambda$. 
\end{remark*}

\subsection{Shrinker limit system comparison}
To implement the idea %
that~\Sp{2}-invariant shrinker ends with $x$ and $\frac{x}{y}$ sufficiently
large should be well approximated by solutions to the %
limit system~\eqref{eq:heisenberg_full}, we need to introduce 
a new set of variables adapted to this regime. 
If we change variables to
\[ \alpha := \frac{\tau_2}{x}, \quad \beta := \frac{y^2}{x}>0,
\quad \gamma := \frac{1}{x} >0\]
then the~\Sp{2}-invariant soliton ODE system \eqref{eq:ODE:tau2'} becomes
\begin{subequations}
\label{eq:heisenberg}
\begin{align}
\alpha' &= -2 (\lambda\beta-3\alpha\gamma)\left(\frac{\alpha+1}{1+2\beta\gamma} - \frac{1}{3}\right) + \frac{\alpha(1+2\alpha)}{2\beta} - \alpha\gamma, \\
\beta' &= 2\left(\alpha+\tfrac{3}{4}\right) - \beta\gamma, \\
\gamma' &= \frac{(2\alpha+1)\gamma}{2\beta} - \gamma^2.
\end{align}
\end{subequations}
This ODE system extends continuously to $\gamma = 0$. The set $\gamma = 0$ is moreover invariant, and the corresponding subsystem
\begin{subequations}
\label{eq:heisenberg:limit}
\begin{align}
\label{eq:heisenberg:limit:alpha}
\alpha' &= -2\lambda\beta(\alpha+\tfrac{2}{3}) + \frac{\alpha(1+2\alpha)}{2\beta}, \\
\beta' &= 2\left(\alpha+\tfrac{3}{4}\right)
\end{align}
\end{subequations}
coincides with that described in Remark \ref{rmk:scaling_reduction} for
invariant ratios in the rescaling limit system~\eqref{eq:heisenberg_full}.
Our main interest in~\eqref{eq:heisenberg:limit}  
now is  as a tool for understanding 
~\Sp{2}-invariant shrinker ends with~${\frac{x}{y} \to \infty}$;
we will therefore refer to \eqref{eq:heisenberg:limit} as the
\emph{shrinker limit system}.
The shrinker limit system is easier to analyse than the full
system \eqref{eq:heisenberg} simply because the global structure of planar ODE
systems is so
much more constrained than $3$-dimensional ODE systems.

\begin{remark*}
In passing, we note that the subsystem~\eqref{eq:heisenberg:limit} makes sense for any value of
$\lambda$ and, via \eqref{eq:heisenberg_full}, its solutions describe closed Laplacian
$\lambda$-solitons on $\R \times \iwa^6$ where $\iwa^6$ is the Iwasawa $6$-manifold mentioned earlier. 
Thus there is independent geometric motivation to study %
solutions to \eqref{eq:heisenberg:limit} for any $\lambda$.
Because our current interest in this system is as a tool to understand~\Sp{2}-invariant shrinker ends
we stick to the case $\lambda<0$; 
results we have established on the behaviour of solutions
to~\eqref{eq:heisenberg:limit} when $\lambda>0$ may appear elsewhere. 
Ball~\cite{Ball} studied solutions with $\lambda=0$; because of the additional scaling invariance
of steady solitons it turns out to be easier to treat. 

We mention in passing that there is also a more systematic way to understand the appearance of both rescaling limit systems described in Remark~\ref{rmk:scaling_reduction}
in terms of Lie algebra contractions of the Lie algebra $\lsp{2}$. The shrinker limit system~\eqref{eq:heisenberg_full} is related to a $10$-dimensional Lie algebra that 
is a semi-direct product of $\mathfrak{u}(2)$ with the $3$-dimensional complex Heisenberg Lie algebra; 
the  limit system~\eqref{eq:Sp2:ODE:limit} is instead related to the $10$-dimensional Euclidean algebra $\mathfrak{e}_4$. Both these $10$-dimensional 
Lie algebras appear as Lie algebra contractions of $\lsp{2} \simeq \lorth{5}$. 
Since this plays no direct role in the current paper we again defer presentation of further details to another venue. 
One might expect such Lie algebra contractions and the associated limiting ODE systems to play an important 
role in other cohomogeneity-one problems. 
\end{remark*}

\begin{remark}
\label{rmk:limit_lifetime}
Clearly any solution to the subsystem \eqref{eq:heisenberg:limit} can be extended (both forwards and backwards in $t$) so long as
$\alpha$ remains bounded and $\beta$ is bounded away from 0. In other words, a
solution with finite lifetime must have $\alpha \to \pm\infty$ or $\beta \to 0$
at the extinction time. By Remark \ref{rmk:scaling_reduction} there is a
corresponding 1-parameter family of solutions
to~\eqref{eq:heisenberg_full} with the same lifetime.
\end{remark}

For $\lambda < 0$, the shrinker limit system \eqref{eq:heisenberg:limit} has
a unique fixed point, located at
\begin{equation}
\label{eq:heisenberg_fixed}
\alpha = -\frac{3}{4}, \quad \beta = \beta_* := \sqrt{-\frac{9}{8\lambda}}.
\end{equation}
The linearisation of~\eqref{eq:heisenberg:limit} at this fixed point is given by the matrix
\[ \begin{pmatrix} \tfrac{5}{6} \sqrt{-2\lambda} & \tfrac{\lambda}{3} \\ 2 & 0
\end{pmatrix} \]
which has a pair of complex conjugate eigenvalues $\dfrac{5\sqrt{-2\lambda} \pm \sqrt{46\lambda}}{12}$.
Since these eigenvalues have positive real part, the fixed point is an unstable spiral point, with
a 1-parameter family of flow lines emanating from it. 

One easy conclusion from viewing the shrinker limit system as a subsystem of
the full shrinker ODEs is that a forward-complete non-AC end solution of the
type in Theorem \ref{thm:tri:shrinkers}(ii) really exists.

\begin{prop}
\label{prop:fowdar_end}
There is a unique (up to translation in $t$) non-AC forward-complete shrinker end
solution $(x, y, \tau_2)$ with $\frac{y^2}{x} \to \beta_*$, 
$\frac{\tau_2}{x} \to -\tfrac34$ and $x \to \infty$.
\end{prop}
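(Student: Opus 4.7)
The plan is to reinterpret the statement in the $(\alpha,\beta,\gamma)=(\tau_2/x,\,y^2/x,\,1/x)$ variables of~\eqref{eq:heisenberg}. The hypotheses $y^2/x\to\beta_*$, $\tau_2/x\to-3/4$ and $x\to\infty$ are equivalent to the single condition that the orbit converges forward in $t$ to the point $(\alpha_*,\beta_*,0)=(-3/4,\sqrt{-9/(8\lambda)},0)$, which is a fixed point of~\eqref{eq:heisenberg} extending the fixed point~\eqref{eq:heisenberg_fixed} of the shrinker limit system. (The non-AC condition is automatic, since $y/x=\sqrt{\beta_*/x}\to 0$.) Existence and uniqueness up to translation in $t$ will then follow from the local stable manifold theorem applied at this fixed point.

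The main ingredient is the Jacobian $J$ of~\eqref{eq:heisenberg} at $(\alpha_*,\beta_*,0)$. Because $\gamma$ appears as a multiplicative factor in every term of the $\gamma$-equation, one has $\partial_\alpha\gamma'|_*=\partial_\beta\gamma'|_*=0$, while $\partial_\gamma\gamma'|_*=(2\alpha_*+1)/(2\beta_*)=-1/(4\beta_*)=-\mu$ with $\mu=\sqrt{-\lambda/18}$. Hence $J$ is upper block triangular, with upper-left $2\times 2$ block equal to the linearisation of the shrinker limit system~\eqref{eq:heisenberg:limit} at its fixed point. The eigenvalues of this upper block are the complex conjugate pair $(5\sqrt{-2\lambda}\pm\sqrt{46\lambda})/12$ with positive real part $\tfrac{5}{12}\sqrt{-2\lambda}$, as already computed after~\eqref{eq:heisenberg_fixed}. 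So $J$ has exactly a $1$-dimensional stable subspace, with eigenvalue $-\mu$ and an eigenvector of the form $(1,-18\mu/\lambda,3/\beta_*)$; in particular its $\gamma$-component is non-zero.

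By the stable manifold theorem there is a $1$-dimensional smooth local stable manifold $W^s_{\mathrm{loc}}$ through the fixed point, consisting of the points whose forward orbit converges exponentially (at rate $\mu$) to $(\alpha_*,\beta_*,0)$. Transversality of the stable direction to $\{\gamma=0\}$ implies that removing the fixed point splits $W^s_{\mathrm{loc}}$ into two branches, one of which lies in $\{\gamma>0\}$; this branch is a single maximal orbit. Reverting via $(x,y^2,\tau_2)=(1/\gamma,\,\beta/\gamma,\,\alpha/\gamma)$ recovers a genuine $\Sp{2}$-invariant shrinker end in which $x$, $y^2$ and $-\tau_2$ are each asymptotic to positive constant multiples of $e^{\mu t}$, matching Fowdar's explicit solution~\eqref{eq:fowdar} up to translation in $t$ (which fixes the single remaining multiplicative constant). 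Forward completeness is automatic, since the orbit lies in $W^s_{\mathrm{loc}}$ for large $t$ and stays bounded away from the singular loci $\{\gamma=0\}$ and $\{\beta=0\}$ on every finite $t$-interval.

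For uniqueness, any end solution satisfying the stated hypotheses gives an orbit of~\eqref{eq:heisenberg} whose forward limit is $(\alpha_*,\beta_*,0)$; by the characterisation of $W^s_{\mathrm{loc}}$, such an orbit eventually enters $W^s_{\mathrm{loc}}$, and must therefore coincide with the $\gamma>0$ branch of $W^s_{\mathrm{loc}}$ and hence with the constructed orbit, up to a translation in $t$. The main technical task is the Jacobian calculation and the identification of the upper-left $2\times 2$ block with the linearisation of~\eqref{eq:heisenberg:limit}; once this is in place the argument becomes a fairly routine application of the stable manifold theorem.
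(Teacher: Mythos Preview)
Your proof is correct and follows essentially the same approach as the paper: pass to the $(\alpha,\beta,\gamma)$ variables, identify the fixed point $(-\tfrac34,\beta_*,0)$, observe that the linearisation is block upper triangular with the $2\times2$ block from the limit system (two eigenvalues with positive real part) and a third eigenvalue $-1/(4\beta_*)<0$, and then apply the stable manifold theorem together with transversality of the stable direction to $\{\gamma=0\}$ to obtain a unique flowline in $\{\gamma>0\}$. The paper's proof is slightly terser---it infers transversality directly from the fact that $\{\gamma=0\}$ is the tangent plane to the $2$-dimensional unstable manifold rather than computing an explicit stable eigenvector---but the argument is otherwise the same.
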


\begin{proof}
In terms of the variables $(\alpha, \beta, \gamma)$, such a shrinker
end corresponds to a solution (with $\gamma > 0$) of \eqref{eq:heisenberg}
converging to the fixed point $(-\tfrac34, \beta_*, 0)$.
Compared with the fixed point in the limit system, the linearisation of \eqref{eq:heisenberg} 
has an additional eigenvalue $-\frac{1}{4\beta_*}$.
Since this eigenvalue is negative, while the other two have positive real parts
as before, this fixed point has a 1-dimensional stable manifold. It is transverse
to the plane $\gamma = 0$ (since that is the tangent plane to the unstable
manifold), so one half of it is a unique flowline with $\gamma > 0$ asymptotic to
the fixed point.
\end{proof}

It is not difficult to see that $(-\tfrac34, \beta_*, 0)$ is, in fact,
the only fixed point of the full shrinker system~\eqref{eq:heisenberg}.

\subsection{Forward-evolution in the limit system}
\label{ss:fe:shrinker:limit}
As a step towards proving Theorem \ref{thm:tri:shrinkers}, we first solve the
easier problem of determining all end behaviours in the shrinker limit system
\eqref{eq:heisenberg:limit}.

\begin{theorem}
\label{thm:limit_tri}
For $\lambda<0$, there exist  (up to time translation) exactly two forward-complete solutions to
\eqref{eq:heisenberg:limit}:
\begin{enumerate}
\item A unique solution, defined for all $t \in \R$,  with $(\alpha, \beta) \to (-\frac34, \beta_*)$ as $t \to -\infty$ and $\beta \to \infty$, $\alpha \to -\frac23$
(in fact, $\beta(\alpha + \frac23)$ remains bounded) as $t \to \infty$.
\item The constant solution $\alpha = -\frac34$, $\beta = \beta_*$ corresponding to the unique fixed point~\eqref{eq:heisenberg_fixed}.
\end{enumerate}
Any other solution has finite forward lifetime, and $\alpha \to \infty$ at its
extinction time.
\end{theorem}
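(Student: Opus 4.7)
My plan is to carry out a qualitative phase-plane analysis of \eqref{eq:heisenberg:limit} in the half-plane $\{\beta > 0\}$, combining the local dynamics at the unstable spiral $P_* = (-3/4, \beta_*)$ with a contraction/fixed-point argument at the asymptotic state $(\alpha, \beta) = (-2/3, \infty)$, which corresponds to the direction in which the $\alpha$-nullcline $N := \{\alpha' = 0\}$ escapes to infinity.

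The first ingredient is finite-time blow-up in the forward-invariant region $\{\alpha \geq 0\}$: a direct estimate gives $\alpha' \geq -2\lambda\beta(\alpha+2/3) + \alpha^2/\beta$, and AM-GM combined with $\alpha(1+2\alpha) \geq 2\alpha^2$ (for $\alpha \geq 1/2$) yields $\alpha' \geq 2\sqrt{-2\lambda}\,\alpha^{3/2}$, which integrates to $\alpha \to +\infty$ in finite time. Next, I would study the two nullclines, namely the vertical line $\{\alpha = -3/4\}$ where $\beta' = 0$ and the curve $N$, which by an explicit computation has two branches (one in $(-1/2, 0)$ and one in $(-\infty, -2/3)$, both escaping to $\beta = \infty$ at their endpoints, and the latter passing through $P_*$). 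A region-by-region sweep of the signs of $(\alpha', \beta')$ shows that the line $\{\alpha = -2/3\}$ is a one-way door (since $\alpha'|_{\alpha = -2/3} = \tfrac{1}{9\beta} > 0$, orbits can only exit the trapping region $T := \{\alpha < -2/3\}$ outward); hence every non-fixed orbit either remains in $T$ for all forward time or exits $T$ at a finite instant and then enters $\{\alpha \geq 0\}$ in finite time, hence blows up.

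The heart of the argument is existence and uniqueness of the special forward-complete orbit inside $T$. For existence, I would use a topological shooting: parametrise orbits by the angular phase at which they first cross a small circle around $P_*$; the subset of phases whose orbits exit $T$ is open and, by the local spiral structure, certainly non-empty. The complementary closed set is non-empty either by a direct test-orbit calculation or by constructing an invariant wedge above $N$ that shields orbits from exit. Any orbit in the complementary set has $\alpha$ monotone decreasing in $T$, with no finite $\omega$-limit set available (since $P_*$ is an unstable spiral and cannot be approached in forward time), forcing $\beta \to \infty$ and $\alpha \to -2/3$. For uniqueness and the refined asymptotics, set $\delta := \alpha + 2/3 - \tfrac{1}{18\lambda}\beta^{-2}$: a direct computation yields
\begin{equation*}
\delta' = -2\lambda\beta\,\delta + O(\delta/\beta) + O(1/\beta^3).
\end{equation*}
Since $-2\lambda\beta > 0$ is forward-repelling, the condition $\delta(t) \to 0$ selects $\delta$ uniquely at any reference time via the variation-of-constants formula (a classical stable-manifold-at-infinity argument), yielding simultaneously $\alpha + 2/3 \sim \tfrac{1}{18\lambda}\beta^{-2}$ (and hence $\beta(\alpha + 2/3) \to 0$, bounded as claimed) and uniqueness of the orbit up to time translation. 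Its extension to $t \to -\infty$ is automatic since an orbit remaining bounded in $T$ must spiral into $P_*$ backwards in time.

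The principal obstacle I anticipate is the sweeping step: one must rule out pathological behaviours near $N$ and confirm that the complementary closed set at the shooting circle really is a single orbit (as opposed to a positive-measure set). Identifying a clean Dulac-type monotone function on $T$, or alternatively using the explicit local normal form at the unstable spiral $P_*$ together with the one-way exit structure at $\alpha = -2/3$, should close this gap.
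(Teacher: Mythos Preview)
Your overall plan is sound and closely parallels the paper's, but the trapping region you choose is too coarse, and this is exactly where your acknowledged gap lies. You take $T = \{\alpha < -\tfrac23\}$; the divergence of the vector field is $(1 + 4\alpha - 4\lambda\beta^2)/(2\beta)$, which is \emph{not} of one sign on all of $T$ (it becomes negative when $\alpha$ is very negative and $\beta$ is small), so no constant Dulac function works there, and finding a non-constant one is not obviously easier than what the paper does. The paper instead carves out an explicit unbounded convex polygon $R \subset \{-1 \leq \alpha \leq -\tfrac{21}{32}\}$ containing $P_*$, checks edge-by-edge that $R$ cannot be re-entered, and verifies that the divergence is strictly positive on $R$ so that Bendixson--Dulac rules out closed orbits. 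A separate lemma then shows that any orbit in $\{\alpha < -\tfrac{21}{32}\} \setminus R$ must reach $\alpha = -\tfrac{21}{32}$ in finite time (this also disposes of the possibility $\alpha \to -\infty$ or $\beta \to 0$ inside $T$, which your sketch glosses over). With $R$ in hand, Poincar\'e--Bendixson forces any non-constant orbit remaining in $R$ to have $\beta$ unbounded, and a further short estimate (bounding $\beta(-\tfrac32\alpha - 1)$) pins down $\alpha \to -\tfrac23$.

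A couple of smaller points. Your description of the nullcline is slightly off: the branch in $(-\tfrac12,0)$ is bounded (it meets $\beta = 0$ at both endpoints), not escaping to $\beta = \infty$; only the branch in $(-\infty,-\tfrac23)$ is unbounded. Also, the claim that orbits in the ``complementary set'' have $\alpha$ monotone decreasing cannot be right near the spiral $P_*$; the paper does not attempt any such monotonicity statement. On the positive side, your $\delta$-variable computation is a clean and explicit way to obtain existence, uniqueness and the sharper asymptotic $\beta(\alpha+\tfrac23)\to 0$; the paper only alludes to this step as ``the same sort of irregular singular initial value problem'' as in the AC-end analysis, noting it reduces to a scalar ODE, so here you have actually spelled out more than the paper does.
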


When we move on to studying the full system \eqref{eq:heisenberg}, where we
will consider $\gamma$ to be small but non-zero, we will use the actual
statement of this theorem only as a heuristic. In fact, the only statement
from this subsection that we will explicitly refer to later is
Lemma \ref{lem:heisenberg_trap}(iii). 

However, 
a number of statements we prove here in the case $\gamma=0$ also hold when $\gamma$ is small.
The advantage is that the proofs of these statements are easier to follow when $\gamma = 0$,
and hence they serve as a models/warm-ups for the arguments needed to prove Theorem~\ref{thm:tri:shrinkers}.
Those parts of the proof of Theorem \ref{thm:limit_tri} that are not relevant for the proof of
Theorem \ref{thm:tri:shrinkers} will be outlined more briefly. 

\begin{remark}
Any solution of~\eqref{eq:heisenberg_full} satisfies
\[
\left(\log{\frac{\check y}{\check x}}\right) ' = \frac{3}{2\beta} \left(\alpha + \frac{2}{3} \right) = \beta\left(\alpha+\frac{2}{3}\right)  \frac{3}{2\beta^2}.
\]
The fact that in (i) above,  $\alpha \to - \frac{2}{3}$, $\beta \to \infty$ with $\beta(\alpha+\frac23)$ being bounded therefore implies (since $\beta' \to \frac{1}{6}$ and so $\beta$ grows linearly in $t$) that the
corresponding solutions to~\eqref{eq:heisenberg_full} have
$\frac{\check y}{\check x}$ converging to some $\ell \in (0,\infty)$
as $t \to \infty$.
They are all related by the 
scaling symmetry described in Remark~\ref{rmk:scaling_reduction}
(which changes $\ell$).
(i) can therefore be interpreted as the essentially unique AC end solution
to~\eqref{eq:heisenberg_full}.
This AC end solution is also complete backwards in $t$, where it
has a cusp-type end modelled on~\eqref{eq:fowdar} as $t \to -\infty$. 
It admits two distinct geometric interpretations.
\begin{itemize}[left=0em]
\item It defines a complete gradient Laplacian shrinker on $\R \times \iwa^6$ (where $\iwa^6$ denotes the Iwasawa $6$-manifold mentioned in Section~\ref{ss:fc_non_ac})
with two different types of end: a finite-volume `cusp-type' end as $t \to -\infty$ and a Euclidean volume growth end as $t \to +\infty$. 
\item
For each $\ell \in (0, \infty)$, we can form a broken flowline in the system
\eqref{eq:heisenberg_full} by joining the
solution from Proposition \ref{prop:fowdar_end}
to the AC solution scaled so that
$\frac{\check y}{\check x} \to \ell \in (0,\infty)$.
On the other hand, we explained in \S\ref{subsec:end_sivp} that there should
exist a unique AC end solution to the original shrinker system
\eqref{eq:ODE:tau2'} for each fixed $\ell$. When $\ell$ is large, a large
portion of this solution should be approximated by the broken flowline.
\end{itemize}
\end{remark}

\begin{figure}
\begin{center}
\includegraphics[width=17cm]{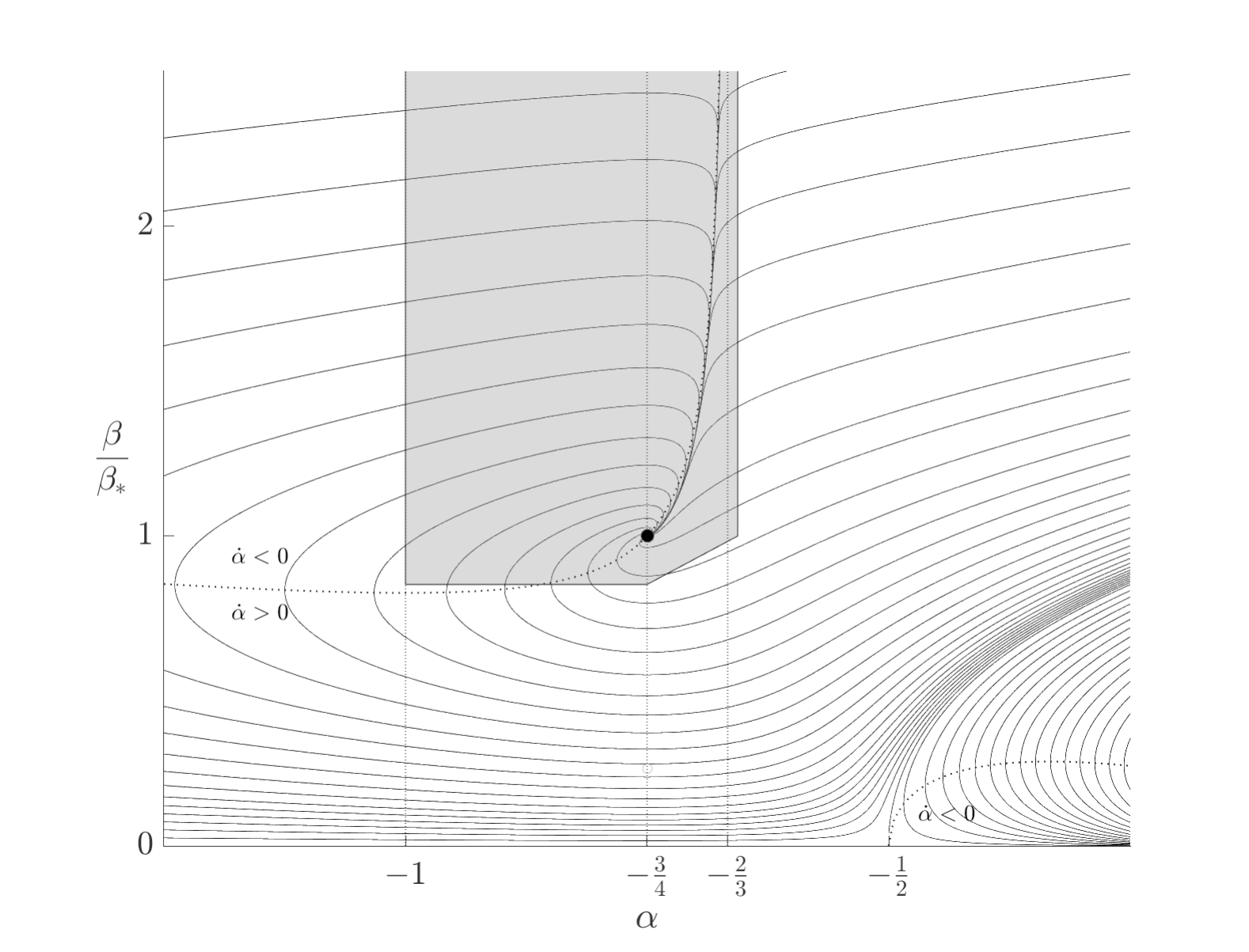}
\caption{Phase diagram of the shrinker limit system~\eqref{eq:heisenberg:limit}}
\label{fig:heisenberg}
\end{center}
\end{figure}

The statement of Theorem \ref{thm:limit_tri} is tied up with the following
picture of the global dynamics of the limit
system \eqref{eq:heisenberg:limit}, which is immediately apparent from its phase diagram (see Figure \ref{fig:heisenberg}). The AC solution in (i)
acts as a barrier: nearby initial conditions on one side of it (to the right of it in our figure) lead to solutions where immediately
$\alpha \to \infty$ monotonically; nearby initial conditions on the other side of the AC solution instead
lead to solutions which first `take a detour around the fixed point', \ie 
there is a regime where %
$\alpha$ and $\beta$ both decrease
for some time, 
before eventually $\alpha \to \infty$ monotonically.
 
To rigorously capture the key aspects of this long-term behaviour, our main tool
is an unbounded polygonal region $R$---shown shaded in Figure \ref{fig:heisenberg}---that contains the fixed point, does not allow
reentry, and contains no closed orbits (besides the fixed point).
There is a certain amount of slack in choosing this region $R$; 
before giving a precise definition of $R$ let us first sketch its key features.

The curve in the $(\alpha,\beta)$-half-plane (a half-plane because
$\beta \ge 0$) where $\alpha'=0$ 
is smooth, but consists of two components. 
The region where $\alpha'>0$ is connected, but there are two components 
where $\alpha'<0$, one bounded and the other unbounded.
We focus on the unbounded component, because  the
fixed point lies on its boundary.
For large $\beta$ the maximum
value where $\alpha' \leq 0$ is close to $-\frac23$, 
and it approaches $-\frac23$ as $\beta \to \infty$. 
Then $\alpha' > 0$ when $\alpha = -\frac23$ (for any value of $\beta$), and an infinite portion of the ray
$\alpha=-\frac23$ is a good choice for one edge of $R$.
With an eye toward making the argument work for
small but non-zero $\gamma$, we instead take one edge to be (an infinite portion of) the ray at a slightly larger value of $\alpha$, namely at
$\alpha = -\frac{21}{32}$.
On the other hand, the minimum of $\beta$ on the unbounded component of $\alpha'\le0$ occurs at
$\alpha = -1$, where $\beta = \sqrt{-\frac{3}{4\lambda}} = \sqrt{\frac23}\beta_*$; 
as another edge for $R$ we take the infinite portion of the ray with $\alpha = -1$ that starts a little
above this value of $\beta$.

Choosing the bottom part of the boundary of $R$ is a little more fiddly,
and requires some control on $\alpha'$ relative to $\beta'$ in a suitable region.
Sufficient control for our later purposes is established in the following lemma. 
\begin{lemma}
For $\beta < \beta_*$ and $-\frac{3}{4} < \alpha < -\tfrac{21}{32}$ any solution $(\alpha,\beta)$ of the shrinker limit system satisfies
\begin{equation}
\label{eq:slope}
(8\beta_*\alpha-5\beta)' > 0 .
\end{equation}
\end{lemma}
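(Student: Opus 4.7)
My plan is to reduce the claim to an explicit polynomial positivity statement, then dispose of that by splitting into two cases according to where the minimum is attained. Substituting the right-hand sides of \eqref{eq:heisenberg:limit} into $(8\beta_*\alpha - 5\beta)' = 8\beta_*\alpha' - 5\beta'$, and using the identity $-16\lambda\beta_* = 18/\beta_*$ (which follows from $\beta_*^2 = -9/(8\lambda)$), gives
\[
(8\beta_*\alpha - 5\beta)' = \frac{18\beta}{\beta_*}(\alpha + \tfrac{2}{3}) + \frac{4\beta_*\alpha(1+2\alpha)}{\beta} - 10\alpha - \tfrac{15}{2}.
\]
Setting $u := \beta/\beta_*$ and multiplying through by the positive quantity $u$, the claimed strict inequality is equivalent to positivity of
\[
P(\alpha, u) := 8\alpha^2 + (18u^2 - 10u + 4)\alpha + 12u^2 - \tfrac{15u}{2}
\]
on the open rectangle $R := (-\tfrac{3}{4}, -\tfrac{21}{32}) \times (0, 1)$.

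I would view $P$ as a quadratic in $\alpha$ with positive leading coefficient $8$. Its $\alpha$-minimiser is $\alpha_{\mathrm{crit}}(u) = -(9u^2 - 5u + 2)/8$, and a short calculation shows $\alpha_{\mathrm{crit}}(u) \in [-\tfrac{3}{4}, -\tfrac{21}{32}]$ precisely for $u \in [u_*, 1]$, where $u_* := (5+\sqrt{142})/18 \approx 0.94$ is the positive root of $36u^2 - 20u - 13 = 0$. For $u \in (0, u_*]$ the critical $\alpha$ lies to the right of our interval, so $P(\cdot, u)$ is decreasing on $[-\tfrac{3}{4}, -\tfrac{21}{32}]$ and the minimum on the interval is at the right endpoint. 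Direct evaluation gives
\[
P(-\tfrac{3}{4}, u) = \tfrac{3}{2}(1 - u^2), \qquad P(-\tfrac{21}{32}, u) = \tfrac{3}{128}(8u^2 - 40u + 35),
\]
both manifestly positive on $(0, 1)$ (the quadratic $8u^2 - 40u + 35$ has smallest root $(10 - \sqrt{30})/4 > 1$).

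For $u \in (u_*, 1)$ the minimum lies in the interior of the $\alpha$-interval and equals $-\Delta(u)/32$, where $\Delta$ is the discriminant of $P$ in $\alpha$. Since the fixed point $(\alpha, u) = (-\tfrac{3}{4}, 1)$ forces $P(-\tfrac{3}{4}, 1) = 0$, $u = 1$ is automatically a root of $\Delta$, and polynomial division yields
\[
\Delta(u) = 4(u-1)\bigl(81u^3 - 9u^2 - 44u - 4\bigr).
\]
On $(u_*, 1)$ the first factor is negative, so it remains to verify that the cubic $c(u) := 81u^3 - 9u^2 - 44u - 4$ is positive on $[u_*, 1]$. This I would do in two steps: (i) $c'(u) = 243u^2 - 18u - 44$ has positive root $(1 + \sqrt{133})/27 \approx 0.46 < u_*$, so $c$ is strictly increasing on $[u_*, 1]$; (ii) polynomial division of $c$ by $36u^2 - 20u - 13$ (the minimal polynomial of $u_*$) gives $c(u_*) = (21u_* + 36)/4 > 0$. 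Hence $\Delta \leq 0$ on $(u_*, 1)$, with equality only at $u = 1$, which is excluded by $\beta < \beta_*$.

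The main (and essentially only) technical step is the discriminant factorisation, but its form is forced by the location of the fixed point at $(\alpha, u) = (-\tfrac{3}{4}, 1)$, so no creativity is required; the rest is routine polynomial arithmetic.
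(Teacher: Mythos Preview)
Your proof is correct, but it takes a genuinely different route from the paper. You reduce the inequality to the positivity of an explicit two-variable polynomial $P(\alpha,u)$ on a rectangle and then carry out a brute-force verification via the discriminant in~$\alpha$, factoring $\Delta(u)$ and checking the signs of the resulting cubic. All the algebraic steps check out, including the factorisation $\Delta(u) = 4(u-1)(81u^3 - 9u^2 - 44u - 4)$ and the evaluation $c(u_*) = (21u_* + 36)/4$.

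The paper's argument is structurally different and more conceptual. Rather than treating the whole expression as a polynomial in $(\alpha,u)$, it first observes that $\partial\alpha'/\partial\beta < 0$ on the region (this amounts to a short sign check on $-4\alpha^2 + 7\alpha + 6$), so that for $\beta < \beta_*$ the value of $\alpha'$ is bounded below by its value at $\beta = \beta_*$. That value factors cleanly as $\beta_*^{-1}(\alpha+2)(\alpha+\tfrac34)$, and then
\[
(8\beta_*\alpha - 5\beta)' > 8(\alpha+2)(\alpha+\tfrac34) - 10(\alpha+\tfrac34) = 8\bigl(\alpha+\tfrac34\bigr)^2 > 0.
\]
This explains \emph{why} the inequality holds: the combination $8\beta_*\alpha' - 5\beta'$ was chosen precisely so that, after bounding $\alpha'$ from below by its fixed-point value, the result collapses to a perfect square. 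Your approach, by contrast, treats the choice of coefficients $8\beta_*$ and $5$ as given and verifies positivity mechanically; it works, but it does not illuminate the construction and requires appreciably more computation (the discriminant factorisation, the location of $u_*$, and the cubic sign analysis). On the other hand, your method is completely self-contained and would adapt to other coefficient choices without needing a new idea.
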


\begin{proof}
First note that $\beta < \beta_*$ implies
\begin{align*}
\frac{d}{d\beta}\left(-2\lambda\beta(\alpha+\tfrac{2}{3}) +
\frac{\alpha(1+2\alpha)}{2\beta}\right) &= -2\lambda(\alpha+\tfrac{2}{3}) -
\frac{\alpha(1+2\alpha)}{2\beta^2} \\ &<
-2\lambda(\alpha+ \tfrac{2}{3}) + \frac{4\lambda}{9}\alpha(1+2\alpha)
= -\frac{2\lambda}{9}(-4\alpha^2+7\alpha+6).
\end{align*}
Because $-4\alpha^2+7\alpha+6$ is increasing for all negative $\alpha$ its value on the interval
$[-\tfrac{2}{3}, -\tfrac{21}{32}]$ is bounded above by its value at
$-\tfrac{21}{32}$, which is $-\frac{81}{256}$. Thus $\frac{d\alpha'}{d\beta} < 0$ in this region, so $\alpha'$ is bounded below
 by the value at $\beta = \beta_*$, \ie $\alpha'$ at the point $(\alpha,\beta)$ is greater than $\alpha'$ at the point $(\alpha,\beta_*)$, and the latter
is equal to
$\beta_*^{-1}(\alpha^2 + \frac{11}{4}\alpha + \frac{3}{2})
= \beta_*^{-1}(\alpha+2)(\alpha+ \frac{3}{4}) > 0$. %
Thus
\[ (8\beta_*\alpha-5\beta)' \ge 
8(\alpha+2)(\alpha+ \tfrac{3}{4}) - 10(\alpha+\tfrac{3}{4}) =
8(\alpha + \tfrac{3}{4})^2 \geq 0 . \qedhere \]
\end{proof}

We now give the precise definition of the set $R$. 
Let
\begin{equation}
\label{eq:R:def}
R := \left\{(\alpha,\beta) :
\; -1 \leq \alpha \leq -\tfrac{21}{32}, \; \beta \geq \tfrac{27}{32}\beta_*,
\; 3\beta - 5\beta_*\alpha \geq \tfrac{201}{32}\beta_* \right\}. 
\end{equation}
Geometrically, $R$ is a closed unbounded convex polygonal domain with two parallel half-infinite edges and two finite edges, 
with corners at $\left(-1, \frac{27}{32}\beta_*\right)$,
$\left(-\frac{3}{4}, \frac{27}{32}\beta_*\right)$ and
$\left(-\frac{21}{32}, \beta_*\right)$.

\begin{remark}
\label{rmk:R:eternal}
Note that $\beta$ is bounded away from $0$ and $\alpha$ is bounded in $R$. Hence it follows from Remark~\ref{rmk:limit_lifetime} 
that no solution of~\eqref{eq:heisenberg:limit} can become extinct (either forward or backward in $t$) while it remains in $R$. 
\end{remark}

\begin{lemma}
\label{lem:heisenberg_trap}
In the limit system~\eqref{eq:heisenberg:limit},
the region $R$ has the following properties.
\begin{enumerate}
\item
Going forwards in time, $R$ cannot be entered from the outside. 
\item
$R$ contains no nontrivial closed orbits. 
\item
The only solution that remains in $R$ for all future time and has $\beta$
bounded above is the constant solution at the fixed point
$(-\frac{3}{4}, \beta_*)$.
\end{enumerate}
\end{lemma}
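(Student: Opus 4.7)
I will prove the three claims in turn, using edge-by-edge checks for (i), Bendixson's criterion for (ii), and Poincar\'e--Bendixson for (iii).

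For (i), I verify that $(\alpha',\beta')$ does not point strictly into $R$ at each boundary point. On $\alpha=-1$, the condition $\alpha'\le 0$ reduces to $\beta^2\ge\tfrac23\beta_*^2$ (using $-4\lambda\beta_*^2=\tfrac92$), which holds in $R$ since $(\tfrac{27}{32})^2>\tfrac23$. On $\alpha=-\tfrac{21}{32}$, direct substitution yields $\alpha'=-\tfrac{\lambda\beta}{48}+\tfrac{105}{1024\beta}>0$, as required. On the bottom edge $\beta=\tfrac{27}{32}\beta_*$, $\beta'=2(\alpha+\tfrac34)\le 0$ when $\alpha\le-\tfrac34$. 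The most delicate edge is the diagonal, where the outward normal is proportional to $(5\beta_*,-3)$ so I need $5\beta_*\alpha'-3\beta'\ge 0$. Here the preceding inequality \eqref{eq:slope} gives $8\beta_*\alpha'-5\beta'>0$; its level-curve slope $\tfrac{8\beta_*}{5}$ does not quite match the diagonal slope $\tfrac{5\beta_*}{3}$, but combined with $\beta'=2(\alpha+\tfrac34)\ge 0$ (which holds since $\alpha\ge-\tfrac34$ on the diagonal) and the algebraic identity
\[
8\bigl(5\beta_*\alpha'-3\beta'\bigr)=5\bigl(8\beta_*\alpha'-5\beta'\bigr)+\beta',
\]
the slope mismatch is exactly compensated and $5\beta_*\alpha'-3\beta'>0$ in the interior of the diagonal. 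This is the main technical subtlety, and it is why $R$ is designed so that its diagonal lies entirely in $\alpha\ge-\tfrac34$. The three finite corners of $\partial R$ are then handled by direct pointwise evaluation of $(\alpha',\beta')$, verifying in each case that the flow vector lies outside the intersection cone of the two adjacent boundary constraints.

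For (ii), I compute
\[
\mathrm{div}(\alpha',\beta')=-2\lambda\beta+\frac{1+4\alpha}{2\beta}=\frac{-4\lambda\beta^2+1+4\alpha}{2\beta}.
\]
Using $\beta\ge\tfrac{27}{32}\beta_*$ and $\alpha\ge-1$ in $R$, the numerator is bounded below by $\tfrac{729}{1024}\cdot\tfrac92-3=\tfrac{417}{2048}>0$, so the divergence is strictly positive throughout $R$. Since $R^\circ$ is convex and hence simply connected, Bendixson's theorem rules out any nontrivial closed orbit: by (i) such an orbit could not touch $\partial R$ (the strict inequalities established on the open edges preclude tangential contact, and the corner computations rule out passage through a corner), so the orbit would sit in $R^\circ$ and its interior $D$ would also lie in $R^\circ$, making $\iint_D\mathrm{div}>0$ incompatible with $\oint_\gamma F\cdot n=0$.

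For (iii), any orbit staying in $R$ with $\beta\le B$ lies in the compact set $R\cap\{\beta\le B\}$, so its $\omega$-limit set $L$ is nonempty, compact, connected and forward-invariant. By Poincar\'e--Bendixson, $L$ is either a periodic orbit, a single fixed point, or a polycycle built from fixed points and heteroclinic/homoclinic orbits. Part (ii) excludes periodic orbits, and the only fixed point in $R$ is the hyperbolic repelling spiral $p^*=(-\tfrac34,\beta_*)$, whose stable manifold is trivial. Hence no orbit other than $p^*$ itself can have $p^*$ as an $\omega$-limit, and $p^*$ cannot participate in any polycycle. Thus $L=\{p^*\}$, and repulsion forces the solution itself to be $p^*$.
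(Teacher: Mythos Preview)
Your proof is correct and follows essentially the same strategy as the paper: edge-by-edge outward-flow checks for (i), Bendixson--Dulac for (ii), and Poincar\'e--Bendixson combined with the repelling nature of the fixed point for (iii). The only notable difference is the algebraic decomposition on the diagonal edge: you use $8(5\beta_*\alpha'-3\beta')=5(8\beta_*\alpha'-5\beta')+\beta'$ together with $\beta'\ge 0$, whereas the paper uses $5\beta_*\alpha'-3\beta'=\tfrac{3}{5}(8\beta_*\alpha'-5\beta')+\tfrac{1}{5}\beta_*\alpha'$ together with $\alpha'>0$; both are equally valid minor variants of the same idea.
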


\begin{proof}
We first prove (i). 
First we deal with the two edges on which $\alpha$ is constant. 
On the edge $\alpha = -1$,  $\beta > \frac{27}{32}\beta_*$ we have
\[ \alpha' =
\frac{2 \lambda \beta}{3}+ \frac{1}{2\beta}.
\]
For $\beta>0$ this is a strictly decreasing function of $\beta$ with image $\R$. 
Its value at $\beta = \frac{27}{32} \beta_*$ is $- \frac{139}{4096 \beta_*}$. 
Hence on the edge $\alpha = -1$,  $\beta > \frac{27}{32}\beta_*$, $\alpha' \le - \frac{139}{4096 \beta_*}<0$, 
so the flow points out of~$R$.
Along the edge with $\alpha = -\frac{21}{32}$, (for any value of $\beta$) both terms in the expression for $\alpha'$ are positive
because $\alpha \in (-\frac{2}{3}, -\frac{1}{2})$.
So along this edge the flow again points out of~$R$.

On the edge with constant $\beta$, \ie $-1 \leq \alpha \leq -\frac{3}{4}$, $\beta = \frac{27}{32}\beta_*$
the fact that $\alpha \leq -\frac{3}{4}$ ensures $\beta' < 0$, except at the endpoint $\alpha=-\frac{3}{4}$ where $\beta'=0$, but $\alpha'>0$.
So again the flow points out of $R$ along this edge. 

Finally, because the remaining edge $-\frac{3}{4} \leq \alpha \leq -\frac{21}{32}$,
$3\beta = (5\alpha +\frac{201}{32})\beta_*$ is contained in the region
where \eqref{eq:slope} holds, along this edge we get
\[
(5\beta_*\alpha - 3\beta)'  = \frac{3}{5} (8 \beta_* \alpha - 5 \beta)' + \frac{1}{5} \beta_* \alpha' \ge \frac{24}{5} \left(\alpha+\tfrac{3}{4}\right)^2 +  \frac{1}{5} \beta_* \alpha ' >0\,
\]
because $\alpha'$ is strictly positive on it, proving (i).

For (ii) we use the Bendixson--Dulac criterion to rule out nontrivial closed orbits. The vector field
\[ V:=
\left(-2\lambda\beta(\alpha+\tfrac{2}{3}) + \frac{\alpha(1+2\alpha)}{2\beta}, \;
2\!\left(\alpha+\tfrac{3}{4}\right) \right) \]
whose flow defines the shrinker limit system has
\[ \Div V = \frac{1+4\alpha-4\lambda\beta^2}{2\beta} . \]
Every point in $R$  satisfies $\alpha \ge -1$ and
$-\lambda \beta^2 \ge -\lambda \frac{27^2}{32^2} \beta_*^2
= \frac{3^8}{2^{13}}>\frac{4}{5}$. 
The divergence of $V$ is certainly positive when $\alpha \ge -1$ and $-\lambda\beta^2 > \frac{4}{5}$,
and so it is positive on~$R$.
Hence since $R$ is simply connected, the Divergence Theorem rules out the existence of nontrivial (\ie not fixed points) closed orbits contained in $R$.

By Remark~\ref{rmk:R:eternal}, a solution that remains in $R$ must have infinite lifetime.
(iii) now follows from the Poincar\'e--Bendixson theorem:
if $(\alpha,\beta)$ stays in a bounded region in $R$ then its limit set
must be either a closed orbit (which is impossible by~(ii)) or the unique fixed
point. In the latter case the solution must be constant since the fixed point is
strictly unstable.
\end{proof}

\begin{lemma}
\label{lem:incomplete_limit}
Any solution of~\eqref{eq:heisenberg:limit} that starts in
$T := (-\infty, -\frac{21}{32}) \times (0, \infty) \setminus R$
reaches $\alpha = -\frac{21}{32}$ in finite time.
\end{lemma}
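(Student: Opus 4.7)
The core of the argument will be a combination of the Poincar\'e--Bendixson theorem with the Poincar\'e index theorem, reinforced by quantitative control near the ``bad'' boundaries of $T$. The broad strategy is: any trajectory starting in $T$ cannot enter $R$ (Lemma \ref{lem:heisenberg_trap}(i)), so the only forward-escape routes out of $T$ are through the right boundary $\alpha = -\tfrac{21}{32}$ or by reaching the ``boundary at infinity'' of the phase space (i.e.\ $\beta\to 0$, $\alpha\to -\infty$ or $\beta\to \infty$); I would rule out the latter by direct estimates and then appeal to Poincar\'e--Bendixson to force exit through the former in finite time.

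First I would verify the two topological facts needed. Since $R$ is a closed convex set extending to $\beta = +\infty$ and contained in the closed strip $[-1, -\tfrac{21}{32}]$, removing it from the open half-strip $(-\infty, -\tfrac{21}{32}) \times (0,\infty)$ produces the simply connected open set $T$. Also $T$ contains no fixed points of \eqref{eq:heisenberg:limit}: the unique fixed point $(-\tfrac34, \beta_*)$ lies in the interior of $R$ (all three defining inequalities of $R$ hold strictly there). By the Poincar\'e index theorem, no closed orbit can be contained in $T$.

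The bulk of the work is then to rule out that a trajectory remains in $T$ forever by escaping to infinity, or leaves $T$ in finite time via $\beta \to 0$ or $|\alpha|\to\infty$. If $\beta(t)\to 0$ with $\alpha$ remaining in $T$, note that $\alpha(1+2\alpha) \geq \tfrac{21}{32}\cdot\tfrac{10}{32} > 0$ for $\alpha \leq -\tfrac{21}{32}$, so the term $\tfrac{\alpha(1+2\alpha)}{2\beta}$ in $\alpha'$ forces $\alpha' \gtrsim c/\beta$; combined with the linear lower bound $-\beta' \geq 1/2$ valid whenever $\alpha \leq -1$ (and a similar bound in the strip $[-1, -\tfrac{21}{32})$ of $T$ using the fact that $\beta$ is bounded above there by the slant inequality $\beta < \beta_*$), this makes $\int \alpha'\,dt$ divergent, contradicting $\alpha \leq -\tfrac{21}{32}$. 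For the escape $\alpha\to -\infty$ I would use the Lyapunov function $V := \beta^2 - \tfrac{2}{|\lambda|}\alpha$, which satisfies
\[ V' \;=\; \frac{\beta}{3} \;-\; \frac{\alpha(1+2\alpha)}{|\lambda|\beta}. \]
When $\alpha$ is sufficiently negative relative to $\beta$, the quadratic growth of $-\alpha(1+2\alpha) \sim -2\alpha^2$ dominates $\beta/3$, making $V' < 0$; but $V \to +\infty$ as $\alpha\to -\infty$, giving a contradiction. The same estimate, combined with $\beta' = 2(\alpha+\tfrac34) < -2(A-\tfrac34) < 0$ when $\alpha \leq -A$, prevents $\beta$ from escaping to $+\infty$ simultaneously with $\alpha\to-\infty$. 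Together these confine any forward-complete trajectory in $T$ to a compact subset, and Poincar\'e--Bendixson then forces its $\omega$-limit to be a fixed point or a closed orbit in $\overline T$, both of which are excluded by the topological step.

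The main obstacle is making the Lyapunov argument uniform over all of~$T$: the inequality $V' < 0$ holds only once $|\alpha|$ is sufficiently large compared with $\beta^2$, so some care is needed to couple the estimate with the monotonicity of $\beta$ coming from $\beta' = 2(\alpha+\tfrac34)$ in order to exclude trajectories whose $\alpha$ and $\beta$ drift to $-\infty$ and $+\infty$ together in a balanced way. Once these quantitative bounds are in place, the topological/Poincar\'e--Bendixson conclusion is immediate.
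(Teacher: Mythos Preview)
Your strategy via Poincar\'e--Bendixson and the index theorem is sound in outline, and genuinely different from the paper's proof, but several of the supporting estimates are either incomplete or misstated, and you end up working much harder than necessary.

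First, the boundedness step can be done far more simply than with your Lyapunov function. Observe that in $T$ the condition $\beta > \beta_*$ forces $\alpha < -1$ (since for $\alpha \in [-1,-\tfrac{21}{32}]$ and $\beta \geq \beta_*$ one is in $R$), and then $\beta' = 2(\alpha+\tfrac34) < -\tfrac12$; so $\beta$ is bounded above by its initial value or $\beta_*$. With $\beta \leq \beta_0$, the two terms of $\alpha'$ satisfy $|{-2\lambda\beta(\alpha+\tfrac23)}| \leq 2|\lambda|\beta_0|\alpha|$ while $\tfrac{\alpha(1+2\alpha)}{2\beta} \sim \tfrac{\alpha^2}{\beta}$ for $\alpha$ very negative, so $\alpha' > 0$ once $|\alpha|$ is large enough. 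This immediately bounds $\alpha$ from below---no Lyapunov function needed, and the ``main obstacle'' you identify dissolves. Your $\beta\to 0$ argument also has a gap: the bound $-\beta' \geq \tfrac12$ fails for $\alpha \in (-\tfrac34, -\tfrac{21}{32})$, where $\beta' > 0$. The paper instead bounds $\beta$ away from $0$ by showing $\tfrac{\alpha^2}{\beta}$ satisfies a differential inequality forcing it to stay bounded.

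Second, once boundedness is in hand, the paper's direct finish is simpler than Poincar\'e--Bendixson: $\alpha$ cannot stay below $-\tfrac45$ forever (else $\beta' \leq -\tfrac{1}{10}$ drives $\beta$ to $0$), and in the compact set $T \cap \{\alpha \geq -\tfrac45\}$ one checks $\alpha'$ is bounded below by a positive constant, so $\alpha$ reaches $-\tfrac{21}{32}$ in finite time. A decisive practical advantage of this elementary route is that it carries over to the full three-dimensional system~\eqref{eq:heisenberg} (Proposition~\ref{prop:trap}), where Poincar\'e--Bendixson is unavailable.
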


\begin{proof}
We first establish that $-\alpha$ and $\beta$ are bounded above. To obtain an upper bound $\beta_0$ on $\beta$, just note that at any point in $T$ with
$\beta > \beta_*$ we have $\alpha < -1$, and hence $\beta' < - \frac{1}{2}$. 
Given an upper bound $\beta_0$ for $\beta$, by
\eqref{eq:heisenberg:limit:alpha} there is
certainly $C > 0$ such that $\alpha' > 0$ whenever 
$\alpha < -C$ and $\beta < \beta_0$.
Thus $\alpha$ cannot keep decreasing beyond $-C$ and so $\alpha$ is bounded below.

Next we prove that $\frac{\alpha^2}{\beta}$ is bounded above, by showing that whenever it is sufficiently large its derivative is negative.
We have
\[ \frac{d}{dt}\frac{\alpha^2}{\beta} = \frac{2\alpha\beta\alpha'-\alpha^2\beta'}{\beta^2}
= \frac{-4\lambda\alpha\beta^2(\alpha + \tfrac{2}{3}) + \alpha^2 + 2\alpha^3 - \alpha^2(2\alpha + \tfrac{3}{2})}{\beta^2}
= -4\lambda \alpha(\alpha+\tfrac{2}{3}) - \frac{\alpha^2}{2\beta^2} \]
and hence when $\alpha<0$
\begin{equation}
\label{eq:alpha2/beta}
\frac{d}{dt} \frac{\alpha^2}{\beta} < - \frac{\alpha^2}{\beta} \left( \frac{1}{2\beta} + 4 \lambda \beta \right).
\end{equation}
Because of the bound $\alpha^2<C^2$, we have
$\frac{1}{\beta} > \frac{1}{C^2} \frac{\alpha^2}{\beta}$, 
which implies that $\frac{d}{dt} \frac{\alpha^2}{\beta} <0$ whenever $\frac{\alpha^2}{\beta}$ is sufficiently large. 
Hence $\frac{\alpha^2}{\beta}$ is bounded above for any solution that remains in $T$. %

Since $-\alpha$ is bounded above, it follows that $\beta$ is bounded away
from 0. In particular, by Remark~\ref{rmk:limit_lifetime} the solution cannot
become extinct while it stays in $T$.  To complete the proof,
suppose for a contradiction that $(\alpha,\beta)$ stays in $T$ for infinite
time. Then $\alpha$ cannot remain smaller than  $-\frac{4}{5}$ for all time,
since then $\beta' < 2 (- \frac{4}{5}+\frac{3}{4}) =- \frac{1}{10}$ and so the
positive quantity $\beta$ would become $0$ in finite time, violating forward
completeness.  But also $\alpha' > 0$ is bounded away from zero for all points
in $T$ with $\alpha \ge  -\frac{4}{5}$, so once $\alpha$ reaches $-\frac{4}{5}$
it then increases to $-\frac{21}{32}$ in finite time.
\end{proof}

\begin{lemma}
\label{lem:limit_boundS}
For any solution of~\eqref{eq:heisenberg:limit} that stays in $R$ throughout its lifetime, the quantity
$\beta(-\frac32 \alpha - 1)$ remains bounded.
\end{lemma}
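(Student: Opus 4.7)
Since $\beta(-\tfrac{3}{2}\alpha - 1) = -\tfrac{3}{2}P$ for $P := \beta(\alpha + \tfrac{2}{3})$, it suffices to show that $P$ is bounded on any solution of~\eqref{eq:heisenberg:limit} staying in $R$ throughout its lifetime (which is automatically infinite by Remark~\ref{rmk:limit_lifetime}, since on $R$ the function $\alpha$ is bounded and $\beta$ is bounded away from $0$). A direct calculation from~\eqref{eq:heisenberg:limit} gives the linear equation
\begin{equation*}
P' = -2\lambda \beta P + Q(\alpha), \qquad Q(\alpha) := 3\alpha^2 + \tfrac{10}{3}\alpha + 1,
\end{equation*}
where $Q$ has negative discriminant and so is everywhere strictly positive; in particular $Q$ is bounded above on $R$ by some constant $Q_{\max}$.

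For the upper bound $P \le 0$, the plan is to show that $\alpha(t) \le -\tfrac{2}{3}$ throughout. For $\alpha \in (-\tfrac{2}{3}, -\tfrac{21}{32}]$ both terms of
\begin{equation*}
\alpha' = -2\lambda\beta(\alpha+\tfrac{2}{3}) + \frac{\alpha(1+2\alpha)}{2\beta}
\end{equation*}
are strictly positive, since $\lambda < 0$ and $\alpha < -\tfrac12$ forces $\alpha(1+2\alpha) > 0$. Dropping the second term and using $\beta \ge \tfrac{27}{32}\beta_*$ on $R$ yields $(\alpha + \tfrac{2}{3})' \ge -\tfrac{27}{16}\lambda\beta_*(\alpha + \tfrac{2}{3})$, so $\alpha + \tfrac{2}{3}$ grows at least exponentially and $\alpha$ reaches $-\tfrac{21}{32}$ in finite time. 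At that moment the flow exits $R$ (from the proof of Lemma~\ref{lem:heisenberg_trap}(i))---a contradiction; hence $\alpha \le -\tfrac{2}{3}$ throughout, giving $P \le 0$.

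For the lower bound I would argue by contradiction: if $P(t_0) \le -K_0$ for some $t_0$ and some sufficiently large $K_0 > 0$, then the solution must exit $R$ in finite time. Because $\alpha \ge -1$ on $R$, $P \le -K_0$ forces $\beta \ge 3K_0$; then the ODE for $P$ gives $P' \le -2|\lambda|\beta K_0 + Q_{\max} \le -6|\lambda|K_0^2 + Q_{\max}$, strictly negative once $K_0^2 > Q_{\max}/(6|\lambda|)$. So $P \le -K_0$ is self-perpetuating, and thus so is $\beta \ge 3K_0$. Meanwhile, rewriting $\alpha'$ in terms of $P$ yields
\begin{equation*}
\alpha' = 2|\lambda| P + \frac{\alpha(1+2\alpha)}{2\beta} \le -2|\lambda| K_0 + \frac{1}{6K_0},
\end{equation*}
which is bounded above by a strictly negative constant once $K_0$ is chosen larger still. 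Hence $\alpha$ decreases at a uniform positive rate and reaches $\alpha = -1$ in finite time, at which moment the solution exits $R$ (again by Lemma~\ref{lem:heisenberg_trap}(i))---the contradiction. Combined with Step~1 this gives $|P| \le K_0$, as required. The main obstacle is ensuring that the $P$-ODE and $\alpha$-ODE estimates are simultaneously strictly negative; both hinge on the same mechanism, namely the automatic largeness of $\beta$ coming from $P \le -K_0$ together with $\alpha \ge -1$, which makes the linear-in-$\beta$ stabilising terms dominate the bounded perturbations $Q(\alpha)$ and $\tfrac{\alpha(1+2\alpha)}{2\beta}$.
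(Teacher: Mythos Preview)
Your proof is correct and the core mechanism matches the paper's: once $f := \beta(-\tfrac{3}{2}\alpha - 1) = -\tfrac{3}{2}P$ is large, the constraint $\alpha \ge -1$ forces $\beta$ large, and then the dominant term $\tfrac{4\lambda}{3}f$ in $\alpha'$ drives $\alpha$ to the boundary $\alpha = -1$ of $R$ at a uniform rate. The paper organises this by estimating $f'$ directly (showing $f' > -\tfrac{19\lambda}{18}\beta f > 0$ for $f$ large), whereas you first derive the clean linear ODE $P' = -2\lambda\beta P + Q(\alpha)$ with $Q > 0$ and use it to show $P \le -K_0$ is self-perpetuating; these are minor rearrangements of the same idea.

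Your Step~1, establishing $\alpha \le -\tfrac{2}{3}$ throughout (hence $f \ge 0$), is a genuine addition: the paper's proof only explicitly handles the upper bound on $f$, yet a two-sided bound is what is actually used later (e.g.\ to conclude $\alpha \to -\tfrac{2}{3}$ in the outline proof of Theorem~\ref{thm:limit_tri}). Your argument---that for $\alpha \in (-\tfrac{2}{3}, -\tfrac{21}{32}]$ both terms of $\alpha'$ are positive and the first grows at least exponentially in $\alpha + \tfrac{2}{3}$, forcing $\alpha$ to reach the edge $\alpha = -\tfrac{21}{32}$ in finite time---is the natural way to close this gap, and the paper's own reasoning could be symmetrised in the same spirit.
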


\begin{proof}
Let $f : = \beta \left(-1 - \frac{3}{2}\alpha\right)$. By our assumption that the solution remains in $R$,
$\beta$ is bounded away from zero and $\alpha$ is bounded, and hence
\begin{equation}
\label{eq:push_alpha_limit}
\alpha' = \frac{4\lambda}{3} f + \frac{\alpha + 2\alpha^2}{2\beta}
< \lambda f
\end{equation}
whenever $f$ is large enough,
while $\alpha \geq -1$ and $\beta > \beta_*$ give
\[ \beta' = 2\alpha + \frac32 \geq -\frac12 = \frac{4\lambda}{9} \beta_*^2 > \frac{4\lambda}{9} \beta^2 .\]
Therefore whenever $f$ is sufficiently large and $\beta > \beta_*$
\[ f' = \frac{\beta'}{\beta}f - \frac{3}{2}\beta \alpha'
> - \frac{19\lambda}{18}f\beta>0. \]
Hence if $f$ is unbounded then eventually it is increasing monotonically.
Then by~\eqref{eq:push_alpha_limit} $\alpha'$ is negative and bounded away
from~0, hence the solution would reach $\alpha=-1$ in finite time and then exit $R$,
which is a contradiction. 
\end{proof}

\begin{proof}[Outline proof of Theorem \ref{thm:limit_tri}]
Lemma \ref{lem:heisenberg_trap}(i) implies that any solution either remains in
$R$ for all $t$ or eventually leaves $R$ and never returns to it.  
In the latter case, Lemma \ref{lem:incomplete_limit} implies that
eventually $\alpha \geq -\frac{21}{32}$.
From there one can see that $\alpha$ must eventually become positive. After that happens,
$\alpha + \frac{4\lambda}{9} \beta^2$ is increasing, which leads to
$\frac{d}{dt} \frac{1}{\beta} < \frac{8\lambda}{9}$.
Therefore $\beta \to \infty$ and $\alpha \to \infty$ in finite time.

It remains to show that the AC solution (i) exists, is unique and that it is the only solution other
than (ii) that stays in~$R$. We first observe that by Lemma \ref{lem:heisenberg_trap}(iii), any
solution other than (ii) that stays in $R$ has $\beta$ unbounded above.
When $\beta$ is big, Lemma~ \ref{lem:limit_boundS} implies that $\alpha$ is close to
$-\frac23$, so certainly $\beta' > 0$. Therefore we must, in fact, have $\beta \to \infty$ and using Lemma~ \ref{lem:limit_boundS} again this implies that
$\alpha \to -\frac23$.

The existence and uniqueness of such an end solution can be deduced from 
the same sort of analysis of irregular singular initial value problems
as described for AC shrinker ends in~\S\ref{subsec:end_sivp}. 
In fact, here the set-up ends up being rather easier because it can be rewritten as an ODE
for a single scalar variable.
We then need to understand the backward-evolution of this end solution.

Since Lemma \ref{lem:heisenberg_trap}(i) forbids reentry to $R$ forward in time,
the solution remains in $R$ as we evolve backwards in $t$.
In particular, by Remark \ref{rmk:limit_lifetime} the solution cannot become
backward incomplete.
We can also prove that $\beta$ remains bounded. Then if $(\alpha, \beta)$ failed to
converge to the unique fixed $(-\frac34, \beta_*)$ as $t \to -\infty$,
the Poincar\'e--Bendixson theorem would yield a non-trivial limit
cycle, but that is ruled out by Lemma \ref{lem:heisenberg_trap}(ii).
\end{proof}

\subsection{Returning to the full system}
\label{ss:shrinker:full}

For any~\Sp{2}-invariant shrinker we know from Lemma~\ref{lem:shrinker:x:unbounded} 
that if $x \to \infty$ then $\frac{x}{y}$ is bounded below, 
but we do not know that $\frac{x}{y}$ is bounded above. 
We would next like to say that once $x$ and $\frac{x}{y}$ are both sufficiently big, or
equivalently $\gamma$ and $\beta\gamma$ are both sufficiently small, then the full system~\eqref{eq:heisenberg} is well
approximated by the limit system~\eqref{eq:heisenberg:limit}, and thereby conclude that reentry to the region $R$ still
remains impossible.

\begin{corollary}
\label{cor:trap}
For any shrinker $(\alpha, \beta,\gamma)$ such that $\frac{x}{y}>8$ 
is increasing for all future time, either eventually $(\alpha,\beta)$
must remain in $R$ or else eventually it remains in
$T := (-\infty, -\frac{21}{32}) \times (0, \infty) \setminus R$.
\end{corollary}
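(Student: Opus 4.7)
The plan is to combine the non-re-entry argument of Lemma~\ref{lem:heisenberg_trap}(i) in the shrinker limit system ($\gamma=0$), extended perturbatively to the full system \eqref{eq:heisenberg} in the regime $\gamma \ll 1$, with a direct algebraic consequence of the hypothesis that keeps $(\alpha,\beta)$ in the strip $\{\alpha < -\tfrac{21}{32}\}$.

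First I would verify $x \to \infty$: the bound $y/x < \tfrac{1}{8}$ prevents $y/x \to \infty$, which via Lemma~\ref{lem:shrinker:x:unbounded} and Proposition~\ref{prop:x_limit} rules out $x \to 0$, forcing $x \to \infty$, so $\gamma = 1/x \to 0$ and $\beta\gamma = (y/x)^2 < \tfrac{1}{64}$. Next, the hypothesis that $x/y$ is increasing is equivalent via Lemma~\ref{lem:log_xy} to $S \ge 0$; since $S/x^2 = \beta\gamma - 1 - \tfrac{3}{2}\alpha$, this translates to $\beta\gamma \ge 1 + \tfrac{3}{2}\alpha$, and combining with $\beta\gamma < \tfrac{1}{64}$ gives $\alpha < -\tfrac{21}{32}$ throughout. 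Hence $(\alpha, \beta)$ is automatically confined to the strip $\{\alpha < -\tfrac{21}{32}\}$, which is the disjoint union $R \cup T$; the corollary therefore reduces to showing that the trajectory eventually settles in one of these two components.

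The remaining content is to show that $R$ cannot be entered from $T$ once $\gamma$ is small enough. I would compare \eqref{eq:heisenberg} with the limit system \eqref{eq:heisenberg:limit} on each of the three edges of $\partial R \cap \partial T$: the vertical edge $\alpha = -1$ with $\beta \ge \tfrac{27}{32}\beta_*$, the horizontal edge $\beta = \tfrac{27}{32}\beta_*$ with $-1 \le \alpha \le -\tfrac{3}{4}$, and the slanted edge with $-\tfrac{3}{4} \le \alpha \le -\tfrac{21}{32}$. In the limit system the outward normal velocity is bounded below by a positive constant on each edge, uniformly in $\beta$ on the unbounded $\alpha = -1$ edge where the proof of Lemma~\ref{lem:heisenberg_trap}(i) yielded $\alpha'_{\text{limit}} \le -\tfrac{139}{4096\beta_*}$. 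A direct computation gives $\beta'_{\text{full}} - \beta'_{\text{limit}} = -\beta\gamma \le 0$, which only strengthens outward flow on the horizontal edge, while $\alpha'_{\text{full}} - \alpha'_{\text{limit}}$ is a sum of $O(\gamma)$ terms plus one term carrying a factor $(\alpha+1)\beta\gamma$ with prefactor bounded using $\beta\gamma \le \tfrac{1}{64}$; the dangerous factor vanishes identically on the edge $\alpha = -1$, and on the slanted edge $\beta$ is bounded so all corrections are $O(\gamma)$. Hence for $\gamma$ small enough the full-system flow points strictly out of $R$ across all three edges, and since $\gamma \to 0$ this holds for all $t$ past some $t_0$.

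For $t \ge t_0$ the trajectory cannot transition from $T$ to $R$; hence if it is in $T$ at $t_0$ it remains in $T$, while if it is in $R$ at $t_0$ it either remains in $R$ or crosses one of the three shared edges into $T$ and then stays there. Either way $(\alpha,\beta)$ eventually remains in $R$ or in $T$, as claimed. The main obstacle I expect is achieving uniform control of the perturbation on the unbounded edge $\alpha = -1$, which is resolved by the fortunate vanishing of the $(\alpha+1)$-factor in the only $\beta$-dependent correction term.
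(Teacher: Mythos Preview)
Your proposal is correct and follows essentially the same route as the paper's proof: both establish $x \to \infty$ so that $\gamma \to 0$, use the hypothesis $\tfrac{x}{y} > 8$ increasing to force $\alpha < -\tfrac{21}{32}$ throughout, and then check edge-by-edge that for small $\gamma$ the flow still points out of $R$ across $\partial R \cap \partial T$, with the unbounded edge $\alpha = -1$ controlled because the $\gamma$-perturbation there is exactly $3\gamma$ (your ``$(\alpha+1)$-factor vanishes'' observation). The only cosmetic difference is that the paper gets $x \to \infty$ more directly by observing $x' \geq \tfrac{1}{3}$ whenever $S \geq 0$, whereas you route through Lemma~\ref{lem:shrinker:x:unbounded} and Proposition~\ref{prop:x_limit}.
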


\begin{proof}
Since $\frac{x}{y}$ increasing implies $x' \geq \frac{1}{3}$ we must
have $x \to \infty$ monotonically (by Proposition \ref{prop:x_limit}, or more simply because
if $x$ were bounded then $\frac{x}{y} \to \infty$ only by $y \to 0$, which
would violate $xy^2$ increasing). Hence $\gamma$ decreases to $0$ monotonically. 

We now argue that once $\gamma$ is small enough, it is impossible for $(\alpha,\beta)$ to enter $R$ from the outside.
It remains impossible to enter $R$ across the edge
with $\beta = \frac{27}{32}\beta_*$ because $\beta' < -\frac{27}{32} \beta_* \gamma<0$ when~$\gamma > 0$.

The inequality preventing entry across the edge with
$3\beta - 5\beta_*\alpha \geq \frac{201}{32}\beta_*$ in the limit system case was not sharp.
Because this edge is compact, the inequality still holds for sufficiently
small $\gamma$.

For $\alpha = -1$, the coefficient of
$\gamma$ in the expression for $\alpha'$ is simply $3$. So
$\alpha' \le  -\frac{139}{4096\beta_*} + 3\gamma < 0$ for any
$\beta \ge  \frac{27}{32}\beta_*$ and any sufficiently small $\gamma$.

Finally, for $\alpha \ge -\frac{21}{32}$ we get
\[ S = x^2\left(\frac{y^2}{x^2} - 1 - \frac{3}{2}\alpha\right)
\le x^2\left(\frac{y^2}{x^2} - \frac{1}{64}\right) , \]
so requiring $\frac{x}{y} > 8$ and increasing forces
$\alpha < -\frac{21}{32}$.
So we cannot enter along the edge where $\alpha = - \frac{21}{32}$ either.

Since a solution cannot reenter $R$, eventually either $(\alpha,\beta)$ remains in $R$ or it remains outside~$R$. 
\end{proof}

\begin{prop}
\label{prop:trap}
Any shrinker $(\alpha, \beta,\gamma)$ such that $\frac{x}{y}>8$ 
is increasing for all future time is forward complete, and eventually $(\alpha,\beta)$
must remain in $R$.
\end{prop}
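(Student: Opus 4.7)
The plan is to build on Corollary \ref{cor:trap}, which already reduces the end behaviour to the dichotomy of $(\alpha,\beta)$ eventually lying in $R$ or eventually in $T$, and in addition ensures $x \to \infty$ monotonically, so that $\gamma = 1/x$ decreases monotonically to $0$. It therefore suffices to rule out the $T$ alternative; forward completeness will then be almost automatic in the remaining $R$ case.

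The elimination of the $T$ alternative proceeds by a perturbative upgrade of Lemma \ref{lem:incomplete_limit} from the limit system $\gamma = 0$ to the full system \eqref{eq:heisenberg} once $\gamma$ is small enough. First, the unbounded part of $T$ is still contained in $\{\alpha < -1\}$, so the full-system identity $\beta' = 2(\alpha + \tfrac34) - \beta\gamma$ yields $\beta' \le -\tfrac12$ there, hence an upper bound $\beta \le \beta_0$. Second, once $\beta \le \beta_0$, the positive quadratic term $\alpha(1+2\alpha)/(2\beta)$ in $\alpha'$ dominates the negative linear contribution from $-2\lambda\beta(\alpha + \tfrac23)$ and the small $\gamma$-corrections for $|\alpha|$ large enough, forcing $\alpha$ to be bounded below. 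Third, a direct computation paralleling \eqref{eq:alpha2/beta} with $O(\gamma)$ error terms shows $\alpha^2/\beta$ is bounded above; combined with $\alpha^2 > (\tfrac{21}{32})^2$---which holds throughout $T$---this bounds $\beta$ away from $0$, so by the same reasoning as in Remark \ref{rmk:limit_lifetime} the solution has infinite lifetime so long as it remains in $T$. Fourth, $\alpha \le -\tfrac45$ sustained for infinite time would give $\beta' \le -\tfrac{1}{20} + O(\gamma) < 0$ and drive $\beta$ to $0$ in finite time---a contradiction---so eventually $\alpha \ge -\tfrac45$; and on $T \cap \{-\tfrac45 \le \alpha < -\tfrac{21}{32}\}$ the quadratic term in $\alpha'$ is bounded below by a positive constant, so for $\gamma$ small enough $\alpha'$ itself is bounded below by a positive constant and $\alpha$ reaches $-\tfrac{21}{32}$ in finite time, exiting $T$. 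This contradicts the assumption that $(\alpha,\beta)$ eventually remains in~$T$.

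Hence Corollary \ref{cor:trap} leaves only the alternative that eventually $(\alpha,\beta) \in R$. In $R$ one has $\alpha \in [-1, -\tfrac{21}{32}]$ and $\beta \ge \tfrac{27}{32}\beta_*$, and the estimate $2\alpha + 1 \le -\tfrac{5}{16}$ together with the equation for $\gamma'$ yields $\gamma' \le -c\gamma$ for some positive constant $c$, so $\gamma$ decays exponentially. Thus the trajectory is confined to a compact subset of the domain on which the right-hand side of \eqref{eq:heisenberg} is smooth and bounded, and standard ODE theory yields infinite forward lifetime, completing the proof of forward completeness.

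The main obstacle will be the quantitative verification of the $\alpha^2/\beta$ bound in the third step above: the new term $-\alpha\gamma$ in $\alpha'$ and the factor $1/(1+2\beta\gamma)$ multiplying the leading contribution in \eqref{eq:heisenberg}(a) have to be tracked carefully through the analogue of \eqref{eq:alpha2/beta}. Because $\gamma \to 0$ monotonically, however, these corrections become arbitrarily small after a finite time and can be absorbed by slightly enlarging the constants appearing in the limit-system argument.
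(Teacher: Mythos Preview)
Your approach is essentially the same as the paper's: invoke Corollary~\ref{cor:trap}, adapt the $\alpha^2/\beta$ estimate from Lemma~\ref{lem:incomplete_limit} to the full system (the paper carries this out carefully, showing the only new correction in the differential inequality is a term $-\tfrac{8\lambda}{3\alpha}\beta\gamma$, bounded because $\beta\gamma = y^2/x^2 < 1/64$), and finish with the same contradiction argument to eliminate~$T$. The organisational difference---you rule out $T$ first and then argue completeness in~$R$, whereas the paper bounds $-\alpha/\beta$ in both cases to get completeness first and only then rules out~$T$---is immaterial.

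There is, however, a genuine error in your $R$-case completeness argument. The claim $\gamma' \le -c\gamma$ for a uniform $c > 0$ is false: from $\gamma' = \gamma\bigl(\tfrac{2\alpha+1}{2\beta} - \gamma\bigr)$, the bracket tends to $0$ as $\beta \to \infty$ (which is allowed in $R$, since $R$ is unbounded in~$\beta$) and $\gamma \to 0$. Indeed $\gamma = 1/x$ decays only like $1/t$, since $x' \ge \tfrac13$. Consequently the trajectory is \emph{not} confined to a compact subset of phase space, and the ``standard ODE theory'' appeal does not go through as stated.

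The fix is easy. Either observe that in $R$ one has $\beta' = 2(\alpha+\tfrac34) - \beta\gamma \le \tfrac{3}{16}$, so $\beta$ is bounded on any finite $t$-interval; combined with $\alpha$ bounded, $\beta$ bounded below, and $\gamma$ decreasing, the solution stays in a compact set on each finite interval and hence extends indefinitely. Or, following the paper: in $R$ the ratio $-\alpha/\beta$ is bounded by definition, hence $(\log\tfrac{x}{y})' = \gamma - \tfrac{1}{\beta} - \tfrac{3\alpha}{2\beta}$ is bounded above, so $\tfrac{x}{y}$ remains bounded on any finite interval and finite extinction is ruled out by Proposition~\ref{prop:lifetime}.
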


\begin{proof}
To prove forward completeness, it suffices to show that $-\frac{\alpha}{\beta}$ is bounded. For then
$(\log{\frac{x}{y}})' = \frac{S}{g^3} = \gamma - \frac{1}{\beta} - \frac{3\alpha}{2\beta} <  \gamma - \frac{3 \alpha}{2 \beta}$ is bounded above (because $\gamma$ and $-\frac{\alpha}{\beta}$ are). 
Hence the increasing function $\frac{x}{y}$ remains bounded above on any finite interval, which rules out extinction by our forward-incompleteness criteria.

By Corollary \ref{cor:trap}, $(\alpha,\beta)$ eventually stays either in $R$ or in $T$.
In the former case, $-\frac{\alpha}{\beta}$ is bounded above just by the definition of $R$.
So it remains to prove that $-\frac{\alpha}{\beta}$ remains bounded if the solution remains in $T$. 
Since $-\alpha > \frac{21}{32}$, we can equally well prove that $\frac{\alpha^2}{\beta}$ is bounded above.

The rest of the proof follows the proof of Lemma \ref{lem:incomplete_limit}.
The arguments that $-\alpha$ and $\beta$ are bounded above carry over
without any problem.

Adapting the estimate \eqref{eq:alpha2/beta}
for the derivative of $\frac{\alpha^2}{\beta}$ to take into account that $\gamma$ is non-zero (but small) takes a bit more work.
Here \eqref{eq:heisenberg} yields the more complicated equation
\[
\frac{d}{dt}\frac{\alpha^2}{\beta} =
\frac{1}{\beta^2} \left( -4\alpha\beta (\lambda\beta-3\alpha \gamma) \left(\frac{3\alpha+2-2 \beta \gamma}{3(1+2 \beta \gamma)}\right)+ 
\alpha^2\left( - \tfrac{1}{2} -  \beta \gamma\right) \right).
\]
However, because various terms have definite signs we can still derive from this a differential inequality that is very close to~\eqref{eq:alpha2/beta}.
To see this, first note that
because $\alpha<0$, $\lambda<0$ and $3 \alpha+2 - 2 \beta \gamma = - \frac{2S}{x^2}<0$ (and of course $\gamma$ and $\beta$ are positive) we obtain the upper bound
\begin{align*}
\frac{d}{dt}\frac{\alpha^2}{\beta} & < - \frac{\alpha ^2}{2\beta^2}  + \frac{1}{\beta^2} \left( -4\alpha\beta^2 \lambda \left(\frac{3\alpha+2-2 \beta \gamma}{3(1+2 \beta \gamma)}\right)\right)\\
\intertext{which (because the second term is positive) is bounded by}
& < - \frac{\alpha ^2}{2\beta^2}  - \frac{4\lambda}{3\beta^2} \alpha \beta^2 (3 \alpha+2 - 2 \beta \gamma) < - \frac{\alpha ^2}{2\beta^2}  - 4\lambda \alpha ( \alpha- \tfrac{2}{3} \beta \gamma)\\
& = - \frac{\alpha^2}{\beta} \left( \frac{1}{2\beta} + 4 \lambda \beta - \frac{8\lambda}{3\alpha} \beta^2 \gamma \right).
\end{align*}
Compared to ~\eqref{eq:alpha2/beta} the only difference is the additional (negative) term $- \frac{8\lambda}{3\alpha} \beta^2 \gamma$. 
However, this additional term is harmless. Indeed, 
since $\beta \gamma = \frac{y^2}{x^2}$ is decreasing, $-\frac{1}{\alpha} < \frac{32}{21}$ and we have already established that $\beta$ is bounded, the term
$ -\frac{8\lambda}{3\alpha} \beta^2 \gamma$ is certainly bounded. Hence
since $\frac{\alpha^2}{\beta}$ large implies~$\frac{1}{\beta}$ large, so again
$\frac{d}{dt} \frac{\alpha^2}{\beta}<0$ whenever $\frac{\alpha^2}{\beta}$ is sufficiently large. So $\frac{\alpha^2}{\beta}$ is bounded above as claimed, finishing the proof of forward completeness.

To complete the proof of the proposition, it remains to rule out the
possibility that $(\alpha,\beta)$ stays in $T$ for infinite time, which we can
do by the same contradiction argument as in Lemma \ref{lem:incomplete_limit}.
\end{proof}

We will need the following further control on the behaviour of shrinkers that
remain in $R$, adapting Lemma \ref{lem:limit_boundS} from the limit system.

\begin{lemma}
\label{lem:boundS}
A shrinker with $\frac{x}{y} > 8$ and $x \to \infty$ such that $(\alpha,\beta)$ stays in $R$ must have $\frac{Sy^2}{x^3} = \frac{S}{x^2}\beta$ bounded above.
\end{lemma}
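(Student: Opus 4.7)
The plan is to adapt the proof of Lemma~\ref{lem:limit_boundS} from the limit system to the full $(\alpha,\beta,\gamma)$ system~\eqref{eq:heisenberg}, treating the $\gamma$-dependent terms as controlled perturbations using the two smallness hypotheses: $\beta\gamma = (y/x)^2 < 1/64$ (from $x/y > 8$) and $\gamma = 1/x \to 0$ (from $x \to \infty$). Set $h := S/x^2 = \beta\gamma - 1 - \tfrac{3\alpha}{2}$, so that $\frac{Sy^2}{x^3} = h\beta =: F$. Since $\alpha$ and $\beta\gamma$ are both bounded on $R\cap\{\beta\gamma<1/64\}$, $h$ takes values in a bounded interval (roughly $[-1/64,33/64]$); consequently $F$ is automatically bounded above whenever $\beta$ is bounded above, so the only way $F$ can become unbounded above is if $\beta\to\infty$ along a subsequence and $h$ eventually stays positive.

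First I would derive ODEs for $h$ and $F$. The identity $\beta\gamma=(y/x)^2$ together with Lemma~\ref{lem:log_xy} gives $(\beta\gamma)' = -2h\gamma$, hence $h' = -2h\gamma - \tfrac{3}{2}\alpha'$. The algebraic identity $\tfrac{\alpha+1}{1+2\beta\gamma} - \tfrac{1}{3} = \tfrac{-2h}{3(1+2\beta\gamma)}$ then lets us rewrite the $\alpha$-equation in~\eqref{eq:heisenberg} as $\alpha' = Ah + B$, where $A := \tfrac{4(\lambda\beta - 3\alpha\gamma)}{3(1+2\beta\gamma)}$ and $B := \tfrac{\alpha(1+2\alpha)}{2\beta} - \alpha\gamma$. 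Combining with $\beta' = 2(\alpha + \tfrac{3}{4}) - \beta\gamma$ yields
\[
F' \;=\; F\kappa + P, \qquad \kappa := -3\gamma - \tfrac{3A}{2}, \qquad P := h\bigl(2\alpha + \tfrac{3}{2}\bigr) - \tfrac{3B\beta}{2} .
\]
On $R\cap\{\beta\gamma<1/64\}$ both $P$ and the $B\beta$ term are uniformly bounded (the potentially troubling $\tfrac{3\alpha\beta\gamma}{2}$ piece of $B\beta$ is controlled by $\beta\gamma<1/64$).

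Second, assume for a contradiction that $F$ is unbounded above. Boundedness of $h$ then forces $\beta\to\infty$ (along a subsequence) and $h>0$ whenever $F$ is large. For $\gamma$ sufficiently small the dominant contribution to $\kappa$ is $-\tfrac{3A}{2} = \tfrac{2(-\lambda\beta + 3\alpha\gamma)}{1+2\beta\gamma}$, which is bounded below by a positive multiple of $\beta$, while $A/\beta \to \tfrac{4\lambda}{3} < 0$. Combined with the elementary inequality $\beta \ge (64/33)F$ (coming from $F = h\beta$ and $h \le 33/64$), these estimates produce, once $x$ is large enough and $F\ge F_0$ for a sufficiently large threshold $F_0$, bounds of the form
\[
F' \;\ge\; c_1 F^2 - c_2, \qquad \alpha' \;\le\; \tfrac{2\lambda}{3} F + c_3,
\]
with $c_1>0$ and all constants depending only on $\lambda$ and $R$. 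Thus once $F$ exceeds $F_0$ it remains above $F_0$ for all subsequent time, and simultaneously $\alpha'$ is bounded above by a strictly negative constant. But then $\alpha$ decreases past $-1$ in finite time, forcing the trajectory to exit $R$, contradicting the hypothesis.

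The main obstacle is Step~2: verifying uniformly on $R\cap\{\beta\gamma<1/64\}$ that the error terms arising from $\gamma>0$ (the most delicate being the $\beta^2\gamma$-order contribution hidden in $A$ when one expands $1/(1+2\beta\gamma)$) do not overwhelm the leading limit-system behaviour. The hypothesis $x/y>8$ is used exactly for this purpose: it keeps $\beta\gamma$ uniformly small so that the expansion around the limit system stays tame. Once this bookkeeping is in place, the argument closes in the same spirit as Lemma~\ref{lem:limit_boundS}, namely by showing that unboundedness of $F$ forces $\alpha$ to escape $R$ through the edge $\alpha=-1$ in finite time.
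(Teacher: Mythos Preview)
Your proposal is correct and follows essentially the same route as the paper. Both arguments set $f=F=\frac{Sy^2}{x^3}=\beta\bigl(\beta\gamma-1-\tfrac{3}{2}\alpha\bigr)$, use the smallness of $\gamma$ and $\beta\gamma$ to show that when $F$ is large one has $F'>0$ and $\alpha'\le c\,\lambda F+\text{bounded}$, and then obtain a contradiction from $\alpha$ being forced below $-1$ while the trajectory is assumed to remain in $R$; the only cosmetic difference is that you package the derivative as $F'=\kappa F+P$ and extract $F'\ge c_1F^2-c_2$, whereas the paper writes $f'=\tfrac{\beta'}{\beta}f+\beta\bigl((\beta\gamma)'-\tfrac{3}{2}\alpha'\bigr)$ and obtains $f'>f\bigl(-\tfrac{19\lambda}{18}\beta-3\gamma\bigr)$.
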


\begin{proof}
We follow the argument from Lemma \ref{lem:limit_boundS}, but now need additional control on the new terms involving $\gamma$ that arise. 
Let $f : = \frac{Sy^2}{x^3} = \frac{S}{x^2} \beta = \beta \left(-1 - \frac{3}{2}\alpha +\gamma \beta \right)>0$. Then we can express the first term in the derivative of $\alpha$ from~\eqref{eq:heisenberg} as
\[ -2(\lambda\beta - 3\alpha\gamma)\left(\frac{2 + 3\alpha - 2\beta\gamma}{3(1+2\beta\gamma)}\right)
=\left(\frac{\frac{4\lambda\beta}{3} - 4\alpha\gamma}{1+ 2\beta\gamma} \right) \frac{S}{x^2}
= \left(\frac{\frac{4\lambda}{3} - \frac{4\alpha\gamma}{\beta}}{1+ 2\beta\gamma}\right) f . \]
Since $\beta\gamma = \frac{y^2}{x^2} < \frac{1}{64}$, $\beta > \beta_*$ and $-1 \leq \alpha \leq -\frac{21}{32}$, certainly
\begin{equation}
\label{eq:push_alpha}
 \alpha' = \left(\frac{\frac{4\lambda}{3} - \frac{4\alpha\gamma}{\beta}}{1+ 2\beta\gamma} \right) f + \frac{\alpha(1+2\alpha)}{2\beta} - \alpha\gamma< \lambda f
 \end{equation}
for $f$ large enough and $\gamma$ small enough (because $\alpha$ bounded,  $\beta$ bounded below away from $0$ and $\gamma \to 0$ certainly imply that 
the second and third terms in the previous equation are bounded above). 
Meanwhile
\[ (\beta\gamma)' = -2\beta\gamma \frac{S}{xy^2} = - \frac{2\gamma f}{\beta} 
\]
and so
\[ f' =\left( \frac{S}{x^2} \beta \right)' 
= \frac{\beta'}{\beta}f + \beta\left((\beta\gamma)' - \frac{3}{2}\alpha'\right)
= f \beta \left( - \frac{3\alpha'}{2f} - \frac{2\gamma}{\beta} + \frac{\beta'}{\beta^2} \right).
\]
Since $\alpha \ge -1$ in $R$, for any $\beta> \beta_*$
we have the lower bound 
\[
\frac{\beta'}{\beta^2} \ge - \frac{1}{2\beta^2} - \frac{\gamma}{\beta} > - \frac{1}{2 \beta_*^2} - \frac{\gamma}{\beta}= \frac{4\lambda}{9}-\frac{\gamma}{\beta}.
\]
Hence when $f$ is large enough so that~\eqref{eq:push_alpha} holds and $\beta>\beta_*$ then also
\[
f' > f \beta \left( - \frac{3\lambda }{2} + \frac{4\lambda}{9} - \frac{3 \gamma}{\beta} \right) = f \left( -\frac{19\lambda\beta}{18} - 3\gamma \right).
\]
In particular, $f'$ is positive for $f$ large enough and $\gamma$ small enough.
Hence if $f$ is unbounded then eventually it is increasing monotonically.
Then by~\eqref{eq:push_alpha} $\alpha'$ is negative and bounded away from~0,
but this is impossible since the solution has infinite forward-lifetime and satisfies $\alpha \geq -1$.
\end{proof}

\begin{theorem}
\label{thm:udhav_end}
Any shrinker such that $\frac{x}{y} \to \infty$ is forward complete, with
$\alpha \to -\frac{3}{4}$, $\beta \to \beta_*$, $\gamma \to 0$ as $t \to \infty$. 
\end{theorem}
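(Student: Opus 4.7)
The plan is to combine the trapping established in Proposition~\ref{prop:trap} with a Poincar\'e--Bendixson-type argument inside the region $R$, invoking the classification of limit-system orbits (Theorem~\ref{thm:limit_tri}) only after first ruling out that the full-system orbit can escape to $\beta=\infty$.

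First I would verify the hypotheses of Proposition~\ref{prop:trap}. The assumption $\frac{x}{y}\to\infty$ combined with Theorem~\ref{thm:y:x:eventually:mono} forces $\frac{x}{y}$ to be eventually strictly increasing and to exceed $8$. Since $g^3=xy^2$ is increasing while $\frac{y}{x}\to 0$, $x$ cannot remain bounded, so $x\to\infty$ and $\gamma\to 0$. Proposition~\ref{prop:trap} then yields forward completeness and the fact that $(\alpha,\beta)$ eventually lies in $R$; in particular $\alpha\in[-1,-\tfrac{21}{32}]$ and $\beta\geq\tfrac{27}{32}\beta_*$ from that point on.

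Next I would show that $\beta$ remains bounded; this is the principal obstacle. Lemma~\ref{lem:boundS} supplies a constant $K$ with
\[
\beta\bigl(\gamma\beta - 1 - \tfrac{3}{2}\alpha\bigr) \le K,
\]
while the hypothesis $\frac{y}{x}\to 0$ is exactly $\gamma\beta\to 0$; combined with the fact that $\frac{x}{y}$ is eventually increasing (equivalent to $\gamma\beta>1+\tfrac{3}{2}\alpha$), these estimates force any subsequential limit $(\alpha_*,\infty)$ with $\beta(t_n)\to\infty$ to satisfy $\alpha_*=-\tfrac{2}{3}$. I would then rule out this scenario by a careful asymptotic analysis of the three-dimensional dynamics: $\beta'=2(\alpha+\tfrac{3}{4})-\beta\gamma$ together with $\alpha\to-\tfrac{2}{3}$ and $\beta\gamma\to 0$ forces $\beta$ to grow linearly in $t$ at rate $\tfrac{1}{6}$, so $\beta\sim t/6$; exploiting the precise rate $\alpha=-\tfrac{2}{3}+O(1/\beta)$ from Lemma~\ref{lem:boundS} and substituting into $(\log x)'=\gamma-\frac{1+2\alpha}{2\beta}$ together with the full expression for $\alpha'$ should then yield a contradiction with $\gamma\beta\to 0$. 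Making this rigorous---especially handling the borderline case $\alpha_*\in(-\tfrac{2}{3},-\tfrac{21}{32}]$ where Lemma~\ref{lem:boundS} gives no direct bound---is the hard part.

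With $\beta$ bounded and $\gamma\to 0$, the $\omega$-limit set $\Omega\subset R$ of $(\alpha,\beta)$ is non-empty and compact. Standard ODE-continuity arguments, using that the full system \eqref{eq:heisenberg} reduces to the limit system \eqref{eq:heisenberg:limit} on $\{\gamma=0\}$, show that $\Omega$ is invariant under the limit flow. Lemma~\ref{lem:heisenberg_trap}(ii) rules out non-trivial closed orbits in $R$, and Lemma~\ref{lem:heisenberg_trap}(iii) combined with Poincar\'e--Bendixson then forces $\Omega=\{(-\tfrac{3}{4},\beta_*)\}$. Hence $(\alpha,\beta,\gamma)\to(-\tfrac{3}{4},\beta_*,0)$, completing the proof.
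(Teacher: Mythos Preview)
Your overall architecture matches the paper's: apply Proposition~\ref{prop:trap} to get forward completeness and eventual containment in $R$, then bound $\beta$, then use a Poincar\'e--Bendixson/continuous-dependence argument to force convergence to the fixed point. The final step you sketch via $\omega$-limit sets is essentially what the paper does (the paper phrases it as: any accumulation point $q\neq p_0$ would, under the limit flow, exit the bounded region $R_0$ in finite time by Lemma~\ref{lem:heisenberg_trap}(iii), and continuous dependence then forces the full solution to exit too).

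The gap is in your ``$\beta$ bounded'' step, and it is not where you think it is. The estimate $\alpha=-\tfrac{2}{3}+O(1/\beta)$ that you extract from Lemma~\ref{lem:boundS} is not merely an asymptotic rate along a subsequence: the bound $f\le K$ gives the \emph{pointwise} inequality $\alpha>-\tfrac{2}{3}-\tfrac{2K}{3\beta}$ for all $t$ once in $R$. There is therefore no ``borderline case'' to worry about. This pointwise lower bound, together with $\gamma\beta\to 0$, shows that once $\beta$ exceeds a fixed threshold one has $\alpha>-\tfrac{17}{24}$ and hence $\beta'>\tfrac{1}{12}-\gamma\beta>0$; so $\beta$ cannot oscillate and must eventually be increasing at a linear rate. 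The contradiction is then much simpler than the route you sketch via $(\log x)'$ and $\alpha'$: since
\[
\frac{d}{dt}\log\frac{x}{y}=\frac{S}{xy^2}=\frac{f}{\beta^2},
\]
with $f$ bounded and $\beta$ eventually linear, the right-hand side is integrable, so $\frac{x}{y}$ stays bounded---contradicting $\frac{x}{y}\to\infty$. This is exactly the paper's argument, and it avoids the delicate asymptotic analysis you were anticipating.
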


\begin{proof}
Since by
Theorem~\ref{thm:y:x:eventually:mono}, $\frac{x}{y}$ is eventually monotonic, 
Proposition \ref{prop:trap} implies forward completeness,
and also that $(\alpha,\beta)$ eventually remains in $R$.
Then Lemma \ref{lem:boundS} implies that $\frac{S}{x^2} < C\frac{1}{\beta}$ for some $C>0$, or equivalently, that 
\[
\alpha> \frac{2}{3} \left(-1  - \frac{C}{\beta} + \frac{y^2}{x^2} \right) > \frac{2}{3} \left(-1  - \frac{C}{\beta} \right).
\]

Now suppose for a contradiction that $\beta$ is unbounded above.
Whenever $\beta$ is large enough, the previous inequality
implies that $\alpha$ cannot be too much less than $-\frac{2}{3}$, say $\alpha > -\frac{17}{24}$.
(Here the value $-\frac{17}{24}$ is just a convenient point in the interval $(-\frac{3}{4},-\frac{2}{3})$). 
At any such point $\beta' > \frac{1}{12} - \frac{y^2}{x^2}$, so if $\beta$ is unbounded and $\frac{y}{x} \to 0$ then $\beta$ must
eventually be linearly increasing.
But then $f= \frac{S}{x^2} \beta>0$ bounded above and $\beta$ increasing linearly implies that $\frac{f}{\beta^2}$ is integrable and hence
\[ \frac{d}{dt}\log \frac{x}{y} = \frac{S}{xy^2} = \frac{f}{\beta^2}\]
implies that $\frac{x}{y}$ is bounded above as $t \to \infty$, which is a contradiction. 
Thus $\beta$ has an upper bound $\beta_0$.

Now suppose for a contradiction that $(\alpha, \beta)$ does not converge to $p_0 = (-\frac{3}{4}, \beta_*)$. Since $(\alpha, \beta)$ remains in the closed  bounded region $R_0 := \{(\alpha,\beta) \in R : \beta \le  \beta_0\}$, there must then exist some accumulation point
$q  \in R_0 \setminus \{p_0\}$,
\ie there is a sequence of times $t_i \to \infty$ such that $(\alpha(t_i), \beta(t_i)) \to q$.

Now consider the solution $(\bar\alpha,\bar\beta)$ to the limit system with initial value $q$ at time $t = 0$.
By Lemma \ref{lem:heisenberg_trap} that solution must leave $R_0$ in finite time, \ie $(\bar\alpha(s), \bar\beta(s)) \not \in R_0$ for some $s > 0$. Then, 
by continuous dependence on initial conditions, the $(\alpha,\beta)$-projection of any solution to the full system with initial condition sufficiently close to $(q,0)$ must also leave $R_0$ by time $s$. In particular, for some sufficiently large $i$, the original forward-complete solution must have $(\alpha(t_i+s), \beta(t_i + s)) \not \in R_0$, giving the desired contradiction.
\end{proof}

\section{\texorpdfsunitary{3}-invariant solitons}
\label{SS:su3}

The main goals of this section are
to explain Remark~\ref{rmk:trichotomy:relevance} about~\sunitary{3}-invariant expanders on~$\Lambda^2_- \CP^2$,
and some of our other expectations about~\sunitary{3}-invariant solitons more generally. 
However, actually proving these expectations remains challenging. Partly the added difficulty
compared with the \Sp{2}-invariant case is a matter of the~\sunitary{3}-invariant soliton ODE system
involving 5 rather than 3 variables; this means we cannot get as much
mileage out of tracking a single function like $\frac{y}{x}$
in \S\ref{sec:warping}. Moreover, we do not currently know a good analogue
in the \sunitary{3}-invariant case of the monotonic quantity from
Lemma \ref{lem:monotone} that played a key role in a number of proofs.

\subsection{\texorpdfsunitary{3}-invariant \texorpdfgtstr s and solitons}
\label{subsec:su3}

First we need to recall from \cite[\S4]{Haskins:Nordstrom:g2soliton1} the main
features of the~\sunitary{3}-invariant setup and explain its relation to
the~\Sp{2}-invariant solitons considered in this paper. (We refer the reader to that paper for further details as needed.)

In the setting of \Sp{2}-invariant~\gtstr s with principal
orbit~$\CP^3$, the variables $x$ and $y$
we used could be interpreted geometrically as the scale of the fibre $\Sph^2$ and
base $\Sph^4$ respectively 
of the homogeneous twistor fibration $\Sph^2 \to \CP^3 \to \Sph^4$. 
Any complete~\Sp{2}-invariant closed~\gtstr~has a unique singular orbit $\Sph^4 = \Sp{2}/\unitary{1} \times \Sp{1}$ 
and hence must be defined on $\Lambda^2_-\Sph^4$. The coefficient $x$,  controlling the $\Sph^2$ fibre size, must vanish  at a certain rate as we approach the zero section 
and the limiting value of $y$ determines the size of the zero section $\Sph^4 \subset \Lambda^2_-\Sph^4$. 

In the~\sunitary{3}-invariant setting, the principal orbit is instead
the flag manifold $F_{1,2} = \sunitary{3}/\T^2$, \ie the quotient
of~\sunitary{3} by its diagonal subgroup $\T^2$. $F_{1,2}$ admits three
distinct homogeneous $\Sph^2$-fibrations over $\CP^2$ corresponding to three
different~\unitary{2}-subgroups of~\sunitary{3}, each containing~$\T^2$.
The closed \sunitary{3}-invariant \gtstr s on $I \times \sunitary{3}/\T^2$
such that the coordinate field $\pd{}{t}$ on the interval $I$ factor has unit length can be parametrised by triples
$f_1, f_2, f_3 : I \to \R_+$ (corresponding to the scales 
of the three different $\Sph^2$ fibres) such that 
\begin{equation}
\label{eq:su3closed}
\frac{d}{dt} (f_1 f_2 f_3) = \frac{1}{2} (f_1^2 +f_2^2 +f_3^2).
\end{equation}
To extend smoothly to a singular orbit
one of the $f_i$ must vanish at the corresponding boundary
point of $I$, while the other two must take the same non-zero value $f_j = f_k = b > 0$.
Then the singular orbit is a $\CP^2$ (with volume proportional to $b^4$),
and the \gtstr{} is defined on a neighbourhood of the zero section in
$\Lambda^2_- \CP^2$. The choice of which $f_i \to 0$ on the singular orbit $\CP^2$ corresponds to three
topologically different ways to fill in $I \times \sunitary{3}/\T^2$ by
a $\CP^2$, detected for instance, by the image of $H_4(\CP^2) \cong \Z$ in
$H_4(\sunitary{3}/\T^2) \cong \Z^2$.

If $f_i=f_j$ for some $i \neq j$ then the \gtstr{} on $I \times F_{1,2}$ 
admits an additional free isometric involution $\sigma_k$ corresponding
to the fibrewise antipodal map of one of the three $\Sph^2$-fibrations
over $\CP^2$.
If the \gtstr{} extends smoothly to (one of the three versions of)
$\Lambda^2_- \CP^2$ then $\sigma_k$ corresponds to multiplying the
fibres of $\Lambda^2_- \CP^2$ by $-1$ (while the other two involutions do not extend
to the zero section).

\begin{remark}
Because the involutions $\sigma_k$ are orientation-reversing, they cannot literally preserve any~\gtstr. Rather, for an \sunitary{3}-invariant \gtstr{}
$\varphi$ with $f_i = f_j$ we have $\sigma_k^*\varphi = -\varphi$.
For brevity though we will nevertheless call these \gtstr s ``$\sunitary{3} \times \Z_2$-invariant'' (or ``$\sunitary{3} \times \langle \sigma_k\rangle$-invariant'' if we want to emphasise which of the three involutions acts isometrically).
\end{remark}

If $f_1=f_2=f_3$ then the three resulting involutions $\sigma_1, \sigma_2, \sigma_3$
generate an action by
the symmetric group~$S_3$. Then the closure condition \eqref{eq:su3closed}
implies $f_i = \frac{t}{2}$, which defines the unique
\sunitary{3}-invariant torsion-free \gtwo-cone.
However, the Bryant--Salamon ~\sunitary{3}-invariant complete AC
torsion-free~\gtstr s on $\Lambda^2_-\CP^2$ 
that are asymptotic to this torsion-free \gtwo-cone have only an additional
$\Z_2$ symmetry. Thus its asymptotic cone has an isometry of order 3 that does not extend (even as a diffeomorphism) to the zero section.

\begin{remark}
\label{rmk:transition}
One can interpret the discussion above as showing the existence of three topologically different
AC \gtwo-manifolds all asymptotic to the unique~\sunitary{3}-invariant torsion-free~\gtwo-cone. 
This can be seen as analogous to the existence of two topologically distinct small
resolutions of the complex 3-dimensional ordinary double point singularity
(called the conifold in the physics literature), related by a `flop'.
Atiyah--Witten~\cite[\S 2.3]{Atiyah:Witten:Mtheory} physically interpret
the relationship between the three AC $G_2$ holonomy spaces sharing the same
asymptotic cone as more similar to the `conifold transition' between a small
resolution and a deformation of the conifold.
\end{remark}

\medskip
The soliton ODEs for \sunitary{3}-invariant \gtstr s on
$I \times \sunitary{3}/\T^2$ can be formulated as a first-order ODE system for
$f_1, f_2, f_3$ and two further real variables encoding the torsion $2$-form  of the closed~\gtstr.
 The phase space $\mathcal{P}$ for~\sunitary{3}-invariant solitons is therefore $5$-dimensional, and hence for any fixed
$\lambda$ the space of local~\sunitary{3}-invariant solitons is $4$-dimensional
($\lambda\not= 0$ fixes the scale, but in the steady case we should really
think of the family as 3-dimensional up to scale).

The involutions $\sigma_k$ act on the 5-dimensional phase space, each fixing
a 3-dimensional submanifold that is invariant under the soliton ODE system.
The soliton ODEs on these $3$-dimensional invariant manifolds reduce to
the ODEs \eqref{eq:ODE:tau2'} for the \Sp{2}-invariant soliton system
if we set $y=f_i=f_j$ and $x=f_k$ (note, for instance, that the closure
condition \eqref{eq:su3closed} specialises to \eqref{eq:closure} in this
situation). Thus solutions to \eqref{eq:ODE:tau2'} have two geometric
interpretations: 
either $\sunitary{3}\times \Z_2$-invariant solitons on $I \times  \sunitary{3}/\T^2$ (in three different ways, depending on which $f_k$ we identify with $x$)
or~\Sp{2}-invariant solitons on $I \times \CP^3$. 

For each fixed dilation constant $\lambda$, there is a 2-parameter
family of smoothly-closing~\sunitary{3}-invariant solitons defined near the zero section
of $\Lambda^2_- \CP^2$. 
This family can be parametrised by a pair $(b,c)$ where $b>0$ and $c\in \R$; 
the parameter $b$ controls the size of the singular orbit $\CP^2$ and
the involution that multiplies the fibres of $\Lambda^2_- \CP^2$ by $-1$
acts as $(b,c) \mapsto (b,-c)$.
Following the pattern above, we can identify this with a family
of solutions $\scfam{\lambda}{b,c}{k}$ on $(0,\epsilon) \times \sunitary{3}/\T^2$
in three different ways, depending on which $f_k \to 0$ as $t \to 0$
(while $f_i$ and $f_j \to b$).

\begin{remark}
\label{rmk:scaling_su3}
Analogously to Remark \ref{rmk:scaling_sc}, for any $\mu>0$ the smoothly-closing soliton
$\scfam{\mu^{-2}\lambda}{\mu b, \mu c}{k}$ is a rescaling of
$\scfam{\lambda}{b,c}{k}$.
We can therefore think of the smoothly-closing solutions as a 2-parameter
family up to scale, with scale-invariant parameters $\lambda b^2$ and
$\frac{c}{b}$; for steady solitons they form a $1$-parameter family with scale-invariant parameter $\frac{c}{b}$. 
\end{remark}

The conditions for an~\Sp{2}-invariant soliton to close smoothly
on $\Sph^4$ turn out to be the same as for the corresponding
$\sunitary{3} \times \Z_2$-invariant soliton to close smoothly on $\CP^2$.
In particular, the $1$-dimensional subfamily $\scfam{\lambda}{b,0}{k}$,
which is invariant under the involution $\sigma_k$, corresponds to the
$1$-parameter family of smoothly-closing~\Sp{2}-invariant solitons
$\spfam{\lambda}{b}$ from Theorem \ref{thm:Sp2:smooth:closure}.

\begin{example}
\label{ex:explicit_su3shrinker}
The explicit~\Sp{2}-invariant shrinker solution from
Example \ref{ex:explicit_shrinker} 
gives rise to a complete~$\sunitary{3}\times \Z_2$-invariant AC shrinker on
$\Lambda^2_-\CP^2$.
Its asymptotic cone is closed but not torsion-free;
unlike in the torsion-free case this cone is invariant only under $\Z_2$ and not all
of~$S_3$. One should therefore think of there being three distinct closed
conical \gtstr s on $\Rpos \times \sunitary{3}/T^2$ that are the asymptotic limits
of three explicit AC shrinkers, that close smoothly on three topologically different
$\CP^2$s. In view of Remark \ref{rmk:explicit_shrinker}, up to scale they
correspond to the smoothly-closing solutions $\scfam{-1}{\frac32,0}{k}$,
where $\scfam{-1}{\frac32,0}{1}$ is defined by
\[ f_1 = t, \quad f_2 = f_3 = \tfrac12 \sqrt{9 + t^2} . \]
\end{example}

\begin{example}
\label{ex:SU3:Z2:expanders}
For $\lambda>0$, it follows from Theorem \ref{thm:sc:expanders:complete} that
$\scfam{\lambda}{b,0}{k}$ defines a complete~$\sunitary{3}\times\Z_2$-invariant AC expander on $\Lambda^2_-\CP^2$ for every $b > 0$.
Moreover, every complete~$\sunitary{3}\times\Z_2$-invariant expander on $\Lambda^2_-\CP^2$ is actually AC. 
\end{example}

\subsection{Completeness of \texorpdfsunitary{3}-invariant solitons}

We would like to understand more generally, for each $\lambda$, which
elements of $\scfam{\lambda}{b,c}{k}$ extend to complete solitons
on~$\Lambda^2_-\CP^2$.
In the steady case $\lambda=0$, two of the present authors gave a definitive answer to this completeness question
in~\cite[Theorem~G]{Haskins:Nordstrom:g2soliton1}.

\begin{theorem}
\label{thm:steadies}
The smoothly-closing soliton $\scfam{0}{b,c}{k}$ is complete and AC (to the
torsion-free cone, with rate $-1$)
for $\frac{1}{b} \abs{c} < \frac{3}{\sqrt{2}}$,
and is forward-incomplete for $\frac{1}{b} \abs{c} > \frac{3}{\sqrt{2}}$.
The borderline case of~$\frac{1}{b} \abs{c} = \frac{3}{\sqrt{2}}$ defines a
complete steady soliton (unique up to scale) with exponential volume growth.
\end{theorem}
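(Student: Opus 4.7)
The plan is to reduce the completeness question for the 2-parameter family $\scfam{0}{b,c}{k}$ to a one-parameter problem in the scale-invariant quantity $\sipar := c/b \geq 0$, and then invoke the steady end-behaviour trichotomy of \cite[Theorem F]{Haskins:Nordstrom:g2soliton1} together with stability arguments to isolate the critical value $\sipar_* = 3/\sqrt{2}$. First I would use the fibre involution $\sigma_k : c \mapsto -c$ to reduce to $c \geq 0$, and then Remark \ref{rmk:scaling_su3} (together with the scale-invariance of the steady soliton PDE noted in Remark \ref{rmk:general_scaling}) to parametrise the smoothly-closing steady solitons, up to rescaling, by $\sipar \in [0,\infty)$. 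Note that $\sipar = 0$ gives the $\sigma_k$-invariant subfamily, which by Theorem \ref{thm:Sp2:smooth:closure} and the Remark following it coincides with the Bryant--Salamon torsion-free \gtmetric{} on $\Lambda^2_-\CP^2$---in particular, it is complete and AC to the unique \sunitary{3}-invariant torsion-free \gtwo-cone with rate~$-1$.

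Next I would apply \cite[Theorem F]{Haskins:Nordstrom:g2soliton1}, which asserts that every \sunitary{3}-invariant steady end is exactly one of: (i) forward-complete AC with rate $-1$ to a torsion-free cone; (ii) forward-complete with exponential volume growth, and essentially rigid (indeed, unique up to scale, given by an explicit formula); or (iii) forward-incomplete of stable type in the sense of \eqref{eq:stable:singularity}. Let
\[
A := \{ \sipar \geq 0 : \scfam{0}{b,c}{k} \text{ is forward complete and AC}\},\quad
B := \{ \sipar \geq 0 : \scfam{0}{b,c}{k} \text{ is forward incomplete}\}.
\]
A steady analogue of Theorem \ref{thm:uniform} (the relevant stability machinery in the steady setting being already developed via the monotonic quantities of \cite{Haskins:Nordstrom:g2soliton1}) shows $A$ is open in $[0,\infty)$; meanwhile $B$ is open by the stability of the finite-extinction ansatz \eqref{eq:stable:singularity} (\cf Remark \ref{rmk:incomplete_open}). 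Since $0 \in A$ and the rigid family (ii) is discrete in the scale-invariant parameter, connectedness of $[0,\infty)$ forces $A$ to be an interval $[0,\sipar_*)$ for some $\sipar_* \in (0,\infty]$, with $\sipar = \sipar_*$ corresponding to a representative of class (ii) and $(\sipar_*,\infty) \subseteq B$.

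The final and most substantive task is to verify that $\sipar_* < \infty$ and to identify $\sipar_* = 3/\sqrt{2}$. For finiteness of $\sipar_*$ one would exhibit, on any complete AC steady end, an a priori bound on $|c|/b$ coming from combining the initial data at the singular orbit $\CP^2$ with the asymptotic cone data at infinity via a conserved or monotone quantity (here the steady case is genuinely easier than \S\ref{ss:large:b:asymptotics}, because $\lambda = 0$ makes $\tilde\tau_1$ and $\tilde\tau_2$ \emph{conserved} rather than merely monotone, as noted in \S\ref{ss:tilde_tau}). For the precise value, I would match with the explicit exponential-volume-growth steady soliton constructed in \cite[\S 8]{Haskins:Nordstrom:g2soliton1}: its smooth closure data on the singular $\CP^2$ can be read off from the series expansion analogous to \eqref{eq:sp2:expansion}, and a direct computation of the ratio $c/b$ from that expansion should yield $3/\sqrt{2}$. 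The main obstacle I anticipate is the continuity argument underlying $A = [0,\sipar_*)$: one must rule out that as $\sipar$ crosses a supposed boundary point of $A$, the family could degenerate into behaviour outside classes (i)--(iii)---for instance, ends oscillating between AC and incomplete regimes. This is what the trichotomy of \cite[Theorem F]{Haskins:Nordstrom:g2soliton1} ultimately prevents, but translating it from a statement about individual ends into a statement about the \emph{family} $\scfam{0}{b,c}{k}$ requires uniform control on the dynamics in a neighbourhood of the singular orbit as $\sipar$ varies, together with the uniqueness (up to scale) of the type (ii) solution to pin down the transition.
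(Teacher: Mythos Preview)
The paper does not give its own proof of this statement: Theorem~\ref{thm:steadies} is explicitly quoted from \cite[Theorem~G]{Haskins:Nordstrom:g2soliton1} (see the sentence immediately preceding the theorem statement), and no argument for it is supplied here. So there is nothing in the present paper to compare your proposal against.

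That said, as a sketch of how one might prove it, your outline is broadly in the right spirit---reduce to the scale-invariant parameter $c/b$, invoke the steady end trichotomy \cite[Theorem~F]{Haskins:Nordstrom:g2soliton1}, and use openness of the AC and forward-incomplete loci together with uniqueness (up to scale) of the exponential-growth solution to force a single transition point. You are also right that the steady case has the simplifying feature that $\tilde\tau_1, \tilde\tau_2$ are conserved rather than merely monotone. The genuine content you have not pinned down is (a) proving that $B$ is nonempty (equivalently $\sipar_* < \infty$), and (b) verifying that the explicit exponential-growth solution actually closes smoothly with $|c|/b = 3/\sqrt{2}$; for both you defer to computations in \cite{Haskins:Nordstrom:g2soliton1} without carrying them out. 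Whether your approach matches what is actually done in that paper you would have to check there.
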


\begin{remark}
The steady soliton $\scfam{0}{1, \tfrac{3}{\sqrt{2}} }{1}$ can be
written explicitly as 
\[
f_1 = 2 \sinh{\tfrac{t}{2}}, \quad f_2 = \sqrt{1 + e^t} \quad  f_3  = \sqrt{1 + e^{-t}}.
\]
In particular, $f_3$ is bounded as $t \to \infty$, while $f_1$ and $f_2$
grow exponentially with $\frac{f_1}{f_2} \to 1$. Informally we could say that
the end is ``approximately invariant'' under the involution $\sigma_3$.
By \cite[Theorem~F]{Haskins:Nordstrom:g2soliton1}, any forward-complete
non-AC steady end  has such an approximate $\Z_2$ symmetry.
However, for the complete non-AC steady solitons this symmetry does not extend
to a diffeomorphism of $\Lambda^2_- \CP^2$
(for~$\scfam{0}{1, \tfrac{3}{\sqrt{2}}}{1}$ it is $\sigma_1$
that acts as $-1$ on the fibres of $\Lambda^2_- \CP^2$).
\end{remark}

As in the~\Sp{2}-invariant case it turns out to be instructive to try to construct~\sunitary{3}-invariant AC expander and shrinker ends asymptotic to closed~\sunitary{3}-invariant cones. 
If we set $f_1=c_1 t$, $f_2 = c_2 t$, $f_3 = c_3 t$ for some positive triple $(c_1,c_2,c_3) \in \R^3_+$ then the associated $3$-form $\varphi$ 
is conical with vertex at $t=0$ and the closure condition \eqref{eq:su3closed}
on $\varphi$ becomes
\begin{equation}
\label{eq:su3closed_cone}
6 c_1 c_2 c_3 = c_1^2+c_2^2+c_3^2.
\end{equation}
It follows that each homothety class of positive triples $[c_1,c_2,c_3]$ contains a unique closed cone, 
so the space of~\sunitary{3}-invariant closed cones is $2$-dimensional and moreover is diffeomorphic to $\Sph^2 \cap \R^3_+$.
The unique torsion-free cone has $c_1=c_2=c_3=\frac{1}{2}$. 

Like in the~\Sp{2}-invariant case, the problem of constructing~\sunitary{3}-invariant AC expander and AC shrinker ends asymptotic to a given closed~\sunitary{3}-invariant cone
can be recast as an irregular singular initial value ODE problem, whose qualitative properties depend strongly on the sign of~$\lambda$; 
the properties of such AC ends lead to heuristics
for the dimensions of the spaces of complete AC~\sunitary{3}-invariant solitons analogous to those
in \S\ref{subsec:end_sivp}.

For any fixed $\lambda<0$, we are able to prove the existence of a unique~\sunitary{3}-invariant AC shrinker end
asymptotic to any given closed~\sunitary{3}-invariant cone. In particular, the space of AC shrinker ends has codimension $2$ 
within the $4$-dimensional space of all local~\sunitary{3}-invariant shrinkers. 
Thus the expected dimension of the space of complete~\sunitary{3}-invariant AC shrinkers is again zero, 
being the intersection of a pair of codimension $2$ subsets within the $4$-dimensional space of all local~\sunitary{3}-invariant shrinkers. 

\begin{conjecture}
\label{conj:finite}
For fixed $\lambda < 0$, there are finitely many
complete \sunitary{3}-invariant AC shrinkers.
\end{conjecture}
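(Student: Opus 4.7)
The plan is to make rigorous the dimension-count heuristic sketched in the excerpt. Fix $\lambda<0$. For each topological type $k\in\{1,2,3\}$ of the singular-orbit filling by $\CP^2$, the smoothly-closing solutions $\scfam{\lambda}{b,c}{k}$ form a $2$-parameter family, while the AC shrinker ends are expected to form a $2$-parameter family indexed by the $2$-dimensional space of closed \sunitary{3}-invariant \gtwo-cones. A complete \sunitary{3}-invariant AC shrinker of type $k$ corresponds to a datum lying in both families. To count these, choose a hypersurface $\mathcal{T}$ in the $5$-dimensional phase space $\mathcal{P}$ transverse to the soliton ODE flow, and let $\mathcal{S}_k\subset\mathcal{T}$ be the set of initial data on $\mathcal{T}$ arising from smoothly-closing solutions, and $\mathcal{A}\subset\mathcal{T}$ the set of initial data whose forward evolution is an AC shrinker end. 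Thanks to the symmetry-inheritance part of Theorem~\ref{thm:HKP}, any AC shrinker with \sunitary{3}-invariant cone is itself \sunitary{3}-invariant, so complete \sunitary{3}-invariant AC shrinkers of type $k$ correspond bijectively to $\mathcal{S}_k\cap\mathcal{A}$; the goal is to show this intersection is finite, then sum over $k$.

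The first task is to exhibit $\mathcal{S}_k$ and $\mathcal{A}$ as genuine $2$-dimensional real-analytic immersed submanifolds of $\mathcal{T}$. For $\mathcal{S}_k$ this follows from the real-analytic dependence of $\scfam{\lambda}{b,c}{k}$ on $(b,c)$ (the \sunitary{3}-invariant analogue of Theorem~\ref{thm:Sp2:smooth:closure}), together with the observation that the rescaling symmetry of Remark~\ref{rmk:scaling_su3} does not preserve $\lambda$ and so does not collapse the parameter space. For $\mathcal{A}$, one constructs an AC end for each closed \sunitary{3}-invariant cone by solving an irregular singular initial value problem at $t=\infty$ directly analogous to~\eqref{eq:irreg_ode} in the \Sp{2} case; since $\lambda<0$, the dichotomy described in~\S\ref{subsec:end_sivp} should yield a unique smooth AC shrinker end for each closed cone, with analytic dependence on the cone. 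Injectivity of the resulting map from cones into $\mathcal{A}$ follows from the uniqueness part of Theorem~\ref{thm:HKP}.

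With $\mathcal{S}_k$ and $\mathcal{A}$ so constructed, two further ingredients are needed. The first is \emph{discreteness} of $\mathcal{S}_k\cap\mathcal{A}$, which would follow from transversality of these two real-analytic submanifolds in $\mathcal{T}$ at each intersection point; infinitesimally, this amounts to showing that no nonzero Jacobi field of the linearised soliton equation is simultaneously tangent to a smoothly-closing variation and to an AC-end variation. The second is \emph{compactness} of $\mathcal{S}_k\cap\mathcal{A}$, requiring uniform a priori bounds on $(b,c)$ and on the asymptotic cone for all complete AC shrinkers of type $k$. The natural tool for the compactness bounds is a monotone quantity analogous to the decreasing $M/g^3$ from Lemma~\ref{lem:monotone}, whose value at the closing orbit is controlled by $(b,c)$ and whose limit at infinity is controlled by the cone, linking bounds at the two ends. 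A compact, discrete, real-analytic subset of $\mathcal{T}$ is finite, so these two ingredients together finish the proof for each $k$.

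The hard part will be the transversality step. In the \Sp{2}-invariant case the single warping variable~$\tfrac{y}{x}$ together with the monotone $M/g^3$ reduced many questions to essentially one-dimensional comparison arguments; here, with a genuine $2$-parameter family of closing data meeting a $2$-parameter family of cones, only a linearised rigidity argument---along the lines of, but more delicate than, the unique-continuation methods of~\cite{HKP}---seems likely to succeed, and identifying the correct weighted-energy identity for the \sunitary{3}-invariant soliton operator is not straightforward. A secondary obstacle is the absence of a known monotone quantity analogous to~$M/g^3$ in the \sunitary{3}-invariant setting: finding such a quantity would underpin the compactness step, and would be independently useful for related questions such as Conjecture~\ref{C:SU3:shrink:expand:matching}.
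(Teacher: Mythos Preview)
The statement is a \emph{conjecture} in the paper, not a theorem: the paper offers no proof, only the same dimension-count heuristic you start from (both the smoothly-closing family and the AC-end family are $2$-dimensional inside a $4$-dimensional space of local solutions, so one expects isolated intersections). Your proposal is therefore not being compared to an existing proof but assessed as a research plan.

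As a plan it is well-organised and you correctly identify the two genuine obstructions. But you should be clear that neither one is currently surmountable with the tools in the paper, and that your write-up does not close either gap. On transversality: even in the much simpler \Sp{2}-invariant setting the paper does not establish transversality of the smoothly-closing and AC-end families---uniqueness of the explicit shrinker (Conjecture~\ref{c:shrinker:unicity}) is itself left open---so there is no template to generalise. Real-analyticity alone does not give discreteness of $\mathcal{S}_k\cap\mathcal{A}$ without knowing the intersection is not positive-dimensional, and the linearised rigidity you allude to is not supplied by \cite{HKP} (which proves uniqueness of the AC end for a \emph{fixed} cone, not rigidity of the matching with smoothly-closing data). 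On compactness: the paper explicitly states (opening of \S\ref{SS:su3}) that no analogue of the monotone quantity $M/g^3$ is known in the \sunitary{3}-invariant case, so the mechanism you propose for a priori bounds is currently unavailable. Without it, nothing rules out a sequence of complete AC shrinkers with $b\to\infty$ or cones degenerating to the boundary of the triangle in Figure~\ref{fig:su3cones}.

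In short: your outline matches the paper's own heuristic and names the right missing pieces, but it is a programme rather than a proof, and the paper treats the statement accordingly.
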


The set of complete AC shrinkers is certainly non-empty because of the
explicit~$\sunitary{3}\times \Z_2$-invariant AC shrinkers on $\Lambda^2_-\CP^2$ already mentioned in Example~\ref{ex:explicit_su3shrinker}; at present, 
we do not know if there are any further complete~\sunitary{3}-invariant AC shrinkers that do not possess such an additional isometric involution.

On the other hand,  for any fixed $\lambda>0$, we can prove the existence of a $2$-parameter family 
of~\sunitary{3}-invariant AC expander ends asymptotic to any given closed~\sunitary{3}-invariant cone. As in the~\Sp{2}-invariant case 
any two AC expander ends asymptotic to the same cone share the same Taylor polynomial, but differ by exponentially-small terms. 

\begin{remark}
In the case of a $\sunitary{3} \times \Z_2$-invariant cone, say
with $c_1 = c_2$, the subfamily of $\sunitary{3} \times \Z_2$-invariant expander
AC end solutions given in Example~\ref{ex:SU3:Z2:expanders} corresponds to the 1-parameter family discussed in Section~\ref{subsec:end_sivp}.
Thus most members of the 2-parameter family of \sunitary{3}-invariant end solutions
with that asymptotic cone are not invariant under the involution. 
This demonstrates that discrete isometries of the asymptotic cone
need not be inherited by AC expander ends, in contrast to what Theorem \ref{thm:HKP} tells us about (gradient) AC shrinker ends.
This leaves open the question of whether or not continuous isometries 
of the asymptotic cone are inherited by AC expander ends. 
(In Ricci flow, Chodosh~\cite{Chodosh} proved that AC expanders on $\R^n$ with positive curvature are $\orth{n}$-invariant 
and therefore must be Bryant expanders~\cite{Bryant:expanders}. 
Also it is known in several cases that AC torsion-free~\gtstr s inherit the continuous symmetries of their asymptotic cones~\cite{Karigiannias:Lotay:conicalG2}.)
\end{remark}

Since the set of \sunitary{3}-invariant closed cones is also 2-dimensional,
all in all we have a 4-parameter family of \sunitary{3}-invariant AC expander
end solutions, suggesting that these form an open subset in the space of
all~\sunitary{3}-invariant local expanders 
(as we have proved is true in the~\Sp{2}-invariant setting). 
As mentioned in Remark \ref{rmk:lending}, the argument in Proposition
\ref{prop:stable} can be generalised to the case of~\sunitary{3}-invariant expanders, and this establishes  that the AC end property is indeed stable in that
setting too. In particular, the existence of the $1$-parameter family of  $\sunitary{3}\times \Z_2$-invariant  complete AC expanders given in Example~\ref{ex:SU3:Z2:expanders}, 
together with the stability of AC expander ends, implies the existence of a
$2$-parameter family of complete AC \sunitary{3}-invariant expanders
(and the generic member of this family will not possess any additional
isometric involution). However, this argument does not address whether all
smoothly-closing~\sunitary{3}-invariant expanders are complete, or whether 
a complete~\sunitary{3}-invariant expander is necessarily AC.

To discuss the possible end behaviours of~\sunitary{3}-invariant expanders more generally, it is helpful to first note that
we can adapt the other two end constructions from~\S\ref{subsec:end_sivp}
to the \sunitary{3}-invariant setting. Indeed, 
for any fixed $\lambda$, \cite[Theorem 6.23]{Haskins:Nordstrom:g2soliton1}
provides a 4-parameter family of forward-incomplete end solutions analogous
to those in (iii) (which contains a corresponding 2-parameter family
with extra $\Z_2$-invariance). In particular, this type of forward incompleteness is
still stable (for any value of $\lambda$). 

Similarly, for a fixed $\lambda > 0$, we obtain a 3-parameter family of~\sunitary{3}-invariant 
forward-complete expander ends with quadratic-exponential volume growth, analogous to the ones
in (ii). More precisely, for each $A > 0$, setting $f_1 = \hat x$,
$f_2 = f_3 = \hat y$ from \eqref{eq:explicit_limit_sol} 
gives an approximate end solution, and then there is a 2-parameter
family of genuine end solutions that are a bounded distance away from this model. 
Note that all solutions of this type have $\frac{f_2}{f_3} \to 1$ as $t \to \infty$, so they all have an ``approximate $\Z_2$ symmetry'' on the end, but for each $A > 0$ there is only one solution with $f_2 = f_3$.

It is thus plausible that there should be a similar trichotomy for
\sunitary{3}-invariant expander ends as for \Sp{2}-invariant ones.
However, unlike for the \Sp{2}-invariant case, we expect that the non-AC
end behaviours actually arise even for smoothly-closing solutions.

\begin{conjecture}
\label{conj:transition}
Fix $\lambda > 0$. Then $\scfam{\lambda}{b,c}{i}$ is forward complete and AC if
$|c| \leq \frac{3}{\sqrt{2}}b$.
For each $k > \frac{3}{\sqrt{2}}$, there is a unique $\mu > 0$ such that
$\scfam{\lambda}{b, kb}{i}$ is forward complete and AC for $b > \mu$,
incomplete for $b < \mu$, and complete non-AC for $b = \mu$.
\end{conjecture}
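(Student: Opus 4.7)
The plan is to study the two-parameter family $\scfam{\lambda}{b,c}{i}$ for fixed $\lambda > 0$ via a phase diagram in the $(b,c)$-plane (equivalently in the scale-invariant parameters $q = \lambda b^2$ and $k = c/b$), adapting the template from the $\Sp{2}$-invariant case developed in \S\ref{s:asymptotic:cones} to this two-dimensional family. The foundation is openness of two regions: the set $\mathcal A \subset \Rpos \times \R$ of $(b,c)$ giving complete AC expanders is open via the \sunitary{3}-analog of Theorem~\ref{mthm:uniform} (\cf Remark~\ref{rmk:lending}), and the set $\mathcal I$ giving forward-incomplete expanders is open because the stable finite-extinction ansatz from \cite[Theorem 6.23]{Haskins:Nordstrom:g2soliton1} yields a $4$-parameter family of end solutions filling an open region of the $5$-dimensional phase space.

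Next I would analyse the two natural limits along the rays $c = kb$. As $b \to 0$ with $k$ fixed, the rescaled solutions converge uniformly on compact intervals to the steady soliton $\scfam{0}{1,k}{i}$; combined with a steady-allowed extension of the stability argument, Theorem~\ref{thm:steadies} then yields that for small $b$, $\scfam{\lambda}{b,kb}{i}$ is complete AC when $|k| < \tfrac{3}{\sqrt{2}}$ and forward-incomplete when $|k| > \tfrac{3}{\sqrt{2}}$. For the opposite regime $b \to \infty$, I would need to show $\scfam{\lambda}{b, kb}{i}$ is complete AC for any fixed $k$, ideally via an \sunitary{3}-invariant analogue of Proposition~\ref{prop:b_oo} driven by some generalisation of the monotonic quantity $M/g^3$; failing that, by studying the large-$b$ rescaling limit (\cf Remark~\ref{rmk:ell_sharp}) and promoting explicit AC solutions of the limit system back to AC ends of the full system by continuity. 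Together with openness of $\mathcal A$ and $\mathcal I$, this forces a nonempty transition set on each ray with $|k| > \tfrac{3}{\sqrt{2}}$.

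To show uniqueness of the transition value $\mu(k)$ for $|k| > \tfrac{3}{\sqrt{2}}$ and that $\mu = 0$ for $|k| \le \tfrac{3}{\sqrt{2}}$, I would develop a $g$-for-$g$ comparison principle along the slice $c = kb$, analogous to Proposition~\ref{prop:comp_g_1}: parametrising two expanders with $b_+ > b_-$ by $g = (f_1 f_2 f_3)^{1/3}$, a suitable difference of spectral quantities between them should be monotonic in $g$, forcing $\mathcal A \cap \{c = kb\}$ to be an upward ray. The solution at the transition must then be forward-complete and non-AC, and by the analysis of quadratic-exponential expander ends sketched in \S\ref{ss:fc_non_ac} it fits within the $3$-parameter family modelled on the explicit ends~\eqref{eq:explicit_limit_sol}, which intersects the $2$-parameter family of smoothly-closing solutions in a curve realising the predicted transitions. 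The main obstacle is the absence of a manifest \sunitary{3} analogue of the monotonic quantity $M/g^3$ (noted explicitly in the paper); in the $\Sp{2}$ case this single quantity drove both the large-$b$ asymptotics and the comparison principle, so overcoming its absence likely requires either a clever algebraic search within the $5$-variable ODE system, or a workaround via the rescaling limit systems indicated in Remark~\ref{rmk:scaling_reduction}.
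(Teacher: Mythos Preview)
This statement is a \emph{conjecture} in the paper, not a theorem; the paper offers no proof, only heuristic evidence and numerical support. So there is no ``paper's own proof'' to compare against. What the paper does provide, in the paragraphs immediately following the conjecture, is essentially the same skeleton you have written: openness of the AC and forward-incomplete regions; the small-$b$ limit via rescaling to the steady case and Theorem~\ref{thm:steadies}; and the large-$b$ regime via the \sunitary{3}-analogue of Proposition~\ref{prop:stable}. Your proposal is thus a faithful expansion of the paper's own heuristic, not an alternative route.

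Where you go beyond the paper is in attempting to pin down \emph{uniqueness} of the transition value $\mu(k)$ via a $g$-for-$g$ comparison principle along rays $c = kb$. This is the right shape of argument by analogy with \S\ref{ss:ac:expander:uniqueness}, but you should be aware that the paper explicitly flags (in the opening of \S\ref{SS:su3}) the absence of any known \sunitary{3}-analogue of the monotone quantity $M/g^3$ from Lemma~\ref{lem:monotone}. In the \Sp{2} case that quantity underpinned not only Proposition~\ref{prop:b_oo} (large-$b$ asymptotics) but also, indirectly, the trichotomy that classifies the transition solution as the quadratic-exponential end. Your proposal acknowledges this obstacle, which is good, but the workaround you suggest via rescaling limit systems is itself speculative: the paper's Remark~\ref{rmk:ell_sharp} already notes that even in the \Sp{2} case, upgrading compact-interval convergence to the limit system into a statement about asymptotic cones is ``more complicated'' and left unproven. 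So both your main proposed tools for the hardest steps (the comparison principle and the large-$b$ limit) rest on ingredients the paper regards as genuinely missing, which is precisely why the statement remains a conjecture.
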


Numerical calculations give strong evidence of the existence of some transition like this between AC and incomplete
forward-evolution of the smoothly-closing expanders. We also have a heuristic based on a comparison with the behaviour of
smoothly-closing~\sunitary{3}-invariant  steady solitons (stated earlier in Theorem
\ref{thm:steadies}).
On the one hand, for each fixed $k$, the~\sunitary{3}-invariant version of Proposition \ref{prop:stable} should imply
AC end behaviour of the smoothly-closing $\lambda$-expander with parameters
$(b, k b)$ whenever $b$ is sufficiently large.

On the other hand, for any $\mu>0$, Remark \ref{rmk:scaling_su3} means that 
the smoothly-closing $\lambda$-soliton $\scfam{\lambda}{\mu b, \mu c}{i}$ is a
rescaling of $\scfam{\mu^2 \lambda}{b, c}{i}$.
Similarly to the argument used in the proof of Lemma \ref{lem:sid*:bto0},
we now compare with a steady soliton as $\mu \to 0$.
By Theorem \ref{thm:steadies}, $\scfam{0}{b, c}{i}$ is forward incomplete when
$k = \frac{|c|}{b} > \frac{3}{\sqrt{2}}$.
Because of the stability of this type of finite extinction behaviour, 
for fixed $\lambda > 0$ and any fixed $(b,c)$ with
$\abs{c} > \frac{3}{\sqrt{2}} b$, 
we expect that
$\scfam{\lambda}{\mu b, \mu c}{i}$
has finite extinction time for $\mu$ sufficiently small.

\subsection{Asymptotic cones of AC \texorpdfsunitary{3}-invariant expanders}
\label{subsec:su3cones}

The final question we wish to discuss is which 
cones arise as the asymptotic limit of a complete~\sunitary{3}-invariant AC expander; 
of particular interest to us is whether the asymptotic cones of the explicit
$\sunitary{3} \times \Z_2$-invariant AC shrinkers from
Example \ref{ex:explicit_su3shrinker} are realised this way.

Recall that the~\sunitary{3}-invariant closed cones are parametrised by positive triples $c_1, c_2, c_3$ satisfying
the scale-normalisation \eqref{eq:su3closed_cone} imposed by the
closure condition. 
To make the previous question precise, we should restrict attention to complete expanders
that extend smoothly to a $\CP^2$ orbit in one of the three possible ways,
\ie we should prescribe which $f_i$ goes to 0 at the special orbit; 
changing that prescription will permute the triples $(c_1,c_2,c_3)$ of the asymptotic cones realised.
For concreteness, we make the choice that $f_1 \to 0$ as $t \to 0$. 

\begin{conjecture}
\label{conjecture:SU3:hit:cones}
For $\lambda > 0$, the closed cone with parameters $c_1, c_2, c_3$ appears
as the asymptotic limit of an AC member of the family of smoothly-closing
expanders $\scfam{\lambda}{b,c}{1}$ if and only if $c_1 < \max(c_2, c_3)$, \ie if $c_1$
is not the largest of the $c_i$.
\end{conjecture}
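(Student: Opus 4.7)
The plan is to prove Conjecture \ref{conjecture:SU3:hit:cones} by combining an \sunitary{3}-invariant generalisation of our asymptotic cone analysis with a topological argument controlling the image of the resulting map. Working up to the scaling from Remark \ref{rmk:scaling_su3}, the smoothly-closing expanders $\scfam{\lambda}{b,c}{1}$ are parametrised by the pair $(q,k) = (\lambda b^2, c/b) \in \Rpos \times \R$. Let $\Omega$ denote the open subset of those $(q,k)$ for which the expander is forward complete and AC; by Example \ref{ex:SU3:Z2:expanders} the axis $\{k=0\}$ is contained in $\Omega$, and on that axis every smoothly-closing expander is complete and AC. Define the asymptotic cone map $\lmap : \Omega \to \mathcal{C}$ sending $(q,k)$ to the scale-normalised asymptotic triple $(c_1,c_2,c_3)$ inside the 2-dimensional cone space $\mathcal{C}$. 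By the \sunitary{3}-invariant analogue of Theorem \ref{thm:uniform} (which, as noted in Remark \ref{rmk:lending}, the arguments of Section \ref{s:AC:geometry:Sp2:expanders} are designed to accommodate), $\Omega$ is open and $\lmap$ is continuous.

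For the ``only if'' direction, one would generalise Theorem \ref{lem:expander:D:neq:0} to show that if $(q,k) \in \Omega$ then $\lmap(q,k)$ must satisfy $c_1 < \max(c_2,c_3)$. The natural strategy is to identify \sunitary{3}-invariant analogues of the adjusted torsion variables $\tilde\tau_1, \tilde\tau_2$ from Section \ref{ss:tilde_tau} that distinguish the role of $f_1$ (which vanishes at the singular orbit) from that of $f_2, f_3$ (which equal $b$ there), and to show that the corresponding sign conditions present at $t=0$ persist along the flow, in the spirit of Theorem \ref{thm:sc:expanders:complete}(i). Combining such persistent sign control with a Proposition \ref{prop:reg}-style asymptotic expansion of $f_1, f_2, f_3$ at $t = \infty$ should then translate into the required inequality on the asymptotic cone.

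For the ``if'' direction, we use that $\lmap(\Omega)$ is open in $\mathcal{C}$, and that its restriction to $\{k=0\}$ coincides with the Sp(2)-invariant asymptotic cone map; by Theorem \ref{mthm:asymptotic:limit}, this restriction surjects onto the ray $\{c_2 = c_3 > c_1\}$ inside the $\sigma_1$-invariant diagonal $\{c_2 = c_3\} \subset \mathcal{C}$. In order to show that $\lmap(\Omega)$ fills the whole region $\mathcal{C}_+ := \{c_1 < \max(c_2,c_3)\}$, it would suffice to prove that $\lmap(\Omega)$ is closed in $\mathcal{C}_+$. Any sequence $(q_n, k_n) \in \Omega$ with $\lmap(q_n,k_n)$ converging in $\mathcal{C}_+$ must either subconverge to a point in $\Omega$ (in which case the limit cone lies in $\lmap(\Omega)$ by continuity of $\lmap$) or escape to $\partial \Omega$. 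On $\partial \Omega$, Conjecture \ref{conj:transition} predicts the transitional solutions to be complete non-AC expanders with quadratic-exponential end behaviour analogous to Theorem \ref{thm:tri:expanders}(ii); such ends, being modelled on solutions of the rescaled limit system \eqref{eq:Sp2:ODE:limit} with $f_2, f_3$ playing approximately symmetric roles, should have an asymptotic ``model cone'' with $c_1 = \max(c_2, c_3)$, hence lying on $\partial \mathcal{C}_+$ rather than in $\mathcal{C}_+$.

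The hardest step will be making this last claim precise and proving it, which reduces to establishing Conjecture \ref{conj:transition} together with a sharpening identifying the model cone attached to each non-AC transitional solution and showing that $\lmap$ extends continuously to $\overline{\Omega}$ with boundary values in $\partial \mathcal{C}_+$. This requires controlling a 5-dimensional cohomogeneity-one dynamical system using ideas that in the Sp(2) setting were already delicate (culminating in Theorem \ref{thm:tri:expanders} and the shrinker analysis of Section \ref{sec:shrinker-tri}); the additional complication here is the absence, in the \sunitary{3}-invariant setting, of any currently-known analogue of the monotone quantity $M/g^3$ from Lemma \ref{lem:monotone} that played a central role in our end-behaviour trichotomies and in locating the asymptotic cones.
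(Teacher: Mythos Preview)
The statement you are attempting to prove is a \emph{conjecture} in the paper, not a theorem: the paper offers no proof, only heuristic evidence (numerical simulation, and the observation that the quadratic-exponential expander ends all have an approximate $\Z_2$ symmetry, which suggests that the boundary of the realised-cone region should lie along the $\Z_2$-invariant loci $c_1 = c_2$ and $c_1 = c_3$). So there is no ``paper's own proof'' to compare against.

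Your proposal is not a proof either, and to your credit you do not pretend otherwise: you explicitly reduce the ``if'' direction to Conjecture~\ref{conj:transition} (itself unproven) plus a further sharpening about boundary behaviour of $\lmap$, and you flag the absence of an \sunitary{3}-analogue of the monotone quantity $M/g^3$ as the main obstruction. What you have written is a coherent strategy whose heuristic content matches and somewhat refines the paper's: the paper's reasoning that the boundary of the AC region should correspond to non-AC ends with approximate $\Z_2$ symmetry is exactly your argument that $\lmap$ should extend to $\partial\Omega$ with values in $\partial\mathcal{C}_+$. Your ``only if'' sketch, via persistent sign conditions on adjusted-torsion analogues, is plausible but not something the paper carries out or even claims is straightforward.

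The genuine gap is therefore structural rather than a single misstep: each of your three main steps (the \sunitary{3}-invariant sign-persistence argument, the proof of Conjecture~\ref{conj:transition}, and the continuous extension of $\lmap$ to the boundary) is currently open, and the paper explicitly identifies the missing monotone quantity as the reason these remain out of reach. Your proposal should be read as a programme for attacking the conjecture, aligned with the paper's own expectations, rather than as a proof.
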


Once again some of our evidence for this conjecture comes from numerical
calculation. Another heuristic reason to believe that the (open) set of cones that
are realised as asymptotic limits of complete AC expanders is bounded by $c_1 = c_2$ and $c_1 = c_3$ is that
the quadratic-exponential growth expander end solutions that we expect to appear
as we approach the boundary of the space of AC solutions all have an approximate $\Z_2$
symmetry at their end.

\smallskip
Since each ray in the positive octant contains a unique triple $c_1, c_2, c_3$
satisfying \eqref{eq:su3closed_cone}, by radially projecting from the set of closed cones
to the plane $c_1 + c_2 + c_3 = 1$ we can visualise the set of closed cones
as the interior of an equilateral triangle, see Figure \ref{fig:su3cones}.
This radial projection map preserves the ordering of the $c_i$ and preserves
equality of any pair $c_i=c_j$. 
Points close to an edge of this triangle correspond to cones where
one of the $c_i$ is much smaller than the other two.

The centre of the triangle corresponds to the torsion-free cone
with $c_1 = c_2 = c_3 = \frac12$. The three lines of symmetry (where two of the
$c_i$ are equal) correspond to cones that are invariant under one of the three
involutions $\sigma_k$. The shaded region, bounded by the segments
$c_1 = c_2 > c_3$ and $c_1 = c_3 > c_2$, is what we conjecture arises as
asymptotic cones of the AC members of the family of smoothly-closing expanders $\scfam{\lambda}{b,c}{1}$.
The bold segment $c_2 = c_3 > c_1$ is what we know arises as the
limits of the $\sunitary{3} \times \langle \sigma_1\rangle$-invariant
expanders $\scfam{\lambda}{b,0}{1}$.

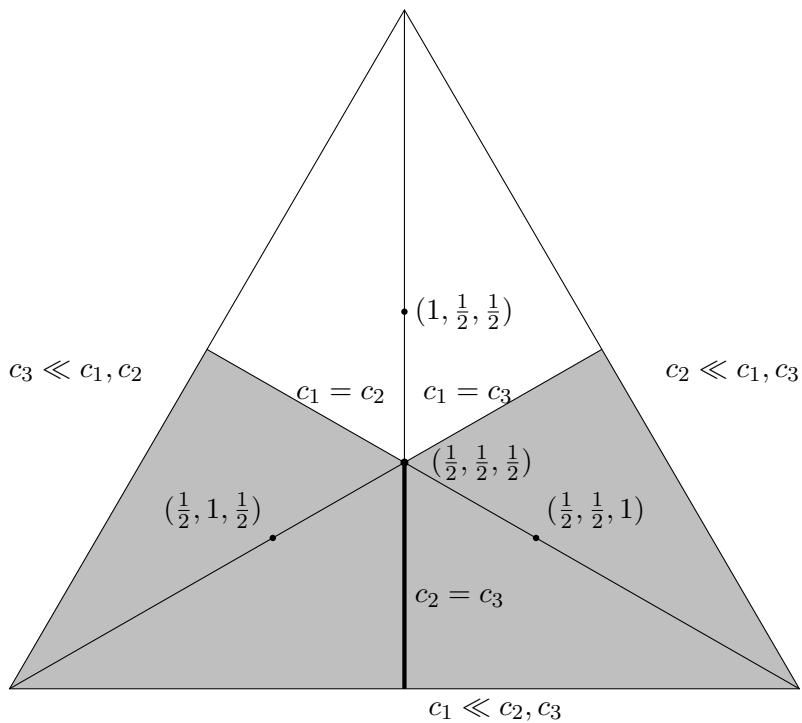
\begin{figure}
    \begin{tikzpicture}[scale=6]
      \fill[color=lightgray] (0,0) -- (0.433,0.25) -- (0.866,-0.5) -- (-0.866,-0.5) -- (-0.433,0.25) -- cycle ;
      \fill (0,0) circle(0.25pt) node[right] {$\;\;(\frac12, \frac12, \frac12)$};
      \fill (0,0.3333) circle(0.2pt) node[right] {$(1, \frac12, \frac12)$};
      \fill (-0.2886,-0.1667) circle(0.2pt) node[above left] {$(\frac12, 1, \frac12)$};
      \fill (0.2886,-0.1667) circle(0.2pt) node[above right] {$(\frac12, \frac12, 1)$};
      \draw (0.866,-0.5) -- (0,1) -- (-0.866,-0.5) -- cycle ;
      \draw (0,-0.5) -- (0,1); 
      \draw[ultra thick] (0,0) -- (0,-0.5); 
      \draw (0.866,-0.5) -- (-0.433,0.25); 
      \draw (-0.866,-0.5) -- (0.433,0.25); 
       \node[below] at (0.2,-0.5) {$c_1 \ll c_2, c_3$}; 
       \node[right] at (0.55,0.2) {$c_2 \ll c_1, c_3$}; 
       \node[left] at (-0.55,0.2) {$c_3 \ll c_1, c_2$}; 
       \node[right] at (0,-0.3) {$c_2 = c_3$}; 
       \node[right] at (-0.26,0.15) {$c_1 = c_2$}; 
       \node[left] at (0.26,0.15) {$c_1 = c_3$}; 
    \end{tikzpicture}
  \caption{Closed \sunitary{3}-invariant cones}
\label{fig:su3cones}
\end{figure}

The closed cone with parameters $(1,\frac12, \frac12)$ is the asymptotic cone
of the explicit $\sunitary{3} \times \langle \sigma_1\rangle$-invariant AC
shrinker $\scfam{-1}{\frac32,0}{1}$ from Example \ref{ex:explicit_su3shrinker}.
According to our conjecture this cone is \emph{not} the asymptotic cone of any
of the AC expanders in the family $\scfam{\lambda}{b,c}{1}$ ($\lambda > 0$),
\ie there is no complete AC expander on ``the same''
$\Lambda^2_-\CP^2$ with this asymptotic cone.

However, the point $(\frac12, \frac12, 1)$, which corresponds to the asymptotic
cone of the  $\sunitary{3} \times \langle \sigma_3\rangle$-invariant shrinker
$\scfam{-1}{\frac32,0}{3}$, \emph{is} in the conjectured region. So too is
$(\frac12, 1,  \frac12)$, the asymptotic cone of the
$\sunitary{3} \times \langle \sigma_2\rangle$-invariant shrinker
$\scfam{-1}{\frac32,0}{2}$.

\begin{remark*}
More generally, if Conjectures~\ref{c:shrinker:unicity} and~\ref{conjecture:SU3:hit:cones} are both true then for \emph{any}~\sunitary{3}-invariant  complete AC shrinker on $\Lambda^2_-\CP^2$ there would be exactly two~\sunitary{3}-invariant~complete AC expanders whose cones match those of that shrinker.
In fact, we not need the full strength of Conjecture~\ref{c:shrinker:unicity} to draw this conclusion, only the weaker result that
no~\Sp{2}-invariant complete AC shrinker $(x,y)$ on $\Lambda^2_-\Sph^4$ has $\ell = \lim_{t \to \infty} \frac{y}{x}>1$
(whereas Conjecture~\ref{c:shrinker:unicity} asserts that only $\ell = \frac{1}{2}$ is possible). 

Moreover, if there were any~\Sp{2}-invariant complete AC shrinker $(x,y)$ on $\Lambda^2_-\Sph^4$ with $\ell>1$, then 
that would give rise to an~\Sp{2}-invariant weak solution to Laplacian flow: it is the AC shrinker for $t<0$, its asymptotic cone at $t=0$ 
and for $t>0$ it is the unique (up to scale)~\Sp{2}-invariant complete AC expander with the given value of $\ell>1$ 
(the existence and uniqueness of such an AC expander being guaranteed by Theorem~\ref{mthm:asymptotic:limit}).
\end{remark*}

So if the Laplacian flow were to develop a conical singularity modelled on the
explicit AC shrinker on~$\Lambda^2_-\CP^2$, then there are \emph{prima facie} two ways to try to
smoothly continue the flow, namely by gluing in either of these two complete AC expanders (defined on the other two versions of~$\Lambda^2_-\CP^2$). 
In particular, this suggests that to be able to smoothly continue the flow after this conical singularity has developed 
Laplacian flow must necessarily undergo a~\gtwo-conifold transition as in Remark \ref{rmk:transition}.

If true, this would also tell us that the behaviour of AC expanders in Laplacian flow is different from in K\"ahler-Ricci flow. In the latter setting, 
a complete AC K\"ahler expander $E$ coming out of a K\"ahler cone $C$ is unique (up to pullback by biholomorphisms)
and $E$ must be the unique minimal model of $C$ (which here is assumed to be smooth)~\cite[Corollary B]{Conlon:Deruelle:Sun:Kahler:solitons}.
On the other hand, in general Riemannian Ricci flow, much more severe non-uniqueness of AC expanders was recently proven to 
occur, even in the cohomogeneity-one setting, and moreover this non-uniqueness 
can occur without any topology change~\cite{Angenent:Knopf}. 

\bibliographystyle{amslink}
\bibliography{g2soliton}

\providecommand{\bysame}{\leavevmode\hbox to3em{\hrulefill}\thinspace}
\providecommand{\MR}{\relax\ifhmode\unskip\space\fi MR }
\providecommand{\MRhref}[2]{%
  \href{http://www.ams.org/mathscinet-getitem?mr=#1}{#2}
}
\providecommand{\href}[2]{#2}
\begin{thebibliography}{10}

\bibitem{Angenent:Knopf}
S.~B. Angenent and D.~Knopf,
  \href{https://doi.org/10.1007/s00039-022-00601-y}{\emph{Ricci solitons,
  conical singularities, and nonuniqueness}}, Geom. Funct. Anal. \textbf{32}
  (2022), no.~3, 411--489.

\bibitem{Atiyah:Witten:Mtheory}
M.~Atiyah and E.~Witten,
  \href{https://doi.org/10.4310/ATMP.2002.v6.n1.a1}{\emph{{$M$}-theory dynamics
  on a manifold of {$G_2$} holonomy}}, Adv. Theor. Math. Phys. \textbf{6}
  (2002), no.~1, 1--106.

\bibitem{Ball}
G.~Ball, \href{https://doi.org/10.1112/jlms.12709}{\emph{Quadratic closed
  $\mathrm{G}_2$-structures}}, Journal of the London Mathematical Society
  \textbf{107} (2023), no.~3, 1110--1171.

\bibitem{Bryant:expanders}
R.~L. Bryant, \emph{Ricci flow solitons in dimension three with
  {$SO(3)$}-symmetries}, 2005.

\bibitem{Bryant:LS:local:generality}
\bysame, \href{https://doi.org/10.4310/PAMQ.2023.v19.n6.a8}{\emph{The
  generality of closed ${G}_2$ solitons}}, Pure and Applied Mathematics
  Quarterly \textbf{19} (2024), 2827--2840.

\bibitem{Bryant:Salamon}
R.~L. Bryant and S.~M. Salamon,
  \href{https://doi.org/10.1215/S0012-7094-89-05839-0}{\emph{On the
  construction of some complete metrics with exceptional holonomy}}, Duke Math.
  J. \textbf{58} (1989), no.~3, 829--850.

\bibitem{L:Chen:double:cone1}
L.~Chen, \href{https://doi.org/10.1007/s12220-022-00988-y}{\emph{Rotational
  symmetry of solutions of mean curvature flow coming out of a double cone}},
  J. Geom. Anal. \textbf{32} (2022), no.~10, Paper No. 250, 11.

\bibitem{Chodosh}
O.~Chodosh, \href{https://doi.org/10.1007/s00526-013-0664-y}{\emph{Expanding
  {R}icci solitons asymptotic to cones}}, Calc. Var. Partial Differential
  Equations \textbf{51} (2014), no.~1-2, 1--15.

\bibitem{Conlon:Deruelle:Sun:Kahler:solitons}
R.~J. Conlon, A.~Deruelle, and S.~Sun,
  \href{https://doi.org/10.2140/gt.2024.28.267}{\emph{Classification results
  for expanding and shrinking gradient {K}\"{a}hler-{R}icci solitons}}, Geom.
  Topol. \textbf{28} (2024), no.~1, 267--351.

\bibitem{eschenburg:wang}
J.-H. Eschenburg and M.~Y. Wang,
  \href{https://doi.org/10.1007/BF02921808}{\emph{The initial value problem for
  cohomogeneity one {E}instein metrics}}, J. Geom. Anal. \textbf{10} (2000),
  no.~1, 109--137.

\bibitem{Fowdar:S1:invariant:LF}
U.~Fowdar,
  \href{https://doi.org/10.1007/s12220-021-00784-0}{\emph{${S}^1$-{I}nvariant
  {L}aplacian {F}low}}, The Journal of Geometric Analysis \textbf{32} (2021),
  no.~1.

\bibitem{HKP}
M.~Haskins, I.~Khan, and A.~Payne,
  \href{https://doi.org/10.1007/s00208-024-03049-7}{\emph{Uniqueness of
  asymptotically conical gradient shrinking solitons in ${G}_2$-{L}aplacian
  flow}}, Math. Annalen \textbf{391} (2025), 5033--5116.

\bibitem{Haskins:Nordstrom:g2soliton1}
M.~Haskins and J.~Nordstr\"om, \emph{Cohomogeneity-one solitons in {L}aplacian
  flow: local, smoothly-closing and steady solitons},
  \href{https://arxiv.org/abs/2112.09095v3}{\texttt{arXiv:2112.09095v3}}, 2025.

\bibitem{Joyce:holonomy:book}
D.~Joyce, \emph{Compact manifolds with special holonomy}, Oxford Mathematical
  Monographs, Oxford University Press, Oxford, 2000.

\bibitem{Karigiannias:Lotay:conicalG2}
S.~Karigiannis and J.~D. Lotay,
  \href{https://doi.org/10.4310/CAG.2020.v28.n5.a1}{\emph{Deformation theory of
  {$\rm G_2$} conifolds}}, Comm. Anal. Geom. \textbf{28} (2020), no.~5,
  1057--1210.

\bibitem{malgrange1974}
B.~Malgrange, \href{https://doi.org/10.5169/seals-46900}{\emph{Sur les points
  singuliers des \'equations diff\'erentielles lin\'eaires}}, Enseign. Math.
  \textbf{20} (1974), 147--176.

\bibitem{stein24}
J.~Stein and M.~Turner, \emph{${G}_2$-instantons on the {ALC} members of the
  ${B}_7$ family},
  \href{https://arxiv.org/abs/2409.03886}{\texttt{arXiv:2409.03886}}, 2025.

\end{thebibliography}

\end{document}